\newcommand{\Vol}{\operatorname{Vol}}
\newcommand{\dN}{\mathds{N}}
\newcommand{\dR}{\mathds{R}}
\newcommand{\N}{\mathbb{N}}
\newcommand{\R}{\mathbb{R}}
\newcommand{\cC}{\mathcal{C}}
\newcommand{\cCS}{\mathcal{CS}}
\newcommand{\cH}{\mathcal{H}}
\newcommand{\cP}{\mathcal{P}}
\newcommand{\cS}{\mathcal{S}}
\newcommand{\cV}{\mathcal{V}}
\newcommand{\norm}[1]{\left\|#1\right\|}
\newcommand{\ps}[2]{\left\langle#1\middle\vert#2\right\rangle}
\newcommand{\ton}[1]{\left(#1\right)}
\newcommand{\qua}[1]{\left[#1\right]}
\newcommand{\cur}[1]{\left\{#1\right\}}
\newcommand{\abs}[1]{\left|#1\right|}
\renewcommand{\L}{\mathcal{L}}
\newcommand{\Cr}{\mathcal{C}}
\newcommand{\Cru}{\mathcal{C}(u)}
\begin{document}

\newtheorem{theorem}{Theorem}[section]
\newtheorem*{theorem*}{Theorem}
\newtheorem{ctheorem}[theorem]{Conjectural Theorem}

\newtheorem{proposition}[theorem]{Proposition}

\newtheorem{lemma}[theorem]{Lemma}
\newtheorem{clemma}[theorem]{Conjectural Lemma}

\newtheorem{corollary}[theorem]{Corollary}

\theoremstyle{definition}
\newtheorem{definition}[theorem]{Definition}

\theoremstyle{remark}
\newtheorem{remark}[theorem]{Remark}

\theoremstyle{remark}
\newtheorem{example}[theorem]{Example}

\theoremstyle{remark}
\newtheorem{note}[theorem]{Note}

\theoremstyle{definition}
\newtheorem{notation}[theorem]{Notation}

\theoremstyle{remark}
\newtheorem{question}[theorem]{Question}

\theoremstyle{remark}
\newtheorem{conjecture}[theorem]{Conjecture}

\title{Volume estimates on the critical sets of solutions to elliptic PDEs}

\author{Aaron Naber and Daniele Valtorta \footnote{supported by SNSF grant 149539 and GNAMPA}}

\date{\today}
\maketitle
\begin{abstract}
In this paper we study solutions to elliptic linear equations $L(u)=\partial_i(a^{ij}(x)\partial_j u) + b^i(x) \partial _i u + c(x) u=0$, either on $\dR^n$ or a Riemannian manifold, under the assumption that the coefficient functions  $a^{ij}$ are Lipschitz bounded.  We focus our attention on the critical set $\Cr(u)\equiv\{x:|\nabla u|=0\}$ and the singular set $\cS(u)\equiv\{x:u=|\nabla u|=0\}$, and more importantly on effective versions of these.  Currently, with just the Lipschitz regularity of the coefficients, the strongest results in the literature say that the singular set is $n-2$-dimensional, however at this point it has not even been shown that $\cH^{n-2}(\cS)<\infty$ unless the coefficients are smooth.  Fundamentally, this is due to the need of an $\epsilon$-regularity theorem which requires higher smoothness of the coefficients as the frequency increases.  We introduce new techniques for estimating the critical and singular set, which avoids the need of any such $\epsilon$-regularity.  Consequently, we prove that if the frequency of $u$ is bounded by $\Lambda$, then we have the estimates $\cH^{n-2}(\cC(u))\leq C^{\Lambda^2}$, $\cH^{n-2}(\cS(u))\leq C^{\Lambda^2}$, depending on whether the equation is critical or not.  More importantly, we prove corresponding estimates for the {\it effective} critical and singular sets.  Even under the assumption of real analytic coefficients these results are much sharper than those currently in the literature.  We also give applications of the technique to give estimates on the volume of the nodal set of solutions and estimates for the corresponding eigenvalue problem.
\end{abstract}

\tableofcontents

\section{Introduction:}
In this paper, we study solutions $u$ to second order linear homogeneous elliptic equations with Lipschitz leading coefficients.  That is, we will study on $\dR^n$ solutions $u$ to the equation
\begin{gather}\label{eq_Lu}
\L(u)=\partial_i(a^{ij}(x)\partial_j u) + b^i(x) \partial _i u + c(x) u=0\, ,
\end{gather}
where the coefficients $a$ are Lipschitz and the coefficients $b,c$ are bounded and measurable. For effective estimates we assume the bounds

\begin{align}\label{e:coefficient_estimates}
&(1+\lambda)^{-1}\delta^{ij}\leq a^{ij}\leq (1+\lambda)\delta^{ij}, \,\notag\\
&\text{Lip}(a^{ij}), \,\, \abs{b^i},\,\,\abs{c}\leq \lambda\, .
\end{align}

We will call the equation {\it critical} if $c\equiv 0$.  Given the local nature of the estimates and the techniques involved in their proof, it is not restrictive to assume for simplicity that $u$ is defined on the ball $B_1(0)\subset \dR^n$. With simple modifications the results are easily extensible also to more general domains in Riemannian manifolds.  Using a new covering argument (see Section \ref{ss:volume_estimates_harmonic}), we prove new $n-2$-{\it Minkowski} estimates on the critical and singular sets
\begin{align}
&\Cr(u)\equiv\{x\in B_{1/2}(0):|\nabla u|=0\}\, ,\,\,\,\,\, \cS(u)\equiv\{x\in B_{1/2}(0):u=|\nabla u|=0\}\, .
\end{align}
In principal, for general equations we will prove estimates on $\cS(u)$, while for critical equations we will prove estimates on $\cC(u)$.  It will be convenient to denote by $\cCS(u)$ either the critical set $\cC(u)$ or singular set $\cS(u)$, depending on whether the equation (\ref{eq_Lu}) is critical or not, respectively.  Note that if we wish to control the $n-2$-measure of critical or singular sets, then the assumption of Lipschitz coefficients is a sharp assumption, since if the coefficients are only H\"older one can find nontrivial solutions which vanish on open subsets, see \cite{plis}. 

\vspace{2mm}

\subsection{Effective estimates}
More importantly we will prove estimates on the {\it effective} critical and singular sets $\cC_r(u)$, $\cS_r(u)$.  The effective critical and singular sets were first introduced by the authors in \cite{ChNaVa}.  In essence, $x\not \in \cC_r(u)$ if on the definite size ball $B_r(x)$ we have that $|\nabla u|$ has a definite size relative to $u$, more precisely we have:

\begin{align}\label{e:effective_critical}
&\cC_r(u)\equiv\cur{x\in B_{1/2}(0): \inf_{B_r(x)} r^2 |\nabla u|^2 < \frac{n}{16}\fint_{\partial B_{2r}(x)} |u-u(x)|^2}\, ,\notag\\
&\cS_r(u)\equiv\cur{x\in B_{1/2}(0): \inf_{B_r(x)}\ton{\abs{u}^2 + \frac{r^2}{n} |\nabla u|^2 } < \frac{1}{16}\fint_{\partial B_{2r}(x)} |u|^2}\, .
\end{align}
Again we will denote by $\cCS_r(u)$ the effective critical or singular set, depending on whether the equation is critical or not.  Notice that for every $r>0$ we have that $\cCS(u)\subseteq \cCS_r(u)$, and more effectively that points of $\cCS_r(u)$ are those points which have a definite amount of gradient on a ball of definite size.

\subsection{Background}  To control the critical and singular sets of a solution to (\ref{eq_Lu}) more information about the solution is needed.  For instance, one could just take the solution $u=0$, which by all regards is a great solution but there is no control on the critical and singular set.  It has been understood for some time that being a constant or close to a constant is all that can really go wrong, and hence what is important is to control how far away $u$ is from a constant solution.  The right measurement for this are the frequencies
\begin{align}\label{d:frequency}
N^u_{\cC}(x,r)\equiv \frac{r\int_{B_r(x)} |\nabla u|^2}{\int_{\partial B_r(x)} (u-u(x))^2}\, ,\,\, N^u_{\cS}(x,r)\equiv \frac{r \int_{B_r(x)} |\nabla u|^2}{\int_{\partial B_r(x)} u^2}\, ,
\end{align}
and their generalizations (see Section \ref{ss:generalized_frequency}), where we denote by $N^u(x,r)$ either $N^u_{\cC}(x,r)$ or $N^u_{\cS}(x,r)$, depending on whether (\ref{eq_Lu}) is critical or not, respectively.  By unique continuation and the maximum principle, if $u$ is not constant, then $N^u$ is well defined for positive $r$.  For a fixed solution $u$ of (\ref{eq_Lu}) we then denote by
\begin{align}
\Lambda \equiv N^u(0,1)\, ,
\end{align}
the frequency bound of $u$.  The main conjecture in the area goes back to Lin \cite{lin}, which predicts that for some constant $C(n,\lambda)$ we have that
\begin{align}
&\cH^{n-2}(\cCS(u))< C\Lambda^2\, .
\end{align}
The best that has been proved in the literature at this point goes back to \cite{hanlin,HON,hanhardtlin,hoste,HLrank}, which proves, under the assumption of {\it smooth} coefficients, that there exists constants $C(n,a,b,c,\Lambda)$ such that
\begin{align}
&\cH^{n-2}(\cCS(u))< C(n,a,b,c,\Lambda)\, .
\end{align}
In particular, $C$ depends on upper bounds on the coefficients $a$, $b$, $c$ and their higher order derivatives.  If one drops the assumption of smoothness on the coefficients, even if one assumes control over a large number of derivatives but not all, then the situation becomes drastically worse.  In this case the best that has been proven is in \cite{hanlin,han_sing}, where it was shown that Hausdorff dimension satisfies
\begin{align}
\dim\cCS(u) = n-2\, ,
\end{align}
however it was not even shown that $\cH^{n-2}(\cCS(u))<\infty$.  There is a fundamental reason for this, as the results of \cite{hanlin,han_sing} rely on an $\epsilon$-regularity theorem which requires additional smoothness as the frequency increases.  One of the main goals of this paper is to improve on these estimates by removing the need for such an $\epsilon$-regularity theorem.

In another direction there are more recent results from \cite{ChNaVa} that attempt to prove more effective estimates on the critical and singular sets.  Namely, even a Hausdorff dimension bound has limited application.  In short, the Hausdorff dimension of a set can be small while still being dense.  On the other hand, Minkowski estimates bound not only the set in question, but the tubular neighborhood of that set, providing a much more analytically effective notion of {\it size}.  For example, we recall that the set of rational numbers in $\dR^n$ has Hausdorff dimension $0$ and Minkowski dimension $n$.  What is needed for applications to nonlinear equations are control over the critical and singular sets on balls of definite size.  That is, it would be better to estimate $\Vol(B_r(\cCS(u)))$, and even better to make the statement that if $x\not\in B_r(\cCS(u))$, then the gradient of $u$ on $B_r(x)$ has some definite size.  The first results in this direction were proven in \cite{ChNaVa}, where by using the ideas of quantitative stratification it was shown under only Lipschitz coefficients that for every $\epsilon>0$:
\begin{align}
\Vol(B_r(\cCS_r(u))) < C(n,\lambda,\epsilon) r^{2-\epsilon}\, .
\end{align}
While such a Minkowski estimate on the critical set is stronger than simple Hausdorff estimates, the existence of the $\epsilon$ unfortunately still prevents one from obtaining finiteness of the $n-2$-measure.

It is worth mentioning that in the very special case of harmonic functions in $\R^2$, a sharp bound on the number of singular points (sharp as a function of the frequency $N$) is proved in \cite[theorem 3.3]{han}.

\paragraph{Nodal sets} Although in this paper the estimates on nodal sets are not the main estimates but rather secondary results, it is worth making a brief overview of the results available in literature in this context. Also in this case, \cite{hanlin} provides a suitable overview of the literature. Here we briefly cite the best results available in literature.

For nodal sets, better bounds are available in terms of the frequency $N_{\cS}^u$. The primary conjecture in the area goes back to Yau, which predicts that there exists a constant $C(n,\lambda)>0$ such that
\begin{align}
\cH^{n-1}(Z(u))< C\Lambda\, .
\end{align}
Yau's conjecture has been proven in \cite{DonFef} for analytic coefficients.  For Lipschitz coefficients the best result known are given in \cite{HardtSimon} which give the estimate
\begin{align}\label{eq_hardtsimon}
\cH^{n-1}(Z(u))< \Lambda^{C\Lambda}\, .
\end{align}
This result is stated in a more general and technical way in the paper in question, see \cite[theorem 1.7]{HardtSimon}.

The techniques of this paper, which are quite different from that of \cite{HardtSimon}, can recapture the result \eqref{eq_hardtsimon}, as well as improve it to the stronger Minkowski version.

\subsection{Main Results}

Now we briefly describe our main new results.

\paragraph{Main Result for Critical and Singular Sets:}  In this paper we have developed a new method for controlling the critical and singular sets, distinct from the techniques of either \cite{ChNaVa} or \cite{hanlin}.  Before discussing the methods, let us state our main results.

\begin{theorem}\label{t:main_critical}
Let $u:B_1(0)\to \dR$ solve (\ref{eq_Lu}) with Lipschitz coefficients satisfying (\ref{e:coefficient_estimates}). There exists $r_0=r_0(n,\lambda)>0$ and $C=C(n,\lambda)$ such that if $\Lambda\equiv N^u(0,2s)$ with $s\leq r_0$, then the following Minkowski estimates hold:
\begin{align}\label{e:mink_est}
\Vol(B_r(\cCS(u))\cap B_{s}(0)) \leq \Vol(B_r(\cCS_r(u))\cap B_{s}(0)) \leq C^{\Lambda^2}\, (r/s)^2\, .
\end{align}
\end{theorem}

\begin{remark}\label{rem_HvsM_crit}
 As a corollary, we obtain the Hausdorff measure estimate 
\begin{align}\label{e:haus_est}
\cH^{n-2}(\cCS(u)\cap B_{s}(0))\leq C^{\Lambda^2}s^{n-2}\, .
\end{align}
Note that this estimate is weaker than \eqref{e:mink_est} in two ways. First of all, uniform volume estimates on $B_r(\cCS(u))$ are stronger than Hausdorff estimates. As a guiding example, consider the set $R$ of rational points in $\R^n$. Although this set has Hausdorff dimension $0$, $B_r(R)$ covers the whole $\R^n$.

Moreover, as explained above, \eqref{e:mink_est} gives estimates not just on the critical set, but on the set $\cCS_r(u)$ defined in \eqref{e:effective_critical}. This set contains not just the critical points of $u$, but also the points where $\nabla u(x)$ is small relative to $u$ in a neighborhood of $x$.
\end{remark}

\begin{remark}
Since this statement is scale invariant, we will assume for convenience that $r_0\geq 1$. This can be obtained using a suitable blow-up of the domain of the function $u$, or, equivalently, by assuming $\lambda$ to be small enough.
\end{remark}

Before continuing let us make some remarks about Theorem \ref{t:main_critical}.  Even under the assumption of analytic coefficients the Hausdorff measure estimate of (\ref{e:haus_est}) is the first which gives an {\it effective} bound for the $n-2$ Hausdorff measure of the critical and singular sets, while of course the Minkowski estimate of (\ref{e:mink_est}) is in fact significantly stronger.  As was previously discussed, under the assumption of Lipschitz coefficients the Hausdorff estimate (\ref{e:haus_est}) is the first to give that the $n-2$-Hausdorff measure is even finite.  In fact, the techniques even show that the critical and singular sets are {\it finitely} rectifiable.  That is, away from a set of $n-2$-measure zero we have that $\cCS(u)$ is the finite union of bi-Lipschitz images of subsets of $\dR^{n-2}$.  On a manifold the constant $C$ should also depend on the sectional curvature bound of the manifold.

\paragraph{Main Results for Nodal Sets:}  By a simple adaptation of the arguments used for critical sets, we are able to also give estimates on the {\it nodal} set of solutions $u$ to \eqref{eq_Lu}.  In this case our effective Hausdorff estimates match those that are currently in the literature, however we do prove the significantly stronger Minkowski versions as well, which is quite new.  To state the results let us recall the definition of the nodal and {\it effective} nodal sets given by
\begin{align}
&Z(u)\equiv \{x\in B_{1/2}(0):u(x)=0\}\, ,\notag\\
&Z_r(u)\equiv \cur{x\in B_{1/2}(0): \inf_{B_r(x)}|u|^2(x)<\epsilon(n)\fint_{\partial B_{2r}(x)}|u|^2}\, .\label{eq_deph_Zr}
\end{align}
As with the effective critical and singular sets, the effective nodal set represents the set of points where $u$ has a definite size on a ball of definite size.  It is again the frequency which plays a key role in controlling the nodal set, though in this case it is the singular frequency $N^u_\cS$. 


Our main estimate for nodal sets is the following:

\begin{theorem}\label{t:main_nodal}
Let $u:B_1(0)\to \dR$ solve (\ref{eq_Lu}) with the coefficients satisfying (\ref{e:coefficient_estimates}).  There exists $r_0=r_0(n,\lambda)>0$ and $C=C(n,\lambda)$ such that if $\Lambda\equiv N^u(0,2s)$ for some $s\leq r_0$, then the following Minkowski estimates hold:
\begin{align}\label{e:mink_est_nod}
\Vol(B_r(Z(u))\cap B_{s}(0)) \leq \Vol(B_r(Z_r(u))\cap B_{s}(0)) \leq (C(n,\lambda)\Lambda)^{\Lambda}\, r/s\, .
\end{align}
\end{theorem}

\begin{remark}
 As for critical sets (see Remark \ref{rem_HvsM_crit}), this estimate immediately yields the Hausdorff measure bound
\begin{align}\label{e:haus_est_nod}
\cH^{n-1}(Z(u)\cap B_{s}(0))\leq (C(n,\lambda)\Lambda)^{\Lambda}s^{n-1}\, .
\end{align}
\end{remark}

\paragraph{Applications to Eigenvalue Equation on Manifolds:}

Let us now assume we are working on a compact Riemannian manifold $(M^n,g)$ with at least Lipschitz metric $g$.  In this context we are most interested in studying the Laplace-Beltrami operator $\Delta u \equiv \text{div} \nabla u$, though the results hold equally well for other second order operators.  It is well understood that the eigenvalues $0=\lambda_0<\lambda_1\leq \lambda_2\leq \cdots$ of $-\Delta$ are discrete with $\lambda_j\to \infty$.  As an application of Theorems \ref{t:main_critical} and \ref{t:main_nodal}, as well as the doubling estimate of \cite{DonFef} we have the following:

\begin{theorem}
For a compact Lipschitz Riemannian manifold $(M^n,g)$ there exists a constant $C(g)$ such that if $u$ solves the eigenfunction equation $-\Delta u = \lambda u$, then we have the Minkowski estimates
\begin{align}
&\Vol(B_r(\cS(u))) \leq \Vol(B_r(\cS_r(u))) \leq C^{\lambda}\,r^2\, ,\notag\\
&\Vol(B_r(Z(u))) \leq \Vol(B_r(Z_r(u))) \leq \lambda^{C\sqrt{\lambda}}\,r\, .
\end{align}
In particular, we have the much weaker estimate on the Hausdorff measure
\begin{align}
&\cH^{n-2}(\cS(u))\leq C^{\lambda}\, ,\notag\\
&\cH^{n-1}(Z(u))\leq \lambda^{C\sqrt{\lambda}}\, .
\end{align}

\end{theorem}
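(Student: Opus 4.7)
My plan is to reduce the eigenfunction problem to the Lipschitz elliptic framework of Theorems \ref{t:main_critical} and \ref{t:main_nodal} via a standard lifting trick, using the Donnelly--Fefferman doubling estimate to convert the eigenvalue $\lambda$ into a frequency bound. First, I set $v(x,t) \equiv u(x)\cosh(\sqrt{\lambda}\,t)$ on the product manifold $\widetilde M \equiv M \times \dR$ with the product Lipschitz metric. A direct computation yields
\begin{align*}
\Delta_{\widetilde M} v = (\Delta_g u + \lambda u)\cosh(\sqrt{\lambda}\,t) = 0,
\end{align*}
so $v$ is harmonic on $\widetilde M$ and solves an equation of the form \eqref{eq_Lu} in the critical case $c \equiv 0$, with coefficient bounds depending only on $g$. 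Because $\partial_t v(x,0) = 0$, one obtains the clean identifications $\cC(v) \cap (M \times \{0\}) = \cC(u) \times \{0\} \supseteq \cS(u) \times \{0\}$ and $Z(v) = Z(u) \times \dR$, with analogous inclusions for the effective sets $\cC_r$ and $Z_r$ after fixed geometric rescalings of $r$.

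Next I would bound the frequency of $v$. The Donnelly--Fefferman doubling inequality for $u$, together with the explicit product form of $v$, yields a doubling exponent for $v$ of size at most $C(g)\sqrt{\lambda}$ on balls of a fixed radius $r_0(g)$ in $\widetilde M$; the standard equivalence between doubling and frequency then produces $N^v(y, 2r_0) \leq C(g)\sqrt{\lambda}$ for every $y$ in a fixed compact neighborhood of $M \times \{0\}$, with $r_0$ small enough for Theorems \ref{t:main_critical} and \ref{t:main_nodal} to apply. Covering this neighborhood by $K = K(g)$ balls of radius $r_0$, applying Theorem \ref{t:main_critical} on each with $\Lambda^2 \leq C(g)\lambda$, and summing, I obtain $\Vol(B_r(\cC_r(v))) \leq C^{\lambda} r^2$ in $\widetilde M$. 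Intersecting with the slice $M \times \{0\}$ and invoking $\cS_r(u) \times \{0\} \subseteq \cC_r(v)$ produces the desired $\Vol(B_r(\cS_r(u))) \leq C^\lambda r^2$, from which the Hausdorff bound $\cH^{n-2}(\cS(u)) \leq C^\lambda$ follows by the standard Hausdorff-from-Minkowski comparison.

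For the nodal set the product identity $Z(v) = Z(u) \times \dR$ converts a Minkowski bound on $B_r(Z_r(v))$ in $\widetilde M$ directly into a Minkowski bound on $B_r(Z_r(u))$ in $M$ via Fubini-type slicing. The main obstacle is that substituting $\Lambda^2 \leq C\lambda$ naively into Theorem \ref{t:main_nodal} would yield only a bound of the form $C^{\lambda} r$, whereas the target $\lambda^{C\sqrt{\lambda}} r$ is strictly sharper. Matching this sharper dependence requires the finer, Hardt--Simon-type nodal Minkowski bound with frequency dependence $\Lambda^{C\Lambda}$, which the covering methods underlying Theorem \ref{t:main_nodal} deliver once one exploits the codimension-one structure of $Z(v)$; substituting $\Lambda \leq C\sqrt{\lambda}$ into that refined bound then yields the stated $\lambda^{C\sqrt{\lambda}} r$ Minkowski estimate and its $\cH^{n-1}$ corollary.
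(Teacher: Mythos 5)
Your lifting-plus-Donnelly--Fefferman argument is exactly the route the paper has in mind (the paper itself states this theorem without proof, referring only to Theorems \ref{t:main_critical}, \ref{t:main_nodal}, and \cite{DonFef}), and the lift to $v(x,t)=u(x)\cosh(\sqrt{\lambda}t)$ on $M\times\dR$ is the right move: it eliminates the growing zeroth-order coefficient $c=\lambda$, which would otherwise violate the uniform bounds in \eqref{e:coefficient_estimates} on balls of fixed size and so block a direct application of Theorem \ref{t:main_critical} to $u$. Two places deserve slightly more care than you give them. First, for the Fubini-type slicing you should work with the full product inclusion $\cS(u)\times\dR\subseteq\cC(v)$ (valid because $u(x)=|\nabla u(x)|=0$ forces $\nabla v(x,t)=0$ for \emph{every} $t$), not merely the slice identity $\cC(v)\cap(M\times\{0\})=\cC(u)\times\{0\}$; it is the $t$-translation invariance of $\cS(u)\times\dR$ that lets you integrate the Minkowski bound for $B_r(\cC_r(v))$ over a $t$-slab of unit height and divide out to get the bound for $B_r(\cS_r(u))$ in $M$, and an analogous statement $\cS_r(u)\times(-\epsilon,\epsilon)\subseteq\cC_{c r}(v)$ (for $\epsilon,c$ depending only on $g$) is what you actually need for the effective version. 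Second, for the nodal estimate the exponent $\lambda^{C\sqrt{\lambda}}$ comes from the codimension-one Minkowski bound $(C\Lambda)^{\Lambda}r$ of Appendix \ref{sec_nodal}, not from the $C^{\Lambda^2}r^2$ form in the display of Theorem \ref{t:main_nodal}; you correctly flag this but should cite the appendix result explicitly rather than a ``Hardt--Simon-type'' refinement of the covering argument, since the appendix already contains precisely the estimate you need.
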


\section{Preliminaries and Outline of Proof}

In this section we concentrate on introducing the correct terminology for the paper, as well as giving an outline for the proof of the mains Theorems.  To keep the arguments as non convoluted as possible we will concentrate on proving Theorem \ref{t:main_critical} in the context where (\ref{eq_Lu}) is critical, as the other results are completely analogous.

The main new ingredient to the proof of Theorem \ref{t:main_critical} is a new covering argument, which itself relies on a new {\it effective} tangent map uniqueness statement.  In section \ref{ss:tangent_maps} we review the notion of a blow up and discuss the new results in this paper related to them.  In section \ref{ss:critical_radius} we discuss the notion of the critical radius.  In section \ref{ss:proof_outline} we outline the proof of Theorem \ref{t:main_critical}, and in particular the new covering argument. 

\subsection{Generalized Frequency}

For solutions of (\ref{eq_Lu}) it is more natural and convenient on small scales to work with a generalization of the frequency function (\ref{d:frequency}) which takes into account better the coefficients of the equation.  Among other things this allows one to preserve the essential property of {\it monotonicity} for the frequency.  Such a generalized frequency was first introduced in \cite{galin1,galin2,hanlin}, and further expanded in \cite{ChNaVa}.  We will follow the mild extensions given in \cite{ChNaVa}, which are discussed in Section \ref{ss:generalized_frequency}.  For now, we simply wish to remark that the frequency $N$ in the next subsections will refer to the generalized frequency.

\subsection{Tangent Maps and Blow Ups}\label{ss:tangent_maps}

In this subsection we define the notion of a blow up and discuss both new and old results related to it.  To discuss this with precision, let us define for $x\in B_1(0)$ the linear transformation
\begin{gather}\label{eq_Q}
 Q_{x} (y) = q_{ij}(x)  \ y^i e^j\, ,
\end{gather}
where $q^{ij}(x)$ is the square root of the matrix $a^{ij}(x)$.  For instance if we consider just the Laplacian then $Q\equiv I$ is just the identity map.  It is evident that $Q_x$ is a bi-Lipschitz equivalence from $\R^n$ to itself with Lipschitz constant $\leq (1+\lambda)^{1/2}$.  Thus if $u$ solves (\ref{eq_Lu}) with $x\in B_{1/2}(0)$ and $r<\frac{1}{2(1+\lambda)^{1/2}}$, then we can define the blow up by 
\begin{definition}[{\bf Tangent map for $u$}]\label{d:tangent}
\begin{enumerate}
\item For $x\in B_{1/2}(0)$ and $r<\frac{1}{2(1+\lambda)^{1/2}}$ we define $T^u_{x,r}:B_{r^{-1}}(0)\to\dR$ by
\begin{gather}\label{eq_dephT_Q}
 T_{x,r}^u(y) = \frac{u(x+rQ_x(y))-u(x)}{\ton{\fint_{\partial B_1(0)} \abs{u(x+rQ_x (y))-u(x)}^2 }^{1/2}}\, .
\end{gather}
\item For $x\in B_1(0)$ we define
\begin{gather}\label{eq_T}
 T_x^u(y)=\lim_{r\to 0} T_{x,r}^u (y)\, .
\end{gather}
\end{enumerate}
\end{definition}
By unique continuation and the maximum principle, $T^u_{x,r}$ is defined for all positive $r$ sufficiently small. The existence of the limit is a different matter. If the coefficients of the equation are smooth, its existence is an easy consequence of Taylor's theorem and the unique continuation principle. In this case, the limit is unique and, up to rescaling, $T_x^u$ is just the leading order polynomial of the Taylor expansion of $u-u(x)$ at $x$.  In the general case, the existence of the limit has been proved in \cite{han_sing} and is a deeply important property of solutions to \eqref{eq_Lu}. It is worth underlying that not only the limit in \eqref{eq_T} exists pointwise in $y$, but \cite{han_sing} proves a definite rate of convergence in $r$ related to the frequency $N$. 

\vspace{3mm}
Using a simple change of variables, it is easy to see that the function $T$ satisfies an 
elliptic PDE of the form:
\begin{gather}\label{eq_LT}
 \tilde \L(u)=\partial_i\ton{\tilde a^{ij} \partial _j T} + \tilde b^i \partial_i T +\tilde c T =0\, ,
\end{gather}
with $\tilde a^{ij}(0)=\delta^{ij}$. Moreover, the conditions \eqref{e:coefficient_estimates} imply similar estimates for the coefficients $\tilde a^{ij}, \ \tilde b^i$:
\begin{gather}\label{eq_aT}
 (1+\lambda r)^{-1}\delta^{ij}\leq \tilde a^{ij}\leq (1+\lambda r)\delta^{ij}, \, \text{Lip}(\tilde a^{ij})\leq \lambda r\, , \,
\abs{\tilde b^i},\abs{\tilde c} \leq \lambda r\, .
\end{gather}

An important property of the blow ups is that they are controlled by the frequency.  We say the frequency at $x$ is $\delta$-pinched on the scales $[r_2,r_1]$ if $|N(x,r_2)-N(x,r_1)|<\delta$.  It is known, see \cite{ChNaVa} for instance, that for every $\epsilon>0$ there is a $\delta>0$ such that if the frequency is $\delta$-pinched at $r$, then $T^u_{x,r}$ is $\epsilon$ close to some homogeneous harmonic polynomial $P_d$.  The primary weakness of this result from \cite{ChNaVa}, besides its lack of effectiveness, is that if the frequency is $\delta$-pinched over a potentially large number of scales $[r_2,r_1]$, then the homogeneous harmonic polynomial $P^{(r)}_d$ which $T^u_{x,r}$ is close to might conceivably depend on $r$.  

In this paper, using arguments which extends both those of \cite{ChNaVa} and the tangent map uniqueness result of \cite{han_sing}, we prove a result which strengthens both of these into a more quantitative statement.  Namely, we see that for every $\epsilon>0$ there is a $\delta>0$, which is in fact given explicitly and sharply, such that if the frequency is $\delta$ pinched on scales $[r_2,r_1]$ with $0\leq r_2< r_1/(c\Lambda)$, then there exists a {\it unique} homogeneous harmonic polynomial $P_d$ such that $T_{x,r}$ is $\epsilon$-close to $P_d$ for all $r\in [r_2,r_1]$.  See Theorem \ref{t:eff_tan_con_uniq_harm} for the harmonic case and Theorem \ref{t:eff_tan_con_uniq} for the general case.  Both the uniqueness and the sharp bounds of the constants play an important role in Theorem \ref{t:main_critical}.

\subsection{The Critical Radius}\label{ss:critical_radius}

Let us begin with the following definition of the critical radius:
\begin{definition}
 Given $x\in B_{1/2}(0)$ we define
 \begin{gather}\label{eq_deph_rc}
  r_c(x)\equiv r_x = \sup\cur{0\leq s\leq r_0:  N(x,s)<\frac{3}{2}} \, ,
 \end{gather}
 where $r_0(n,\lambda)>0$ is a small constant chosen from Lemma \ref{l:frequency_comparison}.
\end{definition}

Let us quickly remark on the following, which is easy to prove, see for instance \cite{ChNaVa} and Lemma \ref{lemma_ell3/2}:

\begin{lemma}\label{l:frequency_comparison}
There exists $C(n,\lambda)>0$ and $r_0(n,\lambda)>0$ such that if $r_x<C^{-1}r$ with $r\leq r_0$, then $\inf_{B_r(x)}|\nabla u|^2 > \frac{n}{4r^2}\fint_{\partial B_{2r}(x)} |u-u(x)|^2$.  In particular, we have the inclusion
\begin{align}
\cC_r(u)\subseteq\{x\in B_{1/2}(0):\ r_x\leq C^{-1}r\}\, .
\end{align}
\end{lemma}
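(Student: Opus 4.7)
The plan is to use the critical radius bound $r_x > Cr$ to force the blow-up of $u$ at $x$ to be, to leading order, a linear function on $B_{2r}(x)$, from which the gradient lower bound can be read off directly by elliptic regularity and rescaling.

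First, I would unpack the hypothesis. By the definition of $r_x$, the assumption $r_x > Cr$ gives $N(x,s) < 3/2$ for every $s \leq Cr$. Combined with the almost-monotonicity of the generalized frequency and the fact that $3/2$ lies strictly between the two consecutive admissible degrees $1$ and $2$ of a nonzero harmonic polynomial, this pins the leading order of the blow-up at $x$ to degree exactly $1$. I would then invoke the effective tangent uniqueness result (Theorem~\ref{t:eff_tan_con_uniq_harm}, or its perturbative version Theorem~\ref{t:eff_tan_con_uniq}): choosing $C$ large forces the pinching $N(x,Cr)-N(x,r)$ to be small, which forces $T^u_{x,2r}$ to be $\eta$-close in $L^2_{av}(\partial B_1)$ to a unique degree-$1$ harmonic polynomial $L(y) = a\cdot y$, normalized by $|a|^2 = n$.

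Next, I would upgrade this $L^2$ closeness to $C^1$ closeness. The rescaled function $T^u_{x,2r}$ satisfies the elliptic equation \eqref{eq_LT} with coefficients $\tilde a^{ij}, \tilde b^i, \tilde c$ deviating from $\delta^{ij}, 0, 0$ by at most $O(\lambda r)$ (by \eqref{eq_aT}), while $L$ is harmonic for the limiting Laplacian. Standard $C^{1,\alpha}$-Schauder estimates applied to $T^u_{x,2r} - L$ therefore upgrade the $L^2(\partial B_1)$ error $\eta$ into a $C^1(\overline{B_{1/2}})$ error of order $\eta + \lambda r$. Since $|\nabla L|^2 \equiv n$ pointwise, this yields $\inf_{B_{1/2}(0)} |\nabla T^u_{x,2r}|^2 \geq n\bigl(1 - C(\eta + \lambda r)\bigr)$.

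Rescaling back to $B_r(x)$ via the chain rule (and accounting for the bi-Lipschitz distortion of $Q_x$, which is bounded by $(1+\lambda)^{1/2}$) translates this into the pointwise bound: with $\alpha^2 := \fint_{\partial B_{2r}(x)} (u-u(x))^2$,
\[
\inf_{B_r(x)} |\nabla u|^2 \;\geq\; \bigl(1 - C(\eta + \lambda r)\bigr)\,\frac{n\,\alpha^2}{4 r^2}.
\]
Choosing $C$ sufficiently large (so $\eta$ is small) and $r_0$ sufficiently small (so $\lambda r_0$ is small) makes the right-hand side exceed $\tfrac{n}{16 r^2}\alpha^2$, which is precisely the negation of the condition defining $\cC_r(u)$, giving the claimed inclusion (and indeed essentially the displayed $n/(4r^2)$ bound, up to the harmless approximation factor absorbed into the generous $n/16$ slack). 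The main obstacle is the second step: the effective tangent uniqueness is needed not merely at a single scale but \emph{uniformly} across the geometric range $[r, Cr]$, so that the \emph{same} linear polynomial $L$ approximates $T^u_{x,s}$ at every intermediate scale. This uniform-across-scales uniqueness is exactly the new ingredient supplied by the paper's effective tangent theorems, and using it here avoids the $\epsilon$-regularity on which earlier approaches rely.
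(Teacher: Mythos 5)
The overall strategy—exploit $N(x,s)<3/2$ over the range $(0,Cr]$ to force $u-u(x)$ to be approximately linear near $x$, then read off the gradient lower bound—is the right one and aligns with the paper's citation of Lemma~\ref{lemma_ell3/2}. But one intermediate step is incorrect as stated. The hypothesis $r_x>Cr$ does \emph{not} imply the pinching $N(x,Cr)-N(x,r)$ is small, no matter how large $C$ is. Consider the harmonic example $u(z)=z_1+\epsilon(z_1^2-z_2^2)$ in $\R^2$: writing $\epsilon'=a_2/a_1$, one computes
\begin{gather}
N(0,s)=\frac{1+2\epsilon'^2 s^2}{1+\epsilon'^2 s^2}\,,
\end{gather}
so $r_c(0)=1/\epsilon'$. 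Taking $r=r_c(0)/(2C)$ gives $r_x>Cr$ for every $C$, yet $N(0,Cr)=N(0,r_c(0)/2)=6/5$, so $N(0,Cr)-N(0,r)\to 1/5$ as $C\to\infty$. Thus Theorem~\ref{t:eff_tan_con_uniq_harm} cannot be applied to the pair $[r,Cr]$ as you propose: the pinching is bounded away from $0$ whenever $r_c$ is only comparable to $Cr$.

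The fix is to pinch over a subrange on which $N$ has already relaxed toward $1$. Since $N(x,s)<3/2$ keeps the nearest integer equal to $1$, Corollary~\ref{cor_Npinch} gives that $N(x,s)-1$ halves with each factor-of-$e$ decrease in $s$; hence $N(x,C^{1/2}r)-1\leq \tfrac12 C^{-(\ln 2)/2}$, and the pinching over $[r/e,\,C^{1/2}r]$ is therefore small for $C$ large. Then Theorem~\ref{t:eff_tan_con_uniq_harm} (or its perturbative version) applies legitimately; note you also need $2r$ to lie in the interior window $(er_2,r_1/e)$ that the theorem returns, which is why starting the pinching at $r_2=r/e$ rather than $r$ matters. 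With those adjustments, the rest of your argument (Schauder upgrade, rescaling, bi-Lipschitz distortion of $Q_x$) goes through.

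Two further remarks. First, the paper's own route as signalled by Lemma~\ref{lemma_ell3/2} is more elementary: a single-scale bound $N(x,r_1)\leq 3/2$ already forces $\sum_{k\geq 2}a_k^2 r^{2k}<a_1^2 r^2$, hence the pointwise identity $|\nabla h(x)|^2=n a_1^2$ dominates, and the harmonic approximation of Theorem~\ref{th_w} with $N=1$ transfers this to solutions of \eqref{eq_Lu}; extending from the center $x$ to $B_r(x)$ uses the same geometric coefficient decay, with no need for the tangent-uniqueness theorem or for pigeonholing a pinched window. Second, your observation that the displayed constant $n/(4r^2)$ is only approached from below (equality for an exact linear function), and that what actually matters is the slack down to $n/(16r^2)$ in the definition of $\cC_r(u)$, is correct and worth keeping.
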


The above Lemma allows us in the proof of Theorem \ref{t:main_critical} to prove a volume estimate on the set $\{r_x<r\}$, which will be more natural in the context of the frequency.

Let us now generalize the above definition in \eqref{eq_deph_rc}, as it will play an important role in our covering argument.  For $d\in\dN$ a fixed integer let us define the following $d$-critical radius:

\begin{definition}
 Given $x\in B_1(0)$ in its domain, we define the $d$-critical radius
 \begin{gather}
  r^d_x = \sup\cur{0\leq s\leq r_0: \forall y\in B_s(x) \text{ we have that } N(y,s)< d+\epsilon_0} \, .
 \end{gather}
\end{definition}
\begin{remark}
Though not supremely important at this stage, the constant $\epsilon_0(n,\lambda)>0$ is chosen from Corollary \ref{c:cone_splitting} by setting $\tau=10^{-6}$, and the radius $r_0(n,\lambda, d)$ is chosen from Theorem \ref{t:eff_tan_con_uniq}. Note that, as $d\to \infty$, $r_0(n,\lambda,d)\to 0$.
\end{remark}

For the sake of the outline all that is important is that the constants $\tau, \epsilon_0,r_0$ in the above definition are chosen small enough in such a way that one has the effective cone splitting of Corollary \ref{c:cone_splitting}.

\subsection{Outline of Proof}\label{ss:proof_outline}

The proof comes by working inductively on $d$-critical balls.  In this short subsection we will try to outline the general idea of the construction without worrying about precision or technical details.  

Let $u$ be a solution of (\ref{eq_Lu}) such that the frequency is bounded by $\Lambda$ as in the statement of Theorem \ref{t:main_critical}.  Using the results of Sections \ref{s:harmonic} and \ref{s:general_elliptic} it is not hard to see that there exists a constant $C(n,\lambda)$ and an integer $d\leq C\Lambda$ such that for each $x\in B_{1/2}(0)$ we have the $d$-critical radius bound $r^d_x\geq r_0$, where $r_0\geq \epsilon(n,\lambda)^{-d}$.  We can cover $B_{1/2}(0)$ by at most $C^\Lambda$ such balls, thus there is no harm in estimating each such ball individually and adding up the error.  Note that on each such ball that after rescaling $r_x\to 1$ and translating $x\to 0$ we can assume we are working on a ball $B_1(0)$ such that $r^d_0\geq 1$.  

Now let us fix $r>0$ and assume $B_1(0)$ is such that $r^d_0\geq 1$ as above.  The rough goal is to find a collection of balls $\{B_{r_i}(x_i)\}\subseteq B_1(0)$ with the following properties:
\begin{enumerate}
\item \label{i1}For each $i$ either $r_i=r$ or $r_i = r_{x_i}^{d-1}$.
\item \label{i2} If $x\not\in \cup B_{r_i}(x_i)$, then $r_x<r$, which is to say $x\not\in \cC_r(u)$.
\item \label{i3} We have the estimate $\sum r_i^{n-2} < C(n,\lambda)^d$.
\end{enumerate}
Ignoring the construction of the balls $B_{r_i}(x_i)$ for a moment, let us remark that we are done if we can always find such a collection.  Indeed, in this case we can then consider each ball $B_{r_i}(x_i)$ independently.  If $r_i=r$ we leave the ball alone, otherwise by rescaling $r_i\to 1$ and translating $x_i\to 0$ we now have a ball $B_1(0)$ such that $r^{d-1}_0\geq 1$, and hence we can find a $d-1$-covering as above for the new ball.  In particular, this means we can cover $B_{r_i}(x_i)$ by a collection of balls $B_{r_{ij}}(x_{ij})$ which satisfy the above conditions for $d-2$.  Summing over all $i$ and $j$ gives us a collection of balls $\{B_{r_{ij}}(x_{ij})\}\subseteq B_1(0)$ of our original ball which satisfy \eqref{i1} and \eqref{i2} above and for which
\begin{align}
\sum r_{ij}^{n-2}\leq C(n,\lambda)^{d}\cdot C(n,\lambda)^{d-1}\, .
\end{align}
 Continuing this $d$ times we arrive at a collection of balls $B_r(y_j)$ which satisfy \eqref{i2} and for which $\sum r^{n-2} \leq C^{\frac{1}{2}d(d-1)}\leq C^{\Lambda^2}$, which finishes the proof.

Hence, we are left with understanding the construction of the balls $\{B_{r_i}(x_i)\}\subseteq B_1(0)$ satisfying \eqref{i1},\eqref{i2},\eqref{i3} above under the assumption that $r_0^{d}\geq 1$.  Roughly, the construction proceeds as follows.  For every $x\in B_1(0)$ let us define
\begin{align}
r'_x \equiv \max\{r, r_x^{d-1}\}\, ,
\end{align}
where $r^{d-1}_x$ is the $(d-1)$-critical radius of Section \ref{ss:critical_radius}.  Let us separate $B_1(0)$ into subsets $S_1,S_2$ which are defined by
\begin{align}
&S_1\equiv\{x\in B_1(0): \not\exists\, y\in B_{10 r'_x}(x) \text{ s.t. } r'_y<10^{-2}r'_x\}\, \notag\\
&S_2\equiv B_1\setminus S_1\, .
\end{align}
We can interpret $S_1$ as the set of points with locally minimizing $d-1$-critical radii.  We let $\{B_{r_i}(x_i)\}$ be a Vitali subcovering of the collection $\{B_{r'_x}(x)\}_{x\in S_1}$.  Clearly the collection satisfies \eqref{i1}, and hence we need to show this collection of balls satisfies \eqref{i2},\eqref{i3}.

Now standard arguments as in \cite[theorem 2.8]{ChNaVa} give us, roughly, that for every $x_i$ and $s\in [r_i,1]$ that there exists a homogeneous harmonic polynomial $P_i^{(s)}$ of degree $d$ such that $T_{x,s}^u$ is close to $P_i^{(s)}$.  A key point is that the new effective argument discussed in Section \ref{ss:tangent_maps} will allow us to take $P_i^{(s)}\equiv P^d$ to be independent of both $i$ and $s$.  For the sake of the outline let us make the assumption that $P^d$ is $n-2$ symmetric, which is to say that $P^d$ depends on only two variables.  Up to some technical details this will turn out to be the important case, in that one can always handle the other cases by even simpler methods.  So in this case there is an $n-2$-plane $V\subseteq \dR^n$ such that if $x\not\in V$, then $P^d$ is not critical at $x$.  

There are two steps needed to complete the proof.  First, if $x\in S_2$, then by assumption there exists a point $x_i$ centering a ball in our covering which is not too far from $x$ relative to $r'_{x}$.  In particular, since $u$ is close to $P^d$ centered at $x_i$ this is roughly equivalent to the statement that $d(x,V)>r'_x\geq r$.  We have already mentioned that $P^d$ is therefore not critical at $x$, and with a little work, since $u$ is close to $P^d$, one can use this to show $u$ is not critical at $x$.  More effectively, we even have that $r_x\geq r$, which proves \eqref{i2} for the covering.  Making this precise will turn out to require an effective cone-splitting argument (see Sections \ref{ss:almost_conesplitting} and \ref{ss:almost_conesplitting_ell}).

Finally, let us consider the projection map $P^V:\dR^n\to V$.  Since $u$ is close to $P^d$ in all balls $B_{s}(x_i)$ with $s\in [r_i,1]$, one can use this to prove the projection map, when restricted to the centers of the balls $\{x_i\}$, is $(1+\epsilon)$-bi-Lipschitz.  Slightly more precisely, if $x_i,x_j\in \{x_k\}$ are two centers in the covering, then we know that the blow up of $u$ centered at $x_i$ at the radius $d_{ij}\equiv d(x_i,x_j)$ looks close to $P^d$.  In particular, since both $x_i$ and $x_j$ are 'good' points relative to the frequency pinching, by construction, we have that $x_i$ and $x_j$ must be close to the plane $V$ relative to $d_{ij}$.  Making this precise is exactly the statement that $P^V$ restricted to $\{x_i\}$ is $(1+\epsilon)$-bi-Lipschitz.  In particular, the Vitali covering $B_{r_i}(x_i)$ induces a Vitali covering $\{B_{r_i/2}(P^V(x_i))\}$ of the $n-2$-ball $B_1\cap V$.  Thus we get from this the estimate \eqref{i3}.

\clearpage

\section{Harmonic functions}\label{s:harmonic}
In this Section, we concentrate on harmonic functions in $\R^n$, and will first prove Theorem \ref{t:main_critical} in this simplified case.  This will allow us to illustrate the main ideas of the proof without the confusion of the added technical complications needed for the general case.  More than that, many of the tools we will need for general solutions of (\ref{eq_Lu}) will follow by appropriate approximation arguments with harmonic functions, and thus many of the results of this Section are directly relevant.

We start in section \ref{ss:hhp} by recalling some basic properties of homogeneous harmonic polynomials, hhP's in short. For a more complete overview on the subject, we refer the reader to \cite{HFT}.  In section \ref{ss:frequency_hhp} and \ref{ss:hpinch} we discuss the frequency function and its relation to homogeneous harmonic polynomials.  Although much of this is known, the estimates of these sections are much more refined than those previously in the literature, and we will need these results.  In particular in Theorem \ref{t:eff_tan_con_uniq_harm} we will prove an effective tangent cone uniqueness statement, which will play an important role in our estimates.  In section \ref{ss:almost_conesplitting} we revisit the idea of cone splitting, introduced in this context first in \cite{ChNaVa}.  The results of \cite{ChNaVa} are based on contradiction arguments, and we again prove much more refined estimates.  Sections \ref{ss:almost_codim2_invariant} and \ref{ss:symmetric_criticalpoints} discuss the relationship of critical points to the symmetry of a harmonic function.  Finally in section \ref{ss:volume_estimates_harmonic} we prove Theorem \ref{t:main_critical} for harmonic functions.

\subsection{Homogeneous harmonic polynomials}\label{ss:hhp}
Let $D\subset \R^n$ be any domain, and denote for convenience $\cH(D)$ the space of harmonic functions $u:D\to \R$, $u\in W^{1,2}(D)$. Most of the times, we will consider $B_1(0)$ as our domain, thus we define for simplicity $B=B_1(0)\subset \R^n$. We recall that a polynomial $P$ is said to be homogeneous of degree $d$ if $P(\lambda x)=\lambda^d P(x)$ for all $\lambda\in \R$ and $x\in \R^n$, or equivalently if $P$ is the sum of monomials with the same degree $d$. 
\begin{definition}
 Set $\cP_{d}$ to be the vector space of homogeneous harmonic polynomial of degree $d$ defined on $\R^n$. For $d\geq 2$ and $n\geq 3$, its dimension is given by
 \begin{gather}
  \operatorname{dim}(\cP_d) = \binom{n+d-1}{n-1}-\binom{n+d-3}{n-1} \leq C(n) d^{n-2}\, .
 \end{gather}
\end{definition}

By the standard theory of spherical harmonics (see for example \cite[chapter 5]{HFT}), one can characterize any such hhP by its restriction to the unit sphere $\partial B_1(0)$ and one finds that
\begin{gather}
 L^2(\partial B_1(0)) = \bigoplus_{d=0}^\infty \cP_d\, ,
\end{gather}
where $L^2(\partial B_1(0))$ is the real Hilbert space generated by the scalar product
\begin{gather}
 \ps{f}{g}= \fint_{\partial B_1(0)} fg\, .
\end{gather}
The space $\cH(B_1(0))$ inherits the Hilbert structure of $L^2(\partial B_1(0))$. Indeed, this product is well defined for all functions in $W^{1,2}(B_1(0))$, however only gives a Hilbert space structure on the harmonic functions as $\norm u =0 \Rightarrow u=0$ is true only on harmonic functions. 

Thus, we can write
\begin{gather}\label{eq_expu}
 u(y)= \sum_{d=0}^\infty a_d P_d(y)\, ,
\end{gather}
where each $P_d$ is a hhP of degree $d$ normalized with $\norm{P_d}=1$, and $a_d = \ps{u}{P_d}$. This expansion of course will depend on the base point chosen for the expansion. When needed, we will make this dependence explicit by writing
\begin{gather}
 u(y+x)=\sum_{d=0}^\infty a_d(x) P_{d,x} (y)\, .
\end{gather}

It is clear that if $P$ is a hhP of degree $d$, then $\partial_i P$ is either zero or a hhP of degree $d-1$. An important relation between the norm of a hhP and the norm of its gradient is given by the following lemma.
\begin{lemma}\label{lemma_pdp}\cite[lemma 5.13]{HFT}
Let $P,\ Q:\R^n\to \R$ be two homogeneous harmonic polynomials of degree $d$. Then
\begin{gather}
 \ps P Q = \frac{1}{d(2d+n-2)} \ps{\nabla P}{\nabla Q} =\frac{1}{d(2d+n-2)} \sum_{i=1}^n \ps{\partial_i P}{\partial_i Q}\, .
\end{gather}
\end{lemma}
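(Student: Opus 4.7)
The identity should follow from a short Green's-identity computation combined with Euler's homogeneity relation, so I would not expect any real obstacle; the aim is just to write the argument cleanly.

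First I would compute $\int_B \nabla P \cdot \nabla Q\,dx$ in two different ways. Integrating by parts and using $\Delta Q = 0$ gives
\begin{gather*}
\int_B \nabla P \cdot \nabla Q\,dx = \int_{\partial B} P\,\partial_\nu Q\,dS.
\end{gather*}
Since $Q$ is homogeneous of degree $d$, Euler's identity $x\cdot \nabla Q(x) = d\,Q(x)$ identifies the radial derivative on $\partial B$ as $\partial_\nu Q = dQ$, so the right-hand side equals $d\int_{\partial B} PQ\,dS$.

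On the other hand, each component $\partial_i P$, $\partial_i Q$ is homogeneous of degree $d-1$, hence $\nabla P \cdot \nabla Q$ is homogeneous of degree $2d-2$. For any function $f$ homogeneous of degree $k$, polar coordinates yield
\begin{gather*}
\int_B f\,dx = \int_0^1 r^{k+n-1}\,dr \int_{\partial B} f\,dS = \frac{1}{k+n}\int_{\partial B} f\,dS,
\end{gather*}
so with $k = 2d-2$ we obtain $\int_B \nabla P \cdot \nabla Q\,dx = \frac{1}{2d+n-2}\int_{\partial B}\nabla P \cdot \nabla Q\,dS$.

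Equating the two expressions and dividing by $|\partial B|$ to pass from $\int_{\partial B}$ to the average $\fint_{\partial B}$ gives
\begin{gather*}
\ps{P}{Q} = \frac{1}{d(2d+n-2)} \ps{\nabla P}{\nabla Q}.
\end{gather*}
The final equality $\ps{\nabla P}{\nabla Q} = \sum_{i=1}^n \ps{\partial_i P}{\partial_i Q}$ is just the definition of the Euclidean dot product together with linearity of the integral. The main things to be careful about are the normalization between boundary integrals and averages (the factor $|\partial B|$ cancels from both sides) and the case $d=0$, where the formula is degenerate but both sides vanish trivially since $\nabla P \equiv 0$ and the statement is understood for $d\geq 1$ in its intended use.
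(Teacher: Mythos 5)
Your argument is correct, and it is a clean, self-contained derivation. Note that the paper does not actually supply a proof of this lemma: it is stated with a citation to \cite[Lemma 5.13]{HFT} and nothing more, so there is no in-paper proof to compare against. Your Green's-identity computation (integrate by parts, kill the bulk term by harmonicity of $Q$, convert the radial derivative to $dQ$ on $\partial B$ via Euler's identity, and then independently reduce the volume integral of the degree-$(2d-2)$ homogeneous function $\nabla P\cdot\nabla Q$ to a boundary integral by polar coordinates) is a standard route to this identity and is exactly the kind of short verification the authors are implicitly taking for granted by citing the reference.

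One small correction to your closing remark: for $d=0$ the left-hand side $\ps{P}{Q}$ does \emph{not} vanish when $P,Q$ are nonzero constants; only the right-hand side vanishes, and the coefficient $1/(d(2d+n-2))$ is itself undefined. So the $d=0$ case is genuinely excluded rather than trivially consistent. Your conclusion — that the lemma is meant for $d\geq 1$ — is the right one, but the justification given for it ("both sides vanish") is not accurate.
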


Moreover, it is possible to prove a simple bound on the sup norm on $\partial B_1(0)$ of a hhP $P$ given its degree and its $L^2(\partial B_1(0))$ norm.
\begin{lemma}\label{lemma_dest}
 Given a $P_d\in \cP_d$, we have the sharp upper bound
\begin{gather}
 \norm{P_d}_{C^0(B_1(0))}\leq \sqrt{\operatorname{dim}(\cH_d)} \ton{\fint_{\partial B_1(0)} P_d^2 }^{1/2}\leq C(n) d^{\frac n 2 -1 }\ton{\fint_{\partial B_1(0)} P_d^2 }^{1/2}\, .
\end{gather}

\end{lemma}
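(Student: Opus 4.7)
The plan is to bound $\|P_d\|_{C^0}$ by exploiting the reproducing kernel of the finite-dimensional space $\cP_d$ together with the rotational invariance of the inner product on $\partial B(0,1)$. First I would note that by homogeneity $|P_d(rx)|=r^d|P_d(x)|\leq |P_d(x)|$ for $r\in[0,1]$, so
\[
\norm{P_d}_{C^0(B(0,1))}=\norm{P_d}_{C^0(\partial B(0,1))},
\]
and it suffices to control the sup-norm on the sphere by the $L^2(\partial B(0,1))$ norm.

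Next, choose an orthonormal basis $\{Y_1,\dots,Y_N\}$ of $\cP_d$ with respect to $\ps{\cdot}{\cdot}$, where $N=\operatorname{dim}(\cP_d)$. Write $P_d=\sum_{i=1}^N c_i Y_i$ with $c_i=\ps{P_d}{Y_i}$, so that $\fint_{\partial B(0,1)} P_d^2=\sum_i c_i^2$. For any fixed $x\in\partial B(0,1)$, the Cauchy--Schwarz inequality gives
\[
|P_d(x)|^2=\Bigl|\sum_i c_i Y_i(x)\Bigr|^2\leq \Bigl(\sum_i c_i^2\Bigr)\Bigl(\sum_i Y_i(x)^2\Bigr).
\]
The key observation is then that $K(x):=\sum_i Y_i(x)^2$ is constant on $\partial B(0,1)$. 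Indeed, for any rotation $R\in O(n)$ the set $\{Y_i\circ R\}$ is again an orthonormal basis of $\cP_d$ (since $\cP_d$ is $O(n)$-invariant and the inner product is rotation-invariant), hence $K(Rx)=K(x)$. Integrating $K$ over the sphere gives $\fint K = \sum_i \norm{Y_i}^2 = N$, so $K\equiv N$ pointwise.

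Combining the two steps yields $|P_d(x)|\leq \sqrt{N}\bigl(\fint P_d^2\bigr)^{1/2}$ for every $x\in \partial B(0,1)$, which is the first inequality in the statement (and is sharp, attained by taking $P_d=\sum_i Y_i(x_0) Y_i$ for a fixed $x_0$, i.e.\ the zonal harmonic at $x_0$). The second inequality then follows immediately from the dimension bound $\operatorname{dim}(\cP_d)\leq C(n)d^{n-2}$ recorded just above the lemma. There is no real obstacle here; the only subtle point is the rotational-invariance argument identifying the diagonal of the reproducing kernel as a constant, and this is a standard fact about spherical harmonics.
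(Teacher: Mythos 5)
Your proof is correct. The paper states this lemma without proof (it sits in a subsection that refers the reader to \cite{HFT} for background), and the argument you give is exactly the standard reproducing-kernel/zonal-harmonic argument that underlies the result in that reference: reduce to the sphere by homogeneity, expand in an orthonormal basis and apply Cauchy--Schwarz, identify the diagonal $K(x)=\sum_i Y_i(x)^2$ of the reproducing kernel as a constant via $O(n)$-invariance, and compute the constant by integration. You also correctly observe that the bound is attained by the zonal harmonic centered at $x_0$, which justifies the word ``sharp'' in the statement, and the second inequality follows from the dimension count $\dim\cP_d\leq C(n)d^{n-2}$ recorded just above the lemma. One very small stylistic note: the step ``hence $K(Rx)=K(x)$'' implicitly uses that the diagonal of the reproducing kernel is independent of the choice of orthonormal basis; it would be cleaner to say explicitly that $\sum_i(Y_i\circ R)(x)^2=\sum_i Y_i(x)^2$ because both are the value at $(x,x)$ of the (basis-independent) reproducing kernel, and the left-hand side equals $K(Rx)$. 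This is a presentation point, not a gap.
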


\subsubsection{Two Variables} A special case which deserves to be studied on its own is the case of hhP's defined in $\R^2$.  The following is a standard but useful point:
\begin{proposition}
 Let $n=2$, then $\cH_{d,2}$ is a $2$ dimensional space for every $d\geq 2$, and an orthonormal basis for this space is given by
\begin{gather}
 P_d(r,\theta) = 2r^{d} \sin\ton{d\theta}\, , \quad C_d(r,\theta)= 2r^{d}\cos\ton{d\theta} = P_d(r,\theta+\pi/(2d))\, .
\end{gather}
Moreover, by direct computation one has
\begin{gather}\label{eq_dxP}
 \partial_x P_d(r,\theta) = 2d r^{d-1} \sin((d-1)\theta)\, , \quad \partial_y P_d(r,\theta) = 2d r^{d-1} \cos((d-1)\theta)\, ,\\
 \abs{\nabla P_d(r,\theta)}=\abs{\nabla C_d(r,\theta)} = 2d r^{d-1}\, \label{eq_nabla2vars} ,\\
 \frac{\partial^k}{\partial x^k} P_d(r,\theta)|_{(0,0)} = 2 \binom{d}{k} k! r^{d-k}\sin((d-k)\theta)= \binom d k k! P_{d-k}(r,\theta)\, ,\\
  P_d((t,0)+(r,\theta)) = P_d(r,\theta) + \sum_{k=1}^d \binom{d}{k} t^k P_{d-k}(r,\theta)\, .
\end{gather}
\end{proposition}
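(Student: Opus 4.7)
The plan is to use the identification of homogeneous harmonic polynomials on $\R^2$ with the real and imaginary parts of holomorphic monomials $z^d$, where $z = x + iy = re^{i\theta}$. Since $z^d = r^d(\cos d\theta + i\sin d\theta)$ is holomorphic, both its real and imaginary parts are harmonic and homogeneous of degree $d$. Conversely, every real harmonic polynomial of degree $d$ on $\R^2$ is the real part of some $\alpha z^d$ with $\alpha \in \mathbb{C}$, since any harmonic function on a simply connected planar domain is the real part of a holomorphic function, unique up to an imaginary constant. This gives $\dim \cP_d = 2$ for $d \geq 1$ and exhibits $C_d = 2\,\mathrm{Re}(z^d) = 2r^d\cos(d\theta)$ together with $P_d = 2\,\mathrm{Im}(z^d) = 2r^d\sin(d\theta)$ as a basis. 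The identity $C_d(r,\theta) = P_d(r,\theta + \pi/(2d))$ follows from $\sin(\alpha + \pi/2) = \cos\alpha$. Orthogonality on $\partial B_1$ and the normalization are immediate from $\int_0^{2\pi}\sin(d\theta)\cos(d\theta)\,d\theta = 0$ and $\int_0^{2\pi}\sin^2(d\theta)\,d\theta = \int_0^{2\pi}\cos^2(d\theta)\,d\theta = \pi$, with the factor $2$ chosen to match the paper's convention for $\ps{\cdot}{\cdot}$.

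All derivative identities will then drop out of the Wirtinger calculus. Holomorphy of $z^d$ gives $\partial_x z^d = d z^{d-1}$ and $\partial_y z^d = i d z^{d-1}$, so taking imaginary parts produces
\[
\partial_x P_d = 2d\,\mathrm{Im}(z^{d-1}) = 2d r^{d-1}\sin((d-1)\theta), \qquad \partial_y P_d = 2d\,\mathrm{Re}(z^{d-1}) = 2d r^{d-1}\cos((d-1)\theta).
\]
Squaring and summing gives $\abs{\nabla P_d}^2 = 4d^2 r^{2(d-1)}$, and the analogous computation for $C_d$ produces the same gradient modulus, confirming $\abs{\nabla P_d} = \abs{\nabla C_d} = 2 d r^{d-1}$. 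Iterating the $x$-derivative yields $\partial_x^k z^d = d(d-1)\cdots(d-k+1) z^{d-k} = \binom{d}{k} k!\, z^{d-k}$, and taking imaginary parts recovers $\partial_x^k P_d(r,\theta) = \binom{d}{k} k!\, P_{d-k}(r,\theta)$.

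The translation identity is then the binomial theorem applied to $(z+t)^d$ followed by taking imaginary parts:
\[
P_d\bigl((t,0)+(r,\theta)\bigr) = 2\,\mathrm{Im}\bigl((z+t)^d\bigr) = 2\,\mathrm{Im}\ton{\sum_{k=0}^d \binom{d}{k} t^k z^{d-k}} = \sum_{k=0}^d \binom{d}{k} t^k P_{d-k}(r,\theta),
\]
and isolating the $k=0$ term puts it in the stated form. Since the entire proposition is a collection of elementary computations resting on the complex-analytic identification above, I do not expect any conceptual obstacle; the only care required is bookkeeping on the factor of $2$ to match the paper's normalization convention throughout.
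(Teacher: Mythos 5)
Your proof is correct, and the complex--analytic packaging is the cleanest route to these identities. The paper gives no proof of this proposition (it is stated as ``standard'' and ``by direct computation''), so there is no argument to compare against step by step; but your identification of $\cP_d$ in $\R^2$ with $\mathrm{span}\{\mathrm{Re}\,z^d,\ \mathrm{Im}\,z^d\}$, followed by $\partial_x z^d = dz^{d-1}$, $\partial_y z^d = id z^{d-1}$, and the binomial theorem, derives every formula uniformly, whereas the naive ``direct computation'' would require several separate trigonometric identities. All the derivative, gradient, and translation formulas you obtain are correct.

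One small caveat you inherited from the statement rather than introduced yourself: your own integral $\int_0^{2\pi}\sin^2(d\theta)\,d\theta = \pi$, combined with the paper's convention $\ps{f}{g} = \fint_{\partial B_1} fg = \frac{1}{2\pi}\int_0^{2\pi} fg\,d\theta$, gives $\norm{2r^d\sin(d\theta)}^2 = \frac{4\pi}{2\pi} = 2$, so the basis elements as written have norm $\sqrt{2}$, not $1$. The correctly normalized elements under $\fint$ are $\sqrt{2}\,r^d\sin(d\theta)$ and $\sqrt{2}\,r^d\cos(d\theta)$. Your remark that the factor $2$ ``matches the paper's convention'' thus contradicts your own computation; the factor $2$ appears to be a typo in the proposition itself. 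This does not affect any of the derivative identities, which are linear in $P_d$, but it is worth flagging since the proposition claims orthonormality, and $\norm{P_d}=1$ is used elsewhere (e.g.\ in the formula $\abs{\nabla P} = 2d\norm{P}\abs{x}^{d-1}$ in Section 3.5).
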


\subsubsection{Cone Splitting}  By simple algebra, it is easy to see that the set of points wrt which $P$ is homogeneous forms a vector subspace of $\R^n$. Indeed, let $P$ be homogeneous wrt $0$ and wrt $x\neq 0$, and pick any $y\in \R^n$. By Euler's formula
\begin{gather}
 d\cdot P(y) = \nabla P|_y \cdot y = \nabla P|_y \cdot (y-x)\quad \Longrightarrow \quad \nabla P|_y \cdot x =0\, ,
\end{gather}
and thus the partial derivative of $P$ in the $x$ direction vanishes at every point, making $P$ invariant wrt the subspace spanned by $x$. In other words, if $P$ is a harmonic polynomial of degree $d\geq 1$, the set
\begin{gather}
 V\equiv\cur{v\in \R^n \ \ s.t.  \ \ \nabla P\cdot v =0}
\end{gather}
is a vector subspace of $\R^n$, and it is the invariant subspace of $P$ in the sense that $P(x+v)=P(x)$ for all $x\in \R^n$ and $v\in V$. 

In the next proposition, we prove an extremely important (and simple) relation between the degree of $P$ and the dimension of $V$.
\begin{proposition}\label{prop_pn-2}
 Let $P$ be a nonconstant homogeneous harmonic polynomial. Then $P$ is of degree $1$ if and only if $V$ has dimension $n-1$.
\end{proposition}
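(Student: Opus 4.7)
The plan is to prove both implications directly, using the structure of $V$ together with harmonicity.

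For the easy direction ($\Rightarrow$), suppose $P$ has degree $1$. Then $P(x)=a\cdot x$ for some nonzero $a\in\R^n$, and $\nabla P\equiv a$ is a constant vector. Hence $V=\{v:a\cdot v=0\}=a^\perp$ has dimension $n-1$.

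For the nontrivial direction ($\Leftarrow$), assume $\dim V=n-1$. Pick a unit vector $e$ orthogonal to $V$, and choose an orthonormal basis of $\R^n$ whose first $n-1$ vectors span $V$ and whose last is $e$. By the discussion preceding the proposition, $P(x+v)=P(x)$ for every $v\in V$, so in these coordinates $P$ depends only on the last coordinate $t=x\cdot e$. Write $P(x)=f(t)$. Since $P$ is a polynomial and homogeneous of degree $d$, the function $f$ is a polynomial in one variable of degree $d$, so $f(t)=ct^d$ for some constant $c$; since $P$ is nonconstant we have $c\neq 0$ and $d\geq 1$.

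Now I invoke harmonicity: $\Delta P(x) = f''(t)|e|^2 = d(d-1)c\,t^{d-2}$. Requiring $\Delta P\equiv 0$ with $c\neq 0$ forces $d(d-1)=0$, and since $d\geq 1$ this yields $d=1$, which is what we wanted.

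There is essentially no obstacle here; the only step that uses more than linear algebra is the final appeal to harmonicity, which is precisely what rules out the polynomials $c\,t^d$ with $d\geq 2$ that would otherwise be $V$-invariant of codimension $1$. The argument also makes clear why the statement is sharp: drop harmonicity and any $ct^d$ becomes a valid counterexample.
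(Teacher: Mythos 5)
Your proof is correct and follows the same route as the paper's one-line argument: the paper simply observes that if $\dim V = n-1$ then $P$ is a harmonic function of one variable and hence linear, which is exactly what you spell out in detail. Both directions are handled the same way.
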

\begin{proof}
 The direct implication is evident. As for the reverse, if $V$ has dimension $n-1$, then $P$ is a nonconstant homogeneous harmonic function of one variable, and thus it is linear.
\end{proof}

\subsubsection{Space of invariant polynomials}
\begin{definition}
 Given $x\in \R^n\setminus \{0\}$, we set $\cP_d(\cancel x)$ to be the subspace of $\cP_d$ of polynomials invariant with respect to the coordinate $x$. It is clear that this subspace has a uniquely defined orthogonal complement such that
 \begin{gather}
  \cP_d = \cP_d(\cancel x) \oplus \cP_d(\cancel x)^\perp\, ,
 \end{gather}
 where the direct sum is in the sense of $L^2(\partial B)$.
\end{definition}

In the next proposition, we characterize the elements of $\cP_d(\cancel x_1)^\perp$.
\begin{proposition}
 The linear function $K:\cP_{d-1}\to\cP_d$ defined by
 \begin{gather}
  K[p]\equiv x_1p - \frac 1 {2d+n-4} \abs x^2 \partial_1 p \equiv x_1p +c_{d,n} \abs x^2 \partial_1 p 
 \end{gather}
provides a vector space isomorphism between $\cP_{d-1}$ and $\cP_{d}(\cancel x_1)$. Moreover
\begin{gather}\label{eq_K-}
 \norm{K[p]}\leq \norm{x_1 p}_{L^2(\partial B_1(0)}\leq \norm{p}\, .
\end{gather}

\end{proposition}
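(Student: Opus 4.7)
The plan is to verify four ingredients in turn: (i) $K[p]\in\cP_d$, (ii) $K[p]\in\cP_d(\cancel{x_1})^\perp$, (iii) the norm estimate \eqref{eq_K-}, and (iv) injectivity of $K$. Once (iv) holds, a direct Pascal computation giving $\dim\cP_d = \dim\cP_d(\cancel{x_1}) + \dim\cP_{d-1}$ promotes injectivity to the full isomorphism claim.

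For (i), homogeneity is immediate, and harmonicity follows from a direct calculation: since $p$ is harmonic, $\Delta(x_1 p) = 2\partial_1 p$, and applying Euler's relation to the homogeneous degree $d-2$ function $\partial_1 p$ together with $\Delta|x|^2 = 2n$ gives $\Delta(|x|^2\partial_1 p) = 2n\partial_1 p + 4\,x\cdot\nabla(\partial_1 p) = 2(2d+n-4)\partial_1 p$. The choice $c_{d,n} = -1/(2d+n-4)$ is exactly what is needed to cancel these two contributions in $\Delta K[p]$.

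The key step is (ii), which I expect to be the main obstacle, since it requires combining harmonicity, Green's identity, and the spherical-harmonic decomposition of $L^2(\partial B)$. For $Q\in\cP_d(\cancel{x_1})$, using $|x|^2=1$ on $\partial B$ we split
\[
\ps{K[p]}{Q} \;=\; \ps{x_1 p}{Q} + c_{d,n}\,\ps{\partial_1 p}{Q}.
\]
The second term vanishes because $\partial_1 p\in\cP_{d-2}$ and $Q\in\cP_d$ sit in orthogonal spherical-harmonic summands of $L^2(\partial B)$. For the first, applying the divergence theorem to the vector field $X=pQ\,e_1$ (so that $X\cdot\nu = x_1 pQ$ on $\partial B$) and using $\partial_1 Q=0$ yields
\[
\int_{\partial B} x_1 pQ\,d\sigma \;=\; \int_B \partial_1(pQ)\,dx \;=\; \int_B (\partial_1 p)Q\,dx.
\]
Since $(\partial_1 p)Q$ is homogeneous of degree $2d-2$, the standard identity $\int_B f = (n+k)^{-1}\int_{\partial B} f$ for $f$ homogeneous of degree $k$ converts the right-hand side back to a boundary integral, which again vanishes by the same orthogonality between $\cP_{d-2}$ and $\cP_d$.

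For (iii), the rearrangement $x_1 p|_{\partial B} = K[p] - c_{d,n}\partial_1 p$ expresses $x_1 p$ on $\partial B$ as a sum of two pieces in the orthogonal spherical-harmonic subspaces $\cP_d$ and $\cP_{d-2}$, so $\|x_1 p\|_{L^2(\partial B)}^2 = \|K[p]\|^2 + c_{d,n}^2\|\partial_1 p\|^2 \geq \|K[p]\|^2$, while $\|x_1 p\|_{L^2(\partial B)}\leq \|p\|$ is immediate from $x_1^2\leq 1$. Finally, for (iv), assume $K[p]=0$ and expand $p(x_1,x')=\sum_k x_1^k a_k(x')$. Reading off the coefficient of each power of $x_1$ in the polynomial identity $K[p]=0$ gives $a_1 = 0$ (from the $x_1^0$ coefficient) and the recursion $(1+(m-1)c_{d,n})\,a_{m-1} = -(m+1)\,c_{d,n}\,|x'|^2\,a_{m+1}$ for $m\geq 1$. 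Applied with $m$ even this propagates $a_1=0$ through all odd indices; applied with $m$ odd and starting from the top (where $a_k=0$ for $k\geq d$) it walks down through all even indices, using $1+k c_{d,n} = (2d+n-4-k)/(2d+n-4)\neq 0$ for $k\leq d-1$. Hence $p=0$, and $K$ is the claimed isomorphism.
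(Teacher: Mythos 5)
Your proposal is correct, but it takes a genuinely different route from the paper in two of its four steps. For the orthogonality $K[p]\perp\cP_d(\cancel{x_1})$ (your step (ii)), the paper argues by induction on $d$: using Lemma \ref{lemma_pdp} it rewrites $d(2d+n-2)\ps{K[p]}{q}$ as a sum of $\ps{\partial_i K[p]}{\partial_i q}$ terms, regroups so that one block is again of the form $\ps{K[\cdot]}{\cdot}$ in degree $d-1$, and kills the remaining blocks by orthogonality of spherical harmonics of distinct degrees. Your divergence-theorem computation avoids the induction entirely and is, if anything, more transparent: after splitting off the $\partial_1 p$ contribution, $\fint_{\partial B}x_1 pQ$ is converted to a pairing of hhPs of degrees $d-2$ and $d$, which vanishes. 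For injectivity (your step (iv)), the paper uses a short divisibility argument: $K[p]=0$ forces $x_1\mid\partial_1 p$ (since $x_1$ and $|x|^2$ are coprime), hence $|x|^2\mid p$, and a harmonic polynomial divisible by $|x|^2$ vanishes by \cite[Cor.\ 5.3]{HFT}. Your coefficient recursion in powers of $x_1$ reaches the same conclusion by elementary bookkeeping and avoids invoking the Fischer decomposition, at the cost of being longer and of requiring the check $1+kc_{d,n}\neq 0$ for $k\leq d-1$ (which holds since $k\le d-1<2d+n-4$). Your steps (i), (iii), and the Pascal dimension count for surjectivity essentially match the paper. One typographical remark: the printed statement says the isomorphism is onto $\cP_d(\cancel{x_1})$, but both your argument and the paper's actually land in the orthogonal complement $\cP_d(\cancel{x_1})^\perp$, consistent with the sentence preceding the proposition.
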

\begin{remark}
 Note that $K$ is, up to multiplicative constants, the \textit{Kelvin transform} defined in \cite{HFT}.
\end{remark}
\begin{proof}
 Let $p\in \cP_{d-1}$, and let $q\in \cP_d(\cancel x_1)$. We will prove that $\ps{K[p]}{q}=0$ by induction on $d$. 
 
 It is clear that the statement is true for $d=1$.  By Lemma \ref{lemma_pdp} we have
 \begin{gather}
 d(2d+n-2) \ps{K[p]}{q}= \sum_{i=2}^n\ps{\partial_i \ton{x_1p +c \abs x^2 \partial_1 p} }{\partial_i q}=\sum_{i=2}^n \ps{x_1\partial_i p +2cx_i \partial_1p +c\abs x^2 \partial_1\partial_i p  }{\partial_i q}=\\
 =\sum_{i=2}^n \ps{x_1\partial_i p +c_{d-1,n}\abs x^2 \partial_1\partial_i p  }{\partial_i q} + \sum_{i=2}^n \ps{\ton{c_{d,n}-c_{d-1,n}}\abs x^2 \partial_1\partial_i p  }{\partial_i q} + 2c\sum_{i=2}^n \ps{\partial_1p}{x_i\partial_i q}=\\
 =\sum_{i=2}^n \ps{x_1\partial_i p +c_{d-1,n}\abs x^2 \partial_1\partial_i p  }{\partial_i q} + \ton{c_{d,n}-c_{d-1,n}}\sum_{i=2}^n \ps{ \partial_1\partial_i p  }{\partial_i q} + 2cd\sum_{i=2}^n \ps{\partial_1p}{q}\, .
 \end{gather}
The first sum is null by induction, while the second and third sum are null since they are scalar products of spherical harmonics (hhP's) of different degrees.  Hence we see that $K$ maps $\cP_{d-1}$ into $\cP_d(\cancel x_1)$.

Note now that $K[\cdot]$ in injective. Indeed, let $p\in \cP_{d-1}$ be such that $K[p]=0$. Then necessarily $\partial_1 p$ must be divisible by $x_1$, and thus $p$ is a harmonic polynomial proportional to $\abs x^2$, which is necessarily zero by \cite[corollary 5.3]{HFT}.

Surjectivity is easily proved by a dimension argument. Indeed
\begin{gather}
 \dim\ton{\cP_d} = \dim\ton{\cP_{d}(\cancel x_1)} + \dim\ton{\cP_{d-1}}\, .
\end{gather}

Finally, since $K[p]$ and $\partial_1 p$ are hhP's of different degrees,
\begin{gather}
 \ps{K[p]}{c_{d,n}\abs{x}^2 \partial_1 p }= c_{d,n}\ps{K[p]}{\partial_1 p }=0\, .
\end{gather}
This immediately implies the estimate on $\norm{K[p]}$.
\end{proof}

This characterization allows us to prove the following important property.
\begin{proposition}\label{prop_perpest}
 Let $h\in \cP_d(\cancel x_1)^\perp$, then
 \begin{gather}
  \norm{h}\leq \norm{\partial_1 h}\, .
 \end{gather}
\end{proposition}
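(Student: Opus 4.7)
The plan is to exploit the isomorphism $K\colon \cP_{d-1}\to \cP_d(\cancel x_1)^\perp$ established in the previous proposition: every $h\in\cP_d(\cancel x_1)^\perp$ can be written uniquely as $h=K[p]$ for some $p\in\cP_{d-1}$. The inequality will then follow from a Cauchy--Schwarz pairing of $\partial_1 h$ against $p$, once we have an adjoint-type identity relating $\partial_1$ and $K$, together with a matching lower bound on $\norm{K[p]}$ in terms of $\norm{p}$.

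First I would show that $\partial_1^*=(2d+n-2)\,K$, where $\partial_1^*\colon \cP_{d-1}\to\cP_d$ is the adjoint of $\partial_1$. Integration by parts on $B$ gives $\int_B \partial_1(PQ)\,dV=\int_{\partial B} x_1 P Q\,d\sigma$ for $P\in\cP_d$ and $Q\in\cP_{d-1}$; rewriting both sides with the identity $\int_B f=(k+n)^{-1}\int_{\partial B} f$ valid for $f$ homogeneous of degree $k$ yields $\ps{\partial_1 P}{Q}+\ps{P}{\partial_1 Q}=(2d+n-2)\ps{x_1 P}{Q}$. The second term on the left vanishes by orthogonality of spherical harmonics of different degrees, and $\ps{x_1 P}{Q}=\ps{P}{K[Q]}$ since $K[Q]-x_1 Q=c_{d,n}|x|^2 \partial_1 Q$ restricts on $\partial B$ to an element of $\cP_{d-2}$, orthogonal to $P$. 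Applied to $h=K[p]$ this gives the key identity
\[
\ps{\partial_1 h}{p}=\ps{h}{\partial_1^* p}=(2d+n-2)\,\norm{h}^2.
\]

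Next I would extract the lower bound $\norm{K[p]}\geq \norm{p}/(2d+n-2)$. Starting from $\norm{K[p]}^2=\ps{K[p]}{x_1 p}$ (the $c_{d,n}|x|^2\partial_1 p$ piece drops by the same degree-orthogonality argument used in the previous proposition), one expands once more using the explicit formula for $K[p]$ and applies the divergence identity $\int_B \partial_1(x_1 p^2)\,dV=\int_{\partial B}x_1^2 p^2\,d\sigma$ to evaluate $\fint x_1 p\,\partial_1 p$. A direct computation then gives
\[
\norm{K[p]}^2=\frac{1}{2(2d+n-4)}\norm{p}^2+\frac{2d+n-6}{2(2d+n-4)}\fint_{\partial B} x_1^2 p^2.
\]
For $2d+n\geq 6$ both coefficients are nonnegative (the remaining low cases, where $\partial_1 p\equiv 0$, are trivial), so $\norm{K[p]}^2\geq \norm{p}^2/\bigl(2(2d+n-4)\bigr)$. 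Combined with the elementary inequality $(2d+n-2)^2\geq 2(2d+n-4)$, which holds because $(2d+n-3)^2+3>0$, this yields the required bound.

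Finally, Cauchy--Schwarz gives
\[
(2d+n-2)\,\norm{h}^2=\ps{\partial_1 h}{p}\leq \norm{\partial_1 h}\,\norm{p}\leq (2d+n-2)\,\norm{\partial_1 h}\,\norm{h},
\]
and dividing through (the case $h=0$ being trivial) one obtains $\norm{h}\leq \norm{\partial_1 h}$. The main technical step is the explicit formula for $\norm{K[p]}^2$, which requires careful bookkeeping of the integration-by-parts constants; the rest is a short chain of identities plus a single application of Cauchy--Schwarz.
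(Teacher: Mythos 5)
Your argument is correct, but it travels a genuinely different route from the paper's. The paper writes $h=K[p]$, computes $\partial_1 h = p + (2c_{d,n}+1)x_1\partial_1 p + c_{d,n}|x|^2\partial_{11}^2 p$ directly, drops the nonnegative cross-terms (using Lemma \ref{lemma_+} to get $\ps{p}{x_1\partial_1 p}\geq 0$) to conclude $\norm{\partial_1 h}^2\geq\norm p^2$, and then invokes the \emph{upper} bound $\norm{K[p]}\leq\norm p$ of \eqref{eq_K-} to chain $\norm h\leq\norm p\leq\norm{\partial_1 h}$. You instead isolate the structural fact that $K$ is, up to the constant $2d+n-2$, the adjoint of $\partial_1$, which yields the clean identity $\ps{\partial_1 h}{p}=(2d+n-2)\norm h^2$; then Cauchy--Schwarz reduces the problem to a \emph{lower} bound $\norm{K[p]}\geq\norm p/(2d+n-2)$, which you extract from the explicit expansion of $\norm{K[p]}^2$ via the divergence identity. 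Your formula $\norm{K[p]}^2=\tfrac{1}{2(2d+n-4)}\norm p^2+\tfrac{2d+n-6}{2(2d+n-4)}\fint x_1^2p^2$ and the numerology all check out, and the $d=1$ cases you brush aside are indeed trivial since $\partial_1 p\equiv 0$. What the paper's proof buys is brevity and reuse of nearby lemmas; what yours buys is a cleaner conceptual picture---the proportionality $\partial_1^*\propto K$ explains in one stroke why the $K$-transform and the $x_1$-derivative are so tightly linked, and it makes the two-sided sandwich $\norm p/(2d+n-2)\leq\norm{K[p]}\leq\norm p$ visible, which the paper never states.
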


\begin{proof}
If $d=1$, the proposition is easily proved by direct computation.

If $d\geq 2$, we write
\begin{gather}
 h = x_1 p + c_{d,n}\abs x ^2 \partial _1 p\, ,\\
 \partial_1 h = p + (2c_{d,n}+1) x_1 \partial_1 p + c_{d,n} \abs x ^2 \partial^2_{1^2} p\, .
\end{gather}
Note that for every $d,n\geq 2$, $2c_{d,n}+1\geq 0$. We estimate the norm of $\partial_1 h$ by
\begin{gather}
 \norm{\partial_1 h}^2 = \norm p ^2 + \norm{(2c_{d,n}+1) x_1 \partial_1 p + c_{d,n} \abs x ^2 \partial^2_{1^2} p}^2 + 2 \ps{p}{(2c_{d,n}+1) x_1 \partial_1 p + c_{d,n} \abs x ^2 \partial^2_{1^2} p}\geq\\
 \geq \norm p ^2 +  2 \ps{p}{(2c_{d,n}+1)x_1 \partial_1 p + c_{d,n}\abs x ^2 \partial^2_{1^2} p}=\norm p ^2 +  2 (2c_{d,n}+1) \ps{p}{x_1 \partial_1 p}\, .
\end{gather}
The last scalar product is nonnegative, as will be shown in the next Lemma (\ref{lemma_+}). This and equation \eqref{eq_K-} conclude the proof.
\end{proof}

\begin{lemma}\label{lemma_+}
 Let $p\in \cH_{n,d}$, then 
 \begin{gather}
  \ps{p}{x_1 \partial_1 p}_{L^2(\partial B_1(0)} = \frac{1}{2d+n-2}\norm{\partial_1 p}^2\, .
 \end{gather}
\end{lemma}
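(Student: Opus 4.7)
\textbf{Proof plan for Lemma \ref{lemma_+}.}
The plan is to convert the boundary inner product $\ps{p}{x_1\partial_1 p}_{L^2(\partial B)}$ into a volume integral, integrate by parts once, and then exploit orthogonality of spherical harmonics of different degrees to kill the unwanted boundary term.

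The first step is to pass to the ball. Since $\partial_1 p$ is homogeneous of degree $d-1$, the product $(\partial_1 p)^2$ is homogeneous of degree $2d-2$, so integrating in polar coordinates yields
\begin{gather}
 \int_B (\partial_1 p)^2\,dx \;=\; \frac{1}{2d+n-2}\int_{\partial B}(\partial_1 p)^2\,d\sigma\, .
\end{gather}
Next, I would integrate by parts in the $x_1$ direction: writing $(\partial_1 p)^2 = \partial_1 p \cdot \partial_1 p$ and moving one derivative off, one obtains
\begin{gather}
 \int_B (\partial_1 p)^2\,dx \;=\; -\int_B p\,\partial_1^2 p\,dx \;+\; \int_{\partial B} p\,(\partial_1 p)\,\nu_1\,d\sigma\, ,
\end{gather}
and since $\nu_1 = x_1$ on $\partial B$ the boundary term is precisely $\int_{\partial B} p\,x_1\,\partial_1 p\,d\sigma$, which is what we want to control.

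The final step is to observe that the volume term $\int_B p\,\partial_1^2 p\,dx$ vanishes. Indeed $p\,\partial_1^2 p$ is a polynomial homogeneous of degree $2d-2$, so again by polar coordinates it equals a constant multiple of $\int_{\partial B} p\,\partial_1^2 p\,d\sigma$; but $p\in\cP_d$ and $\partial_1^2 p \in\cP_{d-2}$, so by the orthogonal decomposition $L^2(\partial B)=\bigoplus_{k} \cP_k$ recalled in Section \ref{ss:hhp}, this spherical integral is zero. Combining the three displays and dividing by $|\partial B|$ gives
\begin{gather}
 \ps{p}{x_1\partial_1 p}_{L^2(\partial B)} \;=\; \frac{1}{2d+n-2}\,\norm{\partial_1 p}^2\, ,
\end{gather}
as required.

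There is no serious obstacle: the argument uses only homogeneity (to compare ball and sphere norms), a single integration by parts, and the spherical-harmonic orthogonality that is the content of the decomposition $L^2(\partial B)=\bigoplus_d \cP_d$. The one point worth being careful about is that the degree drop from $p\in\cP_d$ to $\partial_1^2 p\in\cP_{d-2}$ is exactly what makes the two factors sit in different summands; if we had only differentiated once the argument would collapse, which is consistent with the fact that $\ps{p}{\partial_1 p}$ does not vanish in general.
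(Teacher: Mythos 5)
Your proof is correct, and it takes a genuinely different route from the paper. The paper argues by induction on $d$: it projects $x_1\partial_1 p$ onto $\cP_d$ (obtaining $x_1\partial_1 p + c_{d,n}\abs{x}^2\partial_{1}^2 p$), invokes Lemma \ref{lemma_pdp} to transfer the inner product $\ps{p}{\cdot}$ to the inner product of gradients, and then peels off the derivative terms until the induction hypothesis applies. Your approach bypasses both the induction and Lemma \ref{lemma_pdp} altogether: homogeneity converts the spherical integral to a ball integral, a single integration by parts produces the desired boundary term $\int_{\partial B} p\,x_1\,\partial_1 p$, and the remaining volume term $\int_B p\,\partial_1^2 p$ dies because $p\in\cP_d$ and $\partial_1^2 p\in\cP_{d-2}$ live in orthogonal summands of $L^2(\partial B)$ (for $d\ge 2$; for $d\le 1$ the term vanishes trivially). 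Your argument is shorter, more elementary, and does not require keeping track of the constants $c_{d,n}$; the paper's inductive argument is more in the style of its surrounding machinery, which repeatedly exploits Lemma \ref{lemma_pdp} and the Kelvin-type decomposition, so the two proofs simply trade off self-containedness against uniformity of technique.
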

\begin{proof}
 We prove this proposition by induction on $d$. For $d=1$, the proposition is easily proved. Indeed, $p= \sum_i p_i x_i$ and by direct computation 
 \begin{gather}
  \ps{p}{x_1\partial_1 p} = \sum_i p_1p_i\fint_{\partial B_1(0)}x_1 x_i = p_1^2 \fint_{\partial B_1(0)} x_1 ^2 =\frac{1}{n} \norm{\partial_1 p}^2\, . 
 \end{gather}
Suppose by induction that the statement is true for $d-1$, and let $p\in \cH_{n,d}$. Note that the function $x_1 \partial_1 p$ is not harmonic, and its projection $\cP_d$ is
\begin{gather}
 x_1 \partial_1 p +c_{d,n} \abs x ^2 \partial^2_{1^2} p\, .
\end{gather}
Since $p$ and $\partial^2_{1^2} p$ are spherical harmonics of different degrees, 
\begin{gather}
 \ps{p}{c_{d,n} \abs x ^2 \partial^2_{1^2} p} =0\, .
\end{gather}
Thus, by Lemma \ref{lemma_pdp}, we can write
\begin{gather}
 \ps{p}{x_1 \partial_1 p } = \ps{p}{x_1 \partial_1 p + c_{d,n} \abs x ^2 \partial^2_{1^2} p}=\frac 1 {d(2d+n-2)}\ps{\nabla p}{\nabla \ton{x_1 \partial_1 p + c_{d,n} \abs x ^2 \partial^2_{1^2} p}}\, .
\end{gather}
On the right hand side we have
\begin{gather}
 \ps{\nabla p}{\nabla \ton{x_1 \partial_1 p + c_{d,n}\abs x ^2 \partial^2_{1^2} p}} = \norm{\partial_1 p}^2 + \ps{\nabla p}{x_1 \partial_1 \nabla p} + 2c_{d,n} \ps{x\cdot\nabla p}{\partial^2_{1^2} p} + c_{d,n} \ps{\nabla p}{\abs x ^2 \nabla \partial^2_{1^2} p}\, .
\end{gather}
The last two scalar products are null by the orthogonality of spherical harmonics of different degree. Induction and Lemma \ref{lemma_pdp} allow us to conclude
\begin{gather}
 \ps{\nabla p}{x_1 \partial_1 \nabla p} = (d-1)\frac{2d+n-4}{2d+n-4}=(d-1)\norm{\partial_1 p}^2\, .
\end{gather}
\end{proof}

We close this section with a consideration about invariant polynomials and their norm. Let $P:\R^n\to \R$ be a hhP of degree $d$, and suppose that $P$ is $x_1$ invariant. Then $P$ naturally induces a hhP $\hat P:\mathbb R^{n-1}\to\mathbb R$.  The following proposition gives the relation between the $L^2$ norms of $P$ and $\hat P$.  

\begin{lemma}\label{lemma_norm_n}
  Let $\hat P:\R^{n-1}\to \R$ be a hhP of degree $d$, and denote by $P:\R^{n}\to \R$ the polynomial $P(x_1,y)\equiv \hat P (y)$. Then
  \begin{gather}
   \norm{\hat P}^2 = \fint_{\partial B_1(0)\subset \R^{n-1}} \abs{\hat P}^2 = \norm{P}^2\prod_{k=1}^d \frac{n+2k-2}{n+2k-3}\leq \norm{P}^2_{n}e^2\sqrt{1+\frac{2d-2}{n-1}} \, .
  \end{gather}
 \end{lemma}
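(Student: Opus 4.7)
My approach is to decompose the spherical integral over $\partial B(0,1)\subset\R^n$ into a one-dimensional angular integral times the corresponding integral over $\partial B(0,1)\subset\R^{n-1}$, using the fact that $P$ depends only on the last $n-1$ coordinates and is homogeneous of degree $d$. The resulting ratio is then evaluated via a Wallis-type recursion, and the claimed upper bound is obtained by a standard $\log(1+t)\leq t$ estimate.

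First, I parameterize $(x_1,y)\in\partial B(0,1)\subset\R^n$ as $x_1=\cos\phi$, $y=(\sin\phi)\,\omega$, where $\phi\in[0,\pi]$ and $\omega\in\partial B(0,1)\subset\R^{n-1}$. The induced surface measure factors as $(\sin\phi)^{n-2}\,d\phi\,d\sigma_{S^{n-2}}(\omega)$, and since $P$ is $x_1$-invariant and homogeneous,
\begin{gather}
P(x_1,y)=|y|^d P(\omega)=(\sin\phi)^d P(\omega).
\end{gather}
Consequently $\int_{\partial B\subset\R^n}P^2 = \big(\int_0^\pi(\sin\phi)^{2d+n-2}d\phi\big)\int_{\partial B\subset\R^{n-1}}P^2$. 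Applying the same factorization with $d=0$ identifies $|S^{n-1}|/|S^{n-2}|=\int_0^\pi(\sin\phi)^{n-2}d\phi$, and I conclude
\begin{gather}
\frac{\norm{P}^2_n}{\norm{P}^2_{n-1}}=\frac{\int_0^\pi(\sin\phi)^{2d+n-2}\,d\phi}{\int_0^\pi(\sin\phi)^{n-2}\,d\phi}.
\end{gather}

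Next, I apply the classical recursion $\int_0^\pi\sin^m\phi\,d\phi=\tfrac{m-1}{m}\int_0^\pi\sin^{m-2}\phi\,d\phi$ iteratively $d$ times, stepping $m$ from $n-2$ up to $2d+n-2$ in increments of $2$. Each step $m=n+2k-2$ (for $k=1,\dots,d$) contributes a factor $\tfrac{n+2k-3}{n+2k-2}$ in the numerator of the ratio above, and inverting yields the claimed identity
\begin{gather}
\frac{\norm{P}^2_{n-1}}{\norm{P}^2_n}=\prod_{k=1}^d\frac{n+2k-2}{n+2k-3}.
\end{gather}

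For the inequality, write each factor as $1+\tfrac{1}{n+2k-3}$ and use $\log(1+t)\leq t$. Separating the first term and comparing the rest with $\tfrac12\int_{n-1}^{n+2d-3}\tfrac{ds}{s}$ gives
\begin{gather}
\sum_{k=1}^d\frac{1}{n+2k-3}\leq\frac{1}{n-1}+\frac{1}{2}\log\frac{n+2d-3}{n-1},
\end{gather}
so exponentiating and using $e^{1/(n-1)}\leq e$ for $n\geq 2$ produces $\prod_{k=1}^d\tfrac{n+2k-2}{n+2k-3}\leq e\sqrt{1+\tfrac{2d-2}{n-1}}$, which is stronger than the stated bound. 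The whole argument is essentially a bookkeeping calculation: the only point requiring care is keeping indices aligned under the iterated Wallis reduction, which I view as a technical rather than a substantive obstacle.
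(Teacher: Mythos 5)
Your proof is correct and is precisely the "direct computation" the paper alludes to but does not write out: it establishes the identity by splitting $\partial B(0,1)\subset\R^n$ into a polar angle times $\partial B(0,1)\subset\R^{n-1}$, applying the Wallis-type reduction $\int_0^\pi\sin^m\phi\,d\phi=\tfrac{m-1}{m}\int_0^\pi\sin^{m-2}\phi\,d\phi$, and then bounds the product by the same $\log(1+t)\leq t$ plus integral-comparison step the paper uses. Your final constant $e\sqrt{1+\tfrac{2d-2}{n-1}}$ is in fact slightly tighter than the stated $e^2\sqrt{\cdot}$ (and the paper's own proof yields the even sharper $2\sqrt{\cdot}$), but all of these prove the lemma.
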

 \begin{proof}
  This lemma can be proved by direct computation, or as a corollary of \cite[Theorem 5.14]{HFT}.  As for the last estimate, it is easy to see that
  \begin{gather}
   \ln\ton{\prod_{k=1}^d \frac{n+2k-2}{n+2k-3}} \leq \ln\ton{1+\frac 1 {n-1}} + \sum_{k=2}^d \frac 1 {2k+n-3} \leq \ln\ton{2} + \frac 1 2 \ln\ton{1+\frac{2d-2}{n-1}}\, .
  \end{gather}
 
 \end{proof}

%

\subsection{Almgren's frequency and homogeneous harmonic polynomials}\label{ss:frequency_hhp}

In this subsection we recall the classic frequency function and some basic results about it.  Because we will focus on the critical set and not the singular set in our proofs, we will focus on the normalized frequency function:

\begin{definition}
 Given a nonconstant $u\in \cH(B_1(0))$, $x\in B_1(0)$ and $r\leq 1-\abs x$, Almgren's frequency is defined by
 \begin{gather*}
  N^u_{\cS}(x,r)= \frac{r \int_{B_r(x)} \abs{\nabla u}^2 dV }{\int_{\partial B_r(x)} \abs{u}^2 dS }\, .
 \end{gather*}
 This function is suitable for studying nodal and singular sets of harmonic functions. If we want to focus on the whole critical set of $u$, instead of just the singular set of $u$, a \textit{normalized} frequency is better suited for this job. For this reason, we set as in \cite{HLrank}
 \begin{gather}
  N^u_{\cC}(x,r)=N(x,r) = \frac{r \int_{B_r(x)} \abs{\nabla u}^2 dV }{\int_{\partial B_r(x)} \abs{u-u(x)}^2 dS }\, .
 \end{gather}
\end{definition}

\vspace{3mm}

Note that $N$ is invariant under blow-up, rescaling, and adding a constant.  In particular, if we define
\begin{gather}\label{eq_dephT}
 T_{x,r}^u(y) = T_{x,r}(y)= \frac{u(x+ry)-u(x)}{\ton{\fint_{\partial B} \abs{u(x+ry)-u(x)}^2  }^{1/2}}\, ,
\end{gather}
then $N^u_{\cC}(x,rs)=N^T_{\cC} (0,s)$.  The monotonicity of $N$ wrt $r$ is standard in literature (see for example \cite{hanlin}). Moreover, its proof is a simple matter of calculus.
\begin{proposition}
 For every $u\in \cH(B)$ and $x\in B$, $N(x,r)$ is monotone non decreasing wrt $r$. Moreover, if for some $0<r<s$, $N(x,r)=N(x,s)$, then $u$ is a harmonic polynomial homogeneous wrt $x$, and $N(x,t)$ is constant in $t$ and equal to the degree of the polynomial $u$.
\end{proposition}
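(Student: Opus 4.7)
The plan is the classical Almgren monotonicity computation, adapted to the normalized frequency (with $u$ replaced by $u-u(x)$ so as to kill the constant). After translating so that $x=0$ and setting $\tilde u=u-u(0)$, which remains harmonic, introduce the usual Dirichlet energy and boundary energy
\begin{gather*}
D(r) = \int_{B(0,r)} |\nabla \tilde u|^2\, , \qquad H(r) = \int_{\partial B(0,r)} \tilde u^2\, ,
\end{gather*}
so that $N(0,r) = rD(r)/H(r)$. The proof reduces to computing $N'/N$ and recognising it as a nonnegative quantity via Cauchy--Schwarz.

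First I would compute $H'(r)$ by rescaling the boundary to $\partial B_1$, differentiating, and using the divergence theorem with harmonicity: this yields $H'(r) = \tfrac{n-1}{r}H(r) + 2\int_{\partial B_r}\tilde u\,\partial_\nu \tilde u = \tfrac{n-1}{r}H(r) + 2D(r)$. Next I would compute $D'(r)$ by the coarea formula, $D'(r) = \int_{\partial B_r}|\nabla \tilde u|^2$, and rewrite this via the Rellich/Pohozaev identity
\begin{gather*}
\dive\ton{|\nabla \tilde u|^2 x - 2(x\cdot\nabla \tilde u)\nabla \tilde u} = (n-2)|\nabla \tilde u|^2\, ,
\end{gather*}
valid for harmonic $\tilde u$. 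Integrating over $B_r$ and using $x = r\nu$ on $\partial B_r$ gives $D'(r) = \tfrac{n-2}{r}D(r) + 2\int_{\partial B_r}(\partial_\nu \tilde u)^2$. Forming $\tfrac{N'}{N} = \tfrac{1}{r}+\tfrac{D'}{D}-\tfrac{H'}{H}$, the $\tfrac{1}{r}$ terms cancel and one is left with
\begin{gather*}
\frac{N'(r)}{N(r)} = 2\left(\frac{\int_{\partial B_r}(\partial_\nu \tilde u)^2}{\int_{\partial B_r}\tilde u\,\partial_\nu \tilde u} - \frac{\int_{\partial B_r}\tilde u\,\partial_\nu \tilde u}{\int_{\partial B_r}\tilde u^2}\right)\, ,
\end{gather*}
where I have used $D(r) = \int_{\partial B_r}\tilde u\,\partial_\nu \tilde u$ from harmonicity. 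Cauchy--Schwarz on $\partial B_r$ immediately gives that this bracket is $\geq 0$, proving monotonicity.

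For the rigidity statement, if $N(0,\cdot)$ is constant on $[r,s]$ then equality in Cauchy--Schwarz forces $\partial_\nu \tilde u = \alpha(\rho)\,\tilde u$ on $\partial B_\rho$ for a.e.\ $\rho\in[r,s]$. Combined with the computation above, one reads off $\alpha(\rho) = N(0,\rho)/\rho = d/\rho$ with $d$ the common value of the frequency, which means that on the annulus $\{r<|y|<s\}$ the function $\tilde u$ satisfies the ODE $\partial_\rho \tilde u = (d/\rho)\tilde u$ along radial lines. Hence $\tilde u(y) = |y|^d\,\tilde u(y/|y|)$ on this annulus, so $\tilde u$ is $d$-homogeneous there. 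By unique continuation for harmonic functions, $\tilde u$ is $d$-homogeneous throughout $B$; being harmonic and homogeneous forces $\tilde u\in \cP_d$, in which case $d$ must be a nonnegative integer and one checks directly by scaling that $N(0,t)\equiv d$ for all admissible $t$.

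The only nontrivial step is the rigidity: the derivative computation is routine calculus, and the Cauchy--Schwarz argument is standard. The genuine work lies in extracting homogeneity from the boundary ODE and then propagating it via unique continuation from the annulus to the whole ball; but this is textbook Almgren and there is no obstacle beyond careful bookkeeping.
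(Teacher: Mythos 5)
Your proof is correct, and it is the classical Almgren/Rellich--Pohozaev computation; the paper itself does not write out a proof of this proposition but points to it as standard (citing \cite{hanlin}), and what you have reproduced is precisely that argument. I checked the two derivative formulas, the cancellation of the $1/r$ terms, the Cauchy--Schwarz step, and the rigidity; all are sound, and the unique-continuation step to propagate homogeneity from the annulus to the ball is the right finish.

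One remark worth making: in the very next subsection (\ref{ss:freq_poly_exp}) the paper derives from the spherical-harmonic decomposition $u-u(0)=\sum_{d\geq 1} a_d P_d$ the identity
\begin{gather*}
N(0,r)=\frac{\sum_{d\geq 1} d\,a_d^2 r^{2d}}{\sum_{d\geq 1} a_d^2 r^{2d}}\, ,
\end{gather*}
and from this formula both parts of the proposition drop out purely algebraically: differentiating gives $rN'(r)=2\bigl[\langle d^2\rangle_{\mu_r}-\langle d\rangle_{\mu_r}^2\bigr]$, twice the variance of $d$ under the probability weights $\mu_r(d)\propto a_d^2 r^{2d}$, so $N'\geq 0$; and $N'\equiv 0$ on an interval forces the variance to vanish, i.e.\ exactly one $a_d\neq 0$, so $u-u(0)=a_d P_d$ and $N\equiv d$. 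This route avoids both the Rellich identity and the unique-continuation step at the end, and it is more in the spirit of the machinery the paper sets up in Section \ref{s:harmonic}; your route is equally standard and equally rigorous, just heavier on integration by parts.
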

Since $N$ is monotone, one can define $N(x,0)=\lim_{r\to 0} N(x,r)$ for all $x\in B$. As it is easily seen, $N(x,0)$ is the vanishing order of the function $u-u(x)$ at the point $x$. In particular
\begin{gather}
 1\leq N(x,0)=d \quad \Longleftrightarrow \quad \forall 1\leq k <d\, , \ \nabla ^{(k)} u|_x =0  \quad \text{  and  } \quad \nabla ^{(d)}u|_x\neq 0\, .
\end{gather}

\subsubsection{Polynomials and Frequency} \label{ss:freq_poly_exp}
Let $P$ be a harmonic polynomial of degree $d\geq 1$. Since $N$ is invariant under blow-up and rescaling, it is easy to see that $N(x,r)\leq d$ for all $x$ and $r$. Indeed, consider the function $P_{x,r}(y) = \frac{P(x+ry)}{\ton{\fint_{\partial B_1(0) } P(x+ry)^2  }^{1/2} } $. As $r\to \infty$, this function converges in the smooth sense to the normalized homogeneous component of $P$ with the highest degree, and thus, $\lim_{r\to \infty} N(x,r)=d$ for all $x$. As an easy corollary of the previous proposition, we get that $P$ is homogeneous wrt $x$ if and only if $N(x,0)=d$.

Sometimes it is convenient to exploit the polynomial expansion of $u$ given in \eqref{eq_expu} to express its frequency (see for example \cite[p 23]{hanlin}). Given that we can re-write
\begin{gather}
 \int_{B_r(0)}\abs{\nabla u}^2 = \int_{\partial B_r(0)} u \nabla_n u = \sum_{d=0}^\infty a_d^2 \int_{\partial B_r(0)} P_d \nabla P_d \cdot (r^{-1}x) = \sum_{d=0}^\infty r^{-1} d a_d^2\int_{\partial B_r(0)} P_d^2 \, ,
\end{gather}
we obtain
\begin{gather}
 N(0,r)= \frac{\sum_{d=0}^{\infty} d a_d^2 r^{2d} }{\sum_{d=0}^{\infty} a_d^2 r^{2d}}\, , \quad \quad N(0,r)= \frac{\sum_{d=1}^{\infty} d a_d^2 r^{2d} }{\sum_{d=1}^{\infty} a_d^2 r^{2d}}\, .
\end{gather}

\subsubsection{Growth estimates} Almgren's frequency can also be used to get growth estimates on the function $u$. Indeed, let
\begin{gather}
 h(x,r)\equiv \fint_{\partial B_r(x)} u^2 dx\, ,
\end{gather}
then by direct computation we get
\begin{gather}\label{eq_doth}
 \frac{d}{dt}\ln(h(x,t)) = \frac{2N(x,t)}{t}\, \quad \Longrightarrow \quad h(x,t) = h(x,r) \exp\ton{-2\int_t^r \frac{N(s)}{s}ds}\, .
\end{gather}

\subsubsection{Uniform control} An important property of the unnormalized frequency is that $N(x,r/2)\leq c(n,r) (N(0,1)+1)$ for $\abs x  \leq r <1$ (see for example \cite[theorem 2.2.8]{hanlin}). Thus, a bound on $N(0,1)$ implies an upper bound on the vanishing order of the function for all $x\in B$ away from the boundary. A similar statement with a similar proof holds also for the normalized version. For the sake of completeness, hereafter we sketch a proof of this result.

\begin{theorem}\label{th_cN}
 Let $u$ be a nonconstant harmonic function. Then for every $r,k<1$, there exists a constant $C(n,r,k)$ such that for all $\abs x \leq r$
 \begin{gather}
  N(x,k(1-r))\leq C N(0,1)\, .
 \end{gather}
\end{theorem}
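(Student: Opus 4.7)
The plan is to sketch a proof following the classical argument for the unnormalized frequency in \cite[Theorem 2.2.8]{hanlin}. Since both $N(x,k(1-r))$ and $N(0,1)$ are invariant under $u\mapsto u-c$ for a constant $c$, we may assume $u(0)=0$, and after rescaling so that $\fint_{\partial B_1}u^2=1$ we have $\int_{B_1}|\nabla u|^2=\Lambda\,|\partial B_1|$ with $\Lambda=N(0,1)\ge 1$ (the lower bound follows from nonconstancy of $u$ together with the monotonicity of $N(0,\cdot)$). For the normalized frequency one has the identity $\frac{d}{dt}\ln\mathfrak h(x,t)=2N(x,t)/t$, where $\mathfrak h(x,t):=\fint_{\partial B(x,t)}(u-u(x))^2$; this follows by the same computation as \eqref{eq_doth} applied to the harmonic function $v:=u-u(x)$, which satisfies $v(x)=0$.

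Set $\rho_0=k(1-r)$ and pick a slightly larger scale $\rho_1=\tfrac{1}{2}(1+k)(1-r)\in(\rho_0,1-r)$, so $B(x,\rho_1)\subset B_1$. Monotonicity of $s\mapsto N(x,s)$ combined with the growth identity gives
\begin{equation*}
2N(x,\rho_0)\log\frac{\rho_1}{\rho_0}\leq\int_{\rho_0}^{\rho_1}\frac{2N(x,s)}{s}\,ds=\log\frac{\mathfrak h(x,\rho_1)}{\mathfrak h(x,\rho_0)}.
\end{equation*}
Since $\log(\rho_1/\rho_0)>0$ depends only on $r,k$, it suffices to prove an upper bound $\mathfrak h(x,\rho_1)\le C(n,r,k)$ and a lower bound $\mathfrak h(x,\rho_0)\ge c(n,r,k)^{\Lambda}$. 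The upper bound is immediate from standard interior estimates: for any $r''<1$, harmonic functions satisfy $\sup_{B_{r''}}|u|\le C(n,r'')\|u\|_{L^2(\partial B_1)}=C(n,r'')$, which applied with $r''=r+\rho_1<1$ yields $\sup_{B(x,\rho_1)}|u|\le C(n,r,k)$ and hence $\mathfrak h(x,\rho_1)\le 4\,C(n,r,k)^2$, independent of $\Lambda$.

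The lower bound on $\mathfrak h(x,\rho_0)$ is the main technical step. At the origin, the bound $N(0,s)\le\Lambda$ together with the growth identity (using $u(0)=0$, so $\mathfrak h(0,s)=\fint_{\partial B_s}u^2$) gives $\mathfrak h(0,s)\ge s^{2\Lambda}$. Choosing an intermediate scale $\rho_m\in(\rho_0,\rho_1)$ and invoking the three-balls (Hadamard-type) inequality for $v=u-u(x)$, which follows from convexity of $\log\mathfrak h(x,\cdot)$ in $\log t$ (itself a consequence of the growth identity and monotonicity of $N(x,\cdot)$), one obtains
\begin{equation*}
\mathfrak h(x,\rho_0)\ge\mathfrak h(x,\rho_m)^{1/\theta}\,\mathfrak h(x,\rho_1)^{-(1-\theta)/\theta},\qquad\theta=\frac{\log(\rho_1/\rho_m)}{\log(\rho_1/\rho_0)}\in(0,1).
\end{equation*}
The intermediate bound $\mathfrak h(x,\rho_m)\ge c(n,r,k)^{\Lambda}$ is obtained via the identity $\mathfrak h(x,s)=\fint_{\partial B(x,s)}u^2-u(x)^2$ (from the mean-value property), the uniform bound $|u(x)|\le C(n,r)$, and a comparison of $\int_{\partial B(x,s)}u^2$ with $\int_{\partial B_{s'}}u^2$ on overlapping annuli, which ultimately reduces to the origin estimate $\mathfrak h(0,s')\ge (s')^{2\Lambda}$. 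Combining the bounds yields $\log(\mathfrak h(x,\rho_1)/\mathfrak h(x,\rho_0))\le C(n,r,k)(1+\Lambda)$; dividing by $2\log(\rho_1/\rho_0)$ and using $\Lambda\ge 1$ gives $N(x,\rho_0)\le C(n,r,k)\,\Lambda=C(n,r,k)\,N(0,1)$. The most delicate point is the transfer of the $\mathfrak h$-lower bound from $0$ to $x$, where the correction term $u(x)^2$ must be controlled via the mean-value identity above.
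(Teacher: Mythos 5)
Your overall strategy — assume $u(0)=0$, normalize, use the growth identity $\tfrac{d}{dt}\ln\mathfrak h(x,t)=2N(x,t)/t$, and bound $N(x,\rho_0)$ by controlling the doubling ratio $\mathfrak h(x,\rho_1)/\mathfrak h(x,\rho_0)$ — is the same as the paper's. The upper bound $\mathfrak h(x,\rho_1)\le C(n,r,k)$ and the origin estimate $\mathfrak h(0,s)\ge s^{2\Lambda}$ are both fine. The paper also takes the route of first proving the statement for $|x|\le\beta(n)$ and then iterating to reach general $r$; you don't do this iteration, but that is a minor structural choice.

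The genuine gap is in the lower bound $\mathfrak h(x,\rho_m)\ge c(n,r,k)^{\Lambda}$, which you yourself flag as ``the most delicate point.'' Your sketch proposes to obtain it from $\mathfrak h(x,s)=\fint_{\partial B(x,s)}u^2-u(x)^2$, the pointwise bound $|u(x)|\le C(n,r)$, and a comparison with origin-centered spheres. This does not close: by Jensen's inequality $u(x)^2\le\fint_{\partial B(x,s)}u^2$ always, but nothing in your argument prevents $u(x)^2$ from being, say, $(1-10^{-\Lambda})\fint_{\partial B(x,s)}u^2$, in which case $\mathfrak h(x,s)$ would be far smaller than $c^{\Lambda}$ even though $\fint_{\partial B(x,s)}u^2\ge c^{\Lambda}$ and $|u(x)|$ is bounded. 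The pointwise bound on $|u(x)|$ controls each term separately, but the quantity you need is their difference, and that requires a bound on the \emph{ratio} $u(x)^2/\fint_{\partial B(x,s)}u^2$.

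This is exactly what the paper establishes first (its equation (eq\_alpha)): there is $\beta(n)>0$ such that for $|x|\le\beta(n)$ one has $u(x)^2\le\tfrac12\fint_{\partial B(x,1/2)}u^2$, hence $\mathfrak h(x,1/2)\ge\tfrac12\fint_{\partial B(x,1/2)}u^2$ and the normalized and unnormalized quantities are comparable. The proof of that inequality crucially uses that $u(0)=0$, so $N(0,\cdot)\ge1$ and $\fint_{\partial B(0,s)}u^2$ grows at least quadratically near $0$, which makes $\int_{B(0,2\beta)}u^2$ small relative to $\int_{B(0,1/2-\beta)}u^2$; combined with the inclusions $B(x,\beta)\subset B(0,2\beta)$ and $B(0,1/2-\beta)\subset B(x,1/2)$ this controls $u(x)^2$ against $\fint_{B(x,1/2)}u^2$. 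Without an estimate of this type, the transfer of the lower bound from the origin to $x$ fails, and your proof as written is incomplete at precisely the step it needs most.
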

\begin{proof}
We assume for simplicity that $u(0)=0$. First of all, we prove that there exists a radius $\beta(n)>0$ such that for all $\abs x \leq \beta(n)$,
 \begin{gather}\label{eq_alpha}
 u(x)^2 \leq \frac{1}{2} \fint_{\partial B_{1/2}(x)} u^2\, .
\end{gather}
We exploit the fact that $u$ vanishes with order at least $1$ at the origin. Suppose for convenience that $\fint_{\partial B_{2\beta} (0)} u^2 =1$. Since $N(0,2\beta)\geq 1$, we have
\begin{gather}
 \int_{B_{2\beta} (0)} u^2 \leq \int_0^{2\beta} ds \omega_n s^{n-1} \ton{\frac{s}{2\beta}}^2 = \frac{\omega_n}{n+2} (2\beta)^n\, .
\end{gather}
With a similar argument:
\begin{gather}
 \int_{B_{1/2-\beta} (0)} u^2 \geq \int_{2\beta}^{1/2-\beta} ds \omega_n s^{n-1} \ton{\frac{s}{2\beta}}^2 = \frac{\omega_n}{(2\beta)^2(n+2)} \qua{\ton{\frac 1 2 - \beta}^{n+2} -(2\beta)^{n+2}}\, .
\end{gather}
By geometric considerations, we have
\begin{gather}
 u(x)^2\leq \fint_{B_\beta (x)} u^2 = \frac{1}{\omega_n \beta^n} \int_{B_\beta (x)} u^2\leq \frac{1}{\omega_n \beta^n} \int_{B_{2\beta} (0)} u^2\leq \frac{2^n}{\omega_n} \alpha(n) \int_{B_{1/2-\beta} (0)} u^2\leq\\
 \leq \frac{2^n}{\omega_n} \alpha(n) \int_{B_{1/2}(x)} u^2= \alpha(n) \fint_{B_{1/2}(x)} u^2\leq  \alpha(n) \fint_{\partial B_{1/2}(x)} u^2 \, ,
\end{gather}
where we have set
\begin{gather}
 \alpha(n) \equiv \frac{(2\beta)^{n+2}}{(2\beta)^n \qua{\ton{\frac 1 2 - \beta}^{n+2} -(2\beta)^{n+2}}} = \qua{\ton{\frac 1 2 - \beta}^{n+2} -(2\beta)^{n+2}}^{-1} (2\beta)^2\, .
\end{gather}
It is evident that one can choose $\beta(n)$ sufficiently small in such a way that $\alpha(n)\leq 1/2$, which is all we need.
 
Now we are in a position to prove the theorem with $r=\beta(n)$ and $k$ generic. The general case is obtained by repeated applications of this estimate.

For simplicity of notation, we will assume $k=1/2$ and let $c(n)$ denote a constant that depends only on $n$, that may will change several times throughout the proof. Given the obvious inclusions
 \begin{gather}
  B_{1/4}(0)\subset B_{1/2}(x)\subset B_{3/4}( x)\subset B_1(0)\, ,
 \end{gather}
and the growth estimates
\begin{gather}
 \fint_{\partial B_1(0)} u^2 \leq 4^{2N(0,1)}\fint_{\partial B_{1/4}(0)} u^2 \quad \Longrightarrow \quad \fint_{B_1(0)} u^2 \leq c(n)4^{2N(0,1)}\fint_{B_{1/4}(0)} u^2\, ,
\end{gather}
one has
\begin{gather}
 \fint_{B_{3/4}( x)} u^2 \leq c(n)4^{2N(0,1)}\fint_{B_{1/2}(x)} u^2 \, .
\end{gather}
Since $\fint_{\partial B_r(0)} u^2$ is an increasing function of $r$ (if $u$ is harmonic), we can estimate
\begin{gather}
 \int_{B_{3/4}( x)} u^2 \geq \int_{B_{3/4}( x)\setminus B_{5/8 }(x )} u^2 \geq c(n) \fint_{\partial B_{5/8 }(x )} u^2\, ,\\
 \int_{B_{1/2}(x)}u^2 \leq c(n) \fint_{\partial B_{1/2}(x)} u^2\, .
\end{gather}
Thus we obtain that
\begin{gather}\label{eq_1}
 \fint_{\partial B_{5/8 }(x )} u^2 \leq c(n) 4^{2N(0,1)} \fint_{\partial B_{1/2}(x)} u^2\, .
\end{gather}
With some easy computations as in \cite{hanlin}, this implies $N_{\cS}(x,1/2)\leq C(n) (N(0,1)+1)$. In order to obtain a similar estimate for $N=N_{\cC}$, consider that
\begin{gather}
 u(x)^2 = \lim_{r\to 0} \fint_{\partial B_r(x)} u^2 = \gamma\fint_{\partial B_{5/8 }(x )} u^2\, ,
\end{gather}
where we set for convenience $\gamma = \exp\ton{-2\int_0^{5/8} \frac{N(x,s)}{s} ds }$. Note that, if $u$ is not constant, $0\leq \gamma<1$. Using \eqref{eq_1}, we obtain
\begin{gather}
\fint_{\partial B_{5/8 }(x )} u^2-u(x)^2 \leq c(n) 4^{2N(0,1)} \ton{1-\gamma}\fint_{\partial B_{1/2}(x)} u^2 = c(n) 4^{2N(0,1)} \ton{\fint_{\partial B_{1/2}(x)} u^2 - \gamma' u(x)^2}\, ,
\end{gather}
where $\gamma'= \exp\ton{-2\int_{1/2}^{5/8} \frac{N(x,s)}{s} ds }$, and again $0<\gamma'<1$.
Given Equation \eqref{eq_alpha}, we can estimate
\begin{gather}\label{eq_2}
\fint_{\partial B_{5/8 }(x )} u^2-u(x)^2 \leq 2c(n) 4^{2N(0,1)} \ton{\fint_{\partial B_{1/2}(x)} u^2 - u(x)^2}\, ,
\end{gather}
By the growth conditions related to $ N$, we obtain the inequality
\begin{gather}
 \ton{\frac 5 4}^{2 N(x,1/2)}\leq \exp\ton{2\int_{1/2}^{5/8} \frac{ N(x,s)} s ds} = \frac{\fint_{\partial B_{5/8 }(x )} u^2-u(x)^2 }{\fint_{\partial B_{1/2}(x)} u^2 - u(x)^2} \leq 2c(n) 4^{2N(0,1)}\, .
\end{gather}
By taking logs on both sides, we complete our estimate. Indeed we obtain
\begin{gather}
  N(x,1/2)\leq c(n) (N(0,1) +1) = c(n) ( N(0,1) +1) \leq c(n)  N(0,1)\, .
\end{gather}

\end{proof}

\subsection{Frequency pinching for harmonic functions}\label{ss:hpinch}
In the previous section, we have seen that if $N$ is constant, then the function $u$ is homogeneous. The aim of this section is to prove a quantitative ``almost'' version of this statement, with particular care on how the parameters involved depend on the frequency $N(0,1)$.  The results of this Section may be viewed as a generalization of the quantitative pinching in \cite[theorem 2.8]{ChNaVa}.

%
\begin{definition}
Given a nonconstant harmonic function $u$, we say that its frequency is $\delta$-pinched at $x$ on the scales $[r_2,r_1]$ if
\begin{gather}\label{eq_pinch1}
N(x,r_1)- N(x,r_2)\leq \delta\, .
\end{gather}
\end{definition}
As we have seen, if $\delta=0$, $u$ is, up to an additive constant, a homogeneous harmonic polynomial of degree $d$ and $N(x,r)=d$ for all $r$. Using a simple compactness argument (see \cite[theorem 2.8]{ChNaVa}), one can prove that if $\delta$ is small enough, then $u$ is close to a homogeneous harmonic polynomial. In particular, for every $\epsilon>0$ there exists $\delta(n,\Lambda,\epsilon)>0$ such that if \eqref{eq_pinch1} is satisfied, then for all $r\in [2r_2,r_1]$ there exists a hhP $P^{(r)}$ of degree $d$ such that
\begin{gather}
\norm{T^u_{x,r}-P^{(r)}}\leq \epsilon\quad \text{and}\, , \quad \abs{N^u(x,r)-d}\leq \epsilon\, .
\end{gather}

By exploiting some improved monotonicity properties of $N$, we make the previous argument effective. First of all, we prove that $N(r)$ can be pinched only when it is close to an integer. 
\begin{lemma}\label{lemma_Npinch}
 Let $\min_{k\in \N} \abs{N(r)-k}=\epsilon>0$. Then 
 \begin{gather}
  r \left.\frac{d N}{dt} \right\vert_{t=r} \geq 2\epsilon(1-\epsilon)\, .
 \end{gather}
As a corollary, if $N(r)\leq d-\epsilon$, then
\begin{gather}
 N\ton{\frac{\epsilon}{1-\epsilon} r}\leq d-1+\epsilon\, .
\end{gather}
\end{lemma}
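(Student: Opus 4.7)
The plan rests on the polynomial-expansion formula for $N$ recorded in Subsection~\ref{ss:freq_poly_exp}. Writing $u-u(0) = \sum_{k\ge 1} a_k P_k$ with $P_k$ orthonormal hhP's of degree $k$, set $p_k(r) = a_k^2 r^{2k}/\sum_j a_j^2 r^{2j}$, which defines a probability distribution on the positive integers, and let $X$ denote the integer-valued random variable with $P(X=k)=p_k(r)$. Then $N(r) = \sum_k k\,p_k(r) = E_r[X]$. A direct quotient-rule computation (using $\partial_r\log(a_k^2 r^{2k}) = 2k/r$, so that $p_k' = (2/r)p_k(k-N)$) gives
\begin{equation*}
r\,\frac{dN}{dr} \;=\; 2\bigl(E_r[X^2]-E_r[X]^2\bigr) \;=\; 2\operatorname{Var}_r(X),
\end{equation*}
so the lemma reduces to the variance bound $\operatorname{Var}_r(X)\ge \epsilon(1-\epsilon)$.

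For that bound, set $n=\lfloor N(r)\rfloor$, $t=N(r)-n\in[0,1)$, so that $\min(t,1-t)=\epsilon$, and let $Y=X-n$, an integer-valued random variable with mean $t$. The key observation is that $Y(Y-1)\ge 0$ pointwise on the integers (the two factors have matching signs), hence $E[Y(Y-1)]\ge 0$ gives $E[Y^2]\ge t$ and therefore $\operatorname{Var}(Y)\ge t-t^2=t(1-t)$. Since $\max(t,1-t)=1-\min(t,1-t)=1-\epsilon$ we in fact have the \emph{exact} equality $t(1-t)=\epsilon(1-\epsilon)$, completing the lemma. (Two-atom distributions supported on $\{n,n+1\}$, e.g.\ $u=\alpha P_n+\beta P_{n+1}$, saturate everything.)

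For the corollary, note that the same proof yields the sharper pointwise inequality
\begin{equation*}
s\,\frac{dN}{ds} \;\ge\; 2\bigl(N(s)-(d-1)\bigr)\bigl(d-N(s)\bigr)\qquad\text{whenever } N(s)\in(d-1,d),
\end{equation*}
again because $\epsilon(1-\epsilon)=t(1-t)$ is exact. The case $\epsilon\ge 1/2$ is immediate from monotonicity of $N$; assume $\epsilon<1/2$ and argue by contradiction. If $N(r')>d-1+\epsilon$ for $r'=\epsilon r/(1-\epsilon)$, then monotonicity forces $N(s)\in(d-1+\epsilon,d-\epsilon]\subset(d-1,d)$ on $[r',r]$, so the sharper bound applies throughout. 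Writing $v(s):=N(s)-(d-1)\in(0,1)$, separation of variables gives $dv/[v(1-v)]\ge 2\,ds/s$, which integrates (via $1/[v(1-v)]=1/v+1/(1-v)$) to
\begin{equation*}
\frac{v(r)\bigl(1-v(r')\bigr)}{v(r')\bigl(1-v(r)\bigr)} \;\ge\; (r/r')^2 \;=\; \Bigl(\tfrac{1-\epsilon}{\epsilon}\Bigr)^2.
\end{equation*}
Plugging in $v(r)\le 1-\epsilon$ and solving for $v(r')$ yields $v(r')\le \epsilon$, i.e.\ $N(r')\le d-1+\epsilon$, contradicting the supposition.

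The only delicate point is the \emph{exactness} of the identification $\epsilon(1-\epsilon)=t(1-t)$, as opposed to a mere inequality; this is what allows the ODE argument in the corollary to produce the sharp scaling $r'=\epsilon r/(1-\epsilon)$ rather than the much weaker $r'\sim \exp(-C/\epsilon)\,r$ that a crude use of the constant bound $r\,dN/dr\ge 2\epsilon(1-\epsilon)$ would give. Aside from that, everything is a short calculation.
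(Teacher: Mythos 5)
Your proof is correct, and for the derivative bound it is a genuinely cleaner route than the paper's. You observe that $rN'(r)=2\operatorname{Var}_r(X)$ for the probability distribution $p_k(r)=a_k^2r^{2k}/\sum_j a_j^2 r^{2j}$, and then use the elementary fact that an integer-valued random variable $Y$ with mean $t\in[0,1)$ satisfies $E[Y(Y-1)]\ge 0$ pointwise, hence $\operatorname{Var}(Y)\ge t(1-t)$. The paper instead splits the Fourier mass of $u$ into the parts above and below the dominant degree $d=\lfloor N(1)\rfloor$, introduces auxiliary functions $f_\pm(t)=t^{-2d-2}h_\pm(t)\bigl(\pm(N_\pm(t)-d)\bigr)$, checks each is monotone in the appropriate direction, and combines this with the growth formula for $h$ to reach the logistic differential inequality $tN'(t)\ge 2(N(t)-d)(d+1-N(t))$. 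The monotonicity of $f_+-f_-$ is, after unwinding, precisely the positivity of $\sum_{k\neq d}(k-d)(k-d-1)a_k^2 t^{2(k-d-1)}$, which is your $E[Y(Y-1)]\ge 0$ written termwise; so the two derivations extract the same sign information, but your variance framing gets there in two lines rather than through the $f_\pm$ bookkeeping. For the corollary the two arguments are essentially identical: you integrate the separable logistic inequality directly, while the paper changes variables to $\rho=\ln t$ and compares against the explicit logistic solution $\hat N(\rho)=d+1/(ce^{-2\rho}+1)$ with $c=\epsilon/(1-\epsilon)$; both give the same sharp scale $r'=\epsilon r/(1-\epsilon)$. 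One small presentational nit: since $\epsilon=\operatorname{dist}(N(r),\N)\le 1/2$ always, the aside about the ``case $\epsilon\ge 1/2$'' can be shortened to the single equality case $\epsilon=1/2$, which is trivial.
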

\begin{proof}
By the scale invariance properties of $N$, we can assume for simplicity $r=1$ and $h(1)=1$. Let $d$ be the integral part of $ N(1)$, i.e., the largest integer $\leq  N(1)$. By hypothesis, $d\leq N(1)-\epsilon$.

 Define the following functions.
 \begin{gather}\label{eq_hpm}
 h_+(t) = \sum_{k\geq d+1} a_k^2 t^{2k}\, , \quad \quad h_-(t) = \sum_{k\leq d-1} a_k^2 t^{2k}\, ;\\
 N_+ (t) = \frac{\sum_{k\geq d+1} k a_k ^2 t^{2k}}{h_+(t)}\, , \quad \quad N_- (t) = \frac{\sum_{k\leq d-1} k a_k ^2 t^{2k}}{h_-(t)}\, ;\label{eq_Npm}\\
 f_+(t) = \frac{h_+(t)}{t^{2d+2}}\ton{N_+(t)-d}\, , \quad \quad f_-(t) = \frac{h_-(t)}{t^{2d+2}}\ton{d-N_-(t)}\, .
 \end{gather}
Note that $f_+(t)>0$ for $t>0$, with $\lim_{t\to 0} f_+(t)=0$. As for $f_-$, it is either a strictly positive function or it is zero. In the first case $\lim_{t\to 0} f_-(t)=\infty$. The derivatives of $f_\pm$ are easily computed directly. Indeed, we obtain
\begin{gather}
 \dot f_+(t) = 2\sum_{k\geq d+1}  (k-d-1)(k-d) a_k^2 t^{2(k-d-1)-1}\geq 0 \, ,\\
  \dot f_-(t) = 2\sum_{k\leq d-1}  (k-d-1)(d-k) a_k^2 t^{2(k-d-1)-1}\leq 0 \, .
\end{gather}

We rewrite the frequency $N(t)$ in the following convenient way.
\begin{gather}\label{eq_Nhpm}
  N(t) = \frac{N_-(t) h_-(t) + d a_d^2 t^{2d} + N_+(t) h_+(t) }{h(t)} = \frac{N_-(t) h_-(t) + d \qua{h(t)-h_-(t)-h_+(t)}+ N_+(t) h_+(t) }{h(t)}=\\
 \notag= \frac{h_-(t)}{h(t)} \ton{N_-(t)-d} + d + \frac{h_+(t)}{h(t)} \ton{N_+(t)-d}  = -\frac{h_-(t)}{h(t)} \ton{d-N_-(t)} + d + \frac{h_+(t)}{h(t)} \ton{N_+(t)-d}      \, .
\end{gather}
In particular, we obtain the simple formula
\begin{gather}
N(t)-d = \frac{t^{2d+2}}{h(t)}\ton{f_+(t) - f_-(t)}\, .
\end{gather}
By \eqref{eq_doth}, we obtain the equality
\begin{gather}
\frac{h(t)}{t^{2d+2}} = h(1)\exp\ton{- 2\int_t^1 \frac{ N(s)-d-1}{s}ds}\, .
\end{gather}
This and the fact that $\frac{d}{dt} \ton{f_+(t)-f_-(t)}\geq 0$ imply that
\begin{gather}
 0\leq \frac{d}{dt} \qua{\ton{N(t)-d} \exp\ton{- 2\int_t^1 \frac{ N(s)-d-1}{s}ds} }= \\
 \notag = \exp\ton{- 2\int_t^1 \frac{ N(s)-d-1}{s}ds} \qua{\frac d {dt} { N(t)} + 2(N(t)-d)\frac{ N(t)-d-1}{t}}\, .
\end{gather}
As a consequence we obtain
\begin{gather}\label{eq_dotN}
 \frac{d}{dt} N(t) \geq 2( N(t)-d)\frac{d+1- N(t)}{t}\, .
\end{gather}
As long as $ N(t)\in (d,d+1)$, the rhs is positive. Define for convenience $\rho(t)=\ln(t)$, and $t=e^{\rho}$. We have
\begin{gather}\label{eq_dNdrho}
 \frac {d}{d\rho} N \geq 2( N(\rho)-d)\ton{d+1-N(\rho)}\, .
\end{gather}
Let $\hat N(\rho)$ be the solution of the corresponding differential equality, i.e.,
\begin{gather}
 \hat N(\rho) = d + \frac{1}{c e^{-2\rho} +1} \, ,
\end{gather}
where $c>0$ is chosen in such a way that $\hat N(0)\geq N(\rho=0)$. Since $N(\rho=0)\leq d+1-\epsilon$, we can pick
\begin{gather}
 c=\frac{\epsilon}{1-\epsilon}\, .
\end{gather}
A standard comparison for ODE implies that $N(\rho)\leq \hat N(\rho)$ for all $\rho\leq 0$. Thus if $\bar \rho$ satisfies
\begin{gather}
 \frac{1}{c e^{-2\bar \rho}+1}\leq \epsilon\, \quad \Longrightarrow \quad \bar \rho \leq \log\ton{\frac{\epsilon}{1-\epsilon}}\, ,
\end{gather}
then $N(\bar \rho)\leq d+\epsilon$. This concludes the proof.
%
%
\end{proof}
As a corollary of the proof, we obtain the following
\begin{corollary}\label{cor_Npinch}
 Let $\operatorname{dist}\ton{N(r),\N}=2\epsilon >0$. Then $N(r)-N(r/e)\geq \epsilon$.
\end{corollary}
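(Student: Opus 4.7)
The plan is to invoke directly the logistic differential inequality \eqref{eq_dNdrho} already established inside the proof of Lemma \ref{lemma_Npinch}: on any interval where $N(\rho)\in (d,d+1)$ (with $\rho=\ln t$),
\[
\frac{dN}{d\rho}\ \geq\ 2(N-d)(d+1-N).
\]
Set $d=\integ{N(r)}$ and $\alpha=N(r)-d$. The hypothesis $\operatorname{dist}(N(r),\dN)=2\eps$ forces $\alpha\in\{2\eps,\,1-2\eps\}$, and in particular $\eps\leq 1/4$.

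First I would dispose of the trivial case $N(r/e)\leq d$: by monotonicity of the frequency, $N(r)-N(r/e)\geq\alpha\geq\min(2\eps,\,1-2\eps)\geq\eps$ (the inequality $1-2\eps\geq\eps$ follows from $\eps\leq 1/3$, which is weaker than $\eps\leq 1/4$). Otherwise $N(\rho)\in(d,d+1)$ throughout $\rho\in[\ln r-1,\,\ln r]$, and writing $f=N-d$ the displayed inequality becomes $f'\geq 2f(1-f)$. Dividing by $f(1-f)>0$ and integrating over an interval of $\rho$-length one gives $\ln\tfrac{f}{1-f}$ increases by at least $2$, so
\[
\frac{f(\ln r)}{1-f(\ln r)}\ \geq\ e^{2}\cdot\frac{f(\ln r-1)}{1-f(\ln r-1)}.
\]
Setting $\beta=N(r/e)-d$, solving the monotone relation for $\beta$ and subtracting gives
\[
N(r)-N(r/e)\ =\ \alpha-\beta\ \geq\ \frac{(1-e^{-2})\,\alpha(1-\alpha)}{1-(1-e^{-2})\alpha}.
\]

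The remaining step, which I expect to be the most tedious but not deep, is to check that the right-hand side exceeds $\eps$ in both sub-cases $\alpha\in\{2\eps,\,1-2\eps\}$. Each reduces to a linear inequality in $\eps$ with plenty of slack under $\eps\leq 1/4$: the case $\alpha=2\eps$ becomes $2(1-e^{-2})(1-2\eps)\geq 1-2\eps(1-e^{-2})$, i.e.\ $\eps\leq (1-2e^{-2})/[2(1-e^{-2})]\approx 0.42$, while the case $\alpha=1-2\eps$ becomes $\eps\leq (2-3e^{-2})/[6(1-e^{-2})]\approx 0.31$. Both hold for $\eps\leq 1/4$. The conceptually nontrivial piece is already contained in the logistic differential inequality inherited from Lemma \ref{lemma_Npinch}; the hard part here is really just to keep the constants straight in the elementary algebra so as not to lose the linear-in-$\eps$ drop.
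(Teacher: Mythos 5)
Your proof is correct and uses essentially the same approach as the paper: both rest on the logistic differential inequality \eqref{eq_dNdrho} from Lemma \ref{lemma_Npinch} together with monotonicity to dispose of the case $N(r/e)\leq d$. The only difference is that you integrate \eqref{eq_dNdrho} exactly via the monotone quantity $\ln\tfrac{f}{1-f}$, which gives a slightly sharper bound and in fact quietly repairs a small imprecision in the paper's one-line argument, whose claimed pointwise bound $\frac{dN}{d\rho}\geq 2\epsilon$ should really read $\geq 2\epsilon(1-\epsilon)$ (still $\geq\epsilon$ since $\epsilon\leq 1/4$, so the paper's conclusion stands).
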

\begin{proof}
 Note that by definition $0<\epsilon<1/2$. By \eqref{eq_dNdrho}, as long as $\operatorname{dist}\ton{N(\rho),\N}\geq\epsilon$ we have the lower bound
 \begin{gather}
  \frac {d}{d\rho} N \geq 2\epsilon\, .
 \end{gather}
This and the monotonicity of $N$ immediately imply the thesis.
\end{proof}

Using a similar technique, we can prove that either $u$ is close in the $L^2$ sense to a homogeneous harmonic polynomial $P$ at a certain scale, or the frequency drops by a definite amount after some definite number of scales. 
\begin{lemma}\label{lemma_polypinch}
 Given a harmonic function $u:B_r(0)\to \R$, for every $\epsilon>0$ one of these two things can happen
\begin{enumerate}
 \item either there exists $d$ such that $a_d^2 r^{2d} \geq \ton{1-6\epsilon}h(r)$;
 \item or $N(0,r)-N(0,r/e) \geq \epsilon$.
\end{enumerate}
\end{lemma}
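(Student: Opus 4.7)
The plan is to argue by contradiction, assuming both alternatives fail, and to exploit the probabilistic reformulation of the series expansion of Section \ref{ss:freq_poly_exp}. Introduce the probability measure $\mu_t$ on $\N$ by $\mu_t(k) = a_k^2 t^{2k}/h(t)$, so that $N(0,t) = \sum_k k\,\mu_t(k)$; differentiating in $\ln t$ and using \eqref{eq_doth}, one finds $\frac{d\,N(0,t)}{d(\ln t)} = 2\,\mathrm{Var}_{\mu_t}(k)$. Since $\mu_{r/e}(k) \propto \mu_r(k)\,e^{-2k}$, a direct manipulation yields the clean integrated identity
\[
N(r) - N(r/e) \;=\; -\,\frac{\mathrm{Cov}_{\mu_r}(k,e^{-2k})}{\sum_k \mu_r(k)\,e^{-2k}}\,,
\]
which is non-negative because $k \mapsto e^{-2k}$ is strictly decreasing, in agreement with the monotonicity of $N$.

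Next I would lower bound the numerator by a Chebyshev-type inequality. Writing $-\mathrm{Cov}_{\mu_r}(k,e^{-2k}) = \tfrac{1}{2}\,E[(k-k')(e^{-2k'}-e^{-2k})]$ for independent $k,k' \sim \mu_r$ and using the elementary bound $(1-e^{-2j})\,j \geq 1 - e^{-2}$ valid for every integer $j \geq 1$, one obtains
\[
-\mathrm{Cov}_{\mu_r}(k,e^{-2k}) \;\geq\; (1-e^{-2})\sum_{k<k'}\mu_r(k)\mu_r(k')\,e^{-2k}.
\]
A short case analysis on the position of the dominant atom $d^*$ (the maximizer of $\mu_r$) relative to the remaining mass promotes this to the linear estimate $N(r) - N(r/e) \geq c_0\,(1 - \max_d \mu_r(d))$ for an explicit $c_0 > 1/6$. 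Under the failure of alternative (1), $\max_d \mu_r(d) < 1 - 6\epsilon$, forcing $N(r)-N(r/e) > 6c_0\epsilon > \epsilon$, which contradicts the failure of alternative (2).

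The main obstacle is the passage from the Chebyshev bound to the clean linear estimate $N(r)-N(r/e) \geq c_0\,(1-\max_d \mu_r(d))$, since the Chebyshev step is generally not sharp. One verifies that the extremal configuration is the two-atom case $\mu_r(\{d^*, d^*+1\}) = (p, 1-p)$ on adjacent integers, for which the drop computes explicitly to $p(1-p)(1-e^{-2})/(1 - (1-p)(1-e^{-2}))$; its minimum over $p \in [1/2,1]$ is attained at $p = 1/2$ and equals $(1-e^{-2})/(2(1+e^{-2})) \approx 0.381$, giving $c_0 \approx 0.76$. This is exactly the reason for the factor $6$ in the statement of alternative (1): $6c_0 > 1$ leaves the slack needed to beat $\epsilon$. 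An alternative route, more in the spirit of the proof of Lemma \ref{lemma_Npinch}, splits on $\operatorname{dist}(N(r),\N)$: the case $\geq 2\epsilon$ is immediate from Corollary \ref{cor_Npinch}, while in the complementary case one picks the nearest integer $d$, combines the pointwise estimate $\mathrm{Var}_{\mu_r}(k) \geq (1-\mu_r(d))-(N(r)-d)^2$ with the ODE $\frac{d}{d(\ln t)}\ln \mu_t(d) = 2(d-N(t))$ to control the distribution along $[r/e,r]$, and integrates the resulting variance bound against $d(\ln t)$.
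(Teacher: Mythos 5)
Your probabilistic reformulation is correct and elegant: with $\mu_t(k)=a_k^2t^{2k}/h(t)$ one indeed has $N(0,t)=E_{\mu_t}[k]$, the derivative identity $\frac{dN}{d\ln t}=2\,\mathrm{Var}_{\mu_t}(k)$, the covariance formula for $N(r)-N(r/e)$, and the Chebyshev lower bound you state. The gap, however, is precisely where you place the load: the linear estimate $N(r)-N(r/e)\geq c_0\ton{1-\max_d\mu_r(d)}$ with a universal $c_0>1/6$ is asserted, not proved. Restricting the Chebyshev bound to pairs involving the mode $d^*$ only gives (with $p=\mu_r(d^*)$, $A=\sum_{k>d^*}\mu_r(k)$, $\tilde\alpha=e^{2d^*}\sum_{k>d^*}\mu_r(k)e^{-2k}\leq Ae^{-2}$, $\tilde\beta=e^{2d^*}\sum_{k<d^*}\mu_r(k)e^{-2k}$) the quantity $(1-e^{-2})\,p\cdot\frac{A+\tilde\beta}{p+\tilde\alpha+\tilde\beta}$, and this expression is \emph{not} bounded below by $c_0(1-p)$ in general: when $p<A(1-e^{-2})$ the fraction is decreasing in $\tilde\beta$, which has no a priori upper bound. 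So the estimate is not a consequence of the Chebyshev step; it is a genuinely separate inequality, and ``one verifies the extremal configuration is the two-atom case'' is exactly the missing argument. (Numerically the inequality does look true with $c_0=(1-e^{-2})/(1+e^{-2})$, but that is not a proof, and without it the contradiction does not close.) Your ``alternative route'' can in fact be turned into a proof, but not as written: one must first split off the sub-case $N(t)<d-3\epsilon$ for some $t\in[r/e,r]$ (then the drop exceeds $\epsilon$ by monotonicity), and in the complementary sub-case use the \emph{one-sided} ODE bound $\frac{d\ln\mu_t(d)}{d\ln t}=2(d-N(t))>-4\epsilon$ coming from $N(t)\leq N(r)<d+2\epsilon$, giving $\mu_t(d)<(1-6\epsilon)e^{4\epsilon}$, hence $\mathrm{Var}_{\mu_t}(k)>1-(1-6\epsilon)e^{4\epsilon}-9\epsilon^2$, which a short convexity check shows exceeds $\epsilon/2$ for all $0<\epsilon<1/6$. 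Using the two-sided bound $|d-N(t)|\leq 3\epsilon$ instead would only yield $1-\mu_t(d)\gtrsim\epsilon^2$, which is insufficient; that one-sided observation is load-bearing and is absent from your sketch.

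For comparison, the paper's proof avoids both issues by not centering on the mode at all. It chooses a ``median-like'' index $d$ for which either $h_+(1)=\sum_{k>d}a_k^2\in[3\epsilon,1/2]$ or $h_-(1)=\sum_{k<d}a_k^2\in[3\epsilon,1/2]$ (the pigeonhole existence of such a $d$ is exactly where the failure of alternative (1) is used), and then exploits the monotonicity in $t$ of the one-sided frequencies $N_\pm$ and of the ratios $h_\pm(t)/h(t)$ to get the drop directly; no extremality claim over all distributions is needed.
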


\begin{proof}
Suppose without loss of generality that $r=1$ and $h(1)=1$. Fix an index $d$, and define $h_\pm$ and $N_\pm$ as in \eqref{eq_hpm}, \eqref{eq_Npm}. By analogy with the usual frequency, both $N_\pm$ are monotone nondecreasing functions. Moreover, it is easily seen that
\begin{gather}
 N_+(t)-d\geq 1 \, \quad \quad d-N_-(t)\geq 1\, .
\end{gather}
Simple considerations on the definitions of $h$ and $h_\pm$ imply that
\begin{enumerate}
 \item if $h_+$ is not identically zero, $\frac{h_+(t)}{h(t)}$ is increasing with respect to $t$, and, if $h_-$ is not identically zero, it has limit $0$ for $t\to 0$,
 \item if $h_-$ is not identically zero, $\frac{h_-(t)}{h(t)}$ is decreasing with respect to $t$, and it has limit $1$ for $t\to 0$.
\end{enumerate}

By \eqref{eq_Nhpm} we have
\begin{gather}
 N(t) = -\frac{h_-(t)}{h(t)} \ton{d-N_-(t)} + d + \frac{h_+(t)}{h(t)} \ton{N_+(t)-d}      \, .
\end{gather}

If $a_k^2 \leq 1-6\epsilon$ for all $k$, then there exists an index $d$ such that either $h_+(1)\in [3\epsilon,1/2]$ or $h_-(1) \in [3\epsilon,1/2]$. Suppose that the first case is true, with a similar argument one can deal also with the second case.

By monotonicity of $N_-$, positivity of $d-N_-(t)$ and since $h_-(t)/h(t)$ is a decreasing function of $t$, we have for $t\leq 1$
\begin{gather}
 -\frac{h_-(1)}{h(1)} \ton{d-N_-(1)} + \frac{h_-(t)}{h(t)} \ton{d-N_-(t)} \geq 0\, .
\end{gather}
Thus
\begin{gather}
  N(1)-N(t)\geq  \frac{h_+(1)}{h(1)}(N_+(1)-d) -\frac{h_+(t)}{h(t)}(N_+(t)-d) \, .
\end{gather}
Note that for all $t\leq 1$:
\begin{gather}
1\leq N_+(t)-d \leq N_+(1)-d\, ,\\
\frac{h_+(t)}{h(t)} = \frac{t^{-2d}h_+(t)}{t^{-2d}h_-(t) + a_d^2 + t^{-2d}h_+(t)} \leq \frac{t^2 h_+(1)}{t^{-2d}h_-(t) + a_d^2 } \leq \frac{t^2 h_+(1)}{t^{-2}h_-(1) + a_d^2 } \leq 2t^2 h_+(1)\, ,
\end{gather}
where the last inequality follows from the assumptions $h(1)=1$ and $h_+(1)\leq 1/2$.

Thus we obtain:
\begin{gather}
 N(1)-N(t)\geq  h_+(1) \qua{(N_+(1)-d) -2t^2(N_+(t)-d)} \geq 3\epsilon (N_+(t)-d)(1-2t^2)\geq 3\epsilon (1-2t^2)\, .
\end{gather}
If we choose $t=e^{-1}$, we obtain
\begin{gather}
 N(1)- N(1/e)>  2\epsilon\, .
\end{gather}

In case $h_-(1)\in [3\epsilon,1/2]$, a similar computation holds. Indeed, by monotonicity of $N_+$, positivity of $N_+(t)-d$ and since $h_+(t)/h(t)$ is an increasing function of $t$, we have for $t\leq 1$
\begin{gather}
 \frac{h_+(1)}{h(1)} \ton{N_+(1)-d} - \frac{h_+(t)}{h(t)} \ton{N_+(t)-d} \geq 0\, .
\end{gather}
Thus
\begin{gather}
 N(1)- N(t)\geq  -\frac{h_-(1)}{h(1)}(d-N_-(1)) +\frac{h_-(t)}{h(t)}(d-N_-(t)) \, .
\end{gather}
Note that for all $t\leq 1$:
\begin{gather}
1\leq d-N_-(1)\leq d-N_-(t)\, ,\\
1\geq\frac{h_-(t)}{h(t)} = \frac{t^{-2d}h_-(t)}{t^{-2d}h_-(t) + a_d^2 + t^{-2d} h_+(t)} \geq \frac{t^{-2d} h_-(t)}{t^{-2d}h_-(t) + a_d^2 +t^2 h_+(1)} \geq \frac{t^{-2d} h_-(t)}{t^{-2d}h_-(t) + 1-h_-(1)} \, .
\end{gather}
Since the function $x/(1+x)$ is an increasing function for $x\geq 0$, and since $t^{-2d} h_-(t) \geq t^{-2} h_-(1)$, we have
\begin{gather}
\frac{h_-(t)}{h(t)} \geq \frac{t^{-2} h_-(1)}{1 + (t^{-2})h_-(1)} \, .
\end{gather}
Since $h_-(1)\leq 1/2$, for $t=e^{-1}$ we obtain
\begin{gather}
\frac{h_-(t)}{h(t)} >  \frac 4 3 h_-(1)\, .
\end{gather}

Thus we obtain:
\begin{gather}
 N(1)-N(1/e)> h_-(1) \qua{-(d-N_-(1)) +\frac 4 3 (d-N_-(1/2))}\geq \frac 1 3 \epsilon (d-N_-(1/2))\geq\epsilon \, .
\end{gather}
This concludes the proof.
\end{proof}
As a consequence of Lemma \ref{lemma_Npinch} and \ref{lemma_polypinch}, we see that if the nonnegative quantity $N(r)-N(r/e)$ is sufficiently small, then the function $u$ is close to a homogeneous harmonic polynomial at scale $r$, and $N$ is close to an integer.  We now generalize this point to our {\it effective} tangent cone uniqueness statement for harmonic functions, which is our main result for this subsection. 

\begin{theorem}\label{t:eff_tan_con_uniq_harm}
 Let $u:B_{r_1 }(0 )\to \R$ be harmonic, and assume that $\big|N(0,r_1)-N(0,r_2)\big| \leq \epsilon$ with $r_2\leq r_1/e^3$. There exists an absolute constant $\epsilon_0>0$ such that if $\epsilon \leq \epsilon_0$, then
 \begin{enumerate}
  \item[(i)] There exists an integer $d$ such that for all $t\in (r_2,r_1)$, $\abs{ N(t)-d}\leq 3\epsilon$,
  \item[(ii)] For all $t\in (er_2,r_1/e)$ we have $a_d^2 t^{2d} \geq (1-6\epsilon) h(t)$,
  \item[(iii)] For all $t\in (er_2,r_1/e)$ we have that $u$ is close in the $L^2$ sense to the homogeneous harmonic polynomial $P_d$. More precisely, for all $t\in (e r_2,r_1/e)$,
   \begin{gather}
    \fint_{\partial B_1(0)} \abs{T_{0,t}^u - P_d}^2 \leq 7\epsilon\, ,
   \end{gather}
  \item[(iv)] up to a factor $d$, $u$ and $P_d$ are also $W^{1,2}$ close. More precisely, for all $t\in (e r_2,r_1/e)$,
     \begin{gather}
    \int_{B_1(0)} \abs{\nabla T_{0,t}^u - \nabla P_d}^2\leq 7d\epsilon\, .
   \end{gather}
 \end{enumerate}
\end{theorem}
\begin{remark}
The key aspect of this Theorem is the sharpness of the closeness of $u$ to $P_d$ depending on the frequency drop.  That is, after dropping one scale either the frequency drops by $\epsilon$ or $u$ is $\epsilon$ close to a homogeneous harmonic polynomial, where $\epsilon$ is independent of $d$, compare for instance to \cite{ChNaVa}.
\end{remark}
\begin{remark}
The second key aspect of this Theorem is that if $u$ is pinched on many scales, then $u$ is automatically close to the {\it same} homogeneous harmonic polynomial on all scales.  This is a key point to the proof of the main Theorem.
\end{remark}

\begin{proof}
Let us begin with the observation that if we prove the Theorem for $e^3r_2 = r_1 \equiv r$, then the result is proved for any $r_1,r_2$.  Indeed, since we are proving that $u$ is close to the $d^{th}$-order part of its Taylor expansion, the radii are unimportant, and thus we will make this assumption in the rest of the proof.

 (i) is a direct consequence of Corollary \ref{cor_Npinch}. By Lemma \ref{lemma_polypinch} (ii) is valid with $d$ replaced by another integer $q$ which, a priori, might be different from $d$. We are left to prove that $q=d$. In order to do so, we will prove that $\abs{N(r/e)-d}$ cannot be small if $d\neq q$.
 
 For simplicity, we assume that $r=e$ and $h(1)=1$. A simple computation yields
 \begin{gather}\label{eq_N-d}
  N(1)-d = \sum_{k\neq d} (k-d) a_k^2 = (d-q) a_q^2 + \sum_{k\neq d,q} (k-d) a_k^2 \, .
 \end{gather}
By Cauchy inequality, we estimate the last sum as follows
\begin{gather}
\abs{ \sum_{k\neq q,d} (k-d) a_k^2} \leq \sum_{k\leq d-1, \ k\neq q} (d-k) a_k^2+\sum_{k\geq d+1, \ k\neq q} (k-d) a_k^2\leq \\
\leq \ton{\sum_{k\leq d-1, \ k\neq q} (d-k)^2 a_k^2}^{1/2}\ton{\sum_{k\leq d-1, \ k\neq q} a_k^2}^{1/2}+\ton{\sum_{k\geq d+1, \ k\neq q} (d-k)^2 a_k^2}^{1/2}\ton{\sum_{k\geq d+1, \ k\neq q} a_k^2}^{1/2}\leq \\
\leq \sqrt{6\epsilon} \qua{\ton{\sum_{k\leq d-1, \ k\neq q} (d-k)^2 a_k^2}^{1/2} + \ton{\sum_{k\geq d+1, \ k\neq q} (d-k)^2 a_k^2}^{1/2}}\, .
\end{gather}
In order to estimate the sums with $(d-k)^2$, we exploit the growth conditions on $h(t)$. Recall that, for all $t\in (e^{-1},e)$, $\sum_{k\neq q} a_k^2 t^{2k} \leq 6\epsilon h(t)$. Moreover, by (i) and \eqref{eq_doth}, we can estimate $h(t)$ by
\begin{gather}
 h(e)\leq e^{2d+6\epsilon}\leq e^{2d+1}\, \quad \Longrightarrow \quad \sum_{k\neq q} a_k^2 e^{2k-2d-1}\leq 6\epsilon         \, , \\
 h(e^{-1}) \leq e^{-2d+6\epsilon}\leq e^{-2d+1 }\, \quad \Longrightarrow \quad \sum_{k\neq q} a_k^2 e^{-2k+2d-1}\leq 6\epsilon   \, .
\end{gather}
It is evident that there exists a universal constant $C$ such that
\begin{gather}
 (k-d)^2 \leq C\begin{cases}
               e^{2k-2d-1} & \text{ for } k\geq d+1\, ,\\
               e^{-2k+2d-1}& \text{ for } k\leq d-1\, .
              \end{cases}
\end{gather}
Thus we obtain
\begin{gather}
 \sum_{k\geq d+1, k\ \neq q} (k-d)^2 a_k^2  \leq C \sum_{k\neq q} a_k^2 e^{2k-2d-1} \leq 6 C\epsilon \, ,\\
 \sum_{k\leq d-1, k\ \neq q} (k-d)^2 a_k^2  \leq C \sum_{k\neq q} a_k^2 e^{-2k+2d-1} \leq 6 C\epsilon \, .
\end{gather}
By \eqref{eq_N-d} and the triangle inequality, we obtain
\begin{gather}
 \abs{N(1)-d} \geq \abs{q-d} (1-6\epsilon) - 12 \epsilon \sqrt C\, ,
\end{gather}
and the conclusion follows immediately by (i) and the fact that $q$ and $d$ are both integers.

(iii) is a simple corollary of (ii). Indeed, for all $t\in (r/e^2,r)$
 \begin{gather}
  \fint_{\partial B_1(0)} \abs{T_{0,t}^u - \tilde a_d P_d}^2 \leq 6 \epsilon\, ,
 \end{gather}
and $\tilde a_d \geq \sqrt{1-6\epsilon}\geq 1-4\epsilon$ for $\epsilon\leq \epsilon_0$. This and the normalization of $P_k$ imply that 
 \begin{gather}
  \ton{\fint_{\partial B_1(0)} \abs{T_{0,t}^u - P_d}^2 }^{1/2}\leq \sqrt{6 \epsilon} + 4\epsilon\leq \sqrt{7\epsilon}\, .
 \end{gather}

 The $W^{1,2}$ estimates in $(iv)$ are an easy consequence of the harmonicity of $u$ and $P$. Indeed, we have
\begin{gather}
 \int_{B_1(0)} \abs{\nabla T_{0,t}^u - \nabla P_d}^2 = \int_{B_1(0)} \abs{\nabla T_{0,t}^u}^2 + \int_{B_1(0)} \abs{\nabla P}^2 - 2 \int_{B_1(0)} \ps{\nabla T_{0,t}^u}{\nabla P} =\\
 = N(t) +d - 2 \int_{\partial B_1(0)} T_{0,t}^u\nabla _n P\, .
\end{gather}
By homogeneity, $\nabla_n P (x)= d\abs{x}^{-1} P(x)$, thus
\begin{gather}
 \int_{B_1(0)} \abs{\nabla T_{0,t}^u - \nabla P_d}^2 \leq 2d + 3\epsilon - 2d\tilde a_d \leq 3\epsilon + 4d\epsilon\, .
\end{gather}

 \end{proof}

As it is clear from the proofs of this subsection, the same results proved here are valid also if we replace $N$ with the unnormalized frequency, the only difference is that in this second case the unnormalized frequency is bounded below by $0$, not $1$, and the integers appearing in the propositions can take the value $0$.

\subsection{Almost cone splitting}\label{ss:almost_conesplitting}
As we have seen before, a cone splitting theorem is valid for hhP's. In particular, if $P$ is homogeneous wrt $0$ and $x$, then $P$ is invariant wrt the line defined by $x$, and thus $\partial_x P=0$. An almost cone splitting holds for generic harmonic functions, where homogeneity is replaced by quantitative frequency pinching.

\begin{lemma}\label{lemma_epsd}
Let $u:B_{e^2 d+1}(0)\to \R$ be a harmonic function, and let $d\geq 1$ be an integer such that
 \begin{itemize}
  \item $N(0,e^2 d)-N(0,e^{-1})\leq \epsilon$ with $\abs{N(0,1)-d}\leq \epsilon$ as well,
  \item there exists $\bar x \in B_1(0)$ such that $N(\bar x,e^2 d)-N(\bar x,e^{-1})\leq \epsilon$ with $\abs{N(\bar x,1)-d}\leq \epsilon$.
 \end{itemize}
 After rotating we may assume without loss that $\bar x = (t,0,\cdots,0)$. If $\epsilon \leq \epsilon_0(n)$, then $u$ is almost $\bar x$ invariant, in the sense that:
 \begin{enumerate}
  \item The $d$-th degree part in its expansion is almost constant. In particular             	\begin{gather}
           \fint_{\partial B_1(0)} \abs{a_d(0) P_{d,0}(y) - a_d(\bar x) P_{d,\bar x}(y) }^2 dy \leq C(n)\epsilon t^2 \fint_{\partial B_1(0)} \abs{a_d(0) P_{d,0}(y) }^2 dy  =C(n)\epsilon t^2 a_d(0)^2\, ;
	  \end{gather}
  \item The function itself is almost invariant under translation with respect to $\bar x$. In particular
	  \begin{gather}
           \fint_{\partial B_1(0)} \abs{u(y) - u(\bar x +y) }^2 dy \leq C(n)\epsilon t^2\fint_{\partial B_1(0)} \abs{u(y)}^2 dy \, ;
	  \end{gather}
  \item The $\bar x$ derivative of $P_d$ is almost zero, more precisely
	   \begin{gather}
	    \norm{\partial_1 P_{d,0}}\leq C(n) t^{-1} \sqrt \epsilon \norm{\nabla P_{d,0}}=C(n) t^{-1} \sqrt \epsilon \sqrt{d(2d+n-2)}\norm{P_{d,0}}\, .
	   \end{gather}

 \end{enumerate}
\end{lemma}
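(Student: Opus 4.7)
The plan is to apply Theorem \ref{th_hNpinch} at both base points $0$ and $\bar{x}$ with $r_1 = e^2 d$ and $r_2 = e^{-1}$, thereby extracting comparable leading-order polynomials, and then exploit the harmonicity of their difference to control each homogeneous component. Write $Q_0(z) := a_d(0) P_{d, 0}(z)$ and $Q_{\bar{x}}(w) := a_d(\bar{x}) P_{d, \bar{x}}(w)$. Parts (ii)--(iv) of Theorem \ref{th_hNpinch} imply: on every $s \in (1, ed)$ both error terms $\tilde{u}(z) := u(z) - u(0) - Q_0(z)$ and $\tilde{u}^{\bar{x}}(z) := u(z) - u(\bar{x}) - Q_{\bar{x}}(z - \bar{x})$ are $L^2$-small on the corresponding sphere, with relative error $\sqrt{7\epsilon}$, and the off-degree coefficients decay as $\sum_{k \neq d} a_k(0)^2 s^{2(k-d)} \leq C\epsilon\, a_d(0)^2$, with the analogous estimate at $\bar{x}$.

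For (1), I would expand $u - u(\bar{x})$ around $\bar{x}$ by shifting the expansion around $0$. Since $P_{k, 0}$ is a hhP of degree $k$ and $\bar{x} = (t,0,\ldots,0)$, Taylor in the $\bar{x}$-direction identifies the degree-$d$ part in $w$ as $\frac{t^{k-d}}{(k-d)!}\partial_1^{k-d} P_{k, 0}(w)$. Uniqueness of the degree-$d$ projection then yields the identity
\begin{gather*}
 a_d(\bar{x}) P_{d, \bar{x}}(w) - a_d(0) P_{d, 0}(w) = \sum_{k > d} \frac{t^{k-d}}{(k-d)!}\, a_k(0)\, \partial_1^{k-d} P_{k, 0}(w) \, ,
\end{gather*}
the sum starting at $k = d+1$ because lower-degree $P_{k, 0}$ contribute zero to degree $d$ in $w$. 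Each summand carries $t^{k-d} \geq t$; combined with the decay $a_k(0)^2 \leq C\epsilon (ed)^{-2(k-d)} a_d(0)^2$ for $k > d$ and the iterated Lemma \ref{lemma_pdp} estimate $\|\partial_1^{j} P_{k,0}\|^2 \leq \prod_{i=0}^{j-1}(k-i)(2(k-i)+n-2)$, Stirling's approximation for $((k-d)!)^{2}$ telescopes the series into a bound $C(n)\epsilon t^2 a_d(0)^2$, uniformly in $d$.

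For (3), I would use the harmonic polynomial $G(z) := Q_0(z) - Q_{\bar{x}}(z - \bar{x}) - (u(\bar{x}) - u(0)) = \tilde{u}^{\bar{x}}(z) - \tilde{u}(z)$ of degree $\leq d$ in $z$. Expanding $G$ in homogeneous components in the shifted variable $w = z - \bar{x}$, the degree-$(d-1)$ piece is exactly $t\partial_1 Q_0(w)$ when $d \geq 2$ (for $d = 1$ the constant $u(\bar{x}) - u(0)$ also enters and the statement must be treated separately). Harmonicity of $G$ gives the orthogonal decomposition
\begin{gather*}
 \|G\|_{L^2(\partial B(\bar{x}, s))}^2 = \sum_{k=0}^{d} s^{2k} \|G_k^w\|^2\, .
\end{gather*}
Taking $s = 1$ and estimating $\|G\|_{L^2(\partial B(\bar{x}, 1))}^2 \leq 2\|\tilde{u}^{\bar{x}}\|^2 + 2\|\tilde{u}\|^2$, where the first term is directly bounded by Theorem \ref{th_hNpinch} at $\bar{x}$ and the second is transferred from sphere estimates around $0$ via standard interior bounds for the harmonic function $\tilde{u}$ (using $\bar{x} \in B(0,1)$ and the smallness of $\tilde{u}$ on $\partial B(0,s)$ for $s \in (1,ed)$), gives $t^2 \|\partial_1 Q_0\|^2 \leq C\epsilon\, a_d(0)^2$. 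Dividing by $a_d(0)^2$ and comparing with Lemma \ref{lemma_pdp}'s identity $\|\nabla P_{d,0}\|^2 = d(2d+n-2)$ yields (3). Statement (2) then follows by decomposing
\begin{gather*}
 u(y) - u(\bar{x} + y) = [\tilde{u}(y) - \tilde{u}^{\bar{x}}(\bar{x} + y)] + [Q_0(y) - Q_{\bar{x}}(y)] + [u(0) - u(\bar{x})]
\end{gather*}
and bounding each bracket: the first by the approximation estimates, the second by (1), and the constant $u(\bar{x}) - u(0) = Q_0(\bar{x}) + \tilde{u}(\bar{x})$ using the $t^d \leq t^2$ decay of $|Q_0(\bar{x})|$ for $d \geq 2$, with the frequency-pinching at $\bar{x}$ implicitly forcing $t$ small enough to absorb the residual factor.

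The main obstacle is the uniform-in-$d$ bookkeeping in (1): the geometric decay $a_k(0)^2 \leq C\epsilon/(ed)^{2(k-d)}\, a_d(0)^2$ must beat the growth $\|\partial_1^{k-d} P_{k,0}\|^2 \sim (k!/d!)(2k)^{k-d}$ of derivative norms. Only after dividing by $((k-d)!)^2$ and invoking Stirling's $(k-d)! \geq ((k-d)/e)^{k-d}$ does the series converge with a constant depending solely on $n$, the dominant term being the $k = d+1$ one which contributes the announced factor of $t^2$.
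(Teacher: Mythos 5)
Your part (1) is essentially the paper's argument: the same re-expansion identity for the degree-$d$ piece, the same coefficient decay $a_k^2\lesssim\epsilon\,(ed)^{-2(k-d)}a_d^2$, and the same iterated Lemma~\ref{lemma_pdp} derivative bound with a factorial cancellation. Part (2) is a quick corollary in both treatments.

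Part (3), however, has a genuine gap. The identity $G^w_{d-1}=t\,\partial_1 Q_0$ for the polynomial $G=\tilde u^{\bar x}-\tilde u$ is correct (and it is the same identity the paper uses, just phrased via $G$), and $\|\tilde u^{\bar x}\|_{L^2(\partial B(\bar x,1))}\lesssim\sqrt\epsilon\,|a_d|$ does follow from Theorem~\ref{th_hNpinch} applied at $\bar x$. The problem is the other summand. The pinching at $0$ only controls $\fint_{\partial B(0,s)}\tilde u^2\lesssim\epsilon\, a_d^2\,s^{2d}$ for $s\in(1,ed)$; any ``standard interior bound'' transfer to $\partial B(\bar x,1)\subset B(0,1+t)$ picks up a factor of order $(1+t)^{2d}$ from the $s^{2d}$ growth, which is not absorbed by the claimed $C(n)$. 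Worse, the obstruction is structural, not just a loss of sharpness: expanding $\tilde u(\bar x+w)$ in $w$, the degree-$j$ piece for $j<d$ equals $a_j(\bar x)P_{j,\bar x}-\frac{t^{d-j}}{(d-j)!}\partial_1^{d-j}Q_0$, so $\|\tilde u\|^2_{L^2(\partial B(\bar x,1))}$ already contains, up to small corrections, the terms $\frac{t^{2(d-j)}}{((d-j)!)^2}\|\partial_1^{d-j}Q_0\|^2$ --- including $t^2\|\partial_1 Q_0\|^2$, the very quantity you are trying to estimate. Bounding $\|G\|\le\|\tilde u^{\bar x}\|+\|\tilde u\|$ is therefore circular. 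The paper avoids this entirely: it never bounds $\|G\|$ or $\|\tilde u\|$ on $\partial B(\bar x,1)$, but instead bounds the single coefficient $a_{d-1}(\bar x)^2\le C\epsilon\,a_d(\bar x)^2$ directly from the frequency pinching at $\bar x$ (no transfer from $0$ needed), and then isolates $a_d\,t\,\partial_1 P_d$ from the explicit degree-$(d-1)$ re-expansion formula by bounding the tail $\sum_{k\ge1}\frac{t^{k+1}}{(k+1)!}a_{d+k}\partial_1^{k+1}P_{d+k}$ term by term using the $(ed)^{-2k}$ decay. You should replace the $\|G\|$-step with this term-by-term estimate of the single degree-$(d-1)$ component.
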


\begin{proof} Let $u= \sum_k a_k P_k$ be the expansion at zero. By the pinching conditions and Theorem \ref{t:eff_tan_con_uniq_harm}, we know that for all $s\in [e^{-1},ed]$ we have
 \begin{gather}\label{eq_estepsd}
  \sum_{k\neq d} a_k^2 s^{2k}\leq \epsilon a_d^2 s^{2d}\quad \Longrightarrow \quad \sum_{k\geq d+1} a_k^2 (ed)^{2(k-d)}\leq \epsilon a_d^2\quad \Longrightarrow \quad \forall k\geq d+1\, , \, \,  a_k^2 \leq \epsilon a_d^2 (ed)^{2(d-k)}\, .
 \end{gather}
In order to compute the expansion $u$ at $\bar x$, we expand all the polynomials $P_k$ using Taylor's formula. 
\begin{gather}
 P_k(x+\bar x) = P_k(x) + \sum_{ i=1}^k \frac{t^i}{i!} \ton{\partial _1}^i P_k\, ,
\end{gather}
where $\ton{\partial _1}^i P_k$ is again a homogeneous harmonic polynomial of degree $k-i$. By an iterated use of Lemma \ref{lemma_pdp}, we can estimate 
\begin{gather}\label{eq_estdip}
 \frac{\norm{\ton{\partial _1}^i P_k}^2}{\norm{P_k}^2}\leq \binom k i i! \prod_{j=0}^{i-1} (2(k-j)+n-2)=\qua{\binom k i \ i!}^2 2^i  \prod_{j=0}^{i-1} \ton{1+\frac{n-2}{2(k-j)}}\leq c \qua{\binom k i \ i!}^2 2^i  \ton{\frac{k}{k-i+1}}^{n/2}\, .
\end{gather}

Now, when we re-expand, we obtain
\begin{gather}
 u(x+\bar x)=\sum_k a_k P_k(x+\bar x)\, .
\end{gather}
The degree $d$ part in this expansion is
\begin{gather}
a_d(\bar x) P_{d,\bar x} = a_d P_d + \sum_{k=1}^\infty \frac{t^k}{k!} a_{d+k}\ton{\partial _1}^k P_{d+k}\, .
\end{gather}
By \eqref{eq_estepsd} and \eqref{eq_estdip}
\begin{gather}
 a_d^{-1}\norm{\sum_{k=1}^\infty \frac{t^k}{k!} a_{d+k}\ton{\partial _1}^k P_{d+k}}\leq \sqrt \epsilon \sum_{k=1}^\infty t^k \binom{d+k}{d} d^{-k} e^{-k}\ton{1+\frac{k}{d}}^{n/2}\, .
\end{gather}
Simple and very rough algebraic manipulations lead to
\begin{gather}
 a_d^{-1}\norm{a_dP_d-a_d(\bar x) P_{d,\bar x}}\leq \sqrt \epsilon t \sum_{k=1}^\infty \frac{(d+k)(d+k-1)\cdots (d+1)}{d^k k!} e^{-k}\ton{1+\frac{k}{d}}^{n/2}\leq C(n)\sqrt\epsilon t\, ,
\end{gather}
which concludes the proof of point $1$.

\paragraph{Proof of (2)} This point is a simple corollary of the fact that pinching implies the dominance of the $d$-th term in the expansion. Thus, if the $d$-th term in the expansion is almost constant, the whole function is almost constant.
 
\paragraph{Proof of (3)} In order to prove this last point, we consider also the $d-1$ order part in the expansion of $u$ around $\bar x$.
 
 In detail, it is easy to see that 
 \begin{gather}
  a_{d-1}(\bar x) P_{d-1,\bar x} = a_{d-1} P_{d-1} + a_d t \ \partial_1 P_{d}+ \sum_{k=1}^\infty \frac{t^{k+1}}{(k+1)!} a_{d+k}\ton{\partial _1}^{k+1} P_{d+k} \, .
 \end{gather}
Given the pinching in the frequency around $\bar x$, we obtain
\begin{gather}
 \norm{a_{d-1}(\bar x) P_{d-1,\bar x}}^2\leq C\epsilon \norm{a_{d}(\bar x) P_{d,\bar x}}^2 =C\epsilon a_d^2\, .
\end{gather}
Moreover, the triangle inequality implies the easy estimate
\begin{gather}
 \norm{a_{d-1}(\bar x) P_{d-1,\bar x}} \geq \norm{a_d t \ \partial_1 P_d} - \norm{a_{d-1} P_{d-1}} - \sum_{k=1}^\infty \frac{t^{k+1}}{(k+1)!}  a_{d+k}\norm{\ton{\partial _1}^{k+1} P_{d+k}}\, .
\end{gather}
By computations similar to before, we obtain
\begin{gather}
 \norm{a_d t \ \partial_1 P_d} \leq C\abs {a_d} \sqrt \epsilon + Cd \abs {a_d} \sqrt \epsilon  \, .
\end{gather}
Given Lemma \ref{lemma_pdp}, we can conclude
\begin{gather}
 t \abs {a_d} \norm{\partial_1 P_d} \leq C(n)\abs{a_d} d \sqrt \epsilon \, \quad \Longrightarrow \quad \, \norm{\partial_1 P_d} \leq C(n) t^{-1} \sqrt \epsilon \norm{\nabla P_d}\, .
\end{gather}
\end{proof}

With this Lemma, we are in a position to prove a quantitative version of Proposition \ref{prop_pn-2}. In particular, we will see that the points where the frequency is almost pinched around $d$ are almost contained in a $n-2$ dimensional plane. In order to do so, we start by proving that if a harmonic polynomials has $n-1$ partial derivatives suitably close to zero, then it has to be linear.
\begin{lemma}
 Let $P_d:\R^n\to \R$ be a (nonconstant) hhP of degree $d$ such that
 \begin{gather}
  \norm{\partial_i P_d}\leq \sqrt\epsilon \norm{\nabla P_d}
 \end{gather}
for $i=1,\cdots,n-1$ and $\epsilon < \epsilon_0(n)=[2n(n-1)]^{-1}$. Then $P_d$ is linear, i.e., $d=1$.
\end{lemma}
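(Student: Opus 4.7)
The plan is to argue by contradiction: the case $d=1$ is the desired conclusion and $d=0$ is ruled out since $P_d$ is nonconstant, so I would assume $d\geq 2$ and derive a contradiction by producing matching upper and lower bounds on $\norm{\partial_n^2 P_d}$. The whole point is that harmonicity identifies $\partial_n^2 P_d$ with $-\sum_{i=1}^{n-1}\partial_i^2 P_d$, which forces the second derivative in the only ``good'' direction to be small; on the other hand, $\partial_n P_d$ itself must be large, and if $d\geq 2$ then Lemma~\ref{lemma_pdp} forces its gradient --- and hence $\partial_n^2 P_d$ --- to also be large, producing the contradiction.

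For the upper bound I would observe that each $\partial_i P_d$ with $i\leq n-1$ is either $0$ or a hhP of degree $d-1\geq 1$, so Lemma~\ref{lemma_pdp} applied to $\partial_i P_d$ gives $\norm{\nabla \partial_i P_d}^2 = (d-1)(2d+n-4)\norm{\partial_i P_d}^2 \leq (d-1)(2d+n-4)\epsilon\norm{\nabla P_d}^2$. Harmonicity of $P_d$ gives $\partial_n^2 P_d = -\sum_{i=1}^{n-1}\partial_i^2 P_d$, so the triangle inequality yields
\[
\norm{\partial_n^2 P_d} \;\leq\; (n-1)\sqrt{(d-1)(2d+n-4)\,\epsilon}\,\norm{\nabla P_d}.
\]

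For the lower bound, the hypothesis combined with $\norm{\nabla P_d}^2 = \sum_i \norm{\partial_i P_d}^2$ gives $\norm{\partial_n P_d}^2 \geq (1-(n-1)\epsilon)\norm{\nabla P_d}^2$, and since $\partial_n P_d$ is a hhP of degree $d-1\geq 1$, Lemma~\ref{lemma_pdp} applied to it reads $\norm{\nabla\partial_n P_d}^2 = (d-1)(2d+n-4)\norm{\partial_n P_d}^2$. Expanding $\norm{\nabla\partial_n P_d}^2 = \norm{\partial_n^2 P_d}^2 + \sum_{i\leq n-1}\norm{\partial_i\partial_n P_d}^2$ and absorbing the mixed terms via $\norm{\partial_i\partial_n P_d} \leq \norm{\nabla \partial_i P_d}$ and the upper bound just established yields
\[
\norm{\partial_n^2 P_d}^2 \;\geq\; (d-1)(2d+n-4)\bigl(1-2(n-1)\epsilon\bigr)\norm{\nabla P_d}^2.
\]

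Combining the two estimates and cancelling the strictly positive factor $(d-1)(2d+n-4)\norm{\nabla P_d}^2$ --- positivity requires $d\geq 2$ and $n\geq 2$ --- reduces to the scalar inequality $(n-1)^2\epsilon \geq 1-2(n-1)\epsilon$, equivalently $\epsilon \geq 1/(n^2-1)$. Choosing $\epsilon_0(n)$ below this threshold therefore contradicts $d\geq 2$, so $d=1$ as claimed. There is no real obstacle to the plan: the argument is two applications of Lemma~\ref{lemma_pdp} bracketing a single use of harmonicity, and the only delicate point is confirming that the factor $(d-1)(2d+n-4)$ is strictly positive for $d\geq 2$, which is precisely why the trade-off fails in the linear case $d=1$ and makes the conclusion sharp.
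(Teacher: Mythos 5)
Your argument is correct, and it's a genuine reorganization of the same underlying ingredients. The paper's proof also normalizes $P_d$, applies Lemma~\ref{lemma_pdp} twice (once to $P_d$, once to each $\partial_i P_d$) to convert the hypothesis $\norm{\partial_i P_d}\leq\sqrt\epsilon\norm{\nabla P_d}$ into smallness of the Hessian rows $\norm{\nabla\partial_i P_d}\leq\sqrt\epsilon\norm{\nabla^2 P_d}$ for $i\leq n-1$, and then uses harmonicity ($\sum_i \partial_i^2 P_d=0$) to control the remaining entry $\partial_n^2 P_d$. The difference is in the endgame: the paper sums the entrywise bounds to conclude directly that $\norm{\nabla^2 P_d}^2\leq c(n)\,\epsilon\,\norm{\nabla^2 P_d}^2$, hence $\nabla^2 P_d=0$ and $d\leq 1$; you instead isolate $\norm{\partial_n^2 P_d}$ and bracket it by an upper bound from harmonicity and a lower bound from the largeness of $\partial_n P_d$ (via a third application of Lemma~\ref{lemma_pdp}, to $\partial_n P_d$), assuming $d\geq 2$ to make the factor $(d-1)(2d+n-4)$ positive. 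Your version is slightly more careful about counting and gives the threshold $\epsilon_0(n)=1/(n^2-1)$, which is sharper than the paper's $1/(2n(n-1))$; the latter arises because the paper bounds every off-$(n,n)$ Hessian entry by $\epsilon\norm{\nabla^2 P_d}^2$ individually rather than using the aggregate row bounds, and it double-counts the off-diagonal entries involving the index $n$. (Note the paper's statement $\epsilon_0(n)=2n(n-1)$ is a typo for its reciprocal; your method sidesteps the issue and produces a correct, slightly larger $\epsilon_0$.) One small thing worth making explicit: the lower-bound step requires $\partial_n P_d\neq 0$ so that Lemma~\ref{lemma_pdp} applies to a nonconstant hhP, but this is automatic from your own inequality $\norm{\partial_n P_d}^2\geq(1-(n-1)\epsilon)\norm{\nabla P_d}^2>0$ once $\epsilon<1/(n-1)$.
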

\begin{proof}
 For simplicity we assume that $P_d$ is normalized. By Lemma \ref{lemma_pdp}
 \begin{gather}
  \norm{P_d}^2=1 \quad \Longrightarrow \quad \norm{\nabla P_d}^2 =\sum_{i=1}^n \norm{\partial_i P_d}^2 = d(2d+n-2) \quad \Longrightarrow \\
  \Longrightarrow \quad \norm{\nabla^2 P_d}^2 =\sum_{i,j=1}^n \norm{\partial_i \partial_j P_d}^2 = d(d-1)(2d+n-2)(2(d-1)+n-2)\, .
 \end{gather}
 Also, for each $i$,
 \begin{gather}
  \norm{\partial _i P_d }^2 = (d-1)(2(d-1)+n-2) \norm{\nabla \partial_i P_d}^2 = (d-1)(2(d-1)+n-2) \sum_{j=1}^n \norm{\partial_j \partial_i P_d}^2\, .
 \end{gather}
 Thus we obtain that for $i=1,\cdots,n-1$:
 \begin{gather}
  \norm{\nabla \partial_i P_d} = \norm{\nabla^2 (P_d)[e_i]}<\sqrt \epsilon \norm{\nabla^2 P_d}\, .
 \end{gather}
This in particular implies that $\nabla^2 P_d=(h_{ij})$ is a symmetric matrix (whose elements are hhP's) with $\norm{h_{ij}}^2 \leq \epsilon\norm{h}$ if either $i$ or $j$ are not $n$, so for all $(i,j)\neq (n,n)$. This last term is small as well since $P_d$ is harmonic. Indeed, $\sum_i h_{ii}(x) =0$ for all $x$ implies $\norm{h_{nn}}^2 \leq (n-1)^2 \epsilon \norm h$. Summing everything up we obtain
\begin{gather}
 \norm{h}^2 = \sum_{i,j=1}^n \norm{h_{ij}}^2 \leq \ton{(n^2-1)\epsilon + (n-1)^2 \epsilon } \norm{h}^2\, .
\end{gather}
If $\epsilon < c(n) = 2n(n-1)$, then $\norm{h}^2=0$, which implies $d=1$.
 
\end{proof}

One could rephrase this Lemma in the following way: the almost invariant directions of every nonlinear hhP $P_d$ are always contained in a neighborhood of a subspace $V$ of dimension $\leq n-2$. It is easy to see that this notion is in some sense stable wrt the polynomial $P_d$.
\begin{proposition}\label{prop_alcone}
 Let $d\geq2$ and $P_d, P'_d:\R^n\to \R$ be two nonlinear hhP's with $\norm{P_d}=\norm{P'_d}=1$. Let $I(\epsilon)\subset S\equiv\cur{\norm v =1}\subset \R^n$ be the set of almost invariant directions for $P_d$, i.e., the set of unit vectors $v$ such that
 \begin{gather}
  \norm{\partial_v P_d}\leq \sqrt \epsilon \norm{\nabla P_d}\, , 
 \end{gather}
and let $I'(\epsilon)$ be the corresponding set for $P'_d$. Then for every $\tau>0$, there exists $\epsilon_0(n,\tau)>0$ such that if $0<\epsilon<\epsilon_0$ and
\begin{gather}
 \norm{P_d-P'_d}\leq \sqrt{\epsilon}\, ,
\end{gather}
then there exists a subspace $V\leq \R^n$ of dimension $\leq n-2$ such that $I(\epsilon)\cup I'(\epsilon)\subset B_\tau(V)$. We say that this subspace $V$ as is the almost invariant subspace of $P_d$ and $P'_d$.
\end{proposition}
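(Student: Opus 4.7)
The plan is to first reduce to finding a suitable subspace for $P_d$ alone via a perturbation argument, and then to extract such a subspace from the spectral decomposition of the Gram matrix of partial derivatives of $P_d$, using the previous lemma to guarantee that at least two of its eigenvalues are uniformly large.

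\textbf{Reduction to a single polynomial.} Since $P_d-P'_d$ is a hhP of degree $d$, Lemma \ref{lemma_pdp} gives $\norm{\nabla(P_d-P'_d)} = \sqrt{d(2d+n-2)}\,\norm{P_d-P'_d}$, while the normalization $\norm{P_d}=1$ yields $\norm{\nabla P_d} = \sqrt{d(2d+n-2)}$. Thus for every unit vector $v$,
\begin{gather*}
\norm{\partial_v(P_d-P'_d)} \leq \norm{\nabla(P_d-P'_d)} \leq \sqrt{\epsilon}\,\norm{\nabla P_d},
\end{gather*}
and the triangle inequality gives $\norm{\partial_v P_d} \leq 2\sqrt{\epsilon}\,\norm{\nabla P_d}$ whenever $v\in I'(\epsilon)$, i.e.\ $I'(\epsilon)\subset I(4\epsilon)$. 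Hence it suffices to produce a subspace $V\leq \R^n$ with $\dim V \leq n-2$ such that $I(4\epsilon)\subset B_\tau(V)$.

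\textbf{Spectral analysis of $P_d$.} Consider the symmetric positive semidefinite matrix $M_{ij}\equiv \ps{\partial_i P_d}{\partial_j P_d}$, so that $\norm{\partial_v P_d}^2 = v^T M v$ and the trace of $M$ equals $\norm{\nabla P_d}^2$. Let $\nu_1\leq \cdots\leq \nu_n$ denote its eigenvalues, with orthonormal eigenvectors $w_1,\ldots,w_n$. The key claim is that $\nu_{n-1} \geq \epsilon_*(n)\,\norm{\nabla P_d}^2$, with $\epsilon_*(n)>0$ the dimension-only threshold furnished by the previous lemma. For otherwise all of $\nu_1,\ldots,\nu_{n-1}$ would be strictly less than $\epsilon_*(n)\norm{\nabla P_d}^2$, and rotating coordinates so that $w_i$ becomes the $i$-th standard basis vector for $i=1,\ldots,n-1$ would yield $\norm{\partial_i P_d} < \sqrt{\epsilon_*(n)}\,\norm{\nabla P_d}$ for each such $i$, forcing $P_d$ to be linear by the previous lemma and contradicting $d\geq 2$.

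\textbf{Construction of $V$.} Set $V\equiv \SPAN\cur{w_1,\ldots,w_{n-2}}$, of dimension $n-2$. For any unit $v=\sum c_i w_i\in I(4\epsilon)$,
\begin{gather*}
4\epsilon\,\norm{\nabla P_d}^2 \geq \norm{\partial_v P_d}^2 = \sum_i \nu_i c_i^2 \geq \nu_{n-1}(c_{n-1}^2+c_n^2) \geq \epsilon_*(n)\,\norm{\nabla P_d}^2\,\dist(v,V)^2,
\end{gather*}
so $\dist(v,V)\leq 2\sqrt{\epsilon/\epsilon_*(n)}$. Taking $\epsilon_0(n,\tau)\equiv \tau^2\epsilon_*(n)/4$ then gives $I(4\epsilon)\subset B_\tau(V)$ and hence $I(\epsilon)\cup I'(\epsilon)\subset B_\tau(V)$. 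The main obstacle is the spectral step, where one must transfer the coordinate-based previous lemma to an arbitrary orthonormal frame and rely crucially on the degree-independence of the threshold $\epsilon_*(n)$, which is exactly the strength of the preceding lemma.
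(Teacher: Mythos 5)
Your proof is correct, and it takes a genuinely cleaner, more constructive route than the paper. The paper proceeds by contradiction: if $I(\epsilon)$ escapes every $\tau$-neighborhood of an $(n-2)$-plane, one can extract $n-1$ unit vectors in $I$ that are $\tau$-separated, orthonormalize them (picking up a factor $c(n,\tau)$ in the almost-invariance bound), and then invoke the preceding lemma in the resulting orthonormal frame to force $d=1$. Your approach instead makes the subspace $V$ explicit by diagonalizing the Gram matrix $M_{ij}=\ps{\partial_i P_d}{\partial_j P_d}$, taking $V$ to be the span of the eigenvectors for the $n-2$ smallest eigenvalues, and using the preceding lemma only to certify the quantitative gap $\nu_{n-1}\geq\epsilon_*(n)\norm{\nabla P_d}^2$. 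Both proofs rely on the same rotation-invariance of the preceding lemma (which you correctly flag as the crucial transfer step), but yours avoids the somewhat implicit orthonormalization constant and yields the clean explicit threshold $\epsilon_0(n,\tau)=\tau^2\epsilon_*(n)/4$, whereas the paper's constant is left unquantified. Your reduction step is also slightly sharper in spirit -- you directly show $I'(\epsilon)\subset I(4\epsilon)$ via the triangle inequality together with $\norm{\partial_v(P_d-P'_d)}\leq\norm{\nabla(P_d-P'_d)}$, which matches the paper's derived inclusion but with the reasoning laid bare.
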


\begin{remark}
 Note that this proposition is a quantitative version of Proposition \ref{prop_pn-2} which is also stable wrt the $L^2$ norm of the polynomial $P_d$.  Note also that $V$ is only well defined up to an $\epsilon$-perturbation.
\end{remark}

\begin{proof}
 Recall that, by Lemma \ref{lemma_pdp} and the normalization of the polynomials, the following equality holds
 \begin{gather}
  \norm{\nabla P_d}^2 = \norm{\nabla P'_d}^2 = d(2d+n-2)\, .
 \end{gather}
Thus it is easy to see that
\begin{gather}
 \norm{\partial_v P_d - \partial_v P'_d} \leq \sqrt{d(2d+n-2)} \norm{P_d-P'_d}= \sqrt{\epsilon}\norm{\nabla P_d}=\sqrt \epsilon \norm{\nabla{P'_d}}\, ,
\end{gather}
which means $I(\epsilon/4)\subset I'(\epsilon)\subset I(4\epsilon)$. Thus, up to an inconsequential change in $\epsilon_0$, it is sufficient to prove the statement for $I$.

The inclusion $I(\epsilon)\subset B_\tau(V)$ is an easy corollary of the previous Proposition. Suppose by contradiction that for every $n-2$ dimensional plane $V$, $I\setminus B_\tau(V)\neq \emptyset$. Then there exists $n-1$ unit vectors $v_i$ with 
\begin{gather}
\norm{\partial_{v_i} P_d}\leq \sqrt \epsilon \norm{\nabla P_d}\quad \text{  and  }\quad  v_i \not \in B_\tau \ton{\operatorname{span}(v_1,\cdots,v_{i-1})}\, .
\end{gather}
By a simple orthonormalization argument, it is easy to see that there exists $n-1$ \textit{orthonormal} vectors $w_i$ satisfying $\norm{\partial_{w_i} P_d}\leq c(n,\tau)\sqrt\epsilon \norm{\nabla P_d}$. The previous Lemma concludes the proof.
\end{proof}

As a Corollary of this Proposition and Lemma \ref{lemma_epsd}, we obtain that the points with pinched frequency have to be close to an $n-2$ dimensional plane.
\begin{corollary}\label{cor_alcone}
Let $u:B_{e^2 d +1}(0)\to \R$ be a harmonic function, fix some $\tau>0$ and set $\cV$ to be set of points $x\in B_1(0)$ such that
\begin{gather}
 N(x,e^2 d)-N(x,e^{-1})\leq \epsilon \quad \text{ with } \quad \abs{N(x,1)-d}\leq \epsilon\, .
\end{gather}
If $d\geq 2$, there exists $\epsilon_0(n,\tau)$ such that if $\epsilon<\epsilon_0$, then there exists a subspace $V$ of dimension at most $n-2$ such that for all $x\in \cV$
\begin{gather}
 \cV\cap B_1(0)\subset x+B_\tau (V)\, .
\end{gather}

Note that the subspace $V$ may be chosen independently of $x$. Moreover, if $N(x,e^2 d)-N(x,r_x)\leq \epsilon $ with $0\leq r_x\leq e^{-1}$, the subspace $V$ may also be chosen independently of $r_x$.
\end{corollary}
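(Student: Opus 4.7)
The plan is to reduce the corollary to Lemma \ref{lemma_epsd} and Proposition \ref{prop_alcone} by choosing a single reference point $x_0\in\cV$, extracting from Theorem \ref{t:eff_tan_con_uniq_harm} the unique tangent hhP $P_{d,x_0}$, and then showing that every direction $(x-x_0)/|x-x_0|$ with $x\in\cV$ is an almost invariant direction for $P_{d,x_0}$.  The codimension-$2$ subspace $V$ will be the almost invariant subspace of this single polynomial.

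\textbf{Step 1 (pick a base point and its tangent polynomial).}  If $\cV=\emptyset$ there is nothing to prove.  Otherwise fix any $x_0\in\cV$ and, after translating, assume $x_0=0$.  The frequency pinching at $0$ satisfies the hypothesis of Theorem \ref{t:eff_tan_con_uniq_harm} on the scales $[e^{-1},e^2 d]$, which produces a unique hhP $P_d$ of degree $d$ such that $T^u_{0,t}$ is $O(\sqrt{\epsilon})$-close in $L^2$ to $P_d$ for every $t\in[1,ed]$.  If $d=1$ then $P_d$ is linear and Proposition \ref{prop_pn-2} already supplies an $(n-1)$-invariant subspace, so the corollary is immediate; assume henceforth $d\geq 2$, so $P_d$ is nonlinear.

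\textbf{Step 2 (each point of $\cV$ gives an almost invariant direction of $P_d$).}  Let $x\in\cV$.  If $|x|\leq\tau$ then $x\in B_\tau(V)$ for any $V$, trivially.  If $|x|>\tau$, apply Lemma \ref{lemma_epsd} with base point $0$ and perturbation point $\bar x = x$ (after a rotation placing $x$ on the first axis, and using that $|x|\leq 2$, so the constants behave).  Conclusion (3) of that lemma yields
\begin{gather*}
\norm{\partial_v P_d}\leq C(n)\,|x|^{-1}\sqrt{\epsilon}\,\norm{\nabla P_d}\leq C(n)\tau^{-1}\sqrt{\epsilon}\,\norm{\nabla P_d},
\end{gather*}
where $v=x/|x|\in S^{n-1}$.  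Hence $v$ lies in the almost invariant set $I(C(n)\tau^{-2}\epsilon)$ of $P_d$.

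\textbf{Step 3 (almost invariant directions lie near an $(n-2)$-plane).}  Choose $\epsilon_0(n,\tau)$ small enough so that $C(n)\tau^{-2}\epsilon_0$ is below the threshold of Proposition \ref{prop_alcone} applied with $P_d=P_d'=P_{d,0}$.  The proposition (together with the Lemma that precedes it, which is where the nonlinearity of $P_d$ is used) produces an $(n-2)$-dimensional subspace $V$, depending only on $P_d$, such that the almost invariant set is contained in $B_\tau(V)$.  Therefore $v\in B_\tau(V)$, and since $|x|\leq 1$ this gives $\mathrm{dist}(x,V)\leq \tau$, i.e.\ $x\in 0+B_\tau(V)$, for every $x\in\cV$.

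\textbf{Step 4 (independence of the base point).}  For a second base point $x_0'\in\cV$ the tangent polynomial $P_{d,x_0'}$ produced by Theorem \ref{t:eff_tan_con_uniq_harm} satisfies, by Lemma \ref{lemma_epsd}(1), $\norm{P_{d,x_0}-P_{d,x_0'}}\leq C(n)\sqrt{\epsilon}$.  Applying Proposition \ref{prop_alcone} to the pair $(P_{d,x_0},P_{d,x_0'})$ shows that the almost invariant subspaces of the two polynomials coincide up to a $\tau$-perturbation, so a single $V$ works simultaneously for every base point, giving the stated $\cV\cap B(0,1)\subset x+B_\tau(V)$ for all $x\in\cV$.

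The main obstacle is the $t^{-1}$ factor appearing in Lemma \ref{lemma_epsd}(3), which prevents one from controlling directions between points that are very close; this is dispatched by the trivial regime $|x-x_0|\leq\tau$, and then the remaining regime $|x-x_0|\geq\tau$ is absorbed by choosing $\epsilon_0(n,\tau)$ sufficiently small.  A secondary subtlety is that Lemma \ref{lemma_epsd} is stated for $\bar x\in B(0,1)$, but after re-centering at $x_0$ one has $|\bar x|\leq 2$; tracing through the Taylor-series estimates in its proof shows that the constants degrade only by a dimensional factor, which is harmless.
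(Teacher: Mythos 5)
Your proof follows the route the paper intends (the paper says the corollary follows from Lemma \ref{lemma_epsd} and Proposition \ref{prop_alcone}, and Steps 1--3 do exactly that), so the core argument is sound. However, your dismissal of the case $d=1$ is incorrect: for $d=1$ and $u$ exactly linear, $N(x,r)\equiv 1$ for every $x$ and $r$, so $\cV=B(0,1)$, and no codimension-two $V$ can satisfy $\cV\subset x+B_\tau(V)$ for small $\tau$; Proposition \ref{prop_pn-2} supplies only an $(n-1)$-dimensional invariant plane, which is strictly weaker than what the corollary asserts. The correct reading is that the corollary --- like Proposition \ref{prop_alcone}, which it cites --- implicitly assumes $d\geq 2$; in the covering argument of Section \ref{ss:volume_estimates_harmonic} it is only invoked in that regime, since for $d\leq 1$ the frequency is already low enough that the frequency-drop mechanism takes over. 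You should simply restrict to $d\geq 2$ rather than claim the $d=1$ case follows from Proposition \ref{prop_pn-2}.

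Two smaller remarks. After recentering at $x_0$, the other points of $\cV$ lie in $B(0,2)$, not $B(0,1)$, so the conclusion of Step 3 is $\operatorname{dist}(x-x_0,V)\leq 2\tau$; this factor of two is harmless and absorbed into $\epsilon_0(n,\tau)$. Step 4 is also more work than needed: once $\cV\cap B(0,1)\subset x_0+B_\tau(V)$ holds for a single base point $x_0$, the elementary inequality $\operatorname{dist}(y-x,V)\leq\operatorname{dist}(y-x_0,V)+\operatorname{dist}(x_0-x,V)\leq 2\tau$ (valid because $V$ is a linear subspace) already gives $\cV\cap B(0,1)\subset x+B_{2\tau}(V)$ for every $x\in\cV$, so there is no need to compare tangent polynomials at two different base points.
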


\subsection[Almost n-2 invariant hhP's]{Almost $n-2$ invariant hhP's}\label{ss:almost_codim2_invariant}
In the previous section, we have seen that if some hhP is almost $n-1$ invariant, then it depends only on $1$ variable and thus it is linear. Here we will explore the properties of almost $n-2$ invariant polynomials. Although an almost $n-2$ invariant polynomial is not necessarily a polynomial of $2$ variables, such a function has to be \textit{close} to an hhP of $2$ variables. Exploiting the properties of hhP's in dimension $2$, we will then use this statement to get some control over the critical and almost critical sets of such functions.

\begin{lemma}
 Let $P:\R^n\to \R$ be a hhP of degree $d$ such that
 \begin{gather}
  \norm{\partial_1 P}^2\leq \epsilon \norm{\nabla P}^2 = \epsilon d(2d+n-2) \norm{P}^2\, .
 \end{gather}
There exist constants $\epsilon_0$ such that if $\epsilon\leq \epsilon_0/d^2$, then
\begin{gather}
 P = \sqrt{1-\delta} Q + \sqrt \delta R\, ,
\end{gather}
where $Q$ and $R$ are hhP's of degree $d$ with $\norm Q = \norm R = \norm P$, $Q$ is $x_1$-invariant and $\delta\leq n\epsilon_0$.
\end{lemma}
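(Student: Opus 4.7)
The plan is to use the orthogonal decomposition $\cP_d = \cP_d(\cancel{x_1}) \oplus \cP_d(\cancel{x_1})^\perp$ introduced earlier, and bound the $\perp$-component via Proposition \ref{prop_perpest}. Concretely, write
\begin{gather*}
P = P_0 + P_\perp, \quad P_0 \in \cP_d(\cancel{x_1}), \ P_\perp \in \cP_d(\cancel{x_1})^\perp,
\end{gather*}
so that $\norm{P}^2 = \norm{P_0}^2 + \norm{P_\perp}^2$ by orthogonality. Since $P_0$ is $x_1$-invariant we have $\partial_1 P_0 = 0$, hence $\partial_1 P = \partial_1 P_\perp$.

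Now I would apply Proposition \ref{prop_perpest} to $P_\perp$ to get $\norm{P_\perp} \leq \norm{\partial_1 P_\perp} = \norm{\partial_1 P}$. Combined with the hypothesis this yields
\begin{gather*}
\norm{P_\perp}^2 \leq \norm{\partial_1 P}^2 \leq \epsilon\, d(2d+n-2)\, \norm{P}^2.
\end{gather*}
Defining $\delta \equiv \norm{P_\perp}^2 / \norm{P}^2$, the assumption $\epsilon \leq \epsilon_0/d^2$ gives
\begin{gather*}
\delta \leq \epsilon_0 \cdot \frac{2d+n-2}{d} = \epsilon_0 \left(2 + \frac{n-2}{d}\right) \leq n\epsilon_0
\end{gather*}
for $d \geq 1$, $n \geq 2$, which is precisely the bound on $\delta$ claimed.

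Finally, to produce the decomposition with $\norm{Q} = \norm{R} = \norm{P}$, I would simply rescale each component: set $Q \equiv (\norm{P}/\norm{P_0}) P_0$ and $R \equiv (\norm{P}/\norm{P_\perp}) P_\perp$ (when $P_\perp \neq 0$; otherwise the statement is trivial with $\delta = 0$ and $R$ any unit $x_1$-dependent hhP). By construction $\norm{Q} = \norm{R} = \norm{P}$, $Q$ remains $x_1$-invariant, and
\begin{gather*}
\sqrt{1-\delta}\, Q + \sqrt{\delta}\, R = \frac{\sqrt{1-\delta}\, \norm{P}}{\norm{P_0}} P_0 + \frac{\sqrt{\delta}\, \norm{P}}{\norm{P_\perp}} P_\perp = P_0 + P_\perp = P,
\end{gather*}
since $\norm{P_0} = \sqrt{1-\delta}\,\norm{P}$ and $\norm{P_\perp} = \sqrt{\delta}\,\norm{P}$. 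There is no real obstacle here; the entire argument is essentially a direct application of Proposition \ref{prop_perpest} together with the orthogonal splitting. The only subtle point is ensuring the constant works out to exactly $n\epsilon_0$, which motivates the choice of the $\epsilon \leq \epsilon_0/d^2$ scaling in the hypothesis.
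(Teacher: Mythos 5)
Your proof is correct and follows essentially the same route as the paper: orthogonally decompose $P$ into its $x_1$-invariant part and its complement, observe that $\partial_1 P$ picks out only the complement, apply Proposition \ref{prop_perpest} to bound the complement, and normalize to get $\norm{Q}=\norm{R}=\norm{P}$. The paper's proof is the same argument stated more tersely (it leaves the final rescaling implicit), so nothing of substance differs.
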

\begin{proof}
 We will assume for simplicity $\norm P=1$. Let $P=Q+R$, where $Q\in \cP_d$ is independent of $x_1$, and $R$ is orthogonal to all $x_1$ invariant polynomials. Then $\partial_1 P = \partial_1 R$, and so, by Proposition \ref{prop_perpest}, $\norm{R}\leq \norm{\partial_1 R}\leq \sqrt{n\epsilon} d \norm{P}\leq \sqrt{n\epsilon_0} \norm P$.
\end{proof}

Proceeding by successive steps, one can prove the following.
\begin{proposition}\label{prop_poly2}
 Let $P:\R^n\to \R$ be a hhP of degree $d$ such that for $i=1,\cdots,k\leq n-2$
 \begin{gather}
  \norm{\partial_i P}^2\leq \epsilon \norm{\nabla P}^2 = \epsilon d(2d+n-2) \norm{P}^2\, .
 \end{gather}
There exist constants $\epsilon_0$ such that if $\epsilon\leq \epsilon_0/d^{2n-4}$, then
\begin{gather}
 P = \sqrt{1-\delta^2} Q + \delta R\, ,
\end{gather}
where $Q$ and $R$ are hhP's of degree $d$ with $\norm Q = \norm R = \norm P$, $Q$ is $x_1,\cdots,x_k$ invariant and $\delta\leq \sqrt {c(n)\epsilon_0}$.
\end{proposition}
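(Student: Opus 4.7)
The plan is to induct on $k$, peeling off one almost-invariant direction at a time; the base case $k=1$ is exactly the preceding lemma. For the inductive step, assume the proposition for $k-1$, which yields an orthogonal decomposition
\begin{equation}
P=\sqrt{1-\delta_{k-1}^2}\,Q_{k-1}+\delta_{k-1}\,R_{k-1},
\end{equation}
with $Q_{k-1}\in\cP_d$ that is $x_1,\dots,x_{k-1}$-invariant, $R_{k-1}\perp Q_{k-1}$ in $L^2(\partial B)$, $\norm{Q_{k-1}}=\norm{R_{k-1}}=\norm{P}$, and (tracking the induction) $\delta_{k-1}^2\leq C(n)\epsilon\, d^{2k-2}$.

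The key step is to transfer the smallness of $\norm{\partial_k P}^2\leq\epsilon\norm{\nabla P}^2$ to a comparable bound on $\norm{\partial_k Q_{k-1}}^2$. Differentiating the decomposition, using the triangle inequality, and controlling $\norm{\partial_k R_{k-1}}\leq \norm{\nabla R_{k-1}}=\sqrt{d(2d+n-2)}\norm{R_{k-1}}$ via Lemma \ref{lemma_pdp}, one obtains
\begin{equation}
\norm{\partial_k Q_{k-1}}^2\leq C(n)\ton{\epsilon+\delta_{k-1}^2}\norm{\nabla Q_{k-1}}^2.
\end{equation}
Applying the previous lemma to $Q_{k-1}$ in the $x_k$-direction with new pinching $\epsilon'=C(n)(\epsilon+\delta_{k-1}^2)$ --- which satisfies $\epsilon'\leq\epsilon_0/d^2$ precisely when $\epsilon\leq\epsilon_0/d^{2k}$ --- produces a decomposition $Q_{k-1}=\sqrt{1-\eta^2}\,Q+\eta\,R'$ with $\eta^2\leq n\epsilon'\leq C(n)\epsilon\, d^{2k}$ and $Q$ an $x_k$-invariant hhP of norm $\norm{P}$. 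Crucially, the base lemma is realized by the orthogonal splitting $\cP_d=\cP_d(\cancel{x_k})\oplus\cP_d(\cancel{x_k})^\perp$; by the explicit Kelvin-transform formula $K[p]=x_k p+c_{d,n}\abs{x}^2\partial_k p$ (which involves only the single coordinate $x_k$), both summands of the splitting are preserved by any linear map commuting with translations/reflections in $x_1,\dots,x_{k-1}$, so the projection $Q$ inherits the earlier invariances and is $x_1,\dots,x_k$-invariant.

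Substituting back and rewriting $P=\sqrt{1-\delta_k^2}\,Q+\delta_k\,R$ for an appropriate hhP $R$, one has $\delta_k^2=1-(1-\delta_{k-1}^2)(1-\eta^2)\leq\delta_{k-1}^2+\eta^2\leq C(n)\epsilon\, d^{2k}$, so the induction propagates. Taking $k=n-2$ gives $\delta^2\leq C(n)\epsilon\, d^{2n-4}\leq c(n)\epsilon_0$ under the hypothesis $\epsilon\leq\epsilon_0/d^{2n-4}$, which is the desired bound. The main obstacle will be the bookkeeping: verifying at each inductive step that the $L^2$-projection onto $\cP_d(\cancel{x_k})$ genuinely preserves the previously established invariances, and tracking the precise power of $d$ through the $n-2$ iterations so that the final threshold is exactly $\epsilon_0/d^{2n-4}$ rather than a worse quantity. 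Everything else is a bounded compounding of the single-direction estimate already proved.
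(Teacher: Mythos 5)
Your inductive scheme matches the paper's (peel off one almost-invariant direction at a time, with the one-direction lemma as base case), and the transfer of the $\partial_k$-pinching from $P$ to $Q_{k-1}$ together with the $d$-power bookkeeping is plausible. The gap is precisely in the step you flag as ``crucial'': the claim that the $L^2(\partial B)$-orthogonal projection of $\cP_d$ onto $\cP_d(\cancel{x_k})$ \emph{in $\R^n$} preserves $x_1,\dots,x_{k-1}$-invariance. This is false. Take $n=4$, $d=2$, $k=2$ and $Q_1 = x_2^2 - x_3^2$, which is $x_1$-invariant; a direct computation shows that its $L^2(\partial B)$-orthogonal projection onto $\cP_2(\cancel{x_2})\subset\cP_2$ (the harmonic quadratics in $x_1,x_3,x_4$) equals $\tfrac{1}{3}\ton{x_1^2 - 2x_3^2 + x_4^2}$, which visibly depends on $x_1$. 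The argument fails because harmonicity couples all coordinates through the trace constraint, and the Kelvin complement $K[p]=x_kp + c_{d,n}\abs{x}^2\partial_k p$ involves $\abs{x}^2$ and hence every coordinate, not just $x_k$; moreover, being fixed by the reflection $x_j\mapsto -x_j$ is weaker than being independent of $x_j$, and $\partial_j$ (for $j<k$) does \emph{not} intertwine the projections onto $\cP_d(\cancel{x_k})$ and $\cP_{d-1}(\cancel{x_k})$ --- otherwise $\partial_j Q_{k-1}=0$ would force $\partial_j(\pi_k Q_{k-1})=0$, contradicting the example above.

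The paper's proof sidesteps this obstruction by a dimension reduction rather than a projection argument in $\R^n$. Since $Q_{k-1}$ is already $x_1,\dots,x_{k-1}$-invariant, it is regarded as a hhP on $\R^{n-k+1}$ in the remaining variables, and the single-direction lemma is applied \emph{in that lower-dimensional space}, splitting off $x_k$ with respect to the $L^2(\partial B)$ inner product of $\R^{n-k+1}$. Invariance in the already-killed variables is then automatic, simply because those coordinates are absent from the ambient space; the price is that one must compare $L^2(\partial B)$ norms across dimensions, which is exactly what Lemma \ref{lemma_norm_n} supplies and where the extra constants come from. To repair your argument, replace the $\R^n$-projection by this dimension-reduced projection and insert the norm comparison of Lemma \ref{lemma_norm_n} into the estimate transferring $\norm{\partial_k P}$ to $\norm{\partial_k Q_{k-1}}$.
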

\begin{proof}
 For $k=1$, this is exactly the content of the previous lemma. Thus we can write $P=\sqrt{1-\delta_1}Q_1+\sqrt \delta_1 R_1$, where $Q_1$ is invariant wrt $x_1$ and $\delta_1\leq \epsilon_0/d^{2n-6}$. This in particular implies that
 \begin{gather}
  \sqrt{1-\delta_1}\norm{\partial_2 Q_1} \leq \norm{\partial_2 P} + \sqrt{\delta_1} \norm{\partial_2 R_1}\leq \sqrt{n}d\sqrt{\epsilon}  \norm P + nd^2\sqrt{\epsilon} \norm{R_1} = \sqrt{\epsilon}\ton{\sqrt{n}d  + nd^2}\norm{Q_1}\, . 
 \end{gather}
Given the hypothesis on $\delta_1$, we have the rough estimate
\begin{gather}\label{eq_ere}
 \norm{\partial_2 Q_1} \leq \sqrt 2 \sqrt{\epsilon}\ton{\sqrt{n}d  + nd^2}\norm{Q_1}\, .
\end{gather}
Note that $Q_1$ (and thus also $\partial_2 Q_1$) is independent of $x_1$. As in Lemma \ref{lemma_norm_n}, let $\hat {Q_1}$ and $\hat {\partial_2 Q_1}$ be induced hhP on $\R^{n-1}$. By Lemma \ref{lemma_norm_n}, \eqref{eq_ere} is equivalent to
\begin{gather}
 \norm{\hat{\partial_2 Q_1}} \leq \sqrt 2 \frac{n+2d-2}{n+2d-3} \sqrt{\epsilon}\ton{\sqrt{n}d  + nd^2}\norm{\hat Q_1}\leq 2\sqrt{\epsilon}\ton{\sqrt{n}d  + nd^2}\norm{\hat Q_1}\, .
\end{gather}
Thus we can apply again the previous Lemma and obtain that
\begin{gather}
 Q_1=\sqrt{1-\delta_2}Q_2+\sqrt{\delta_2} R_2\, ,
\end{gather}
with $\delta_2\leq c(n) \epsilon_0 /d^{2n-8}$. Moreover, $Q_2$ is both $x_1$ and $x_2$ invariant.

By induction, we obtain the thesis.
\end{proof}

\subsection{Symmetry and Critical Points}\label{ss:symmetric_criticalpoints}
In this section, we study the properties of functions close to $n-2$ symmetric hhP's and obtain estimates on the critical radius $r_c(x)$ for suitable $x$.

Let $P$ be a hhP of degree $d$ depending only on two variables, where for simplicity we choose the coordinates $(x,y)\in \R^2\times \R^{n-2} =\R^n$ in such a way that $P$ depends only on $x$. As we have seen in \eqref{eq_nabla2vars}, the gradient of $P$ has absolute value $\abs{\nabla P (x,y)}= 2d \norm{P}_{L^2(\partial B)} \abs x ^{d-1}$, thus $P$ has no critical points outside its $n-2$ dimensional invariant plane $V$. The aim of this section is to obtain a quantitative version of this property.

\subsubsection{Harmonic functions in $\R^2$} First of all, we restrict ourselves to harmonic functions in $\R^2$, since in this situation the statements are stronger and easier to prove. In the previous sections, we have seen how the pinching on Almgren's frequency affects the expansion of a harmonic function at a point. Here we prove an important connection between pinching and the critical points (or better, the lack thereof).
\begin{proposition}\label{prop_crit2}
 Let $u:B_{e^2}(0)\subset \R^2\to \R$ be a harmonic function. There exists an $\epsilon_0$ independent of $d$ such that if
 \begin{gather}
   N(0,e^2)-  N(0,e^{-2})\leq \epsilon
 \end{gather}
with $\epsilon \leq \epsilon_0$, then $u$ does not have critical points on $\partial B_1(0)$.
\end{proposition}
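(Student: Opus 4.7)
The plan is to exploit the $2$-dimensional complex-analytic structure. Writing $u = \operatorname{Re}(f)$ for the holomorphic extension $f\colon B(0,e^2)\to\mathbb{C}$ (which exists since $B$ is simply connected), one has $\abs{\nabla u} = \abs{f'}$, so the critical points of $u$ on $\partial B(0,1)$ correspond exactly to the zeros of $f'$ on the unit circle. After normalizing $u(0)=0$ (which affects neither $N$ nor the critical set), expand $f(z) = \sum_{k\geq 1} c_k z^k$. A direct computation gives the frequency in complex form,
\begin{gather*}
 N(0,t) = \frac{\sum_{k\geq 1} k\abs{c_k}^2 t^{2k}}{\sum_{k\geq 1} \abs{c_k}^2 t^{2k}},
\end{gather*}
and the hypothesis combined with Theorem \ref{th_hNpinch} applied on $[r_2,r_1]=[e^{-2},e^2]$ furnishes an integer $d\geq 1$ satisfying
\begin{gather*}
 \abs{c_d}^2 t^{2d} \geq (1-6\epsilon)\sum_{k\geq 1}\abs{c_k}^2 t^{2k} \quad \text{for every } t\in (e^{-1},e).
\end{gather*}

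The key step I would then carry out is to promote this $L^2$-type dominance into pointwise exponential decay of the individual modes. Since both $t=2$ and $t=1/2$ lie strictly inside the interval $(e^{-1},e)$, applying the above estimate separately at these two scales yields
\begin{gather*}
 \abs{c_k} \leq C\sqrt{\epsilon}\,\abs{c_d}\,2^{-\abs{k-d}} \quad \text{for every } k\neq d,
\end{gather*}
with $C$ an absolute constant. This decay in $\abs{k-d}$ is the central ingredient and cannot be extracted from $L^2$-closeness of $u$ to its dominant polynomial alone.

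With this control in hand, I estimate $f'$ directly on $\abs{z}=1$. From $f'(z) = d c_d z^{d-1} + \sum_{k\neq d} k c_k z^{k-1}$ and the triangle inequality,
\begin{gather*}
 \abs{f'(z)} \geq d\abs{c_d} - \sum_{k\geq 1,\,k\neq d} k\abs{c_k} \geq \abs{c_d}\Bigl(d - C\sqrt{\epsilon}\sum_{k\geq 1,\,k\neq d} k\cdot 2^{-\abs{k-d}}\Bigr).
\end{gather*}
A routine geometric-series computation shows $\sum_{k\geq 1,\,k\neq d} k\cdot 2^{-\abs{k-d}} \leq 2d+2 \leq 4d$ for $d\geq 1$, giving $\abs{f'(z)} \geq d\abs{c_d}\ton{1-C'\sqrt{\epsilon}}$, strictly positive as soon as $\epsilon < \epsilon_0 := (2C')^{-2}$. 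Since this threshold depends only on absolute constants, $f'$ has no zeros on $\partial B(0,1)$, and $u$ has no critical points there.

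The main obstacle is precisely the $d$-independence of $\epsilon_0$. The naive route of using the $W^{1,2}$-closeness $\int_{B(0,1)}\abs{\nabla u -\nabla P_d}^2 \leq 7d\epsilon$ provided by Theorem \ref{th_hNpinch}(iv), upgraded by interior gradient estimates for harmonic functions, produces an error on $\partial B(0,1)$ of order $\sqrt{\epsilon}\,e^d$, which would force $\epsilon$ to shrink exponentially in $d$. The two-scale evaluation at $t=2$ and $t=1/2$ circumvents this by converting the global $L^2$ bound into pointwise exponential decay of the modes in $\abs{k-d}$; this decay is exactly sharp enough to absorb the factor $k$ coming from differentiation and yields a remainder proportional to $d\abs{c_d}$, directly comparable to the leading term rather than exponentially larger.
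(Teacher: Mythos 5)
Your proof is correct and follows essentially the same strategy as the paper: isolate the dominant mode $d$, use the two-scale $L^2$ dominance from Theorem~\ref{th_hNpinch}(ii) to obtain exponential decay of the coefficients in $\abs{k-d}$, and observe that this decay absorbs the linear factor $k$ introduced by differentiation, so that the remainder contributes at most $C\sqrt{\epsilon}\,d\abs{c_d}$ on $\partial B(0,1)$ and the threshold $\epsilon_0$ is therefore independent of $d$. Your complex-analytic framing ($u=\operatorname{Re} f$, $\abs{\nabla u}=\abs{f'}$) together with the termwise extraction at $t=2$ and $t=1/2$ is a cosmetically cleaner route than the paper's real Fourier expansion with Cauchy--Schwarz and an integral comparison at $t=e^{\pm1}$, but the underlying mechanism and numerical content are the same.
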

\begin{proof}
As done previously, we consider the Taylor expansion of $u$
\begin{gather}
 u= \sum_{k=1}^\infty a_k P_k\, ,
\end{gather}
where without loss of generality, we assume $u(0)=0$ and $\norm{u}=h(1)=1$.

By Theorem \ref{t:eff_tan_con_uniq_harm}, there exists an integer $d$ such that
 \begin{enumerate}
  \item for all $t\in [e^{-2}, e^2]$, $\abs{ N(t)-d}\leq 3\epsilon$
  \item for all $t\in [e^{-1}, e^1]$,
   \begin{gather}\label{eq_kdsum}
    \sum_{k\neq d} a_k^2 t^{2k} \leq 6\epsilon h(t)\, ,
   \end{gather}
  or equivalently
   \begin{gather}
    a_d^2 t^{2d} \geq (1-6\epsilon )h(t)\, .
   \end{gather}
 \end{enumerate}
With this relation we can compare the gradient of $u$ with the gradient of its leading term, $P_d$:
\begin{gather}
 \delta \equiv u-a_d P_d = \sum_{k\neq d} a_k P_k \, \Longrightarrow \, \abs{\nabla \delta} \leq \sum_{k\neq d} \abs{a_k} \abs{\nabla P_k} \, ,
\end{gather}
In particular, for $x\in \partial B_1(0)$, we have
\begin{gather}
 \abs{\nabla P_d}(x) = 2d\, , \quad \abs{\nabla \delta}(x) \leq 2 \sum_{k\neq d} k\abs{a_k}\, . 
\end{gather}
In order to estimate the last sum, we split it in two parts: the sum from $d+1$ to infinity, and the sum up to $d-1$. We can estimate
\begin{gather}
 \sum_{k\geq d+1} k\abs{a_k} \leq \ton{\sum_{k\geq d+1} \abs{a_k}^2 e^{2k-2d} }^{1/2}\ton{\sum_{k\geq d+1} k^2e^{2d-2k}  }^{1/2}\, .
\end{gather}
The first term on the rhs can be estimated using \eqref{eq_kdsum}. Indeed by this equation, \eqref{eq_doth} and the pinching on the frequency we have
\begin{gather}
 \sum_{k\geq d+1} \abs{a_k}^2 e^{2k} \leq 6\epsilon e^{2d+6\epsilon} \, \Longrightarrow   \sum_{k\geq d+1} \abs{a_k}^2 e^{2k-2d} \leq 6e\epsilon\, ,
\end{gather}
where we assumed $\epsilon_0\leq 6^{-1}$. As for the second term, we can use the comparison with integrals.
\begin{gather}
 \sum_{k\geq d+1} k^2e^{-2k}  \leq \int_d^\infty x^2 e^{-2x} dx = -\frac 1 2 \qua{ \ton{x^2+x-\frac 1 2 } e^{-2x} }_d^\infty = -\frac 1 2 \ton{d^2+d-\frac 1 2 } e^{-2d} \leq cd^2 e^{-2d}\, .
\end{gather}
In a similar way, we can deal with the sum up to $d-1$. As before, we use Cauchy inequality to split the sum and get
\begin{gather}
 \sum_{k\leq d-1} k\abs{a_k} \leq \ton{\sum_{k\leq d-1} \abs{a_k}^2 e^{2d-2k} }^{1/2}\ton{\sum_{k\leq d-1} k^2e^{2k-2d}  }^{1/2}\, .
\end{gather}
The first term on the rhs can be estimated using \eqref{eq_kdsum}. Indeed by this equation, \eqref{eq_doth} and the pinching on the frequency we have
\begin{gather}
 \sum_{k\leq d-1} \abs{a_k}^2 e^{-2k} \leq 6\epsilon e^{-2d+6\epsilon} \, \Longrightarrow   \sum_{k\leq d-1} \abs{a_k}^2 e^{-2k+2d} \leq 6e\epsilon\, .
\end{gather}
As for the second term, we can use again the comparison with integrals.
\begin{gather}
 \sum_{k\leq d-1} k^2e^{2k}  \leq \int_0^d x^2 e^{2x} dx = \frac 1 2 \qua{ \ton{x^2-x+\frac 1 2 } e^{2x} }_d^\infty = \frac 1 2 \ton{d^2-d+\frac 1 2 } e^{2d} \leq cd^2 e^{2d}\, .
\end{gather}
Summing up, we obtain
\begin{gather}
 \sum_{k\neq d } k\abs {a_k} \leq cd\epsilon\, .
\end{gather}
If $\epsilon_0\leq (4c)^{-1}$, we have for all $x\in \partial B_1(0)$:
\begin{gather}
 \abs{\nabla u}(x) \geq a_d \abs{\nabla P_d}(x) - \abs{\nabla \delta}(x) \geq d -2cd\epsilon >0\, .
\end{gather}
\end{proof}

It is possible to improve the previous theorem to obtain information not only on the gradient of $u$ at $x\in \partial B_1(0)$, but also on its critical radius $r_c(x)$.
\begin{proposition}\label{prop_h2effcrit}
 Let $u:B_{e^2}(0)\subset \R^2\to \R$ be a harmonic function. There exist $\epsilon_0,r_0$ independent of $d$ such that if
 \begin{gather}
   N(0,e^2)-  N(0,e^{-2})\leq \epsilon
 \end{gather}
with $\epsilon \leq \epsilon_0$ and $\abs{ N(0,1) -d }<1/2$, then for all $x\in \partial B_1(0)$, $r_c(x)\geq r_0 d^{-1}$.
\end{proposition}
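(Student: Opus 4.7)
The strategy is to prove $N^u(x, s) < 3/2$ at the scale $s = r_0/d$ with $r_0$ an absolute constant; by monotonicity of the frequency this gives $r_c(x) \geq s$. After normalizing so that $u(0) = 0$, $h(0,1) = 1$, and rotating so that $x = (1,0)$, I would apply Theorem~\ref{th_hNpinch} on the pinching window $[e^{-2}, e^2]$. This produces a \emph{single} homogeneous harmonic polynomial $P_d$ such that $u \approx a_d P_d$ in both $L^2$ and $W^{1,2}$ on all scales $t \in (e^{-1}, e)$, with $a_d^2 \geq 1-6\epsilon$. Writing $u = \sum_k a_k P_k$, the pinching bound $\sum_{k \neq d} a_k^2 t^{2k} \leq 6\epsilon\, h(t)$ evaluated at $t = e^{\pm 1}$, together with the growth estimate $h(e^{\pm 1}) \sim e^{\pm 2d}$, yields the two-sided decay
\begin{gather*}
|a_k|^2 \leq C\epsilon\, e^{-2|d-k|}, \qquad k \neq d.
\end{gather*}

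Next, I would analyze $a_d P_d$ around $x$ via the explicit two-dimensional translation formula recorded in the Two Variables subsection,
\begin{gather*}
P_d(x + y) = \sum_{m=1}^d \binom{d}{m} P_m(y).
\end{gather*}
Setting $s = r_0/d$ with $r_0$ small and $\tilde P(y) \equiv a_d P_d(x + sy)$, the expansion $\tilde P = \sum_m b_m P_m$ has $b_m = a_d \binom{d}{m} s^m$, so consecutive ratios satisfy $|b_{m+1}/b_m| = (d-m)/(m+1) \cdot s \leq r_0/(m+1)$. The frequency formula of Section~\ref{ss:freq_poly_exp} then gives
\begin{gather*}
N^{\tilde P}(0,1) - 1 = \frac{\sum_{m \geq 2}(m-1) b_m^2}{\sum_{m \geq 1} b_m^2} \leq C r_0^2,
\end{gather*}
so $N^{\tilde P}(0,1) < 9/8$ for $r_0$ sufficiently small.

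The core step is to show that the error $w \equiv u - a_d P_d = \sum_{k \neq d} a_k P_k$ perturbs the frequency only by $O(\sqrt\epsilon)$. Using $|\nabla P_k|(y) = 2k|y|^{k-1}$, the triangle inequality gives, for $y \in B(x, s)$ (where $|y| \leq 1+s$),
\begin{gather*}
|\nabla w|(y) \leq 2\sum_{k \neq d} |a_k|\, k\, (1+s)^{k-1}.
\end{gather*}
Inserting $|a_k| \leq C\sqrt{\epsilon}\, e^{-|d-k|}$ and splitting the sum at $k = d$, with the scale choice $s = r_0/d$ the factor $(1+s)^{k-1}$ stays bounded near $k=d$ and decays exponentially far away; the dominant contributions come from $k = d\pm 1$ and yield a total bound of $Cd\sqrt\epsilon$, \emph{uniformly in $d$}. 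This pointwise estimate integrates to $\int_{B(x,s)}|\nabla w|^2 \leq Cr_0^2\epsilon$ and $\int_{\partial B(x,s)}(w-w(x))^2 \leq C s r_0^2 \epsilon$, while the analogous integrals for $a_d P_d$ are of order $r_0^2$ and $sr_0^2$ respectively (since $|\nabla(a_d P_d)|(x) \approx 2d$ and $|y-x| \leq s$). Hence the relative errors are $O(\epsilon)$, with no dependence on $d$. A standard Young-inequality argument comparing numerator and denominator of $N^u(x,s) = N^{\tilde u}(0,1)$ with those of $N^{\tilde P}(0,1)$ then yields $|N^u(x,s) - N^{\tilde P}(0,1)| \leq C\sqrt{\epsilon}$, hence $N^u(x,s) < 9/8 + C\sqrt\epsilon < 3/2$ for $\epsilon \leq \epsilon_0$.

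The hard part is the $d$-independence of the weighted-sum bound on $|\nabla w|$: one must exploit precisely the exponential decay $|a_k| \leq \sqrt\epsilon\, e^{-|d-k|}$ from the frequency pinching together with the scale choice $s = r_0/d$, so that the factor $(1+s)^k$ does not blow up in $d$. Without this combined cancellation the perturbation estimate would scale with $d$ and the result would fail.
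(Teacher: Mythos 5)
Your proof is correct and relies on the same two core ingredients as the paper's: the two-sided exponential decay $|a_k|^2 \lesssim \epsilon\, e^{-2|k-d|}$ that follows from the pinching window $[e^{-2},e^2]$, and the explicit two-dimensional translation formula $P_d(x+y)=\sum_m\binom{d}{m}P_m(y)$. The organization, however, is genuinely different. The paper re-expands the \emph{full} function $u(x+y)=\sum_k a_k(x)P_{k,x}(y)$, bounds each translated coefficient $|a_k(x)|$ by a dominant term $\binom{d}{k}$ plus an $O(\sqrt\epsilon)$ error, and then plugs directly into the rational formula $N(x,r)=\sum k a_k(x)^2 r^{2k}/\sum a_k(x)^2 r^{2k}$. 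You instead split $u=a_d P_d + w$ with $w=\sum_{k\neq d}a_k P_k$, compute the frequency of the leading piece $a_d P_d$ exactly at $x$ at scale $s=r_0/d$ (getting $N\leq 1+Cr_0^2$ from the ratio bound $|b_{m+1}/b_m|\leq r_0/(m+1)$), and then argue that $w$ is a $C^1$-small relative perturbation. The two accounts are morally equivalent — the paper's dominant term $\binom{d}{k}$ in $a_k(x)$ is precisely the contribution of $a_d P_d$ in your decomposition — but your two-step version makes the logic (``$u$ is $\epsilon$-close to $a_d P_d$, whose $x$-centered frequency at scale $r_0/d$ is $\approx 1$'') more transparent, at the cost of needing the Cauchy–Schwarz/Young comparison between the numerators and denominators of $N^u$ and $N^{a_dP_d}$ at the end. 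The $d$-independence check you highlight (that $e^{-|k-d|}(1+r_0/d)^{k-1}$ stays summable because the $(k-1)\log(1+r_0/d)$ exponent is dominated by $|k-d|$ for $r_0<1$) is exactly the point where the $s=r_0/d$ scale choice earns its keep, and matches the convergence of the sums $\sum_k k/k!$ and $\sum_k k 2^{-k}$ in the paper's final display.
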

\begin{remark}
 By studying hhP of two variables, it is easy to realize that the lower bound on $r_c(x)$ cannot be independent of $d$. however, in our computations this only adds a polynomial error to the final estimate.
\end{remark}
\begin{proof}
 The proof of this theorem is very similar in spirit to the proof of the previous proposition. However, in order to get estimates on $r_c(x)$, it is not sufficient to concentrate on the gradient of the function $u$. We need to estimate all the terms in the Taylor expansion of the function $u$ at $x$.
 
 As before, we start by writing the expansion of $u$ at the origin and at some point $x\in \partial B_1(0)$:
 \begin{gather}
  u(y)=\sum_k a_k P_k(y)\, , \quad \quad u(x+y)=\sum_k a_k(x) P_{k,x}(y)\, .
 \end{gather}
 The pinching condition implies that
 \begin{gather}
  \sum_{k\neq d} a_k^2 e^{4\abs{k-d}} \leq c\epsilon a_d^2\, .
 \end{gather}
For simplicity, we will assume that $a_d^2=1$, and that $x=(t,0,\cdots,0)$. By re-expanding $u$ at $x$, we get
\begin{gather}
 a_k(x) P_{k,x}(y)= \sum_{s=0}^\infty a_{k+s} \frac{t^s}{s!} \partial_1^s P_{k+s}\, ,\\
 \abs{a_k(x)}\leq \sum_{s=0}^\infty \abs{a_{k+s}} t^s \binom{k+s}{s}\, .
\end{gather}
Since $t=1$, for $k\geq d+1$ we obtain
\begin{gather}
 \abs{a_k(x)}\leq c\sqrt \epsilon \sum_{s=0}^\infty e^{-2(k-d)} e^{-2s} \frac{(k+s)^s}{s!}\leq c\sqrt \epsilon \sum_{s=0}^\infty e^{-2(k-d)} e^{-s} \ton{1+\frac k s}^s\leq c\sqrt \epsilon e^{-k+2d}\sum_{s=0}^\infty e^{-s}\leq c\sqrt \epsilon e^{-k+2d}\, .
\end{gather}
For $k\leq d$, it is convenient to separate the contribution coming from the expansion of the degrees $\leq d-1$, $=d$ and $\geq d+1$. In such a way we obtain
\begin{gather}
 a_{k}(x)P_{k,x}(y)= \sum_{s=0}^{d-k-1} a_{k+s} \frac{t^s}{s!}\partial_1^s P_{k+s} \, +\, t^{d-k}{(d-k)!} \partial_1^{d-k} P_d \, +\, \sum_{s=d-k+1}^\infty a_{k+s} \frac{t^s}{s!}\partial_1^s P_{k+s}\, ,\\
 \abs{a_k}\leq \sum_{s=1}^{d-k} a_{d-s} \binom{d-s}{k} \, + \, \binom d k \,  + \,  \sum_{s=1}^\infty a_{d+s} \binom{d+s} k \leq \binom d k + c\sqrt\epsilon\qua{\sum_{s=1}^{d-k} e^{-2s} \frac{(d-s)^k}{k!} +  \sum_{s=1}^\infty e^{-2s} \frac{d^k}{k!} \ton{1+\frac{s}{d}}^k }\leq\\
 \leq \binom{d}{k} +c\sqrt\epsilon \frac{d^k}{k!}\qua{c+\sum_{s=1}^\infty s^k e^{-2s}}\leq \frac{d^k}{k!} +c\sqrt\epsilon d^k\qua{\frac 1 {k!} + 2^{-k}} \, .
\end{gather}
 By the previous proposition, it is easy to see that
 \begin{gather}
  (1-c\epsilon)d \leq \abs {a_1}\leq (1+c\epsilon) d\, .
 \end{gather}
Putting together these estimates, we obtain the following very rough bound on the frequency $ N(x,r_0 d^{-1})$:
\begin{gather}
  N(x,r)= \frac{\sum_{k=1}^\infty k a_k(x)^2 r^{2k}}{\sum_{k=1}^\infty a_k(x)^2 r^{2k}}\leq 1+\frac{\sum_{k=2}^\infty k a_k(x)^2 r^{2k}}{a_1(x)^2 r^2 }\, ,\\
  N(x,r_0 d^{-1})\leq 1+cr_0^2 \sum_{k=2}^d\ton{k \frac{d^k}{k!} \frac 1 {d^k}} + c\sqrt \epsilon r_0^2 \qua{\sum_{k=2}^d \frac k {k!} + k2^{-k}  + \sum_{k=d+1}^\infty ke^{-k+2d} d^{-k} }\, .
\end{gather}
It is clear that $r_0$ can be chosen in such a way that $ N\ton{x,r_0d^{-1}}\leq 1+3/2$, and this proves the thesis.

\end{proof}

\subsubsection{Harmonic functions in $\R^n$} For general $n$, with similar computations we can obtain similar results. However, in this case the results we obtain are somewhat weaker, in particular the constant $\epsilon_0$ will not be independent of the degree $d$ of the polynomial.

\begin{proposition}\label{prop_n-2}
 Let $u:B_1(0)\subset \R^n\to \R$ be a harmonic function which can be written as
 \begin{gather}
  u=Q_d + \sum_k a_k P_k\, ,
 \end{gather}
where $P_k$ are normalized hhP's of degree $k$, and $Q_d$ is a normalized hhP of degree $d$ invariant wrt the $n-2$ dimensional plane $V$. For $0<\tau\leq 1$, there exists a constant $c(n)$ such that if
\begin{gather}
 \sum_k \abs{a_k}^2 e^{2\abs {k-d}}\leq \epsilon\, 
\end{gather}
with $\epsilon \leq (c(n)\tau )^{2d-2}$, then $u$ does not have critical points in $B_1(0)\setminus B_\tau (V)$.
\end{proposition}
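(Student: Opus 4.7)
The strategy is to show directly that $|\nabla u|(x) \geq |\nabla Q_d|(x) - |\nabla \delta|(x) > 0$ on $B(0,1)\setminus B_\tau(V)$, where $\delta = \sum_k a_k P_k$. The proof splits into a lower bound on $|\nabla Q_d|$ and an upper bound on $|\nabla \delta|$, both expressed in terms of $d$, $\tau$, and $\epsilon$, followed by a comparison.

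For the lower bound, I would exploit that $Q_d$ is $V$-invariant: choosing coordinates $(y,z)\in V\times V^\perp$, the polynomial $Q_d$ is really a hhP on $\R^2$. By the explicit formula \eqref{eq_nabla2vars},
\begin{gather*}
|\nabla Q_d|(x) = \alpha_{n,d}\, \rho(x)^{d-1}, \qquad \rho(x) := \dist(x,V),
\end{gather*}
where $\alpha_{n,d}$ is determined by the normalization $\norm{Q_d}_{L^2(\partial B_1\subset\R^n)}=1$. Iterating Lemma \ref{lemma_norm_n} $n-2$ times to pass from the $\R^2$-norm to the $\R^n$-norm of a two-variable polynomial introduces only a polynomial-in-$d$ factor, so $\alpha_{n,d}\geq c_1(n) d^{-q_1(n)}$. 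Consequently, for $x\in B(0,1)\setminus B_\tau(V)$ we get $|\nabla Q_d|(x) \geq c_1(n) d^{-q_1(n)} \tau^{d-1}$.

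For the upper bound, I would combine Lemma \ref{lemma_pdp} (so $\norm{\partial_i P_k}_{L^2}\leq C\sqrt{k(2k+n-2)}\leq Ck$) with Lemma \ref{lemma_dest} applied to the degree-$(k-1)$ polynomial $\partial_i P_k$, yielding the pointwise bound $\norm{\nabla P_k}_{C^0(B_1)} \leq C(n) k^{n/2}$. Then by Cauchy--Schwarz with weight $e^{-|k-d|}$ and the hypothesis,
\begin{gather*}
|\nabla\delta|(x) \leq C(n) \sum_k |a_k| k^{n/2}
\leq C(n) \Bigl(\sum_k |a_k|^2 e^{2|k-d|}\Bigr)^{1/2}\Bigl(\sum_k k^n e^{-2|k-d|}\Bigr)^{1/2}
\leq C_2(n)\, d^{n/2}\sqrt{\epsilon},
\end{gather*}
since $\sum_k k^n e^{-2|k-d|}$ is dominated by its terms near $k=d$ and therefore grows only polynomially in $d$.

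Combining the two estimates, $|\nabla Q_d| > |\nabla \delta|$ is equivalent to $\sqrt{\epsilon} < c_3(n)\, d^{-p(n)}\, \tau^{d-1}$ for some exponent $p(n)$. This is implied by the hypothesis $\sqrt{\epsilon}\leq (c(n)\tau)^{d-1}$ provided one picks $c(n)<1$ small enough that $c(n)^{d-1}\leq c_3(n)\, d^{-p(n)}$ for all $d\geq 2$, which is possible since exponentials dominate polynomials. The main obstacle is precisely this bookkeeping: the iteration of Lemma \ref{lemma_norm_n} and the pointwise estimate of Lemma \ref{lemma_dest} both introduce polynomial-in-$d$ factors, and the whole point of the sharp exponent $2d-2$ in the hypothesis is that these polynomial factors are swallowed by the exponential $c(n)^{d-1}$ at the price of an $n$-dependent (but $d$-independent) constant $c(n)$.
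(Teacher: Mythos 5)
Your proof is correct and follows essentially the same route as the paper's: lower-bound $|\nabla Q_d|$ via the two-variable formula \eqref{eq_nabla2vars}, upper-bound $|\nabla\delta|$ via Lemmas \ref{lemma_pdp}, \ref{lemma_dest} and Cauchy--Schwarz, and compare. One small simplification you missed: Lemma \ref{lemma_norm_n} works in your favor here (the $\R^2$-norm of a two-variable polynomial \emph{dominates} its $\R^n$-norm, since the product of factors is $\geq 1$), so no polynomial-in-$d$ loss occurs in the normalization and one gets cleanly $|\nabla Q_d|(x)\geq 2d\,\dist(x,V)^{d-1}$, which is what the paper uses.
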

\begin{proof}
 The proof of this Lemma follows closely the proof of Proposition \ref{prop_crit2}. Indeed, define $\delta = \sum_k a_k P_k$, and consider that
 \begin{gather}
  \abs{\nabla \delta}\leq \sum_k \abs{a_k} \abs{\nabla P_k}\, .
 \end{gather}
The normalization on $P_k$, along with Lemmas \ref{lemma_pdp} and \ref{lemma_dest}, imply that
\begin{gather}
 \norm{\nabla P_k}^2 \leq k(2k+n-2)\leq nk^2\, \quad \Longrightarrow \quad \, \abs{\nabla P_k(x)} \leq c(n) k^{n/2} \abs x ^{k-1}\, .
\end{gather}
Thus for all $x\in B_1(0)$:
\begin{gather}
 \abs{\nabla \delta(x)}\leq c(n)\sqrt{\epsilon} \sum_k e^{-\abs{k-d}} k ^{n/2}\leq c(n) \sqrt{\epsilon} d^{n/2}\, .
\end{gather}
On the other hand, let $\abs{x-V}$ be the distance from $x$ to $V$. By the properties of hhP's of two variables
\begin{gather}
 \abs{\nabla Q_d(x)}\geq 2 d \abs{x-V}^{d-1}\, ,
\end{gather}
thus, if $x\in B_1(0)\setminus B_\tau (V)$, 
\begin{gather}
 \abs{\nabla u (x)} \geq \abs{x-V}^{d-1} \ton{2d - \tau^{1-d}c(n) d^{n/2} \sqrt{\epsilon}  }\, ,
\end{gather}
which implies the thesis.
\end{proof}

As for the $n=2$ case, also for general dimension it is not difficult to improve the previous statement in order to get estimates on the \textit{effective} critical set.

\begin{proposition}\label{prop_heffcrit}
  Let $u:B_1(0)\subset \R^n\to \R$ be a harmonic function which can be written as
 \begin{gather}
  u=Q_d + \sum_k a_k P_k\, ,
 \end{gather}
where $P_k$ are normalized hhP's of degree $k$, and $Q_d$ is a normalized hhP of degree $d$ invariant wrt the $n-2$ dimensional plane $V$. For $0<\tau\leq 1$, there exists a constant $c(n)$ such that if
\begin{gather}
 \sum_k \abs{a_k}^2 e^{2\abs {k-d}}\leq \epsilon\, 
\end{gather}
with $\epsilon \leq (c(n)\tau )^{2d-2}$, then for all $x\in B_1(0)\setminus B_\tau(V)$, $r_c(x)\geq c(n) \tau^{d}$. 
\end{proposition}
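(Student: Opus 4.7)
The plan is to mirror the strategy of Proposition~\ref{prop_h2effcrit} (the two-dimensional effective critical radius estimate) but in the $n$-dimensional setting, exploiting the $(n-2)$-invariance of $Q_d$ to reduce the essential geometry to that of a 2D harmonic polynomial.  First I would choose orthonormal coordinates so that $V=\{0\}\times\R^{n-2}$ and $Q_d$ depends only on $(x_1,x_2)$, then after a rotation in $\R^2$ assume $x=(|\xi|,0,v)$ with $|\xi|\geq\tau$. Re-expand $u$ around $x$ into hhPs centered at $x$,
\[
u(x+y)=\sum_{m\geq 0} a_m(x)\,P_{m,x}(y),
\]
and aim to show $N(x,r)<3/2$ at $r=c(n)\tau^d$, which by definition of $r_c$ will give the conclusion.

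The heart of the proof consists of a lower bound on the linear coefficient $a_1(x)$ and upper bounds on $a_m(x)$ for $m\geq 2$. For the linear term, the argument in Proposition~\ref{prop_n-2} already yields $|\nabla u(x)|\geq 2d\,|\xi|^{d-1} - c(n)d^{n/2}\tau^{1-d}\sqrt{\epsilon}$, so under the hypothesis $\epsilon\leq (c(n)\tau)^{2d-2}$ we obtain $|a_1(x)|\gtrsim d\,\tau^{d-1}$. For higher-order coefficients I would split $u=Q_d+\delta$ with $\delta=\sum_k a_k P_k$. Since $Q_d$ depends on only two variables, its contribution to $a_m(x)$ is controlled by the 2D Taylor identity of Section~\ref{ss:hhp}, giving a degree-$m$ part of $L^2(\partial B^{(2)})$-norm $\binom{d}{m}|\xi|^{d-m}$; Lemma~\ref{lemma_norm_n} then converts this to an $L^2(\partial B^{(n)})$-bound at the cost of a constant depending only on $n$ and (polynomially) $d$. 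The $\delta$ contribution is bounded using the derivative estimate \eqref{eq_estdip} and the hypothesis on $\epsilon$, which makes it strictly dominated by the $Q_d$ contribution. The net bound is of the form $|a_m(x)|\lesssim c(n)\binom{d}{m}\bigl(|\xi|^{d-m}+\sqrt{\epsilon}\,d^{n/2}\bigr)$ for $m\leq d$, with exponentially small contributions for $m>d$.

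With these bounds in hand, Step 5 applies the polynomial expression for the frequency from Section~\ref{ss:freq_poly_exp},
\[
N(x,r)-1\leq \frac{\sum_{m\geq 2}(m-1)\,|a_m(x)|^2\,r^{2m-2}}{|a_1(x)|^2}.
\]
Substituting $|a_1(x)|^2\gtrsim d^2\tau^{2d-2}$ and the bounds on $|a_m(x)|$, the $m$-th term becomes of order $(c(n)d)^{2m-2}(r/\tau)^{2(m-1)}$ plus a correction from the $\delta$ part that is negligible for $\epsilon\leq(c(n)\tau)^{2d-2}$. Choosing $r=c(n)\tau^d$ with $c(n)$ sufficiently small forces every such term to be small enough that the full sum is bounded by $1/2$, yielding $N(x,r)<3/2$ and hence $r_c(x)\geq c(n)\tau^d$. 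The main technical obstacle is the bookkeeping in Step 4--5: one needs to track simultaneously the binomial $\binom{d}{m}$ coming from the 2D Taylor expansion of $Q_d$, the factorial-type constants from Lemma~\ref{lemma_norm_n} going from $\partial B^{(2)}$ to $\partial B^{(n)}$, and the $d^{n/2}$ dimensional factors arising from the derivative estimate~\eqref{eq_estdip}, and verify that under the sharp constraint $\epsilon\leq(c(n)\tau)^{2d-2}$ the perturbation $\delta$ contributes at every degree $m$ a negligible amount compared to the model polynomial $Q_d$ — the estimate $r=c(n)\tau^d$ (rather than the more optimistic $c(n)\tau/d$ suggested by pure 2D scaling) gives exactly the slack needed for all these factors to be absorbed.
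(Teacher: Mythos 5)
Your plan is essentially the paper's own: the paper's proof of this proposition is literally ``follows as in the $n=2$ case using the results of the previous proposition,'' meaning expand $u$ at $x$, lower-bound the degree-$1$ coefficient using the gradient estimate of Proposition~\ref{prop_n-2}, upper-bound the higher-degree coefficients $a_m(x)$ via the two-variable Taylor identity for $Q_d$ combined with the hypothesis $\sum_k|a_k|^2e^{2|k-d|}\leq\epsilon$, and then read off $N(x,r)<3/2$ from the polynomial frequency formula at $r=c(n)\tau^d$ — exactly your outline. The only place I would flag is the stated form of the perturbative bound $|a_m(x)|\lesssim\binom{d}{m}(|\xi|^{d-m}+\sqrt\epsilon\,d^{n/2})$: the $\binom{d}{m}$ factor on the $\delta$-contribution is not literally what comes out of re-expanding $\sum_k a_k P_k$ (one really gets a sum $\sum_{k\geq m}|a_k|\binom{k}{m}|x|^{k-m}$, whose dominant term at $k=d$ gives the binomial you wrote, so the conclusion is the same), but since the $e^{-|k-d|}$ decay in the hypothesis controls the tails this imprecision does not affect the argument.
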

\begin{proof}
The proof follows as in the $n=2$ case using the results of the previous proposition.
\end{proof}

\subsection{Volume estimates on the effective critical sets}\label{ss:volume_estimates_harmonic}
In this section, we prove the main volume estimates on the effective critical set. The proof is obtained by successive covering of ``good'' and ``bad scales''.

We start with the definition of a good scale for the function $u$ relative to the degree $d$. As we will see, on these scales we will have nice covering arguments for the set $\cS_r(u)$.

We fix $\tau=1/100$, and set $\epsilon=\epsilon(n,d)$ given by the minimum of $\epsilon_0(n)/2$ in Theorem \ref{t:eff_tan_con_uniq_harm}, $\epsilon_0(n)/2$ in Lemma \ref{lemma_epsd}, $\epsilon_0(n,\tau)/2$ in Proposition \ref{prop_alcone} and $\epsilon(n,d)=(c(n)\tau)^{2d-2}$
from Proposition \ref{prop_heffcrit}.

\begin{definition}
 Let $u$ be a harmonic function defined on some domain $D$ with $B_{2t}(x)\subset D$. We say that $(x,t)$ is a good scale for $u$ (or equivalently we say that $B_t (x)$ is a good scale ball for $u$) relative to the degree $d$ if $N(y,t)\leq d+\epsilon$ for all $y\in B_t (x)$.
\end{definition}

\begin{definition}
 Fix some positive $r$ and suppose that $B_t (x)$ is a good scale ball for $u$ relative to $d$. Then we define
 \begin{gather}
  r'_x = \sup\cur{s\geq 0 \ \ s.t. \ \ N(x,s)\geq d-\epsilon}\, , \quad r_x = \max\cur{r'_x,r}\, ,
 \end{gather}
where as a convention we set $r'_x=\infty$ if $N(x,s)$ is never $\geq d-\epsilon$ on the domain of $u$. Moreover, for any positive $r$, we also set
 \begin{gather}
  \cS_r(u)=\cS= \cur{x\in B_1(0) \ \ s.t. \ \  N(x,r)\geq 3/2}\, ,\\
  \cS_g(u) =\cS_g= \cur{x\in \cS \ \ s.t. \ \ \forall y \in \cS\cap B_{5r_x}(x), \ \ r_y \geq r_x/7}\, ,\\
  \cS_b(u) =\cS_b= \cS(u)\setminus \cS_g(u) =\cur{y\in \cS \ \ s.t. \ \ \exists x \in \cS\cap B_{5r_y}(y), \ \ r_x < r_y/7}\, .
 \end{gather}
\end{definition}

The following proposition gives us a covering of the set $\cS(u)$ on a good scale. Later on, we will deal with bad scales. 
\begin{proposition}
 Let $B_1(0)$ be a good scale ball for $u$ relative to the degree $d$. Then there exists $x_i\in \cS_r(u)$ and $s_i>0$ such that
 \begin{gather}
  \cS_{c(n)\tau^d r}(u)\subset \bigcup_{i} B_{s_i}(x_i)\, , \quad \sum_i s_i^{n-2}\leq r^{n-2} C(n)d^n
 \end{gather}
and such that for every $y\in B_{s_i}(x_i)$, either $s_i\leq r$ or $ N(y,7^{-1}\epsilon s_i)\leq d-1+\epsilon$.
\end{proposition}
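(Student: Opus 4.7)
The strategy follows the outline of Section 2.4: cover the good set $\cS_g$ with a Vitali-type collection, and recursively refine the bad set $\cS_b$. The good-scale hypothesis $N(y,1)\leq d+\epsilon$ throughout $B(0,1)$ means that for every $x\in\cS_g$ with $r_x=r'_x$, the frequency $N(x,\cdot)$ is trapped in a window of width $O(\epsilon)$ around $d$ on scales in $[r_x,1]$. Applying the effective tangent cone uniqueness of Theorem \ref{th_hNpinch}, I get a single homogeneous harmonic polynomial $P_d$ of degree $d$ such that the normalized blow-ups $T^u_{x,s}$ are uniformly $L^2$-close to $P_d$ simultaneously for every $x\in \cS_g$ and every $s\in[r_x,1]$. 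The uniform choice of $P_d$ across both centers and scales is the key gain over the compactness-based treatment of \cite{chnava}, and will drive the volume estimate.

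The first concrete step would be to extract a maximal disjoint collection $\{B(x_i,r_{x_i}/5)\}_i$ with $x_i\in\cS_g$, so that $\{B(x_i,r_{x_i})\}_i$ covers $\cS_g$, setting $s_i\equiv r_{x_i}$. When $s_i=r$ there is nothing to check, so consider $s_i=r'_{x_i}>r$; at this scale $N(x_i,s_i)\sim d-\epsilon$ by the very definition of $r'_{x_i}$. Lemma \ref{lemma_Npinch} and Corollary \ref{cor_Npinch} then guarantee that $N(x_i,\cdot)$ drops across the integer $d-1$ within a bounded number of dyadic scales, giving $N(x_i,7^{-1}\epsilon s_i)\leq d-1+\epsilon$. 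Propagating this bound to all $y\in B(x_i,s_i)$ uses the uniform frequency control of Theorem \ref{th_cN} combined with the $L^2$-proximity of $T^u_{x_i,s_i}$ to $P_d$, yielding the required alternative for every $y$ in the ball.

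The volume estimate $\sum_i s_i^{n-2}\leq C(n)d^n r^{n-2}$ is the heart of the argument and uses the $(n-2)$-concentration of the centers. Since every $T^u_{x_i,s}$ is uniformly close to the same $P_d$, the almost cone-splitting of Lemma \ref{lemma_epsd} and Corollary \ref{cor_alcone} forces every normalized displacement $(x_j-x_i)/|x_j-x_i|$ to be an almost-invariant direction of $P_d$. For $d\geq 2$ (the case $d=1$ is vacuous since then $\cS_r(u)\cap B(0,1)=\emptyset$ by monotonicity), Proposition \ref{prop_poly2} guarantees that all these directions are contained in the $\tau$-neighborhood of an $(n-2)$-plane $V\subseteq\R^n$. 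The projection to $V$ is therefore $(1+o(1))$-biLipschitz on the centers $\{x_i\}$, and the disjointness of the $B(x_i,r_{x_i}/5)$ translates via this projection into an $(n-2)$-dimensional Vitali packing whose $s_i^{n-2}$-mass is controlled by the sup-norm of $P_d$ on $B_1$, i.e.\ by $C(n)d^n$ via Lemma \ref{lemma_dest}.

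Finally, the bad set $\cS_b$ is treated by iteration: each $y\in \cS_b$ admits a nearby $x\in\cS\cap B(y,5r_y)$ with $r_x<r_y/7$, so I cover $\cS_b$ by the smaller balls $B(x,r_x)$ centered at such witnesses and reapply the Vitali extraction on each. The geometric decrease $r_x<r_y/7$ ensures that successive applications contribute only a convergent geometric series to $\sum_i s_i^{n-2}$, which is absorbed into $C(n)d^n$. The main obstacle I expect is verifying that, throughout the bad-set iteration, the approximating polynomial $P_d$ remains uniform across all nested descendants — a priori a smaller ball might pair with a different degree-$d$ polynomial, so an additional cone-splitting step tying each descendant polynomial back to the ancestor $P_d$ is needed. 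This is precisely where the $d$-independent threshold $\epsilon_0$ of Theorem \ref{th_hNpinch} becomes indispensable, since a $d$-dependent pinching loss per iteration would destroy the bound after the $O(d)$ nested rounds required to exhaust $\cS_b$.
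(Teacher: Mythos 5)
Your setup for $\cS_g$ matches the paper: extract a Vitali subcollection $\{B(x_i,r_{x_i})\}$ from the good set, use the effective tangent-cone uniqueness of Theorem \ref{th_hNpinch}, and push the almost-invariant directions into an $(n-2)$-plane via cone splitting. Two things in that step are imprecise but fixable: (i) the paper does not produce one polynomial $P_d$ shared by every center in $\cS_g$ — Theorem \ref{th_hNpinch} gives a $P_d$ \emph{per center} $x_i$, and Corollary \ref{cor_alcone} only guarantees a common almost-invariant subspace $V$ across centers; and (ii) the paper must first partition the centers into $C(n)d^n$ subfamilies of diameter $\leq (e^3d)^{-1}$, because the pinching only controls scales up to $(e^3d)^{-1}$, not up to $1$; without this partition the counting argument loses its $d^n$ bound.

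The genuine gap is in the bad set. Your proposal iterates a Vitali extraction on $\cS_b$ and hopes that the decrease $r_x<r_y/7$ yields a geometric series that is ``absorbed into $C(n)d^n$.'' This is not the paper's mechanism and it is not obviously sound: each new round of extraction can produce another $C(n)d^n$ balls, so $O(d)$ or even $O(\log)$ rounds give a multiplicative blowup $(C(n)d^n)^{\#\mathrm{rounds}}$ rather than a convergent series. The paper instead handles $\cS_b$ in a \emph{single pass}. It shows via the chain of inequalities $d(x,y)\leq 6r_y$, $r_x<r_y/7$, $d(x_i,y)\leq 11 r_y$, $r_i\leq r_y$ that every $y\in\cS_b$ lies within $O(t_y)$ of the Lipschitz graph $\Gamma(f)$ through the centers, where $t_y=55^{-1}\min_i d(y,x_i)$. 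A dyadic tube-volume estimate around $\Gamma(f)$ then gives $\sum_j t_j^{n-2}\leq C(n)d^2$ with no iteration at all.

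You are also missing the dichotomy $\dim V\leq n-3$ versus $\dim V=n-2$, and with it the ingredient that actually produces the factor $c(n)\tau^d$ in $\cS_{c(n)\tau^d r}(u)$. When $\dim V=n-2$, the tube estimate alone does not cover $\cS_b$: the paper invokes Proposition \ref{prop_heffcrit} to conclude that any $z$ with $5r_i<d(z,x_i)<(e^3d)^{-1}$ and $d(z,x_i+V)>\tau d(z,x_i)$ satisfies $r_c(z)\geq (c(n)\tau)^d r_i$, and therefore does not belong to $\cS_{(c(n)\tau)^d r}$. Those points are \emph{excluded}, not covered. The remaining bad points are forced to lie within $\tau$ of $V$ and are then handled as in the $\dim V\leq n-3$ case. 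Without this step you have no mechanism for the $\tau^d$-loss in the radius of the effective set, and indeed your covering would have to engulf all of $\cS_b$, which the packing estimate does not allow when $\dim V=n-2$. Finally, the frequency-drop clause in the statement comes, in the paper, from the local-minimality built into $\cS_g$ (namely $r_z\geq r_i/7$ for $z\in\cS\cap B(x_i,5r_i)$) combined with Lemma \ref{lemma_Npinch}, not from propagating an $L^2$-proximity bound via Theorem \ref{th_cN} as you suggest.
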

\begin{proof}
We can assume that $r\leq (e^3d)^{-1}$, otherwise a simple Vitali covering of $\cS_r\cap B_1(0)$ will do the trick.
 
 Consider the collection of balls $B_{r_x}(x)$ with centers in $x\in \cS_g(u)$, and pick a Vitali subcovering of $\cS_g(u)$, i.e., a finite collection of balls such that
 \begin{gather}
  \cS_g(u)\subset \bigcup_{i} B_{5r_i}(x_i)\quad \text{  and  }\quad B_{r_i}(x_i)\cap B_{r_j}(x_j)=\emptyset\, ,
 \end{gather}
where $r_i=r_{x_i}$. For each $r_i$ we have two options, either this radius is smaller than $(e^3d)^{-1}$ or not. In the first case, we say that $i\in G_g$, in the second, $i\in G_b$.

An immediate volume argument allow us to estimate
\begin{gather}
 \sum_{i\in G_b} r_i^{n-2}\leq C(n) d^n\, .
\end{gather}
As for the indexes in $G_g$, we can partition this set further in subfamilies, in such a way that for each $i$ and $j$ in each subfamily, $d(x_i,x_j)\leq (e^3d)^{-1}$. Again, the number of such subfamilies is bounded above by $C(n)d^n$.

Now, pick $i,j$ in the same subfamily, and consider that
\begin{gather}
 r_i+r_j\leq d(x_i,x_j)\leq (e^3d)^{-1}\, .
\end{gather}
By definition of good scale and $r_i$, $N(x_i,1)- N(x_i,r_i)\leq 2\epsilon$, and the same holds for $x_j$. Thus we can apply Theorem \ref{t:eff_tan_con_uniq_harm} to obtain the existence of a unique normalized hhP $P_d$ such that for all $s\in [r_i,(e^3d)^{-1}]$, $\norm{T_{x_i,s}-P_d}\leq \sqrt{7\epsilon}$. A similar statement is true for $x_j$, and we denote $P'_d$ the approximating polynomial in this case.

By the almost cone splitting proved in Corollary \ref{cor_alcone}, there exists a common almost invariant subspace $V\leq \R^n$ of dimension at most $n-2$ for $P_d$ and $P'_d$, and $x_j$ is effectively close to $x_i+V$, in the sense that
\begin{gather}
 d(x_j-x_i,V) \leq \tau d(x_i,x_j)=100^{-1} d(x_i,x_j)\, .
\end{gather}

Since this argument holds for any $i,j$ in the same subfamily, by the Lipschitz extension theorem there exists a Lipschitz function $f:V\to V^\perp$ with Lipschitz constant $\leq 10^{-1}$ such that all $x_i$ in the same subfamily belong to the graph of $f$, which we denote by $\Gamma(f)$.

This allow us to estimate the sum $\sum r_i^{n-2}$, where $i$ belong to the same subfamily. Indeed, this sum is bounded above by a constant depending only on the Lipschitz constant of $f$ and on the $n-2$ Lebesgue measure of an $n-2$ dimensional ball of radius $(e^3d)^{-1}$. Summing over all subfamilies we obtain
\begin{gather}
 \sum_{i\in G_g} r_i^{n-2}\leq C(n) d^2\, .
\end{gather}
In the end, we have
\begin{gather}
 \sum_{i\in G_g\cup G_b} r_i^{n-2} \leq C(n) d^n\, .
\end{gather}

As for the drop in the frequency, let $z\in B_{5r_i}(x_i)$. By definition of $\cS_g$, $r_z\geq r_i/7$, which with Lemma \ref{lemma_Npinch} proves the frequency drop.

\paragraph{Covering of $\cS_b$} Now we turn our attention to the set $\cS_b$. We divide this argument in two subcases.

\paragraph{If $V$ has dimension $\leq n-3$}. It is easy to see that for each $y\in \cS_b$, there exists a point $x\in \cS_g$ such that
\begin{gather}
 d(x,y)\leq \ton{5\sum_{k=0}^\infty 7^{-k}}r_y \leq 6 r_y \quad \text{  and  } \quad r_x<7^{-1} r_y\, .
\end{gather}
In turn, there exists some $B_{5r_i}(x_i)$ in the covering of $\cS_g$ such that $x\in B_{5r_i}(x_i)$ and $r_i\leq 7^{-1}r_x$. This implies that
\begin{gather}\label{eq_rj}
 r_i\leq r_y \quad \text{  and  } \quad d(x_i,y)\leq 11r_y\, .
\end{gather}
For all $y\in \cS_b$, define $t_y=55^{-1}\min_i d(y,x_i)\leq r_y/5$, and consider the covering of $\cS_b$ given by $\cup_{y\in \cS_b} B_{t_y}(y)$. A Vitali subcovering has the property that
\begin{gather}
 \cS_b\subset \bigcup_j B_{5t_j}(y_j) \quad \text{  and  }\quad B_{t_j}(y_j)\cap B_{t_k}(y_k)=\emptyset\, .
\end{gather}
We partition the index set $J$ into $J_k$, with $k=0,\cdots,\infty$ such that $j\in J_k$ only if $t_j\in (2^{-k-1},2^{-k}]$. Denote by $\Gamma(f)$ the union of all the graphs of the functions $f$ in the subfamilies described above. By \eqref{eq_rj}, $d(y_j,\Gamma(f))\leq 55t_j$, and so for every $k$ we can estimate the number of balls elements in $J_k$ by
\begin{gather}
 \#(J_k)\leq \frac{\Vol\ton{B_{55\times 2^k} \Gamma(f)} }{\Vol B_{2^{-k-1}}(0)}\leq c(n) 2^{k(3-n)} d^2\, .
\end{gather}
Thus we obtain
\begin{gather}
 \sum_{j\in J} t_j^{n-2} \leq \sum _k 2^{k(n-2)} \# (J_k) \leq c(n)d^2\, .
\end{gather}

Now consider any $z\in B_{5t_j}(y_j)\cap \cS_2$. Evidently for all $i$: $d(z,x_i)\geq d(y_j,x_i)-5t_j \geq 50 t_j $. Moreover, since $r_z\geq d(z,x_i)/11$, we have that $r_z\geq 5 t_j$, thus proving the frequency drop.

\paragraph{If $V$ has dimension $=n-2$.} In this case, we see that all the hypothesis of Proposition \ref{prop_heffcrit} are satisfied. 

Thus in particular, if $z\in B_1(0)$ is such that there exists $i$ with
\begin{gather}
 5r_i<d(z,x_i)< (e^3d)^{-1}\, \quad \text{ and } \quad d(z,x_i+V) > \tau d(z,x_i)\, ,
\end{gather}
then $r_c(z)\geq (c(n)\tau)^{d}r_i$, which means that, by definition, $z\not \in \cS_{(c(n)\tau)^d r}$.

As for the points such that $d(z,x_i+V) \leq \tau d(z,x_i)$, we can cover them as we covered $\cS_b$ in the previous case and obtain easily the $n-2$ Minkowski estimate on them. Indeed, these points are effectively close to an $n-2$ dimensional subspace.
\end{proof}

With this proposition, we are ready to prove our main theorem.

\begin{theorem}\label{th_vol_harm}
 Let $u:B_1(0)\to \R$ be a harmonic function with $N(0,1)\leq \Lambda$. There exists a constant $C(n)$ such that
 \begin{gather}
  \Vol\ton{B_r\ton{\cS_r(u)} \cap B_{1/2}(0)}\leq C(n)^{\Lambda^2} r^2\, .
 \end{gather}
\end{theorem}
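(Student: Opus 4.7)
\textbf{Proof plan for Theorem \ref{th_vol_harm}.} The strategy is to iterate the covering produced by the preceding proposition, lowering the effective polynomial degree by one at each stage, until only balls of radius at most $r$ survive. The exponential in $\Lambda^2$ then arises as a telescoping product of the per-stage factors $C(n)d^n$.

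First, I would upgrade the one-point bound $N(0,1)\leq\Lambda$ to a uniform one. By Theorem \ref{th_cN}, there is $r_0=r_0(n)\in(0,1/2)$ such that $N(y,r_0)\leq C(n)\Lambda$ for every $y\in B(0,1/2)$. Since the Theorem is scale invariant I may rescale the domain to assume $r_0=1$, so that $B(0,1)$ is a \emph{good scale ball} for $u$ relative to the integer
\[
d_0\equiv \lceil C(n)\Lambda + \epsilon\rceil \leq C(n)\Lambda .
\]

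Next, I would apply the preceding covering proposition to $(B(0,1),d_0)$. This yields balls $\{B(x_i^{(1)},s_i^{(1)})\}$ with $\sum_i (s_i^{(1)})^{n-2}\leq C(n)d_0^n$ covering $\cS_{c(n)\tau^{d_0}r}(u)\supseteq \cS_r(u)\cap B(0,1/2)$. A ball with $s_i^{(1)}\leq r$ is declared \emph{terminal} and kept as is. On any non-terminal ball the proposition gives the uniform frequency bound $N(y,\eta s_i^{(1)})\leq d_0-1+\epsilon$ for all $y\in B(x_i^{(1)},s_i^{(1)})$, where $\eta=7^{-1}\epsilon$; after rescaling this ball to unit size, a mildly shrunken version becomes a good scale ball relative to $d_0-1$. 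I then apply the covering proposition again to each such non-terminal ball with degree $d_0-1$ in place of $d_0$, and iterate. After at most $d_0$ rounds the degree has fallen below $3/2$, at which point (by definition of $\cS_r$) any remaining ball does not meet $\cS_r(u)$.

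At the end of the iteration I obtain a collection of balls $\{B_r(y_j)\}$ covering $\cS_r(u)\cap B(0,1/2)$. The telescoping bound on the sum is
\[
\sum_j r^{n-2}\leq \prod_{k=1}^{d_0}C(n)k^n\leq C(n)^{d_0}(d_0!)^n\leq \exp\!\left(C(n)\,d_0\log d_0\right)\leq C(n)^{\Lambda^2}.
\]
Hence $\#\{y_j\}\leq C(n)^{\Lambda^2}r^{-(n-2)}$, and $B_r(\cS_r(u))\cap B(0,1/2)\subseteq \bigcup_j B_{2r}(y_j)$ has volume at most
\[
C(n)\,\#\{y_j\}\,r^n\leq C(n)^{\Lambda^2}r^2,
\]
which is the desired Minkowski estimate.

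The main obstacle is the cross-scale bookkeeping. The preceding proposition gives the frequency drop only at the reduced scale $\eta s_i$, while the inductive hypothesis (good scale ball) is a statement at the ball's own radius, so I must carefully shrink each non-terminal ball by a controlled factor before reapplying the proposition, and verify that the shrinking and the $c(n)\tau^{d_k}$-type losses in the set covered at each stage accumulate to a factor that is still absorbable into $C(n)^{\Lambda^2}$. A second, minor, subtlety is that the pinching threshold $\epsilon=\epsilon(n,d)$ depends on the current degree via $(c(n)\tau)^{2d-2}$; since we only ever need $\epsilon$ in the rescaling factor $\eta$ and not in the counting bound, this dependence only affects constants and not the exponent $\Lambda^2$.
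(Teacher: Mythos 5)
Your overall strategy matches the paper's: initialize with Theorem \ref{th_cN}, iterate the covering proposition while dropping the effective degree by one each round, and telescope the per-stage factors. But there is a genuine quantitative gap in the bookkeeping, and the explicit dismissal at the end of your write-up is wrong.

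The covering proposition gives you, on a non-terminal ball $B(x_i, s_i)$, the uniform bound $N(y, 7^{-1}\epsilon\, s_i)\leq d-1+\epsilon$ for all $y\in B(x_i,s_i)$. This says that balls of radius $\eta s_i$, $\eta=7^{-1}\epsilon$, centered anywhere in $B(x_i,s_i)$ are good scale balls for degree $d-1$; it does \emph{not} say that $B(x_i,s_i)$ itself is a good scale ball for $d-1$. Before you can reapply the proposition you must re-cover $B(x_i,s_i)$ by $\sim\eta^{-n}$ such shrunken balls, and this multiplies the count by $\eta^{-n}\sim(7/\epsilon)^n$ at every stage. Since $\epsilon=\epsilon(n,d)\sim(c(n)\tau)^{2d-2}$ is exponentially small in the current degree, $\eta^{-n}\sim C(n)^{d}$, so the telescoping product you wrote, $\prod_{k=1}^{d_0}C(n)k^n\sim C(n)^{\Lambda\log\Lambda}$, misses the dominant contribution $\prod_k\eta_k^{-n}\sim\tau^{-2n\sum_k k}\sim C(n)^{d_0^2}\sim C(n)^{\Lambda^2}$. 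That is precisely what produces the $\Lambda^2$ in the exponent. Your ``second, minor, subtlety'' therefore has it backwards: the $(c(n)\tau)^{2d-2}$ dependence does enter the counting (through the re-covering), and it is \emph{not} absorbable into a constant --- it is the whole reason the final exponent is $\Lambda^2$ rather than $\Lambda\log\Lambda$. The conclusion $\leq C(n)^{\Lambda^2}$ you end up writing is still true, but a reader checking the displayed telescoping would conclude you had proved a strictly stronger bound, which you have not. The paper's proof interleaves a re-covering by $C(n)\epsilon^{-n}$ good-scale balls of radius $\sim\epsilon$ with each application of the covering proposition, and bounds the per-round factor by $C(n)^{\Lambda}$, with $C(n)\Lambda$ rounds.

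A smaller point: the inclusion $\cS_{c(n)\tau^{d_0}r}(u)\supseteq\cS_r(u)\cap B(0,1/2)$ is reversed. With $\cS_r=\{x:N(x,r)\geq 3/2\}$ and $N$ monotone in $r$, one has $\cS_{r'}\subset\cS_r$ whenever $r'<r$, and $c(n)\tau^{d_0}r<r$. The paper covers the \emph{smaller} set $\cS_{\tilde r}$ with $\tilde r=c(n)\tau^{d^\star}r$ by balls of radius $r$ and then reparametrizes at the very end, using $\tilde r\leq r\,C(n)^{-\Lambda}$ so that the loss is again absorbed into $C(n)^{\Lambda^2}$. Your argument needs the same reparametrization step.
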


\begin{proof}
We are going to prove the theorem by successive covering of the set $S_r\cap B_{1/2}(0)$, in such a way that in each step we will cover a ball of radius $s$ in the previous step with balls of radia $s_i\geq r$ with
\begin{gather}
 \sum_i s_i^{n-2}\leq C(n)^\Lambda s^{n-2}\, .
\end{gather}
As we will see, the number of steps in the induction will be bounded above by $C(n)\Lambda$, thus the estimate follows. Define for convenience ${\tilde r}=c(n)\tau^{d} r$. 

First of all, observe that, by Theorem \ref{th_cN}, $N(x,1/3)\leq C(n)\Lambda$ for all $x\in B_{1/2}(0)$. Let $d^\star$ be the integral part of $C(n)\Lambda$, then by Lemma \ref{lemma_Npinch} $N(x,\epsilon/4)\leq d^\star + \epsilon$. This in particular implies that for every $x$, $B_{\epsilon/4}(x)$ is a good scale ball relative to the degree $d^\star$. We cover $\cS_{\tilde r}\cap B_{1/2}(0)$ with $C(n)\epsilon^{-n}$ such balls, say $B_{s_{0,i}}(x_{0,i})$ such that
\begin{gather}
 \sum_i s_{0,i}^{n-2}\leq C(n)\epsilon^{-2}\leq C(n)^{\Lambda}\, .
\end{gather}
Fix one $i$, the previous proposition gives us a covering of $\cS_{\tilde r}\cap B_{s_{0,i}}(x_{0,i})$ by balls $B_{t_{0,j}}(y_{0,j})$ such that
\begin{gather}
 \sum_{j} t_{0,j}^{n-2} \leq C(n)^{\Lambda} s_{0,i}^{n-2}\, .
\end{gather}
Evidently, if we consider all the balls $B_{t_{0,i}}(y_{0,i})$ in the coverings of all $B_{s_{0,i}}(x_{0,i})$ we obtain
\begin{gather}
 \sum_i t_{0,i}^{n-2}\leq C(n)^{2\Lambda}\, .
\end{gather}
Moreover, for each $x\in B_{t_{0,i}}(y_{0,i})$, either $t_{0,i}=r$ (and in such a case we keep this ball untouched in the successive steps), or the ball of radius $t_{0,i}\epsilon/7$ is a good scale ball relative to $d^\star -1$. Since we can cover each $B_{t_{0,i}}(y_{0,i})$ by $C(n) \epsilon^{n}$ such balls, we obtain a covering of $\cS_{\tilde r}\cap B_{1/2}(0)$ by balls $B_{s_{1,i}}(x_{1,i})$ which are good with respect to $d^\star -1$ and such that
\begin{gather}
  \sum_i s_{1,i}^{n-2}\leq C(n)^{3\Lambda}\, .
\end{gather}
Recall that $\epsilon=\epsilon_0(n) \tau^{2d-2}\leq \epsilon_0(n) C(n)^{-\Lambda}$. We repeat this argument $d^\star -1$ times, and obtain a covering of $\cS_{\tilde r}\cap B_{1/2}(0)$ made by balls $B_{t_i}(y_i)$ such that either $t_i=r$ or for all $x\in B_{t_i}(y_i)$, $N(x,\epsilon_0 t_i)\leq 3/2$. Thus if $t_i\geq \epsilon$, then $B_{t_i}(y_i)\cap \cS_{\tilde r} = \emptyset$. Otherwise, $\cS_{\tilde r}\cap B_{t_i}(y_i)$ can be easily covered by at most $C(n)\epsilon^{-n}$ balls of radius $r\geq \epsilon t_i$. 

By induction, it is easy to realize that at this last step we obtain a covering of $\cS_{\tilde r}\cap B_{1/2}(0)$ by at most $M$ balls of radius $r$, where $M r^{n-2}\leq C(n)^{\Lambda^2}$. Since ${\tilde r}\leq r C(n)^{-\Lambda}$, we obtain the thesis.

\end{proof}

\begin{remark}
 If we deal with functions in $\R^2$, we can obtain better estimates. Indeed, in this case the $\epsilon$-regularity theorem works for $\epsilon\leq \epsilon_0(n)$, without any dependence on $d$, and one does not need to use the cone splitting Lemma in \ref{lemma_epsd}. In the next two statements, we briefly describe how to modify the arguments stated previously in order to obtain these better estimates.
\end{remark}

For the next proposition, fix $\epsilon(n)$ to be the minimum of $\epsilon_0(n)$ given by Theorem \ref{t:eff_tan_con_uniq_harm} and Proposition \ref{prop_h2effcrit}.
\begin{proposition}
 Let $B_1(0)\subset \R^2$ be a good scale ball for $u$ relative to the degree $d$, fix $r>0$ and let $\tilde r = r r_0/d$, where $r_0=r_0(n)$ is the one in Proposition \ref{prop_h2effcrit}.
 
 There exists a single $x\in \cS_r$ such that $\cS_{\tilde r}\subset B_{r_x}(x)$ and either $r_x=r$ or for all $y\in B_{r_x}(x)$, $N(y,\epsilon r_x)\leq d-1+\epsilon$.
\end{proposition}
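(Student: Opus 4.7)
The plan is to exploit the rigidity of $n=2$: any homogeneous harmonic polynomial of degree $d\geq 2$ has nontrivial critical set only at the origin, and, by Proposition \ref{prop_pn-2}, its invariant subspace has dimension $<n-1=1$ and is therefore trivial in $\R^2$. Consequently, two distinct centers at which the frequency is pinched around degree $d$ would produce a nontrivial almost invariant direction in the sense of Lemma \ref{lemma_epsd}, forcing the Vitali covering that appears in the general proposition to collapse to a single ball. I would first reduce to $d\geq 2$: in the cases $d\leq 1$ the good-scale bound $N(\cdot,1)\leq d+\epsilon<3/2$ forces $\cS_{\tilde r}=\emptyset$ and the statement is vacuous. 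Assuming $\cS_{\tilde r}\neq\emptyset$, I choose $x\in\cS_{\tilde r}(u)$ minimizing $r_x$, whose existence is guaranteed by compactness and lower semicontinuity of $r'_\cdot$. Monotonicity of $N$ in the scale then gives $N(x,r)\geq N(x,\tilde r)\geq 3/2$, so $x\in \cS_r$.

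To establish the containment $\cS_{\tilde r}(u)\subset B(x,r_x)$, I would argue by contradiction. Suppose $y\in\cS_{\tilde r}(u)$ with $R=|y-x|>r_x$. The good-scale hypothesis together with the definition of $r'_x$ yields frequency pinching at $x$ around $d$ on the interval $[r'_x,\,1-|x|]$, so for $R$ in the admissible window $[e^2 r'_x,\ e^{-2}(1-|x|)]$ the rescaled function $v(w)=u(x+Rw)$ satisfies the hypotheses of Proposition \ref{prop_h2effcrit}. That proposition yields $r_c(y)\geq Rr_0/d>rr_0/d=\tilde r$, contradicting $y\in\cS_{\tilde r}$ via Lemma \ref{l:frequency_comparison}. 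Points with $|y-x|\leq e^2 r'_x\leq e^2 r_x$ already lie in a slightly enlarged ball $B(x,Cr_x)$, and the universal constant $C$ can be absorbed by enlarging $r_x$, or equivalently shrinking $r_0$ in the definition of $\tilde r$, without affecting the overall argument.

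For the final dichotomy, if $r_x=r$ there is nothing further to prove. Otherwise $r_x=r'_x>r$ and the definition of $r'_x$ yields $N(x,r_x)\leq d-\epsilon$. Applying Lemma \ref{lemma_Npinch} to this bound gives $N(x,\epsilon r_x)\leq d-1+\epsilon$. To propagate this bound from $x$ to every $y\in B(x,r_x)$, I would combine the minimality of $r_x$ over $\cS_{\tilde r}$ (which directly forces $r'_y\geq r_x$ and hence the same estimate via Lemma \ref{lemma_Npinch} for every $y\in\cS_{\tilde r}\cap B(x,r_x)$) with the sharp quantitative estimate hidden in the proof of Proposition \ref{prop_h2effcrit}: rescaling by $r_x$, the function $v(w)=u(x+r_x w)$ satisfies $N^v(0,1)\leq d-\epsilon$, and by choosing $r_0$ small enough the final computation there actually gives $N^v(w,r_0/d)\leq 1+\epsilon$ for every $w\in\partial B(0,1)$. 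Since $1+\epsilon\leq d-1+\epsilon$ whenever $d\geq 2$, this handles the remaining $y\notin\cS_{\tilde r}$ and yields the uniform frequency drop.

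The hard part will be the constant bookkeeping in the last two steps: the factor $r_0/d$ appearing in $\tilde r$ has to match exactly the one coming out of Proposition \ref{prop_h2effcrit} in order to close the containment without loss of a multiplicative constant, and the sharp form $N(\cdot,r_0/d)\leq 1+\epsilon$ (rather than only the $3/2$ bound stated in Proposition \ref{prop_h2effcrit}) must be verified uniformly in $d$ so that the frequency drop from $d$ to $d-1+\epsilon$ works even in the borderline case $d=2$. Both refinements are essentially implicit in the proof of Proposition \ref{prop_h2effcrit}, since the non-linear Taylor contributions at a non-critical point of the approximating polynomial pick up an extra $r_0^2$ factor that can be made arbitrarily small.
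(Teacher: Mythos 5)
Your proof takes essentially the same route as the paper: pick a point minimizing $r_x$, use the frequency pinching at $x$ together with the two-dimensional $\epsilon$-regularity of Proposition \ref{prop_h2effcrit} to force $\cS_{\tilde r}$ into a single ball around $x$, and obtain the frequency drop from minimality and Lemma \ref{lemma_Npinch}. The paper's own proof is three sentences long, so you are mostly supplying details that were left implicit. Some of your additions are genuinely helpful: the explicit reduction to $d\geq 2$, the observation that Proposition \ref{prop_h2effcrit} can only be applied at scales $\rho\geq e^2 r'_x$ so that one needs either to enlarge the covering ball by a universal factor or to shrink $r_0$ in the definition of $\tilde r$, and the recognition that the frequency drop at points $y\in B(x,r_x)$ outside the effective critical set is not a priori obvious (the paper simply calls this step ``trivial'').

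However, two parts of your write-up do not stand up. First, the opening paragraph about cone-splitting, Lemma \ref{lemma_epsd}, the almost invariant subspace, and the collapse of the Vitali covering is a red herring: none of that machinery is used in the argument you actually write, which proceeds directly via Proposition \ref{prop_h2effcrit} exactly as the paper does, without ever touching the almost cone-splitting lemmas. Second, your patch for the frequency drop at $y\notin\cS_{\tilde r}$ is not sound as stated: after rescaling by $r_x$ the pinching for $v$ holds only on scales $\geq 1$ (the definition of $r'_x$ gives pinching on $[r_x,1-|x|]$, not below $r_x$), so the hypothesis $N^v(0,e^2)-N^v(0,e^{-2})\leq\epsilon$ of Proposition \ref{prop_h2effcrit} need not hold; moreover that proposition controls $r_c$ only for $w\in\partial B(0,1)$, which after unscaling controls $N(y,\cdot)$ only on $\partial B(x,r_x)$ and only at scale $\sim r_0 r_x/d$, which need not dominate $\epsilon r_x$. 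The paper's ``trivial'' claim has the same loose end for $y\notin\cS_r$, so you correctly identified a gap, but the repair you sketch does not close it. Relatedly, minimizing $r_x$ over $\cS_{\tilde r}$ rather than over $\cS_r$ (as the paper does) is admissible since $\cS_{\tilde r}\subseteq\cS_r$ and $x$ still lands in $\cS_r$, but it enlarges the set $\cS_r\cap B(x,r_x)\setminus\cS_{\tilde r}$ of points for which minimality gives nothing, which is the opposite of what you want if you are already struggling with those points.
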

\begin{proof}
If for all $x\in \cS_r$, $r_x=\infty$, then we obtain our estimate just by considering $B_1(0)$ as a cover for itself.  

In the other cases, let $x\in \cS_r$ be (one of the) points for which $r_x$ is minimum. By definition of $r_x$ and good scale, we have that $N(x,1)-N(x,r_x)\leq \epsilon$, and thus, by Proposition \ref{prop_h2effcrit}, $B_1(x)\setminus B_{r_x}(x)$ has empty intersection with $\cS_{\tilde r}(u)$.
 
 Moreover, since $r_x$ has minimum value, the statement about frequency drop is trivial.
\end{proof}

With this proposition, using the exact same argument as before, we can prove the following improved estimate in dimension $2$.

\begin{theorem}
 Let $u:B_1(0)\subset \R^2\to \R$ be a harmonic function with $N(0,1)\leq \Lambda$. There exists a constant $C(n)$ such that
 \begin{gather}
  \Vol\ton{B_r\ton{\cS_r(u)} \cap B_{1/2}(0)}\leq C(n)^{\Lambda} r^2\, .
 \end{gather}
\end{theorem}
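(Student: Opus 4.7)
The plan is to replay the proof of Theorem \ref{th_vol_harm} verbatim, with the crucial difference that each occurrence of the $n$-dimensional cone-splitting covering is replaced by the $n=2$ proposition just proved. In the proof of Theorem \ref{th_vol_harm} the exponent $\Lambda^2$ arose because at every one of the $d^\star\leq C\Lambda$ iteration steps one had to refine a good-scale ball into $\epsilon^{-n}$ sub-balls, with $\epsilon=\epsilon_0(n)\tau^{2d-2}\leq C(n)^{-\Lambda}$. In the present $\R^2$ setting the pinching constant $\epsilon_0$ is \emph{absolute} (independent of the degree $d$), so each refinement costs only a factor $C(n)$, producing the improved bound $C(n)^\Lambda$.

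Concretely, I would begin as before: Theorem \ref{th_cN} gives $N(x,1/3)\leq C(n)\Lambda$ on $B(0,1/2)$, and Lemma \ref{lemma_Npinch} upgrades this to an integer $d^\star\leq C(n)\Lambda$ with $N(x,\epsilon_0/4)\leq d^\star+\epsilon_0$ for every such $x$. Thus $B(0,1/2)$ is covered by $C(n)$ good-scale balls of degree $d^\star$ (no $\Lambda$ in the exponent since $\epsilon_0$ is absolute). For each such ball the proposition just above produces a \emph{single} sub-ball $B(x,r_x)$ containing $\cS_{\tilde r}$, with either $r_x=r$ (and the branch terminates) or $N(z,\epsilon_0 r_x)\leq d^\star-1+\epsilon_0$ on all of $B(x,r_x)$. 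In the second case I cover $B(x,r_x)$ by at most $C(n)$ good-scale balls of degree $d^\star-1$ at scale $\epsilon_0 r_x$, and recurse.

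Each full step multiplies the current ball count by $C(n)$, and the recursion terminates after at most $d^\star\leq C(n)\Lambda$ steps: once the degree has dropped to $1$, Proposition \ref{prop_crit2} forbids any point of $\cS_{\tilde r}$ from lying in the remaining balls. This produces a covering of $\cS_{\tilde r}\cap B(0,1/2)$ by at most $C(n)^{d^\star}\leq C(n)^\Lambda$ balls of radius $r$. Exactly as at the end of Theorem \ref{th_vol_harm}, the scale discrepancy $\tilde r$ versus $r$ is absorbed: $\tilde r$ is obtained from $r$ by a product of at most $d^\star$ factors of the form $r_0/d$, which is $\geq C(n)^{-\Lambda}$, and rescaling contributes at most an additional $C(n)^\Lambda$ to the count. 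Because $n=2$, the covering estimate $\#\{B_r\}\leq C(n)^\Lambda$ translates directly into $\Vol(B_r(\cS_r(u))\cap B(0,1/2))\leq C(n)^\Lambda r^2$.

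The main technical point, which is really where both 2D features enter, is the compatibility of the iteration hypotheses. The frequency-drop conclusion $N(z,\epsilon_0 r_x)\leq d-1+\epsilon_0$ produced by one application of the proposition must match the good-scale hypothesis needed by the next; this forces the subdivision of $B(x,r_x)$ to be done at scale $\epsilon_0 r_x$ rather than at scale $r_x$, and requires $\epsilon_0$ to be independent of $d$ so that the factor $C(n)\epsilon_0^{-2}$ in the subdivision count does not secretly depend on $\Lambda$. This is exactly what the $n=2$ versions of Theorem \ref{th_hNpinch} and Proposition \ref{prop_h2effcrit} provide, and is the only place where the argument fails in dimensions $n\geq 3$.
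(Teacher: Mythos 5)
Your proof follows the paper's argument essentially verbatim: the paper also derives this theorem by re-running the proof of Theorem \ref{th_vol_harm} with the $2$-dimensional covering proposition (single ball, absolute $\epsilon_0$ independent of $d$) replacing the $n$-dimensional cone-splitting proposition, so that each of the $\leq C(n)\Lambda$ iteration steps multiplies the ball count by $C(n)$ rather than by $C(n)^\Lambda$. Your identification of the two mechanisms that make this work — the degree-independence of $\epsilon_0$ in Theorem \ref{th_hNpinch} and Proposition \ref{prop_h2effcrit}, and the single-ball output of the $2$D covering proposition — is exactly the paper's point. One small imprecision in your final paragraph: the discrepancy between $\tilde r$ and $r$ is not a product of $d^\star$ factors of $r_0/d$ (such a product would be of order $(d^\star!)^{-1}$, which is \emph{not} $\geq C(n)^{-\Lambda}$). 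At every recursive step the proposition is applied with the original $r$ fixed, and the set being covered is $\cS_{r r_0/d}$ with $d$ the current degree, which only grows as $d$ decreases; so the only scale discrepancy is the single factor $r_0/d^\star\geq c(n)/\Lambda$, which is of course comfortably $\geq C(n)^{-\Lambda}$. Since you were deferring to the paper's own rescaling step this slip does not affect the conclusion, but the correct mechanism is simpler than the one you described.
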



\section{More general elliptic equations}\label{s:general_elliptic}
Using the same technique as in the harmonic case, one can obtain similar results also for solutions of more general elliptic equations of the form \eqref{eq_Lu}. The most important tool in the estimates proved in the previous sections is Almgren's frequency and its monotonicity properties. For this reason, we start our analysis of elliptic equations by recalling the definition and basic properties of the generalized frequency. For convenience, we follow the notation used in \cite{ChNaVa}, which is a generalization of similar constructions given in \cite{galin1,galin2,HLrank,hanlin,hanhardtlin}.

\subsection{Generalized frequency}\label{ss:generalized_frequency}

 Fix an origin $\bar x$, and define the function $r^2$ by
\begin{gather}
 r^2=r^2(\bar x,x)=a_{ij}(\bar x) (x-\bar x)^i (x-\bar x)^j\, ,
\end{gather}
where $x=x^ie_i$ is the usual decomposition in the canonical basis of $\R^n$, and $a_{ij}$ is the inverse matrix of $a^{ij}$. Note that the level sets 
of $r$ are Euclidean ellipsoids centered at $\bar x$.
\begin{definition}\label{prop_gij}
 Given $a^{ij}$ satisfying \eqref{e:coefficient_estimates}, set
\begin{gather}
 \eta(\bar x,x)={a^{kl}(x)\frac{\partial r(\bar x,x)}{\partial x^k}\frac{\partial r(\bar x,x)}{\partial x^l}}
={a^{kl}(x)\frac{a_{ks}(\bar x)a_{lt}(\bar x)(x-\bar x)^s(x-\bar x)^t}{r^2}}\, ,\\
g_{ij}(\bar x,x)=\eta(\bar x,x)a_{ij}(x)\, .
\end{gather}
\end{definition}
\begin{remark}
 This metric has been introduced in the work \cite{toc}. It is important to underline that the geodesic distance $d_{\bar x}(\bar x,x)$ in the metric $g_{ij}(\bar x,x)$ is equal to $r(\bar x,x)$ for all $x,\bar x$. In particular, the geodesic ball $\cur{x\ \ s.t. \ \ d_{\bar x}(\bar x,x)<r}$ coincides with the Euclidean ellipsoid $\cur{x \ \ s.t. \ \ a_{ij}(\bar x) (x-\bar x)^i (x-\bar x )^j <r}=\bar x + Q_{\bar x} ^{-1}  (B_r(0))$.
\end{remark}

Now we are ready to define the generalized frequency function for a (weak) solution 
$u$ to \eqref{eq_Lu}. For ease of notation, we will keep using the symbol $N$ also for the generalized frequency.
\begin{definition}\label{deph_LN}
For a solution $u:B_1(0)$ to equation \eqref{eq_Lu}, for each $\bar x\in B_1(0)$ and
 $r\leq \lambda^{-1/2}(1-\abs {\bar x})$, define
\begin{gather}
 I(u,\bar x,g,r)=\int_{B(g(\bar x),\bar x,r)}\norm{\nabla u}_{g(\bar x)}^2
+ (u-u(\bar x))\Delta_{g(\bar x)} (u )dV_{g(\bar x)}\\
 D(u,\bar x,g,r)=\int_{B(g(\bar x),\bar x,r)}\norm{\nabla u}_{g(\bar x)}^2\\
H(u,\bar x,g,r)=\int_{\partial B(g(\bar x) ,\bar x,r)} 
\qua{u-u(\bar x)}^2 dS_{g(\bar x)}\\
N(u,\bar x,g,r) =\frac{rI(u,\bar x,g,r)}{H(u,\bar x,g,r)}\, ,
\end{gather}
where for convenience of notation we write $B(g,x,r)$ for the ball of radius $r$ centered at $x$ wrt the metric $g$.
\end{definition}
\begin{remark}
If the operator $\L$ in \eqref{eq_Lu} is the usual Laplace operator, then it is easily seen that this new definition coincides with the old one. This is why we call $N$ the \textit{generalized} frequency for solutions to \eqref{eq_Lu}.
\end{remark}
Note that $N$ has the same invariance properties than $u$. In particular,
\begin{gather}
 N(u,x,g,r) = N(T^u_{x,r},0,g_T,1)\, ,
\end{gather}
where $g_T$ is the metric defined according to the equation satisfied by $T$ (see \eqref{eq_dephT_Q}).

For convenience, from now on we will use the notation
\begin{gather}
 N(u,x,g,r) = N^u(x,r)\, 
\end{gather}
when there is no risk of confusion regarding the metric $g$.

\begin{proposition}\label{prop_genfreq}
 There exist constants $r_0,C$ depending only on $n$ and $\lambda$ such that
 \begin{enumerate}
  \item $N$ is almost monotone, in the sense that $e^{Cr} N(x,r)$ is monotone for all solutions $u$ and for all $r\in (0,r_0]$;
  \item $N$ controls the growth of $u$, in particular for $0<s<r \leq r_0$:
  \begin{gather}
   \abs{\frac{H(s)}{H(r)} \exp\ton{-2\int_s^r \frac{N(t)}{t} dt} -1} \leq C r\, ,
  \end{gather}
  \item $I$ and $D$ are almost equal, in particular
  \begin{gather}
   \frac{\abs{I(r)-D(r)}}{D(r)} \leq C r\, .
  \end{gather}

 \end{enumerate}
\end{proposition}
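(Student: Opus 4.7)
The proof follows the classical Almgren--Garofalo--Lin scheme, adapted to the generalized frequency. The crucial design feature of the metric $g_{ij}(\bar x, \cdot)$ is that $g_{ij}(\bar x,\bar x)=a_{ij}(\bar x)$ and $\eta(\bar x,\bar x)=1$, so the geodesic balls centered at $\bar x$ are precisely the level sets $\{r(\bar x,\cdot)\leq r\}$, and the principal symbol of $\L$ agrees with $\Delta_{g(\bar x)}$ at $\bar x$. Since $a^{ij}$ is Lipschitz, on $B(\bar x,r)$ one has $|a^{ij}(x)-a^{ij}(\bar x)|\leq C\lambda r$, and hence $|g_{ij}(\bar x,x)-a_{ij}(\bar x)|\leq C\lambda r$ pointwise, together with $|\partial_x g_{ij}|\leq C\lambda$. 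These three estimates are the engine of everything that follows.

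For part (3), I would write $I-D=\int_{B(\bar x,r)}(u-u(\bar x))\,\Delta_{g(\bar x)}u\,dV_{g(\bar x)}$, and then use the equation $\L u=0$ together with the pointwise identity
\begin{gather}
\Delta_{g(\bar x)}u=\eta(\bar x,x)^{-1}\!\ton{a^{ij}(\bar x)\partial_{ij}u+\text{first order}},
\end{gather}
so that $\Delta_{g(\bar x)}u$ equals $\eta^{-1}\bigl(\L u+[a^{ij}(\bar x)-a^{ij}(x)]\partial_{ij}u-b^i\partial_i u-cu\bigr)$ up to extra Christoffel terms of order $\lambda$. On $B(\bar x,r)$ the bracketed terms are bounded in $L^2$ by $C\lambda r\,\norm{\nabla u}_{L^2}+C\lambda\norm{u}_{L^2}$. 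Combining Cauchy--Schwarz with the Poincar\'e inequality $\int_{B_r}(u-u(\bar x))^2\leq Cr^2\int_{B_r}|\nabla u|^2$ in the metric $g(\bar x)$ yields $|I-D|\leq CrD$.

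For part (2), I would compute $H'(r)$ via the coarea formula in $g(\bar x)$, which gives $H'(r)=\frac{n-1}{r}H(r)+2\int_{\partial B(\bar x,r)}(u-u(\bar x))\,\partial_\nu u\,dS_{g(\bar x)}+O(1)H(r)$, the $O(1)$ coming from the variation of the Jacobian of the exponential map of $g(\bar x)$. Integration by parts identifies the boundary integral with $I(r)$, so $r\partial_r\log H(r)=(n-1)+2N(r)+O(r)$. Integrating $\partial_r\log H$ from $s$ to $r$ gives the estimate stated in (2).

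Part (1) is the heart of the argument, and the main obstacle. I would differentiate $\log N(r)=\log r+\log I(r)-\log H(r)$ and use (3) to replace $I$ by $D$ up to $Cr$ errors. The derivative $D'(r)$ is handled by the Rellich--Pohozaev identity in the metric $g(\bar x)$, applied with the radial vector field $X=r\,\nabla_{g(\bar x)} r$ (which is a genuine geodesic radial field because of the choice of $g$). After integration by parts this identifies $rD'(r)$ with $2\int_{\partial B}|\partial_\nu u|^2\,dS_{g(\bar x)}$ plus an error controlled by $C\lambda r\,D(r)$ (arising from $|\partial g|\lesssim\lambda$ and from the lower order terms $b,c$). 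Combining with $H'/H=2N/r+O(1)$ from step (2), one obtains
\begin{gather}
\partial_r\log N(r)=\frac{2}{rH(r)}\ton{H(r)\!\!\int_{\partial B}|\partial_\nu u|^2\,dS_{g(\bar x)}-\ton{\int_{\partial B}(u-u(\bar x))\partial_\nu u\,dS_{g(\bar x)}}^{\!2}}-C,
\end{gather}
and the first term is nonnegative by Cauchy--Schwarz. Hence $(\log N)'(r)\geq -C$, which gives that $e^{Cr}N(r)$ is monotone nondecreasing as required.

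The main technical difficulty is the bookkeeping in part (1): one must apply Rellich in a metric that itself depends on the center $\bar x$, with Lipschitz-only regularity, while keeping all error terms proportional to $\lambda r\cdot D(r)$ rather than the a priori dangerous $\lambda\cdot D(r)$. The choice of the radial field $X=r\nabla_{g(\bar x)}r$, for which $\nabla X$ is essentially the identity modulo $O(\lambda r)$ corrections thanks to the matching $g(\bar x,\bar x)=a(\bar x)$, is what buys the extra factor of $r$ and makes the monotonicity estimate work.
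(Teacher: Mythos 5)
The paper itself presents Proposition~\ref{prop_genfreq} without proof, citing \cite{galin1,galin2,hanlin,HLrank,chnava} and \cite{toc} for the construction; your proposal is the standard Garofalo--Lin argument and is the right overall strategy. Parts (1) and (2) are outlined correctly (your displayed formula in (1) is only heuristic: the denominator should be $I(r)H(r)$ rather than $rH(r)$ for the Cauchy--Schwarz term to be dimensionless, but that is cosmetic).

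There is, however, a substantive error in your proof of part (3). The leading second-order part of $\Delta_{g(\bar x)}$ at a point $x$ is $g^{ij}(\bar x,x)\partial_{ij}=\eta(\bar x,x)^{-1}a^{ij}(x)\partial_{ij}$, not $\eta^{-1}a^{ij}(\bar x)\partial_{ij}$: the inverse metric is $g^{ij}=\eta^{-1}a^{ij}(x)$ because $g_{ij}=\eta\,a_{ij}(x)$. Starting from the wrong pointwise identity produces the spurious term $[a^{ij}(\bar x)-a^{ij}(x)]\partial_{ij}u$, which you then claim is bounded in $L^2(B_r)$ by $C\lambda r\,\|\nabla u\|_{L^2}$. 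That bound fails: the Lipschitz gain $|a(\bar x)-a(x)|\leq C\lambda r$ is exactly cancelled by the interior $W^{2,2}$ estimate $\|\nabla^2 u\|_{L^2(B_{r/2})}\lesssim r^{-1}\|\nabla u\|_{L^2(B_r)}$, so the best one could hope for this way is $C\lambda\,\|\nabla u\|_{L^2}$, which destroys the crucial factor $r$ in $|I-D|\leq CrD$. The correct computation contains no such difference term: expanding $\L u=0$ gives $a^{ij}(x)\partial_{ij}u=-(\partial_j a^{ij})\partial_i u-b^i\partial_i u-cu$, which matches the leading part of $\Delta_{g(\bar x)}u$ term for term, leaving
\begin{gather}
\Delta_{g(\bar x)}u=\eta^{-1}\Bigl[-(\partial_j a^{ij})\partial_i u-b^i\partial_i u-cu\Bigr]+\text{(Christoffel)}\cdot\nabla u\, ,
\end{gather}
a purely first-order expression with $L^\infty$ coefficients of size $O(\lambda)$. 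The factor $r$ in $|I-D|\leq CrD$ then comes from pairing this with $u-u(\bar x)$ and applying Poincar\'e, $\|u-u(\bar x)\|_{L^2(B_r)}\leq Cr\|\nabla u\|_{L^2(B_r)}$, exactly as in the second half of your argument. (Alternatively, if one insists on keeping a term of the form $[a(\bar x)-a(x)]\partial_{ij}u$, it must first be integrated by parts against $u-u(\bar x)$ before estimating; a direct $L^2$ bound as written does not close.) With this correction the divergence structure of $\L$, which is what makes the Garofalo--Lin metric construction work, is properly exploited and the rest of your sketch goes through.
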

Moreover, by the existence and uniqueness of the tangent map proved in \cite{han_sing}, the limit $N(x,0)=\lim_{r\to 0} N(x,r)$ exists for all $x$ and it is the vanishing order of $u-u(x)$ at $x$ (thus, it is an integer $\geq 1$).

It is important to underline that a generalization of Theorem \ref{th_cN} is available also for general elliptic equations, although it is necessary to restrict ourselves to $r\leq r_0(n,\lambda)$.  The proof of the following is analogous to the proof of Theorem \ref{th_cN}, up to some minor technical modifications.

\begin{theorem}\label{th_ellcN}
 Let $u$ solve \eqref{eq_Lu} with \eqref{e:coefficient_estimates}, and assume for simplicity that $a^{ij}(0)=\delta^{ij}$. There exists $r_0=r_0(n,\lambda)$ and $C(n,\lambda)$ such that if $r_1\leq r_0$ and $N(0,r_1)\leq \Lambda$, then
 \begin{gather}
  N(y,r_1/2)\leq C\Lambda
 \end{gather}
for all $y\in (B_{r_1 }(0 ))$.
\end{theorem}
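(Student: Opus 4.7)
The plan is to imitate, step by step, the proof of Theorem \ref{th_cN}, using the generalized frequency $N$ of Definition \ref{deph_LN} and absorbing the extra errors produced by Proposition \ref{prop_genfreq} into multiplicative constants. First I would rescale: after replacing $u(\cdot)$ by $u(r_1\,\cdot)$, the rescaled function solves an equation of the form \eqref{eq_LT} with coefficients satisfying \eqref{eq_aT}, and with $\lambda$ replaced by $\lambda r_1\leq \lambda r_0$. By choosing $r_0(n,\lambda)$ small enough, I may therefore assume $r_1=1$ while guaranteeing that each metric $g(\bar x,\cdot)$ is $(1+\eta)$-bi-Lipschitz to the Euclidean metric for an $\eta(n,\lambda)$ as small as desired, and that $e^{Cr}\leq 1+\eta$ in the almost-monotonicity. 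Assume $u(0)=0$.

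The first substantive step is an analog of \eqref{eq_alpha}: there exists $\beta=\beta(n,\lambda)>0$ such that
\begin{gather*}
u(y)^2 \leq \tfrac{1}{2}\,\fint_{\partial B(g(y),y,1/2)} u^2
\end{gather*}
for all $\abs{y}\leq \beta$. For this I use that $u(0)=0$ forces the vanishing order at the origin to be at least one, so by almost-monotonicity $N(0,t)\geq e^{-Ct}\geq 1/2$ for small $t$, and the growth formula (Proposition \ref{prop_genfreq}(2)) then yields $H(0,t)\leq C(t/2\beta)^2 H(0,2\beta)$ for $t\leq 2\beta$. Combining this with a De Giorgi--Moser $L^\infty$ bound $u(y)^2\leq C(n,\lambda)\fint_{B(y,\beta)} u^2$ (valid for weak solutions of \eqref{eq_Lu} and replacing the harmonic mean value in the original proof) and sandwiching $B(y,\beta)\subset B(0,2\beta)$ reproduces the geometric computation of Theorem \ref{th_cN} and gives the pointwise bound after choosing $\beta$ small.

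The second step is the frequency-drop comparison. Using the nearly-Euclidean inclusions
\begin{gather*}
B(g(0),0,1/4)\subset B(g(y),y,1/2)\subset B(g(y),y,3/4)\subset B(g(0),0,1),
\end{gather*}
together with the growth formula at the origin (which, since $N(0,t)\leq e^{Cr_0}\Lambda$, gives $\fint_{\partial B(g(0),0,1)}u^2\leq c(n,\lambda)4^{2\Lambda}\fint_{\partial B(g(0),0,1/4)}u^2$) and monotonicity of $H(g(y),y,\cdot)$, I obtain
\begin{gather*}
\fint_{\partial B(g(y),y,5/8)}u^2 \leq c(n,\lambda)\,4^{2\Lambda}\fint_{\partial B(g(y),y,1/2)}u^2.
\end{gather*}
Subtracting $u(y)^2$ from both sides and using Step 1 to absorb the error gives the same inequality with $(u-u(y))^2$ in place of $u^2$. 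Applying the growth formula once more, now at base point $y$, converts this into $(5/4)^{2N(y,1/2)}\leq c(n,\lambda)4^{2\Lambda}$, i.e., $N(y,1/2)\leq C(n,\lambda)\Lambda$; the bound for $N(y,1/3)$ then follows from one further use of almost-monotonicity.

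The principal obstacle will be the bookkeeping between the base-point-dependent metric $g(\bar x,\cdot)$ and the Euclidean geometry that was implicit in the harmonic argument: the ellipsoid inclusions above only hold after a controlled perturbation, and averages on different ellipsoids must be compared using the uniform $(1+\eta)$-bi-Lipschitz constant. All of these perturbations, together with the $e^{Cr}$ factor in the almost-monotonicity, contribute only multiplicative constants depending on $(n,\lambda)$ once $r_0$ is fixed, and are absorbed into the final $C(n,\lambda)$. Points $y$ with $\beta<\abs{y}\leq 1$ are handled exactly as in the harmonic case, by iterating the estimate a bounded ($\beta$-dependent) number of times.
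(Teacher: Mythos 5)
The paper does not actually write out a proof of Theorem \ref{th_ellcN}: it only says that the argument is ``analogous to the proof of Theorem \ref{th_cN}, up to some minor technical modifications.''  Your proposal correctly identifies what those modifications must be and carries them out in the right order: rescale so that $\lambda r_1$ is as small as desired, replace the harmonic mean--value inequality by a De Giorgi--Moser $L^\infty$ bound, replace exact monotonicity and the exact growth identity \eqref{eq_doth} by the almost-monotone quantity $e^{Cr}N$ and the generalized growth estimate of Proposition \ref{prop_genfreq}, and account for the discrepancy between Euclidean balls and the ellipsoids $B(g(\bar x),\bar x,\cdot)$ by the $(1+\eta)$-bi-Lipschitz comparison.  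This is the intended argument.

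Two small remarks, neither fatal.  First, from $N(0,0)\geq 1$ and almost-monotonicity you get $N(0,t)\geq e^{-Ct}$, which for small $r_0$ gives an exponent close to $2$ in $H(0,t)\lesssim(t/2\beta)^{2e^{-Cr_0}}H(0,2\beta)$; the weaker inequality $N\geq 1/2$ you invoke only gives exponent $1$, but either is enough to make $\alpha$ small and close the first step, so this is a harmless imprecision.  Second, the final iteration to pass from $\abs y\leq\beta$ to all $y$ up to radius $\sim r_1$ is stated as tersely as in the paper's own harmonic Theorem \ref{th_cN}; one does need to be careful there that each recentring step halves the comparison scale, so the constant one ends up with genuinely depends on how close $\abs y$ is allowed to get to $r_1$ (matching the dependence on $r$ in Theorem \ref{th_cN}).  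This is a feature of the original harmonic argument that you have faithfully inherited, not a gap you introduced.
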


\subsection{Frequency pinching for elliptic equations: growth estimates}\label{sec_lipgrowth}
Generalizing the definition given for harmonic functions, we say that a solution $u$ to \eqref{eq_Lu} has frequency $\delta$-pinched on the scales $[r_2,r_1]$ around $x$ if
\begin{gather}
 \abs{N(x,s)-N(x,r_1)}\leq \delta\, \quad \forall\ s\in [r_2,r_1]\, .
\end{gather}
Given the almost monotonicity of $N$, a sufficient condition for pinching is
\begin{gather}
 N(x,r_1)-N(x,r_2)\leq \delta/2 \quad \text {  and  } \quad r_1\leq r_0(n,\lambda) \frac{\delta}{N(x,r_1)}\, .
\end{gather}

The aim of this section is to generalize in this context the properties enjoyed by harmonic functions with pinched frequency, with particular emphasis on the quantitative versions of these properties.

Throughout this section, we fix some $0<\delta<1/7$ and we will assume that $N(0,r_0)\leq \Lambda$ and that $r_1\leq \delta \min\cur{\frac{r_0}{\Lambda}, \frac{1}{C}}$.

\begin{lemma}\label{lemma_lipl2est}
Set for convenience $T=T^u_{x,r_1}$, and let $\ell$ be any real number, and suppose that for some $r_2\leq r_1$, $N(x,r_2)\geq \ell-\delta/2$ (or equivalently that $N(x,s)\geq \ell-\delta$ for all $s\in[r_2,r_1]$). Then 
\begin{gather}
 \int_{B_t} T^2dV \leq \frac{\omega_n}{n} (1+2\delta)\begin{cases}
                                             t^{n+2\ell-2\delta} & \text{ for } t\in\qua{\frac{r_2}{r_1},1}\, ,\\
                                             \ton{\frac{r_2}{r_1}}^{2\ell-2\delta-2} t^{n+2} & \text{ for } t\in\qua{0,\frac{r_2}{r_1}}\, ·
                                            \end{cases}
\end{gather}
\end{lemma}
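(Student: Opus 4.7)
My plan is to reduce the estimate to pointwise bounds on the spherical average $h_T(s)=\fint_{\partial B_s}T^2\,dS$ via the growth estimate of Proposition~\ref{prop_genfreq}(2), and then integrate. Recall that $T=T^u_{x,r_1}$ satisfies an elliptic equation of the form \eqref{eq_LT} with $\tilde a^{ij}(0)=\delta^{ij}$ and coefficient errors of order $\lambda r_1$ (see \eqref{eq_aT}), and is normalized so that $h_T(1)=1$. The assumption $r_1\leq \delta\min\{r_0/\Lambda,1/C\}$ will be used repeatedly to collapse every multiplicative factor of the form $(1+Cr_1)$ or $(1+Cr_1\Lambda)$, arising from almost monotonicity, from the growth formula itself, and from the comparison between the Euclidean and generalized metrics near $0$, into a single $(1+\delta)$.

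First I would translate the hypothesis $N(x,r_2)\geq N-\delta/2$ into the scale-invariant bound $N_T(0,s)\geq N-\delta$ for every $s\in[r_2/r_1,1]$; this is a direct consequence of almost monotonicity of $N$ and the choice of $r_1$. Substituting this into the growth formula between $s$ and $1$ and using $h_T(1)=1$ yields
\begin{gather*}
h_T(s)\leq (1+\delta)\,s^{2(N-\delta)}\qquad\text{for every }s\in[r_2/r_1,1].
\end{gather*}
For smaller scales $s\in[0,r_2/r_1]$ the only lower bound available is $N_T(0,t)\geq 1$, which holds because $T$ is non-constant with $T(0)=0$ (hence vanishing order at least $1$) and propagates to all $t\geq 0$ by almost monotonicity. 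Applying the growth formula between $s$ and $r_2/r_1$ and plugging in the endpoint bound just obtained yields
\begin{gather*}
h_T(s)\leq (1+\delta)\,(r_2/r_1)^{2N-2\delta-2}\,s^2\qquad\text{for every }s\in[0,r_2/r_1].
\end{gather*}

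To conclude I integrate: $\int_{B_t}T^2\,dV\leq \omega_n(1+Cr_1)\int_0^t h_T(s)\,s^{n-1}\,ds$. For $t\in[0,r_2/r_1]$ the second pointwise bound gives the desired estimate at once, with the extra denominator $n+2$ producing the required factor $\omega_n/n$. For $t\in[r_2/r_1,1]$ I split the integral at $r_2/r_1$: the small-scale piece is bounded by $\frac{1+\delta}{n+2}(r_2/r_1)^{n+2N-2\delta}$ and the bulk piece by $\frac{1+\delta}{n+2N-2\delta}\qua{t^{n+2N-2\delta}-(r_2/r_1)^{n+2N-2\delta}}$. An elementary case analysis on the sign of $\frac{1}{n+2}-\frac{1}{n+2N-2\delta}$ combined with the inequality $\frac{1+\delta}{\min\{n+2,\,n+2N-2\delta\}}\leq \frac{1+2\delta}{n}$ (which holds whenever $N\geq\delta$, and in particular under our hypotheses) collapses the sum to at most $\frac{\omega_n(1+2\delta)}{n}t^{n+2N-2\delta}$. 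The main obstacle is purely bookkeeping: one must verify that every multiplicative error $(1+Cr_1)$ introduced along the way can genuinely be absorbed into the prescribed $(1+2\delta)$, which is exactly why the assumption $r_1\leq \delta\min\{r_0/\Lambda,1/C\}$ is made. No new idea beyond Proposition~\ref{prop_genfreq} and the normalization of $T$ is needed.
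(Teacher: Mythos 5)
Your proposal is correct and takes essentially the same route as the paper: translate the pinching hypothesis and the trivial lower bound $N\geq 1$ (for scales below $r_2/r_1$) into pointwise bounds on $h_T$ via Proposition~\ref{prop_genfreq}(2), then integrate radially, absorbing the $(1+Cr_1)$ errors into $(1+2\delta)$ using the assumption $r_1\leq\delta\min\{r_0/\Lambda,1/C\}$. The paper's own argument is exactly this sketch, left mostly implicit; you have simply filled in the split of the radial integral at $r_2/r_1$ and the elementary comparison of the coefficients $\tfrac{1}{n+2}$ and $\tfrac{1}{n+2N-2\delta}$.
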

\begin{proof}
The almost monotonicity of $N$ (or equivalently of $N^T$) and the fact that $N^T(x,0)\geq 1$ for any $x$ give the bound:
\begin{gather}
 N^T(0,r)\geq \begin{cases}
				      N^T(0,r_2)-\delta & \text{ for } r\in [r_2,r_1]\, ,\\
				      e^{-Cr}  & \text{ for } r\in [0,r_2]\, .
				     \end{cases}
\end{gather}
The lemma is an easy consequence of the $L^2$ estimates in Proposition \ref{prop_genfreq}. Indeed, we have
\begin{gather}
 \int_{B_t(0)} \abs{T(x)}^2 = \int_{0}^t ds \omega_n s^{n-1} \fint_{\partial B_s(0)} \abs{T(x)}^2 \leq \omega_n (1+Cr_1)\int_0^t ds s^{n-1} \exp\ton{-2\int_s ^1 \frac{N^T(0,r_1 s)} s ds }\, .
\end{gather}

\end{proof}

By standard elliptic estimates (see \cite[theorem 8.24]{GT}), we obtain the following corollary.
\begin{corollary}\label{cor_lipabsest}
 Under the assumptions of Lemma \ref{lemma_lipl2est}, we have that
 \begin{gather}
  \abs{T(x)} \leq C(n,\lambda)	2^{\ell/2}\begin{cases}
				 \abs x^{\ell-\delta} & \text{ for } \abs x \in [2^{-1/2}r_2/r_1,2^{-1/2}]\, ,\\
				 \ton{r_2\ r_1^{-1}}^{\ell-\delta-1} \abs x^{1} & \text{ for } \abs x \in [0,2^{-1/2}r_2/(r_1)]\, .
				\end{cases}
 \end{gather}
\end{corollary}

 \begin{proof}
 These estimates are an easy consequence of the standard elliptic estimates
  \begin{gather}
   \sup_{x\in B_{r}} \{\abs {T(x)}\}\leq C(n,\lambda) \norm T _{L^2(B_{2^{1/2}r})}r^{-n/2}\, .
  \end{gather}
 We refer the reader to \cite[theorem 8.24]{GT} for a proof of these estimates. 
\end{proof}

By $W^{2,p}$ elliptic estimates, we can easily use the previous Corollary to obtain $L^p$ estimates on the Laplacian of $u$.
\begin{lemma}\label{lemma_liplpest}
 For any fixed $p\in (1,\infty)$, under the assumptions of Lemma \ref{lemma_lipl2est}, the Laplacian of $T=T^u_{0,r_1}$ satisfies
 \begin{gather}\label{eq_lipdeltaest}
  \norm{\Delta T}_{L^p(B_t(0))}\leq C(n,\lambda,p)r_1 2^{\ell} \begin{cases}
				 \ton{r_2\ r_1^{-1}}^{\ell-\delta-1} t^{n/p}\   & \text{ for } t \in [0,r_2/(2r_1)]\\
				 t^{\ell-\delta-1} t^{n/p} & \text{ for } t \in [r_2/(2r_1),1/2]
				\end{cases}
 \end{gather}
\end{lemma}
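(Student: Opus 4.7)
The plan is to view $\Delta T$ as a lower-order perturbation of the divergence-form operator $\tilde{\L}$ that annihilates $T$, and to combine the resulting pointwise identity with the sup-norm growth bound from Corollary~\ref{cor_lipabsest} via standard interior $W^{2,p}$ regularity. Expanding \eqref{eq_LT} and adding and subtracting $\delta^{ij}\partial_i\partial_j T$ yields
\begin{gather*}
\Delta T = (\delta^{ij}-\tilde a^{ij})\,\partial_i\partial_j T - (\partial_i\tilde a^{ij})\,\partial_j T - \tilde b^i\,\partial_i T - \tilde c\,T\, .
\end{gather*}
By \eqref{eq_aT} and $\tilde a^{ij}(0)=\delta^{ij}$, on $B(0,t)\subset B(0,1/2)$ we have $|\delta^{ij}-\tilde a^{ij}|\leq \lambda r_1 t$ and $|\partial_i\tilde a^{ij}|,|\tilde b^i|,|\tilde c|\leq \lambda r_1$, so pointwise
\begin{gather*}
|\Delta T|\leq C\, r_1\bigl(t\,|\nabla^2 T| + |\nabla T| + |T|\bigr)\, .
\end{gather*}

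Since $\tilde{\L}$ is uniformly elliptic with Lipschitz leading coefficients (independent of $r_1$, because $r_1\leq 1$) and $\tilde{\L}T=0$, standard interior $W^{2,p}$ estimates, applied after rescaling $B(0,2s)\to B(0,1)$, give
\begin{gather*}
\|\nabla T\|_{L^p(B(0,s))} \leq C\, s^{n/p-1}\,\|T\|_{L^\infty(B(0,2s))},\qquad \|\nabla^2 T\|_{L^p(B(0,s))} \leq C\, s^{n/p-2}\,\|T\|_{L^\infty(B(0,2s))}\, ,
\end{gather*}
with $C=C(n,\lambda,p)$. Taking $s\sim t$ and using Corollary~\ref{cor_lipabsest} to bound
\begin{gather*}
\|T\|_{L^\infty(B(0,2s))}\leq C\,2^{N/2}\cdot\begin{cases}(r_2/r_1)^{N-\delta-1}\,s & \text{if } 2s\leq r_2/(2r_1)\, ,\\ s^{N-\delta} & \text{if } 2s\geq r_2/(2r_1)\, ,\end{cases}
\end{gather*}
together with the trivial $\|T\|_{L^p(B(0,t))}\leq C t^{n/p}\|T\|_{L^\infty(B(0,t))}$, produces in both regimes the uniform bound
\begin{gather*}
\|T\|_{L^p(B(0,t))}+\|\nabla T\|_{L^p(B(0,t))}+ t\,\|\nabla^2 T\|_{L^p(B(0,t))} \leq C\,2^{N/2}\,\kappa(t)\, t^{n/p}\, ,
\end{gather*}
where $\kappa(t)=(r_2/r_1)^{N-\delta-1}$ if $t\leq r_2/(2r_1)$ and $\kappa(t)=t^{N-\delta-1}$ otherwise.

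Substituting this into the pointwise bound of the first paragraph and integrating $L^p$ over $B(0,t)$ gives \eqref{eq_lipdeltaest} (with $2^{N/2}\leq 2^N$ absorbed into the stated $2^N$). The only real bookkeeping obstacle is the transition at $t\sim r_2/(2r_1)$: the interior estimate above consumes $\|T\|_{L^\infty}$ on the doubled ball, so when $t$ lies near the threshold, $B(0,2t)$ straddles both regimes of Corollary~\ref{cor_lipabsest}. This is harmless, because at $|x|=r_2/(2r_1)$ the two branches of the corollary agree up to a constant ($(r_2/r_1)^{N-\delta-1}\cdot |x|\sim |x|^{N-\delta}$); thus either slightly enlarging the constant in front of the threshold or invoking the corollary twice covers the transition without changing the form of the bound.
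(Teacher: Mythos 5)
Your proof follows essentially the same route as the paper's: decompose $\Delta T$ as $\Delta T - \tilde\L(T)$, use \eqref{eq_aT} and $\tilde a^{ij}(0)=\delta^{ij}$ to bound the resulting coefficients by $C\lambda r_1|x|$ and $C\lambda r_1$ respectively, then feed the interior $W^{2,p}$ estimate and the sup-norm growth bound from Corollary~\ref{cor_lipabsest} into the pointwise inequality. The only cosmetic differences are that you route through $\|T\|_{L^\infty}$ rather than the $L^p$ norm on the doubled ball and you spell out the harmless transition at $t\sim r_2/(2r_1)$, both of which are fine and consistent with what the paper leaves implicit.
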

\begin{proof}
 By $W^{2,p}$ elliptic estimates (see for example \cite[theorem 9.11]{GT}), we have
 \begin{gather}\label{eq_ellest}
  r^{2} \norm{\nabla^{2} T }_{L^p(B_r(0))} + r \norm{\nabla T }_{L^p(B_r(0))} \leq C(n,\lambda,p) \norm{T }_{L^p(B_{2^{1/2}r}(0))}
 \end{gather}
The estimates on the Laplacian are an easy consequence of the previous corollary and the fact that
 \begin{gather}\label{eq_estd}
  \Delta T = \Delta T - \tilde \L(T) = \ton{\tilde a^{ij}(0)-\tilde a^{ij}(x)}\partial_i \partial_j T + \ton{\tilde b^i(x) + \partial _j \tilde a^{ij}(x)} \partial_i T\, ,
 \end{gather}
where the coefficients $\tilde a^{ij}, \ \tilde b^i$ are the ones defined in \eqref{eq_LT}.
 
By the Lipschitz condition on $a^{ij}$ and the definition of $T$, we have that
\begin{gather}
 \abs{\tilde a^{ij}(0)-\tilde a^{ij}(x)}\leq  \lambda C(n,\lambda) r_1 \abs x\, .
\end{gather}
Moreover, the uniform bound on the coefficients $b^i$ and the Lipschitz bounds on $a^{ij}$ imply that
\begin{gather}
 \abs{\tilde b^i(x) + \partial _j \tilde a^{ij}(x) } \leq \lambda C(n,\lambda) r_1\, .
\end{gather}

Plugging in the estimates \eqref{eq_ellest} in \eqref{eq_estd} we obtain the result.
 
\end{proof}

\subsection{Frequency pinching and approximating harmonic functions}
Following \cite{han_sing}, we use the Green's kernel of the Laplacian in order to produce a harmonic function which approximates the solution $u$ under suitable pinching conditions.

\begin{theorem}\label{th_w}
 Let $u$ be a solution of \eqref{eq_Lu} with \eqref{e:coefficient_estimates}. Suppose that $r_1\leq r_0(n,\lambda)$, and that for some $r_2\leq r_1/4$, $N(x,r_2)\geq \ell$, where $\ell$ is any real number. Denote for simplicity $T=T^u_{x,r_1}$. Then there exists a function $w(x)$ such that for $\abs x \in [r_2/r_1,1/2]$ 
 \begin{gather}\label{eq_w0}
  \abs{w(x)}\leq C(n,\lambda) 8^\ell r_1 \abs x ^{\ell+1/3}\, , \quad \quad \abs{\nabla w (x)} \leq C(n,\lambda) 16^\ell r_1 \abs x ^{\ell-1+1/3}\, 
 \end{gather}
 and with $\Delta (w) = \Delta (T)$ on $B_{1/2}(0)$. Moreover, $w(0)=0$.
\end{theorem}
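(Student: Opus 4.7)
The plan is to construct $w$ as the Newtonian potential of $\Delta T$ (suitably normalized) and then extract pointwise bounds by feeding the $L^p$ estimates of Lemma \ref{lemma_liplpest} through a dyadic decomposition of the integral. Concretely, set
\begin{gather*}
w(x) := c_n \int_{B(0,1/2)} |x-y|^{2-n} \Delta T(y)\, dy \; - \; c_0,
\end{gather*}
where $c_n$ is the fundamental-solution normalization of the Laplacian on $\R^n$ (with the logarithmic analogue when $n=2$), and $c_0$ is chosen so that $w(0)=0$. Since $\Delta$ applied to the Newtonian kernel is a Dirac mass, we automatically have $\Delta w = \Delta T$ on $B(0,1/2)$ and $w(0)=0$. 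What remains is the two pointwise bounds in \eqref{eq_w0}.

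Fix $x$ with $\rho := |x| \in [r_2/r_1, 1/2]$ and write $w(x) = \int [G_0(x,y)-G_0(0,y)]\,\Delta T(y)\,dy$. I would decompose the domain of integration into the ball $B(x,\rho/2)$ together with the dyadic annuli $A_k := \{y : 2^{-k-1} \leq |y| \leq 2^{-k}\}$ around the origin, and estimate each piece by H\"older's inequality with a fixed exponent $p>n$ (for definiteness $p=2n$). In the outer regime $2^{-k} \geq r_2/(2r_1)$, Lemma \ref{lemma_liplpest} gives $\norm{\Delta T}_{L^p(A_k)} \leq C r_1\, 2^N (2^{-k})^{N-\delta-1 + n/p}$, while in the inner regime the linear bound $\norm{\Delta T}_{L^p(A_k)} \leq C r_1\, 2^N (r_2/r_1)^{N-\delta-1} (2^{-k})^{n/p}$ takes over. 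Summing the resulting geometric series (the kernel contributes $|x-y|^{2-n}$, and on the piece $B(x,\rho/2)$ near $x$ one invokes a standard Morrey-type estimate since $p>n$) yields $|w(x)| \leq C r_1\, 4^N \rho^{N+1-\delta}$. Because $\delta < 1/7$ and $\rho \leq 1/2 < 1$, one has $\rho^{N+1-\delta} \leq \rho^{N+1/3}$, which upgrades the bound to the first estimate in \eqref{eq_w0}.

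The gradient estimate is obtained by differentiating under the integral: $\nabla w(x) = c_n \int (y-x)|x-y|^{-n} \Delta T(y)\, dy$. The kernel now loses one power of spatial decay, but the same dyadic decomposition together with a Calder\'on--Zygmund / Morrey bound on the interior piece $B(x,\rho/2)$ produces $|\nabla w(x)| \leq C r_1\, 4^N \rho^{N-\delta}$, which absorbs into the claimed $|\nabla w(x)| \leq C(n,\lambda)\, 16^N r_1\, |x|^{N-1+1/3}$ by the same exponent comparison.

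The main obstacle is keeping the exponential-in-$N$ constants honest while summing the dyadic contributions: the factor $2^N$ inherited from Lemma \ref{lemma_liplpest} must be squared (once for the $L^p$ norm on each annulus, once to control the geometric tail and interior piece), which is exactly the source of the $4^N$ that we then double to $8^N$, respectively $16^N$, to absorb the dimensional constants. The gap $1-\delta > 6/7 > 1/3$ produced by Green's-function integration is precisely what absorbs the $\delta$-loss and furnishes the $1/3$ margin appearing in \eqref{eq_w0}. A secondary check is that $c_0 = w(0)$ is itself controlled (by applying the same bounds with $x$ replaced by a generic interior point, together with the $L^1$ control of $|y|^{2-n}\Delta T$ via the inner/outer split), so that the constant subtraction is harmless.
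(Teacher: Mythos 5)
Your construction subtracts only a constant, $w(x) = c_n\int |x-y|^{2-n}\Delta T(y)\,dy - c_0$, but this cannot produce the high-order vanishing $|w(x)|\lesssim 8^N r_1|x|^{N+1/3}$ that the theorem claims. To see the failure, write the contribution from the far region $|y|\geq 2|x|$: there $G(x-y)-G(-y) = -\nabla G(-y)\cdot x + O(|x|^2/|y|^{n})$, so
\begin{gather*}
\int_{|y|\geq 2|x|}\bigl[G(x-y)-G(-y)\bigr]\,\Delta T(y)\,dy \;\approx\; -x\cdot\int_{|y|\geq 2|x|}\nabla G(-y)\,\Delta T(y)\,dy\, ,
\end{gather*}
which is \emph{linear} in $x$ with a coefficient that does not vanish in general. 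So $|w(x)|$ is of order $r_1|x|$ as $|x|\to 0$, not $|x|^{N+1/3}$, and the inequality $|x|\lesssim 8^N|x|^{N+1/3}$ fails for small $|x|$ whenever $N\geq 1$. Your dyadic summation bounds only the part of the potential coming from $|y|\lesssim|x|$ and from the singularity near $x$; it says nothing that forces the contribution from $|y|\gg|x|$ to vanish to order $N+1/3$.

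The missing idea, and the heart of the paper's proof, is that one must subtract not just the constant term but an entire degree-$d$ harmonic polynomial in $x$ (where $d$ is the integer nearest $N$). For each fixed $y\neq 0$ the function $x\mapsto G(x-y)$ is harmonic on $B(0,|y|)$ and expands as $\sum_k g_k(y)P_k(x)$; the paper sets
\begin{gather*}
w(x)=\int_{|y|\leq 1}\bigl(G(x-y)-G(-y)\bigr)\Delta T(y)\,dy \;-\;\sum_{k=1}^{d}\int_{r_2/r_1\leq|y|\leq 1} g_k(y)\,P_k(x)\,\Delta T(y)\,dy\, ,
\end{gather*}
so that in the far region $|y|\geq 2|x|$ the integrand becomes the degree-$d$ Taylor remainder $S_{y,d}(x)$, which satisfies $|S_{y,d}(x)|\lesssim 2^{d+1}|x|^{d+1}/|y|^{n+d-1}$. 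That $|x|^{d+1}$ decay, combined with the $L^p$ bound of Lemma \ref{lemma_liplpest} on the annuli, is what actually yields the exponent $N+1-\delta$ (and the $8^N$ constant). Subtracting a harmonic polynomial still gives $\Delta w=\Delta T$, so the PDE property is preserved. Without this polynomial correction the statement you are trying to prove is simply false for your choice of $w$, so the gap is structural rather than a matter of tightening constants.
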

\begin{remark}
 As it will be clear from the proof, the function $w$ depends linearly on the function $T$, and in particular on its normalization. Recall that $T$ satisfies $\fint_{\partial B_1(0)} T^2 =1$.
\end{remark}

\begin{proof}
 We prove the theorem for $n\geq 3$. Similar estimates hold in the case $n=2$ by replacing $G(x,y)=c(n) \abs {x-y}^{2-n}$ with the Green's kernel in dimension $2$, i.e., $G(x,y)=c \log(\abs {x-y})$. 
 
\paragraph{Estimates on the Green's kernel.}

Let us recall some facts and estimates about the Green's kernel. Let $G(x,y)$ be the fundamental solution of the Laplace operator in $\R^n$, i.e., the Green's kernel. By standard theory, $G(x,y)= c(n) \abs {x-y} ^{2-n}$, where $c(n)$ is chosen in such a way that
 \begin{gather}
  \Delta_x G(x,y) = \delta(x-y)\, .
 \end{gather}
Fix some $y\neq 0$, and consider consider the function $G_y(x) = G(x,y)$ on the ball of radius $B_{\abs y}(0)$. $G_y$ is harmonic on this ball, and so we can write
\begin{gather}
 G_y(x) = \sum_k g_k(y) P_k(x)\, ,
\end{gather}
where $P_k(x)$ are homogeneous harmonic polynomials of degree $k$ normalized by $\fint_{\partial B_1(0)} P_k^2 =1$. For $r<\abs y$,
\begin{gather}
 \sum_k \qua{g_k(y) r^k}^2 = \fint_{\partial B_r(0)} G_y(x)^2\, .
\end{gather}
In particular, if we choose $r=2\abs y /3$, we obtain
\begin{gather}
 g_k(y) \leq \ton{\frac 3 2}^k \abs y ^{-k} \ton{\fint_{\partial B_{3\abs y /2}(0)} G_y(x)^2}^{1/2}\leq c(n) \ton{\frac 3 2}^k \abs y ^{-n+2-k}\, .
\end{gather}
Note also that the function
\begin{gather}
 S_{y,d}(x)=G(x,y)-\sum_{k=0}^d g_k(y) P_k(x)
\end{gather}
is a harmonic function with vanishing order $\geq d+1$ at the origin. Moreover, by the orthogonality properties of $P_k$, 
\begin{gather}
 \fint_{\partial B_{2\abs y /3}(0)} S_{y,d}(x)^2 \leq \fint_{\partial B_{2\abs y /3}(0)} G(x,y)^2 \leq \frac{c(n)}{\abs y^{2(n-2)}}\, .
\end{gather}
By growth conditions related to the frequency of $S$ and standard elliptic estimates, we have for $\abs x \leq \abs y /2$:
\begin{gather}\label{eq_d+1est}
 \abs{S_{y,d}(x)}\leq c(n) \ton{\fint_{\partial B_{4\abs x /3}(0)} S_{y,d}(x)^2}^{1/2} \leq c(n)2^{d+1} \frac{\abs x^{d+1}}{\abs y ^{d+1}}\ton{\fint_{\partial B_{2\abs y /3}(0)} S_{y,d}(x)^2}^{1/2}\leq c(n)2^{d+1} \frac{\abs x^{d+1}}{\abs y ^{n+d-1}}\, .
\end{gather}

\paragraph{Construction of $w$.}
Denote for convenience
\begin{gather}
 f(y)=\Delta T(y)\, .
\end{gather}
By Lemma \ref{lemma_liplpest}, $f\in L^p(B_1(0))$ for all $p<\infty$. Let $d$ be the closest integer to $\ell$ (so that $\abs{\ell-d}\leq 1/2$), and define the function $w$ by
\begin{gather}
 w(x) = \int_{\abs y \leq 1} \qua{G(x-y)-G(-y)} f(y)dy- \sum_{k=1}^d \int_{r_2/r_1\leq\abs y \leq 1} g_k(y) P_k(x) f(y)dy\, .
\end{gather}
Assuming that both integrals converge, the sum on the rhs is a harmonic polynomial of degree $\leq d$, and, by the properties of the Green's kernel, $\Delta (w) = \Delta (T)$. In order to finish the proof, we only need the $C^0$ estimates on $w$.

Rewrite $w$ as
\begin{gather}
 w(x)= \int_{\abs y < r_2/r_1} \ton{G(x-y)-G(-y)} f(y)dy + \int_{r_2/r_1\leq \abs y \leq 1} \qua{G(x-y) -\sum_{k=0}^d g_k(y)P_k(x) }f(y)dy\, .
\end{gather}
Fix $r_2/r_1<\abs x<1/2$, and split the integral in the following fashion:
\begin{gather}
 I_1 = \int_{\abs y \leq 2 \abs x} \ton{G(x-y)-G(-y)} f(y) dy\, ,\\
 I_2 = -\int_{r_2/r_1\leq \abs y \leq 2 \abs x} \sum_{k=1}^d g_k(y)P_k(x) f(y) dy\, ,\\
 I_3 = \int_{2\abs x \leq \abs y \leq 1} \qua{G(x-y) -\sum_{k=0}^d g_k(y)P_k(x) } f(y) dy\, .
\end{gather}
Let $p=n+1$ and let $p'=(n+1)/n$ be its conjugate exponent. Using the $L^p$ estimates on $f(y)$ we obtain
\begin{gather}
 \abs {I_1} \leq c(n)\abs x \ton{\int_{\abs y \leq 2 \abs x} \frac{1}{\abs{x-y}^{(n-1)p'}} dy }^{1/p'}\ton{\int_{\abs y \leq 2 \abs x} f(y)^p dy }^{1/p}\leq\\
 \leq c(n)\abs x \ton{\int_{\abs z \leq 3 \abs x} \frac{1}{\abs{z}^{(n-1)p'}} dz }^{1/p'} C(n,\lambda)2^{2\ell} r_1 {\abs x} ^{\ell-\delta-1} \abs x ^{n/(n+1)} \leq C(n,\lambda)r_1 2^{2\ell}\abs x^{\ell+1-\delta}\, .
\end{gather}
In order to estimate $I_2$ we write
\begin{gather}
 \abs {I_2} \leq c(n)\sum_{k=1}^d \ton{\frac 3 2}^k\abs{P_k(x)} \sum_{i=0}^{A} \int_{2^{-i}\abs x \leq \abs y \leq 2^{1-i}\abs x} \frac{\abs {f(y)}}{\abs y ^{n+k-2} }dy \leq\\
 \leq c(n)\sum_{k=1}^d 2^k \abs x^k \sum_{i=0}^{A} \ton{\int_{2^{-i}\abs x \leq \abs y \leq 2^{1-i}\abs x} \frac{1}{\abs y ^{(n+k-2)p'} }dy }^{1/p'} \ton{\int_{\abs y \leq 2^{1-i}\abs x} \abs{f(y)}^p dy }^{1/p}\, ,
\end{gather}
where $A$ is the smallest integer greater or equal to $\log_2 (r_1\abs x) - \log_2 (r_2)$.

We use the estimates in Lemma \ref{lemma_liplpest} in order to obtain
\begin{gather}
 \abs{I_2}\leq C(n,\lambda)2^{d+\ell}r_1  \sum_{k=1}^d \abs x ^k \sum_{i=0}^{A} \ton{\frac{\abs x}{2^i}}^{1-k}\ton{\frac{\abs x}{2^{i-1}}}^{\ell-\delta}\leq \\
 \leq C(n,\lambda)2^{d+2\ell} r_1 \abs x^{\ell+1-\delta}\sum_{k=1}^d \sum_{i=0}^{\infty} \ton{2^{k+\delta-1-\ell}}^i\leq C(n,\lambda)d2^{d+2\ell}r_1  \abs x^{\ell+1-\delta}\, .
\end{gather}
Note that the bounds on the infinite series are a direct consequence of $\abs{d-\ell}\leq 1/2$, which implies $2^{k+\delta-1-\ell}\leq 2^{-1/3}$. 

With a similar technique, we can estimate $I_3$. Indeed, by \eqref{eq_d+1est} we have
\begin{gather}
 \abs{I_3} \leq C(n) 2^d\abs x^{d+1} \sum_{i=0}^A \int_{2^{-1-i} \leq \abs y \leq 2^{-i} } \frac{\abs{f(y)}}{\abs y ^{n+d-1}} dy\, ,
\end{gather}
where $A$ is the first integer $\geq -\log_2(\abs x)-1$. Thus we have
\begin{gather}
 \abs{I_3} \leq C(n,\lambda)2^{d+\ell}r_1\abs x^{d+1} \sum_{i=0}^A \ton{\int_{2^{-1-i} \leq \abs y \leq 2^{-i} } \frac{1}{\abs y ^{(n+d-1)p'}} dy}^{1/p'}\ton{\int_{\abs y \leq 2^{-i} } \abs{f(y)}^p dy}^{1/p}\leq\\
 \notag \leq C(n,\lambda)2^{d+\ell}r_1\abs x^{d+1} \sum_{i=0}^A 2^{i(d+\delta-\ell)}\, .
\end{gather}
The last sum can be estimates via integrals. In particular, it is easily seen that
\begin{gather}
 \sum_{i=1}^N 2^{\eta i} \leq \int_0 ^ {-\log_2 (\abs x)} 2^{\eta s} ds \leq  \frac{1}{\eta \ln(2)}\ton{\abs x ^{-\eta}-1}\leq c \abs x ^{-\eta} (-\log(\abs x))\, .
\end{gather}

In the end we have
\begin{gather}
 \abs{I_3} \leq C(n,\lambda) r_1 2^{d+\ell} \abs x^{\ell+1-\delta} (-\log(\abs x))\, .
\end{gather}
Since $\abs {\ell-d}\leq 1/2$, we have proved the $C^0$ estimates for $\abs x \in \qua{\frac {r_2}{r_1}, \frac 1 2}$.

With analogous estimates, it is easy to prove that $\abs{w(x)} = O(\abs x)$ as $x\to 0$.

The estimate on the gradient of $w$ is a simple corollary of the elliptic estimate valid for any $p>n$ (see for example \cite[Theorem 9.11]{GT})
\begin{gather}
 \abs{\nabla w(x)}\leq C(n,\lambda,p) \abs x ^{-1} \ton{\norm{w}_{C^0(B_{2\abs x}(0)} + \abs x ^{2-n/p} \norm{\Delta w}_{L^p(B_{2\abs x}(0)} }\, .
\end{gather}
\end{proof}

As a corollary, we obtain the existence of an approximating harmonic function $h$ for $u$. For convenience of notation, we state the theorem with the function $T=T^u_{x,r}$.
\begin{corollary}\label{cor_ell1}
 Let $u$ be a solution of \eqref{eq_Lu} with \eqref{e:coefficient_estimates}, and $0<\delta<1/7$. Suppose that $r_1\leq r_0(n,\lambda)$, and that for some $r_2\leq r_1/4$, $u$ has frequency $\delta$-pinched on $[r_2,r_1]$, i.e., $\abs{N(x,s)-N(x,r_1)}\leq \delta$ for all $s\in [r_2,r_1]$. There exists a constant $C(n,\lambda)$ such that if $r_1\leq \delta \ton{C(n,\lambda) 16^\Lambda}^{-1}$, then there exists a harmonic function $h$ such that for $r_2/r_1\leq \abs y \leq 1/4$:
 \begin{gather}
  \abs{h(y)-T(y)}^2\leq \delta^2 \fint_{\partial B_{\abs y}(0)} T^2 dS \, \quad \text{ and }\quad \quad  \abs{N^h(0,\abs y)- N^T(0,\abs y)} \leq \delta\, .
 \end{gather}
\end{corollary}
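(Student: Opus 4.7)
The plan is to set $h := T - w$, where $w$ is the function constructed in Theorem \ref{th_w}. Since $\Delta w = \Delta T$ on $B(0,1/2)$, $h$ will be harmonic there, and $w(0) = 0$ forces $h(0) = T(0) = 0$, so both $N^h(0,\cdot)$ and $N^T(0,\cdot)$ are well defined. It then remains to verify the two closeness estimates.

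For the pointwise bound, Theorem \ref{th_w} supplies $\abs{w(x)}^2 \leq C(n,\lambda)^2 \cdot 64^N r_1^2 \abs{x}^{2N+2/3}$. The frequency pinching $N(0,s) \leq N + \delta$ for $s \in [r_2/r_1, 1]$ combined with the almost-growth formula from Proposition \ref{prop_genfreq}(2), applied to the equation satisfied by $T$ (whose coefficients satisfy \eqref{eq_aT} with effective constants $\leq \lambda r_1 \leq \delta$), will yield
\begin{gather*}
 \fint_{\partial B(0,\abs{x})} T^2 \, dS \geq c(n,\lambda) \abs{x}^{2N+2\delta},
\end{gather*}
after using the normalization $\fint_{\partial B(0,1)} T^2 = 1$. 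Dividing gives $\abs{w(x)}^2 / \fint_{\partial B(0,\abs{x})} T^2 \leq C \cdot 64^N r_1^2 \abs{x}^{2/3 - 2\delta}$. Since $\delta < 1/7 < 1/3$ and $\abs{x} \leq 1/4$, the factor $\abs{x}^{2/3 - 2\delta}$ is bounded by a dimensional constant, and the hypothesis $r_1 \leq \delta/(C \cdot 16^N)$ (with $C$ chosen large enough) then drives the ratio below $\delta^2$.

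For the frequency comparison at radius $s = \abs{x}$, I will expand $\int_{B_s} \abs{\nabla h}^2 = \int_{B_s} \abs{\nabla T}^2 - 2\int_{B_s} \nabla T \cdot \nabla w + \int_{B_s} \abs{\nabla w}^2$. The gradient bound $\abs{\nabla w} \leq C \cdot 16^N r_1 \abs{x}^{N-2/3}$ from Theorem \ref{th_w} integrates to $\int_{B_s} \abs{\nabla w}^2 \leq C \cdot 256^N r_1^2 s^{n+2N-4/3}/N$, while pinching together with the growth formula implies $\int_{B_s} \abs{\nabla T}^2 \asymp N s^{n-2+2N+2\delta}$. Their ratio is bounded by $C \cdot 256^N r_1^2/N^2 \cdot s^{2/3 - 2\delta}$, which is $\leq \delta^2$ under the hypothesis on $r_1$. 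A Cauchy--Schwarz step then controls the cross term by $\delta \int_{B_s} \abs{\nabla T}^2$, so $\int_{B_s} \abs{\nabla h}^2 = (1 + O(\delta)) \int_{B_s} \abs{\nabla T}^2$. The pointwise estimate already established transfers to $\int_{\partial B_s} h^2 = (1 + O(\delta)) \int_{\partial B_s} T^2$ via the triangle inequality. Finally, Proposition \ref{prop_genfreq}(3) yields $\abs{I - D}/D \leq Cr_1 \leq \delta$, so the classical harmonic-style frequency of $T$ agrees with its generalized frequency up to additive error $O(\delta N)$. Assembling these three comparisons produces $\abs{N^h(0,s) - N^T(0,s)} \leq \delta$ after enlarging $C(n,\lambda)$.

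The main obstacle in this plan is the frequency comparison. The pinching hypothesis controls the \emph{generalized} frequency of $u$, while the conclusion compares the classical harmonic-style frequencies of $T$ and $h$; three distinct error sources -- the $I$--$D$ mismatch from Proposition \ref{prop_genfreq}(3), the boundary $C^0$ error transferred from the pointwise bound, and the $L^2$ gradient error from $w$ -- must be simultaneously absorbed into the single parameter $\delta$ via the smallness $r_1 \leq \delta/(C \cdot 16^N)$. Tracking each error explicitly so that no logarithmic or $N$-dependent loss appears is the delicate bookkeeping step of the proof.
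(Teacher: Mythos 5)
Your proposal follows essentially the same route as the paper: set $h=T-w$ with $w$ from Theorem \ref{th_w}, use the $C^0$ bound on $w$ together with the growth formula of Proposition \ref{prop_genfreq}(2) to get the pointwise estimate, and obtain the frequency comparison by splitting $\int_{B_s}\abs{\nabla h}^2$ and $\int_{\partial B_s}h^2$ and absorbing the $w$-errors, the $I$--$D$ mismatch, and (implicitly) the $g$-vs-Euclidean density mismatch under the smallness $r_1\leq\delta(C\,16^N)^{-1}$. This is the paper's argument, modulo minor bookkeeping.
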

\begin{proof}
 Define the function $h(y)=T(y)-w(y)$, which is evidently a harmonic function on $B_{1/2}(0)$, and let $\ell=N(x,r_2)$. By the estimates on $\abs {w(x)}$, the pinching of the frequency and the $L^2$ estimates in \ref{prop_genfreq}, we obtain immediately the first inequality. Indeed
 \begin{gather}\label{eq_wT}
  \abs{w(y)}\leq C(n,\lambda) 8^\ell r_1 \abs{y} ^{\ell+1/3} \ton{\fint_{\partial B_1(0)} T^2}^{1/2}\leq C(n,\lambda) 8^\ell r_1 \ton{\fint_{\partial B_{\abs y}(0)} T^2}^{1/2} (1+C(n,\lambda) r_1) \abs y ^{1/3-\delta}\, .
 \end{gather}
As for the frequency of $h$, consider the ratio between the frequency of $h$ and the generalized frequency of $T$
\begin{gather}
 \frac{N^h(0,r)}{N^T(0,r)} = \frac{D^T(r)}{I^T(r)}\frac{ \int_{B_r(0)} \abs{\nabla h}^2 dV }{ \int_{B_r(0)} \norm{\nabla T}_g^2 dV_g  } \frac{\int_{\partial B_r(0)} \abs{T}^2 dS_g }{\int_{\partial B_r(0)} \abs{h}^2 dS }\, ,
\end{gather}
where we have used the notation introduced in Proposition \ref{prop_genfreq}. Since $\abs{g^{ij}(y)-\delta^{ij}} \leq C(n,\lambda) \abs y$, we can easily replace all the integrals wrt $g$ with standard Euclidean integrals up to some small multiplicative constant. More precisely
\begin{gather}
 \abs{\frac{ \int_{B_r(0)} \abs{\nabla T}^2 dV }{ \int_{B_r(0)} \norm{\nabla T}_g^2 dV_g  }  -1} \leq C(n,\lambda) r_1\, , \quad \quad \abs{\frac{\int_{\partial B_r(0)} \abs{T}^2 dS_g }{\int_{\partial B_r(0)} \abs{T}^2 dS }-1}\leq C(n,\lambda)r_1\, .
\end{gather}
As for the ratio of the Dirichlet integrals, by definition
\begin{gather}
\int_{B_r(0)} \abs{\nabla h}^2 = \int_{B_r(0)} \abs{\nabla T}^2 + \int_{B_r(0)} \abs{\nabla w}^2 -2 \int_{B_r(0)} \ps{\nabla T}{\nabla w}\, .
\end{gather}
The $C^1$ estimate in the previous theorem and the $L^2$ growth estimates in \ref{prop_genfreq} give for all $r_2/r_1\leq r \leq 1/4$
\begin{gather}
 \fint_{B_r(0)} \abs{\nabla w}^2 \leq \ton{C16^\ell r_1 r ^{-1-\delta+1/3} }^2 \fint_{\partial B_r(0)} \abs T^2 \leq (1+Cr) \ton{C16^\ell r_1 r ^{-1-\delta+1/3} }^2 \frac{c(n) r^2}{\ell-\delta} \fint_{B_r(0)} \abs{\nabla T}^2\, .
\end{gather}
In a similar way, one can estimate also the second fraction.
\end{proof}

Corollary \ref{cor_ell1} also immediately leads to a generalization of Lemma \ref{lemma_Npinch}:
\begin{lemma}\label{lemma_ellNpinch}
 Let $u$ be a solution to \eqref{eq_Lu} with \eqref{e:coefficient_estimates}. There exists constants $r_0(n,\lambda)$ and $C(n,\lambda)$ such that if $N(0,r_0)\leq \Lambda$, $r_1\leq \delta C(n,\lambda)^{-\Lambda}$ and for some integer $d$, $N(0,r_1)\in [d+\delta,d+1-\delta]$, then $N(0,e^2 r_1/4)\leq N(0,r_1)-\delta/10$. Moreover, if $N(0,r_1)\leq d+1-\delta$, then $N\ton{0,\delta^{C(n,\lambda)} r_1}\leq d+\delta$. 
\end{lemma}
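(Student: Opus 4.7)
The lemma is the elliptic analogue of Corollary \ref{cor_Npinch} (for the first assertion) and Lemma \ref{lemma_Npinch} (for the second). The plan is to approximate $T=T^u_{0,r_1}$ by a harmonic function $h$ via Corollary \ref{cor_ell1}, apply the harmonic frequency-drop statements to $h$, and transfer the conclusion back through the approximation, using the scale invariance $N^u(0,r_1 s)=N^T(0,s)$. The smallness assumption $r_1\leq \delta C(n,\lambda)^{-\Lambda}$, combined with Theorem \ref{th_ellcN} to bound $N$ uniformly on the relevant subscales by $C(n,\lambda)\Lambda$, is designed precisely to absorb the $16^N$ factor appearing in Corollary \ref{cor_ell1} and to make the harmonic approximation available with error as small a multiple of $\delta$ as required.

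For the first assertion I argue by contradiction. Suppose the claimed drop fails; by the almost monotonicity of $N$ (Proposition \ref{prop_genfreq}) this forces $N^u$ to be $(\delta/10)$-pinched on an interval $[r_2,r_1]$ with $r_2\leq r_1/4$ (the precise $r_2$ chosen to match the scale factor in the statement). Corollary \ref{cor_ell1} then produces a harmonic $h$ on $B(0,1/2)$ with $|N^h(0,\rho)-N^T(0,\rho)|\leq \delta/10$ for $\rho\in[r_2/r_1,1/4]$. Since $N^T(0,1)=N^u(0,r_1)\in[d+\delta,d+1-\delta]$, the distance from $N^h(0,1)$ to $\mathbb{Z}$ is at least $9\delta/10$, so Corollary \ref{cor_Npinch} yields $N^h(0,1)-N^h(0,1/e)\geq 9\delta/20$. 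Transferring this drop back through the approximation gives $N^T(0,1)-N^T(0,1/e)\geq 9\delta/20-2(\delta/10)=\delta/4$, hence $N^u(0,r_1)-N^u(0,r_1/e)\geq \delta/4>\delta/10$, which by monotonicity contradicts the assumed pinching.

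The second assertion is obtained by iterating the first: each application of part (i) shrinks the scale by a fixed factor $\kappa<1$ and drops $N$ by $\delta/10$, as long as $N$ remains in the integer gap $[d+\delta,d+1-\delta]$; after $O(1/\delta)$ applications we exit this gap from below, so $N\leq d+\delta$ at some scale $\leq \delta^{C(n,\lambda)} r_1$ (absorbing the accumulated $\kappa^{O(1/\delta)}$ into the stated bound). Alternatively one may produce a one-shot drop by applying Lemma \ref{lemma_Npinch} directly to the harmonic approximation, which requires sufficient pinching of $u$ on an interval of ratio $\sim\delta$; the cases where this pinching fails are even better, since then $N$ has already dropped substantially and one re-enters the argument with an improved starting point. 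The main obstacle is making the approximation error in Corollary \ref{cor_ell1} strictly smaller than the harmonic drop produced by Corollary \ref{cor_Npinch} or Lemma \ref{lemma_Npinch} at every step, so that the drop survives the passage from $h$ back to $T$; this careful balance between the pinching parameter, the approximation range $[r_2/r_1,1/4]$, and the integer-gap lower bound for $N^h$ is exactly what forces the quantitative hypothesis $r_1\leq \delta C^{-\Lambda}$.
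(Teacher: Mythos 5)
Your overall strategy — approximate $T=T^u_{0,r_1}$ by a harmonic $h$ via Corollary \ref{cor_ell1}, invoke the harmonic frequency-drop results (Corollary \ref{cor_Npinch}, Lemma \ref{lemma_Npinch}), and transfer the drop back through the approximation — is precisely what the paper does; the published proof is a one-line appeal to Corollary \ref{cor_ell1} and Lemma \ref{lemma_Npinch}. Your treatment of the first assertion is sound in spirit, but note a small slip: Corollary \ref{cor_ell1} guarantees $\abs{N^h(0,\rho)-N^T(0,\rho)}\leq\delta$ only for $\rho\in[r_2/r_1,1/4]$, whereas you evaluate the approximation at $\rho=1$. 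The fix is routine: work at $\rho=1/4$ (where $N^T(0,1/4)$ is still within $\delta/10$ of $N^u(0,r_1)$ by the pinching hypothesis), apply Corollary \ref{cor_Npinch} between $\rho=1/4$ and $\rho=1/(4e)$, and transfer.

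Your primary route for the second assertion, however, has a genuine quantitative gap. Iterating part (i) with a flat drop of $\delta/10$ per step requires on the order of $1/\delta$ steps to traverse the integer gap $[d+\delta,d+1-\delta]$; each step shrinks the scale by a fixed factor $\kappa<1$, so you arrive at a scale of order $\kappa^{c/\delta}r_1$. That is far smaller than $\delta^{C(n,\lambda)}r_1$ for small $\delta$, and since $N$ is (almost) monotone increasing, knowing $N\leq d+\delta$ at a much smaller scale does not let you conclude $N(0,\delta^{C}r_1)\leq d+\delta$ — the inequality goes the wrong way. Absorbing $\kappa^{O(1/\delta)}$ into $\delta^C$ is therefore not possible. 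The correct count comes from the full ODE comparison in Lemma \ref{lemma_Npinch}: the rate of drop per logarithmic scale is $\geq 2\epsilon(1-\epsilon)$ where $\epsilon=\operatorname{dist}(N,\N)$, so the drop accelerates in the middle of the gap, and the total number of scales needed to go from $d+1-\delta$ to $d+\delta$ is only $O(\ln(1/\delta))$, giving $\delta^{C}$ as claimed. This is exactly what your \emph{alternative} route (a one-shot application of Lemma \ref{lemma_Npinch}'s corollary to $h$) delivers, and it should be the main argument, not the fallback; the accompanying pinching concern is handled by noting that whenever $N^u$ fails to be pinched on a dyadic block, it has already dropped by a definite amount on that block, so one re-enters the estimate with strictly less remaining to traverse.
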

\begin{proof}
 This lemma is an easy consequence of the previous corollary and of Lemma \ref{lemma_Npinch} applied to the approximating harmonic function for $T$. 
\end{proof}

By applying Corollary \ref{cor_ell1} and the above Lemma, and combining with Theorem \ref{t:eff_tan_con_uniq_harm}, we immediately obtain our main result for the subsection, namely the {\it effective} tangent cone uniqueness statement.

\begin{theorem}\label{t:eff_tan_con_uniq}
 Let $u$ be a solution of \eqref{eq_Lu} with \eqref{e:coefficient_estimates}, $0<\epsilon<1/7$ and $N(x,r_1)\leq \ell+1$.  If $100 r_2\leq r_1\leq r_0(n,\lambda)\epsilon \ton{C(n,\lambda) 16^\ell}^{-1}$, and if $u$ has frequency $\epsilon$-pinched on $[r_2,r_1]$, i.e., $\abs{N(x,r_2)-N(x,r_1)}\leq \epsilon$, then there exists a {\it unique} homogeneous harmonic polynomial $P_d$ such that:
\begin{enumerate}
  \item[(i)] There exists an integer $d$ such that for all $t\in (r_2,r_1)$, $\abs{ N(0,t)-d}\leq 6\epsilon$,
  \item[(ii)] For all $t\in (4 r_2,r_1/16)$
   \begin{gather}
    \fint_{\partial B_1(0)} \abs{T_{0,t}^u - P_d}^2 \leq 14\epsilon\, ,
   \end{gather}
 \end{enumerate}
\end{theorem}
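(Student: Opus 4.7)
The plan is to reduce to the harmonic version Theorem \ref{t:eff_tan_con_uniq_harm} through the Green's-function-based harmonic approximation of Corollary \ref{cor_ell1}. After translating so that $x=0$ and writing $T=T^u_{0,r_1}$, the almost-monotonicity of the generalized frequency (Proposition \ref{prop_genfreq}(1)), together with the smallness of $r_1$, upgrades the endpoint pinching $|N(0,r_2)-N(0,r_1)|\leq \epsilon$ to uniform pinching $|N(0,s)-N(0,r_1)|\leq 2\epsilon$ for all $s\in[r_2,r_1]$, where the extra factor of $2$ absorbs the $Cr_1$ defect in almost-monotonicity.

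Next, I would apply Corollary \ref{cor_ell1} with parameter a suitable constant multiple of $\epsilon$ to obtain a harmonic function $h$ on $B(0,1/2)$ with
\begin{gather*}
 |h(x)-T(x)|^2 \leq \epsilon^2 \fint_{\partial B(0,|x|)} T^2 \, , \qquad |N^h(0,|x|)-N^T(0,|x|)|\leq \epsilon
\end{gather*}
for $|x|\in[r_2/r_1,\,1/4]$. In particular $h$ inherits a uniform frequency pinching of size at most $4\epsilon$ on this annulus. Applying Theorem \ref{t:eff_tan_con_uniq_harm} to $h$ (the mild shrinking of the admissible interval by a factor of $e$ on each side is absorbed into $r_0$) yields a unique integer $d'$ and a unique normalized hhP $P_{d'}$ of degree $d'$ such that the harmonic analogues of (i)--(iii) hold for $h$, with constant multiples of $\epsilon$ on the right hand sides. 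The degree $d'$ must agree with the integer $d$ in the hypothesis: indeed, $|N^T(0,s)-d'|\leq \epsilon + c\epsilon < 1$ together with the upper bound $N(x,r_1)\leq d+1$ and the integrality of $d'$ forces $d'=d$ as soon as $\epsilon<1/7$.

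Finally, I would transfer the estimates from $h$ back to $T$ by the triangle inequality. The rescaled profile $T^u_{0,t}$ corresponds, up to a controlled normalization factor, to the scale $t/r_1$ rescaling of $T$; the $L^2$ closeness of $T$ to $h$ on the annulus, combined with the $C^1$ estimate on $w=T-h$ from Theorem \ref{th_w}, translates under rescaling into precisely the $L^2$ and $W^{1,2}$ estimates required by (ii) and (iv), while (i) follows from $|N^T-N^h|\leq \epsilon$ and the harmonic bound on $|N^h-d|$. Uniqueness of $P_d$ for $T$ inherits from uniqueness for $h$, since any competing polynomial close to $T$ on all scales would, by the same triangle inequality, be close to $h$ at those scales and hence coincide with $P_d$.

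The main obstacle is purely bookkeeping: threading the chosen $\epsilon$ through the two approximation steps (elliptic-to-harmonic and harmonic-tangent-cone-uniqueness) while landing at the precise constants $6\epsilon$, $14\epsilon$, and $14d\epsilon$ stated in the theorem. This is ensured by picking the approximation parameter $\delta$ in Corollary \ref{cor_ell1} to be a small fixed fraction of $\epsilon$, which simultaneously constrains $r_1$ by the $16^d$-factor appearing there and therefore by the same factor in the hypothesis $r_1 \leq r_0\,\epsilon\,(C 16^d)^{-1}$.
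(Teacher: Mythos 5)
Your proposal is correct and follows essentially the same route the paper takes: the paper explicitly presents Theorem \ref{t:eff_tan_con_uniq} as an immediate consequence of Corollary \ref{cor_ell1}, Lemma \ref{lemma_ellNpinch}, and the harmonic version Theorem \ref{t:eff_tan_con_uniq_harm}, which is exactly the reduction you carry out. One small inaccuracy worth flagging: you assert that the integer $d'$ produced by the harmonic theorem must equal the $d$ from the hypothesis, but $N(x,r_1)\leq d+1$ only bounds $d'$ above (the pinched frequency could sit near $d-1$, say); this is harmless since (i) merely asserts existence of some integer and the smallness condition on $r_1$ involving $16^d$ is monotone in $d$, so a smaller $d'$ only helps, and $d'=d+1$ is absorbed by the constant $C$.
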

\begin{proof}
  Let $h$ be the harmonic approximation of $T^u_{0,r_1}$ built in Corollary \ref{cor_ell1}. This harmonic function satisfies
 \begin{gather}
  N^h(0,1/4)- N^h(0,r_2/r_1) \leq 2\epsilon\, .
 \end{gather}
As a consequence, we can apply Theorem \ref{t:eff_tan_con_uniq_harm} to $h$. The approximation properties proved in \ref{cor_ell1} immediately imply the thesis.
\end{proof}

Another application of Theorem \ref{th_w} leads us to the following:

\begin{lemma}\label{lemma_ell3/2}
 For any $\delta>0$, there exists $r_0(n,\lambda)$ such that if for some $r_1\leq \delta r_0$, $N(x,r_1)\leq 3/2$, then
 \begin{gather}
  \abs{\nabla T^u_{x,r_1/8} (x) }^2 \geq \frac{n}{2} (1+\delta) \, .
 \end{gather}
\end{lemma}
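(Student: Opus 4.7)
The plan is to combine the blow-up $T = T^u_{x,r_1/8}$ with a harmonic approximation and a sharp algebraic identity on the homogeneous-harmonic-polynomial expansion. The hypothesis $N(x,r_1)\leq 3/2$, together with the integrality of the vanishing order at $x$, forces $u-u(x)$ to be essentially linear at $x$, from which $|\nabla T(0)|^2$ is bounded below by $n/2$ up to small errors.

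First, I would set $T := T^u_{x,r_1/8}$, so that $T(0)=0$, $\fint_{\partial B_1}T^2=1$, and $T$ solves an equation of the form \eqref{eq_LT} with $\tilde a^{ij}(0)=\delta^{ij}$ and Lipschitz coefficient control of order $O(r_1)$. The generalized frequency rescales as $N^T(0,s)=N(x,sr_1/8)$, so $N^T(0,8)\leq 3/2$, and by the almost monotonicity of Proposition \ref{prop_genfreq}, $N^T(0,s)\leq 3/2+O(r_1)$ throughout the relevant range. Next, I would observe that $N(x,0)$ is a positive integer (the vanishing order of $u-u(x)$ at $x$, by \cite{han_sing}), and by almost monotonicity $N(x,0)\leq N(x,r_1)+O(r_1)\leq 3/2+O(\delta r_0)<2$ for $\delta r_0$ small, hence $N(x,0)=1$.

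Second, I would invoke Theorem \ref{th_w} with $N=1$ to produce a harmonic function $h$ on a ball containing the origin, with $h(0)=0$, $\Delta h=\Delta T$, and error $w=T-h$ obeying
\begin{gather*}
|w(y)|\leq C(n,\lambda)\,r_1\,|y|^{4/3},\qquad |\nabla w(y)|\leq C(n,\lambda)\,r_1\,|y|^{1/3}.
\end{gather*}
These are just integrable at the critical exponent $N=1$; in particular $\nabla w(0)=0$, so $\nabla T(0)=\nabla h(0)$, and $\fint_{\partial B_1}h^2=1+O(r_1)$. Expanding $h=\sum_{k\geq 1}a_kP_k$ in normalized homogeneous harmonic polynomials, a normalized linear hhP satisfies $|\nabla P_1|^2\equiv n$, hence $|\nabla h(0)|^2=n\,a_1^2$. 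The frequency bound $N^h(0,1)\leq 3/2+O(r_1)$, written as
\begin{gather*}
\sum_{k\geq 1}(k-\tfrac{3}{2})\,a_k^2\;\leq\;O(r_1)\sum_{k\geq 1}a_k^2,
\end{gather*}
rearranges to $\sum_{k\geq 2}(2k-3)\,a_k^2\leq a_1^2+O(r_1)$; since $2k-3\geq 1$ for $k\geq 2$, this yields $\sum_{k\geq 2}a_k^2\leq a_1^2+O(r_1)$, so $a_1^2\geq\tfrac{1}{2}\fint_{\partial B_1}h^2-O(r_1)\geq\tfrac{1}{2}-O(r_1)$. Combining gives
\begin{gather*}
|\nabla T(0)|^2\;=\;|\nabla h(0)|^2\;=\;n\,a_1^2\;\geq\;\tfrac{n}{2}\bigl(1+O(r_1)\bigr),
\end{gather*}
and the hypothesis $r_1\leq\delta r_0$ with $r_0=r_0(n,\lambda,\delta)$ small absorbs the error into the claimed $\delta$-slack.

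The main technical obstacle is arranging the harmonic approximation so that the rearrangement step is available at a scale where $\fint h^2$ is close to $\fint T^2=1$: Theorem \ref{th_w} as stated gives $\Delta h=\Delta T$ only on $B_{1/2}$, and a naive application there yields only $a_1^2\geq 2\fint_{\partial B_{1/2}}h^2$, which via the growth estimate $\fint_{\partial B_{1/2}}T^2\in[1/8,1/4]$ degrades the constant from $n/2$ to $n/4$ in the worst case. Closing the gap to the sharp $n/2$ (hence $(n/2)(1+\delta)$) requires either extending the Green's-kernel construction of Theorem \ref{th_w} to $B_1$, or — equivalently — combining the almost monotonicity of $H^T(s)/s^2$ with the iterative frequency drop of Lemma \ref{lemma_ellNpinch} so that $N^T(0,s)\leq 1+\eta$ on small scales, making $T$ almost proportional to a linear polynomial and recovering the sharp constant in the limit.
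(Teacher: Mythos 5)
Your approach is the same as the paper's: construct a harmonic $C^1$-approximation $h=T-w$ via Theorem~\ref{th_w} with $N=1$, observe $\nabla T(0)=\nabla h(0)$, expand $h$ in normalized hhPs, and convert the frequency bound $N^h(0,1)\lesssim 3/2$ into the coefficient inequality $a_1^2\geq\tfrac12-O(r_1)$. The paper's own proof is terse and mostly defers to the adaptations; your write-up usefully makes the rearrangement $\sum_{k\geq 2}(2k-3)a_k^2\leq a_1^2+O(r_1)$ and the identity $|\nabla h(0)|^2=n\,a_1^2$ explicit.

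Two remarks on the points you flag. First, the scale-matching concern in your final paragraph is real but is in fact already resolved by the factor $1/8$ in the statement: apply Theorem~\ref{th_w} with the \emph{original} radius $r_1$ to obtain $\Delta w=\Delta T^u_{x,r_1}$ on $B_{1/2}$, then rescale by $8$; the resulting $\tilde h$ is a harmonic approximation of $T^u_{x,r_1/8}$ on $B_{4}\supset B_1$. So the coefficient rearrangement can be run at $r=1$, where $\fint_{\partial B_1}\tilde h^2=1+O(r_1)$ and $N^{\tilde h}(0,1)\leq N^T(0,8)+O(r_1)\leq \tfrac32+O(r_1)$, recovering the sharp constant $n/2$; the degradation to $n/4$ you describe only occurs if one applies Theorem~\ref{th_w} directly at scale $r_1/8$ and works at $r=1/2$. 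The paper's in-proof remark about $\fint_{\partial B(0,r)}|T|^2\leq Cr^2\fint_{\partial B(0,1)}|T|^2$ is the "iterative frequency drop" route you mention and is noted there as a bonus, not the main mechanism. Second, your conclusion $n\,a_1^2\geq\tfrac{n}{2}\bigl(1+O(r_1)\bigr)$ with "the $\delta$-slack absorbs the error" is slightly misleading about the sign: the argument, like the harmonic model case (e.g.\ $N\equiv 3/2$), gives a bound approaching $n/2$ from \emph{below}, so the conclusion one actually obtains is $|\nabla T(0)|^2\geq\tfrac{n}{2}-O(r_1)\geq\tfrac{n}{2(1+\delta)}$ for $r_1\leq\delta r_0$; the printed inequality $\geq\tfrac{n}{2}(1+\delta)$ is not attainable and is evidently a typo for $\tfrac{n}{2(1+\delta)}$ (note Lemma~\ref{l:frequency_comparison} only needs $\tfrac{n}{4}$, so the slack is there). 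It is worth stating that explicitly rather than folding it into an $O(r_1)$ of indeterminate sign.
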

\begin{proof}
First note that if $h$ is harmonic with $N(x,r)<\frac{3}{2}$, then we have that
\begin{align}
|\nabla h|^2(x)\geq \frac{n}{2}\fint_{\partial B_r(x)}(u-u(x))^2\, .
\end{align}

Now in general let $h$ be any suitable harmonic $C^1$ approximation of $T$.  One possible way to obtain the approximation $h$ is to adapt the proof of Corollary \ref{cor_ell1}. Indeed, even though we are not assuming any pinching of the frequency, we can use theorem \ref{th_w} with $N=1$ and obtain the existence of a function $w$ such that $h=T-w$ is a good $C^{1,\alpha}$ approximation of $T$.  Then applying the above to $h$ and $r_0$ sufficiently small gives us our Lemma.

Although it is not necessary for the scope of this Lemma, it is worth noticing that equation \eqref{eq_wT} is still valid also in this context. Indeed, by the previous corollary, there exists a constant $c(n,\lambda)<1/2$ such that
\begin{gather}
 N(x,c^k r_1)\leq 1+2^{-k-1}\, .
\end{gather}
This and the $L^2$ growth conditions in Proposition \ref{prop_genfreq} imply that there exists a constant $C(n,\lambda)$ (not close to $1$, but still a constant) such that
\begin{gather}
 \fint_{\partial B_r(0)} \abs T^2 \leq C r^2 \fint_{\partial B_1(0)}\abs T^2\, .
\end{gather}
Using this estimate, it is easy to check that the proof of Corollary \ref{cor_ell1} carries over also in this context.
\end{proof}

\subsection{Almost cone splitting}\label{ss:almost_conesplitting_ell}
The aim of this section is to obtain a generalization of Lemma \ref{lemma_epsd} for elliptic equations which will allow us to extend Corollary \ref{cor_alcone} also in this context. The basic idea is quite simple: if we pick a solution $u$ to \eqref{eq_Lu} such that on a small enough ball its generalized frequency is pinched around some $x$, then by Theorem \ref{th_w} we obtain an approximating harmonic function $h$ with standard frequency pinched as well. All we need to prove is that if the frequency of $u$ is pinched also at some other point $x'$, then the frequency of $h$ is pinched as well around $x'$.

Throughout this section, we will assume that the solution $u$ of \eqref{eq_Lu} has frequency $\delta$-pinched on $[r_2,r_1]$, with $r_1\leq r_0(n,\lambda)$, $r_2\leq r_1/32$ and $N(0,r_1)\leq \Lambda$. Moreover, we fix the notation $T=T^u_{0,r_1}$ and $N=N(0,r_1)$.

\begin{proposition}\label{prop_ell_y_pinch}
 Suppose that $N(0,r_0)\leq \Lambda$, $r_1\leq \delta C(n,\lambda)^{-\Lambda}$ and $N(0,r_1)-N(0,r_1/32)\leq \delta/2$. Let $T=T^u_{0,r_1}$ and $h$ be its harmonic approximation, as in Corollary \ref{cor_ell1}. There exists a $\beta(n)>0$ such that if $\bar x\in B_{\beta(n)}(0)$, then
 \begin{gather}
  \abs{{N^T(\bar x,1/8) } - {N^h(\bar x,1/8)} }\leq \delta/2\, .
 \end{gather}
In particular, if for some $r\leq 1/16$ we have $N^T(\bar x,1)-N^T(\bar x,r/2)\leq \delta/2$, then $N^h(\bar x,1/8)- N^h(\bar x,r)<2\delta$.
\end{proposition}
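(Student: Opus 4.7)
The plan is to expand $h = T - w$, with $w$ the explicit correction constructed in Theorem \ref{th_w}, and to estimate directly the discrepancy $N^T(y,1/8) - N^h(y,1/8)$ term by term. The smallness hypothesis $r_1 \le \delta\, C(n,\lambda)^{-\Lambda}$ is precisely what will make every error small.

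I would first fix $\beta(n)$ small enough that for $y \in B(0,\beta(n))$ the ball $B(y,1/8)$ sits comfortably inside $B(0,1/4)$. I then expand
\begin{align*}
H^h(y,1/8) &= \widetilde{H}^T(y,1/8) + \int_{\partial B(y,1/8)}(w-w(y))^2 \, dS - 2\int_{\partial B(y,1/8)}(T-T(y))(w-w(y))\, dS\, ,\\
D^h(y,1/8) &= \widetilde{D}^T(y,1/8) + \int_{B(y,1/8)}|\nabla w|^2 \, dV - 2\int_{B(y,1/8)}\langle\nabla T,\nabla w\rangle\, dV\, ,
\end{align*}
where tildes denote Euclidean (rather than $g$-metric) versions. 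The passage from $\widetilde{H}^T, \widetilde{D}^T$ to the generalized quantities $H^T, D^T$ (over a $g$-ellipsoid) introduces multiplicative errors of order $r_1$, since $|g^{ij}-\delta^{ij}|=O(r_1|x|)$ and the two domains of integration differ by $O(r_1)$ in Hausdorff distance. The cross-terms above are controlled by Cauchy--Schwarz together with the pointwise estimates $|w(x)|\le C(n,\lambda) 8^N r_1 |x|^{N+1/3}$ and $|\nabla w(x)|\le C(n,\lambda) 16^N r_1 |x|^{N-1+1/3}$ from Theorem \ref{th_w}; the factor $16^N r_1 \le \delta$ is supplied precisely by the hypothesis on $r_1$.

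The principal obstacle is obtaining a uniform lower bound on $H^T(y,1/8)$, which is needed to convert multiplicative errors on numerator and denominator into a small absolute error on the frequency. Here I would invoke the effective tangent-cone uniqueness Theorem \ref{t:eff_tan_con_uniq} --- or more directly the growth estimates in Proposition \ref{prop_genfreq} combined with the uniform bound $N(y,1/8)\le C\Lambda$ from Theorem \ref{th_ellcN}, which together give that $H^T(y,1/8)$ is comparable to $H^T(y,1)$, and the latter is non-degenerate because $T$ is $L^2$-close to a non-constant homogeneous harmonic polynomial $P_d$. A secondary technical point is that if $|y|$ is small then $B(y,1/8)$ may intersect the inner region $\{|x| < r_2/r_1\}$ where the Theorem \ref{th_w} pointwise bounds on $w$ fail; this is dealt with either by shrinking $\beta(n)$ further so the ball avoids this region, or, more robustly, by noting that $|x|^{2(N-1+1/3)}$ remains integrable near the origin and complementing the outer estimates with the $L^p$ bounds on $\Delta w = \Delta T$ from Lemma \ref{lemma_liplpest} via interior elliptic regularity. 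Finally, the ``in particular'' statement follows at once from the almost-monotonicity of $N$ in $r$ (Proposition \ref{prop_genfreq}), since $r_1/8 \in [r_2, r_1]$.
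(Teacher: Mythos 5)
Your proposal is essentially the same argument the paper gives: expand $h = T - w$, estimate the contribution of $w$ via the pointwise bounds of Theorem~\ref{th_w}, compare the $g$-ellipsoid integrals appearing in the generalized frequency with their Euclidean counterparts (picking up errors of order $r_1$), and let the smallness hypothesis $r_1 \le \delta\, C(n,\lambda)^{-\Lambda}$ absorb everything; the ``in particular'' clause is monotonicity, exactly as you say. One point where you are slightly less precise than the paper: the crucial lower bound on the denominator is obtained in the paper by invoking the $\beta(n)$-estimate from the proof of Theorem~\ref{th_cN} applied to $h$ (using $h(0)=0$ and vanishing order $\ge 1$ to conclude that $h(y)^2 \le \tfrac{1}{2}\fint_{\partial B(y,1/8)}|h|^2$ when $|y|\le\beta(n)$, whence $\fint_{\partial B(y,1/8)}|h|^2 \le 2\fint_{\partial B(y,1/8)}|h-h(y)|^2$), and this is exactly where the hypothesis $x'\in B(x,\beta(n)r_1)$ enters. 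You chose $\beta(n)$ only to keep $B(y,1/8)$ inside the domain and left the non-degeneracy to a somewhat vague appeal to tangent-cone uniqueness or growth bounds; those do reach the same conclusion, but the specific role of $\beta(n)$ is to control $h(y)$ against the $L^2$ average, and making that explicit is what closes the argument. Your remark about the inner region $\{|x|<r_2/r_1\}$ is a fair technical observation (the paper passes over it, and the $L^p$ bounds from Lemma~\ref{lemma_liplpest} plus the $O(|x|)$ behavior of $w$ near $0$ noted at the end of the proof of Theorem~\ref{th_w} handle it), but it does not change the structure of the proof.
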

\begin{proof}
Although philosophically very similar to the proof of Lemma \ref{lemma_ellNpinch}, the proof of this proposition has some minor (albeit annoying) technical details to be addressed. In particular, the fact that $a^{ij}(\bar x)\neq a^{ij}(0)$ will force us to deal with different ellipsoids instead of balls.
 
Consider the function $T=T^u_{0,r_1}$, which solves \eqref{eq_LT}.
By the invariance properties of the frequency and the Lipschitz assumption on $a^{ij}$, 
\begin{gather}
\abs{N^T(\bar x,s) - \frac{s \int_{E(\bar x,s)} \abs{\nabla T}^2 dV }{\int_{\partial E(\bar x,s)} \abs{T-T(\bar x)}^2 dS } } \leq C(n,\lambda) r_1 \Lambda\, , \quad \text{ where } \quad E(y,s) =\cur{z \ \ s.t. \ \ \tilde a_{ij}|_y (z-y)^i (z-j)^j < s^2  }\, .
\end{gather}
Since $\tilde a^{ij}(0)=\delta^{ij}$ by definition, there exists $\epsilon\leq C(n,\lambda) r_1$ such that for all $y\in B_1(0)$
\begin{gather}
 B_{(1-\epsilon) s}(y)\subset E(y,s)\subset B_{(1+\epsilon) s}(y)\, .
\end{gather}
Moreover, by standard elliptic estimates and the $L^2$ growth condition in Proposition \ref{prop_genfreq}, we can bound the gradient of $T$ by
\begin{gather}
 \norm{\nabla T}_{C^0(B_{1/4}(\bar x))}^2\leq C(n,\lambda) \fint_{\partial E(\bar x,1)} \abs{T-T(\bar x)}^2 \leq C(n,\lambda) 8^\Lambda \fint_{\partial E(\bar x,1/8)} \abs{T-T(\bar x)}^2     \, .
\end{gather}
We can use this bound to estimate the $L^2(\partial E(\bar x,1/8))$ norm. Indeed
\begin{gather}
 \abs{\frac{\int_{\partial B_{1/8}(\bar x)} \abs{T-T(\bar x)}^2 dS }{\int_{\partial E(\bar x,1/8)} \abs{T-T(\bar x)}^2 dS } -1 }\leq C(n,\lambda) \epsilon \frac{\norm{\nabla T}_{C^0(B_{1/4}(\bar x))} }{\ton{\int_{\partial E(\bar x,1)} \abs{T-T(\bar x)}^2 dS }^{1/2} } \leq C(n,\lambda)^\Lambda r_1\, .
\end{gather}
By using the estimates on $w=T-h$ from Theorem \ref{th_w}, we obtain
\begin{gather}
 \abs{{\fint_{\partial B_{1/8}(\bar x)} \abs{h-h(\bar x)}^2 } - {\fint_{\partial B_{1/8}(\bar x)} \abs{T-T(\bar x)}^2  }}\leq C(n,\lambda)^\Lambda r_1 \fint_{\partial B_1(0)} \abs{h}^2\leq C(n,\lambda)^\Lambda r_1 \fint_{\partial B_{1/16}(0)} \abs{h}^2\, .
\end{gather}
As shown in the proof of Theorem \ref{th_cN}, there exists $\beta(n)<1/8$ such that for $\abs {\bar x}\leq \beta(n)$:
\begin{gather}
 h(\bar x)^2\leq \frac 1 2 \fint_{\partial B_{1/8}(\bar x)} \abs {h}^2 \, \quad \Longrightarrow \quad \, \fint_{\partial B_{1/8}(\bar x)} \abs {h}^2 \leq 2\fint_{\partial B_{1/8}(\bar x)} \abs {h-h(\bar x)}^2 \, .
\end{gather}
Moreover simple geometric considerations and the growth estimates related to $N$ lead to
\begin{gather}
 \fint_{\partial B_{1/16}(0)} \abs h ^2 \leq c(n)\Lambda \fint_{\partial B_{1/8}(\bar x)} \abs {h}^2 \, .
\end{gather}
Putting together these estimates, we obtain
\begin{gather}
 \abs{\frac{\int_{\partial B_{1/8}(\bar x)} \abs{T-T(\bar x)}^2  }{\int_{\partial B_{1/8}(\bar x)} \abs{h-h(\bar x)}^2 }  -1}\leq C(n,\lambda)^\Lambda r_1
\end{gather}
Arguing as in the proof of Corollary \ref{cor_ell1}, we conclude
\begin{gather}
 \abs{\frac{N^T(\bar x,1/8) }{N^h(\bar x,1/8)} -1}\leq C(n,\lambda)^\Lambda r_1\, 
\end{gather}
as desired.
\end{proof}

As a corollary of this proposition, we can easily prove a generalization of Lemma \ref{lemma_epsd}. 
\begin{corollary}\label{cor_epsd}
 Fix some $0<\epsilon<\epsilon_0(n,\lambda)<<1$, and suppose that $N(x,r_0)\leq \Lambda$ for all $x\in B_{r_0}(0)$. Let $r_1\leq r_0 C(n,\lambda)^{-\Lambda}\epsilon$, and assume that for some integer $d$ we have
 \begin{itemize}
  \item $N(0,r_1)-N(0,\chi \beta(n) r_1)\leq \epsilon$, with $\chi = c(n,\lambda)\Lambda^{-1}$ and $\abs{N(0,r_1)-d}\leq \epsilon$,
  \item there exists $\tilde x \in B_{\chi r_1}(0)$ such that $N(\tilde x,r_1)-N(\tilde x,\chi \beta(n) r_1 )\leq \epsilon$, with $\abs{N(\tilde x,r_1)-d}\leq 1/3$.
 \end{itemize}
 Let $h$ be the harmonic approximation of $T_{0,\chi r_1}$ and $h'$ be the harmonic approximation of $T_{\tilde x,\chi r_1}$. Set also $\bar x = (\chi r_1)^{-1} Q_0(\tilde x)$. After rotating we may assume without loss that $\bar x = (t ,0,\cdots,0)$, with $\abs t\leq \beta(n)$. If $\epsilon \leq \epsilon_0(n)$, then $u$ is almost $\bar x$ invariant, in the sense that:
 \begin{enumerate}
  \item The $d$-th degree part in the Taylor expansion of $h$ is dominant. In particular if $h=\sum_k a_k P_k$, then
  \begin{gather}
   a_d^2\geq (1-12\epsilon) \norm{h}_{L^2(\partial B_1 (0))}^2\, .
  \end{gather}
  Similarly for $h'$.
  \item The $d$-th degree part in the Taylor expansion of $h$ is almost constant. In detail:
	  \begin{gather}
           \fint_{\partial B_1(0)} \abs{a_d(0) P_{d,0}(y) - a_d(\bar x) P_{d,\bar x}(y) }^2 dy \leq C(n)\epsilon t^2 \fint_{\partial B_1(0)} \abs{a_d(0) P_{d,0}(y) }^2 dy  =C(n)\epsilon t^2 a_d(0)^2\, .
	  \end{gather}
	  Similarly for $h'$.
  \item The two functions $h$ and $h'$ are almost the same. In particular
	  \begin{gather}
           \fint_{\partial B_1(0)} \abs{h(y) - h'(y) }^2 dy \leq C(n)\epsilon t^2\fint_{\partial B_1(0)} \abs{h(y)}^2 dy \, ;
	  \end{gather}
  \item The $\bar x$ derivative of $P_d$ is almost zero, more precisely
	   \begin{gather}
	    \norm{\partial_1 P_{d,0}}\leq C(n) t^{-1} \sqrt \epsilon \norm{\nabla P_{d,0}}=C(n) t^{-1} \sqrt \epsilon \sqrt{d(2d+n-2)}\norm{P_{d,0}}\, .
	   \end{gather}
 \end{enumerate}
 
\end{corollary}

\begin{proof}
 This Corollary is an easy consequence of the previous proposition and Lemma \ref{lemma_epsd}. Note that point $1$ is the content of Lemma \ref{lemma_ellNpinch}.
 
 Set for convenience $T=T_{0,r_2}$ and $T'=T_{\bar x,r_2}$, and set $h$ and $h'$ to be their harmonic approximations. By Corollary \ref{cor_ell1}, we can estimate
 \begin{gather}
  N^h(0,r_1/(2r_2))- N^h(0,1/5) \leq 2\epsilon\, .
 \end{gather}
 Let $\bar x = r_2^{-1}Q_0(\tilde x)$. In other words, $\bar x$ is the point in the domain of $T$ corresponding to $\tilde x$ in the domain of $u$. Note that $\bar x\in B_{\beta(n)}(0)$. Proposition \ref{prop_ell_y_pinch} guarantees that 
 \begin{gather}
  N^h(\bar x,r_1/(2r_2))- N^h(\bar x,1/5) \leq 2\epsilon\, .
 \end{gather}
As a consequence, we can apply Theorem \ref{t:eff_tan_con_uniq_harm} to $h$ and prove point $1$, and by Lemma \ref{lemma_epsd} applied to $h$, we immediately prove points $2$ and $4$.

As for point $3$, Lemma \ref{lemma_epsd} ensures that
 \begin{gather}
  \fint_{\partial B_1(0)} \abs{h(y) - h(\bar x +y) }^2 dy \leq C(n)\epsilon t^2\fint_{\partial B_1(0)} \abs{h(y)}^2 dy \, .
 \end{gather}
Arguing as in the proof of \ref{prop_ell_y_pinch}, we can easily prove that
 \begin{gather}
  \fint_{\partial B_1(0)} \abs{h(\bar x + y) - h'(y) }^2 dy \leq C(n)\epsilon t^2\fint_{\partial B_1(0)} \abs{h(y)}^2 dy \, ,
 \end{gather}
and this concludes the proof of point $3$.
\end{proof}

In turn, we are now in a position to generalize Corollary \ref{cor_alcone} to generic elliptic equations.
\begin{corollary}\label{cor_alcone_ell}\label{c:cone_splitting}
Fix some $0<\tau<1$ and some integer $d\geq 2$, and suppose that $N(x,r_0)\leq \Lambda$ for all $x\in B_{r_0}(0)$. Let $r_1\leq r_0 C(n,\lambda)^{-\Lambda} \epsilon$ and set $\cV$ to be set of points $x\in B_{r_0}(0)$ such that 
\begin{gather}
N(x,r_1)-N(x,r_x)\leq \epsilon\, \quad \text{ with } \quad r_x\leq c(n,\lambda)\Lambda^{-1} r_1 \quad \text { and } \quad \abs{\bar N(x,r_1)-d}\leq 1/3\, ,
\end{gather}
There exists $\epsilon_0(n,\tau)$ such that if $\epsilon<\epsilon_0$, then there exists a subspace $V$ of dimension at most $n-2$ such that for all $x\in \cV$
\begin{gather}
 \cV\cap B_{c(n,\lambda)\Lambda^{-1} r_1}(0)\subset x+B_\tau (V)\, .
\end{gather}
Note that the subspace $V$ is independent of $x$ and $r_x$
\end{corollary}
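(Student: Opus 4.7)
The strategy is to run the proof of the harmonic analogue Corollary \ref{cor_alcone} with its two ingredients, Lemma \ref{lemma_epsd} and Theorem \ref{th_hNpinch}, replaced by their elliptic counterparts: Corollary \ref{cor_epsd} and Theorem \ref{t:eff_tan_con_uniq}. The upshot is that a single homogeneous harmonic polynomial $P_d$, attached to an arbitrary base point $x_0\in \cV$ via the effective tangent map uniqueness statement, will have all of $\cV\cap B(0,r_1)$ lying near a translate of its invariant subspace.

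\textbf{Step 1: produce a unique $P_d$ at a base point.} Assume $\cV$ is nonempty and fix $x_0\in\cV$. The hypothesis $N(x_0,r_1)-N(x_0,r_2)\leq \epsilon$ on the range $[r_2,r_1]$ with $r_2\leq c(n,\lambda)\Lambda^{-1} r_1$ and $r_1\leq r_0 C(n,\lambda)^{-\Lambda}\epsilon$ is precisely what is required by Theorem \ref{t:eff_tan_con_uniq}. That theorem yields a unique normalized homogeneous harmonic polynomial $P_d$ of degree $d$ (the integer distance $1/3$ from $N(x_0,r_1)$) such that
\begin{gather}
\fint_{\partial B(0,1)} \abs{T^u_{x_0,s}-P_d}^2 \leq 14\epsilon \quad \text{for every } s\in(r_2,r_1/4).
\end{gather}

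\textbf{Step 2: every direction to another point of $\cV$ is almost invariant.} Pick any $y\in \cV\cap B(0,r_1)$ with $y\neq x_0$, rotate coordinates so that $y-x_0$ points along $e_1$, and set $\chi$, $r_2'$ so that $|y-x_0|=\chi r_2'$ with $\chi$ of order $1$ and $r_2'$ large enough that $r_2\leq r_2'\ll r_1$. Using Proposition \ref{prop_ell_y_pinch} to transfer pinching from $x_0$ to the nearby center $y$ (and vice versa) on the rescaled range, I can verify all three hypotheses of Corollary \ref{cor_epsd}. Its conclusion (3) then gives the key gradient bound
\begin{gather}
\norm{\partial_1 P_d}\leq c(n,\lambda)\chi^{-1}\sqrt{\epsilon}\,\norm{\nabla P_d},
\end{gather}
which, with $\chi$ kept of order $1$, says that the unit vector $v_y\equiv (y-x_0)/|y-x_0|$ is an almost invariant direction for $P_d$. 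Conclusions (1)--(2) of the same corollary ensure that the leading polynomial at $y$ is the \emph{same} $P_d$ up to an error controlled by $\epsilon$, so the argument does not depend on $y$.

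\textbf{Step 3: almost invariant directions lie near a codimension-$2$ subspace.} Since $d\geq 2$ and $P_d$ is a nonconstant homogeneous harmonic polynomial, the quantitative almost cone splitting of Proposition \ref{prop_alcone} applies: for $\epsilon\leq \epsilon_0(n,\tau)$, the set of unit vectors $v$ with $\norm{\partial_v P_d}\leq c(n,\lambda)\sqrt\epsilon\norm{\nabla P_d}$ is contained in $B_\tau(V)$ for some subspace $V\subseteq \R^n$ of dimension at most $n-2$ (intrinsically determined by $P_d$ itself). Combining with Step 2, $v_y\in B_\tau(V)$ for every $y\in \cV\cap B(0,r_1)$, which is the statement $\cV\cap B(0,r_1)\subset x_0+B_\tau(V)$. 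The subspace $V$ is independent of the choice of $y$; independence from the base point follows because any other $x\in \cV$ itself lies in $x_0+B_\tau(V)$, so up to an inconsequential adjustment of $\tau$ one can replace $x_0$ by $x$.

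\textbf{Main obstacle.} The principal bookkeeping issue is the rescaling in Step 2. Corollary \ref{cor_epsd} is formulated for $\chi\leq 1$, i.e., for points $y$ at distance $\leq r_2$ from $x_0$, whereas $\cV\cap B(0,r_1)$ may sit at distances up to $r_1\gg r_2$. The fix is to choose for each pair $(x_0,y)$ a rescaled inner scale $r_2'\sim |y-x_0|$ on which pinching still holds, which is precisely the role played by Proposition \ref{prop_ell_y_pinch}. Verifying that the scale constraints $r_2'\leq c\Lambda^{-1} r_1'$ and $r_1'\leq r_0 C^{-\Lambda}\epsilon$ propagate through, and that the $\chi^{-1}$ factor stays bounded (so the almost-invariance estimate is not lost), will require careful tracking of the constants $c(n,\lambda), C(n,\lambda)$ inherited from the Lipschitz dependence of the metric $g_{ij}(\bar x,\cdot)$ on the coefficients $a^{ij}$.
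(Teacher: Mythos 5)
The paper actually provides no proof of this corollary: it is stated immediately after Corollary \ref{cor_epsd} with only the surrounding remark that it ``generalizes Corollary \ref{cor_alcone} to generic elliptic equations,'' and Corollary \ref{cor_epsd} itself carries the one-line proof ``easy, albeit a little technical, consequence of the previous proposition and Lemma \ref{lemma_epsd}.'' Your reconstruction follows exactly the chain the authors indicate --- Theorem \ref{t:eff_tan_con_uniq} for a unique approximating $P_d$ at a base point, Corollary \ref{cor_epsd} (via Proposition \ref{prop_ell_y_pinch}) to show that the direction to any other point of $\cV$ is an almost-invariant direction of $P_d$, then Proposition \ref{prop_alcone} to bundle those directions into a codimension-two plane --- and this mirrors precisely the (also unwritten) harmonic proof of Corollary \ref{cor_alcone} from Lemma \ref{lemma_epsd} and Proposition \ref{prop_alcone}. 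So in terms of strategy you are on the intended track.

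The difficulty you flag under ``Main obstacle'' is genuine, but your proposed fix does not close it. To apply the cone-splitting mechanism at a pair $x_0,y$ with $s=|y-x_0|$, one must run the argument of Lemma \ref{lemma_epsd} with the role of the unit scale played by $s$: the estimate \eqref{eq_estepsd} and the re-expansion of the degree-$d$ part at $\bar x$ crucially use pinching over the range $[s/e,\,e^2 d\, s]$, i.e.\ the outer pinching radius must exceed the separation by a factor of order $d\sim\Lambda$. In Corollary \ref{cor_alcone} this is built in: the pinching is on $[e^{-1},\,e^2 d]$ while $\cV\subset B(0,1)$. In Corollary \ref{c:cone_splitting}, however, the pinching is only on $[r_2,r_1]$, while the conclusion is claimed for all of $\cV\cap B(0,r_1)$, so that $s$ can be comparable to $r_1$; in that regime there is no room above $s$ to absorb the factor $\Lambda$, and choosing $r_2'\sim s$ as you suggest forces the required outer scale $\sim\Lambda s$ above $r_1$, where no pinching is assumed. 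The argument therefore only yields the conclusion for $y$ within distance $\lesssim c(n,\lambda)\Lambda^{-1}r_1\sim r_2$ of $x_0$; this is likely the intended scope (and the one actually used in the covering argument, where centers within one subfamily are $(e^3d)^{-1}$-close after rescaling), but it does not match the literal statement ``$\cV\cap B(0,r_1)$.'' Either the statement should restrict to $\cV\cap B(0,c\Lambda^{-1}r_1)$, or the pinching hypothesis should be widened to $[r_2,\,C^{\Lambda}r_1]$; as written, Step~2 of your proof cannot be completed for $|y-x_0|$ of order $r_1$.

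Two smaller remarks. In Step~3 you invoke Proposition \ref{prop_alcone} to conclude $I(\epsilon)\subset B_\tau(V)$ for a single polynomial, but the proposition is stated for a pair $P_d,P_d'$ precisely so that $V$ can be chosen to work simultaneously for the (slightly different) approximating polynomials arising at different base points; you should make explicit that conclusion~(2) of Corollary \ref{cor_epsd} supplies the hypothesis $\norm{P_d-P_d'}\leq\sqrt\epsilon$ needed there, so that $V$ is genuinely independent of the base point rather than only up to adjusting $\tau$. And when you conclude ``$\cV\cap B(0,r_1)\subset x_0+B_\tau(V)$,'' note that $B_\tau(V)$ in the statement should be read as the tube of radius $\tau$ relative to the scale under consideration (equivalently, the claim is about directions on the sphere: $d\ton{(y-x_0)/|y-x_0|,\,V}\leq\tau$), exactly as it is used in the covering argument where it appears as $d(x_j-x_i,V)\leq\tau\, d(x_i,x_j)$.
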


\subsection{Critical Points and Symmetry for Elliptic Equations}\label{ss:critical_symmetry_elliptic}

In order to complete the generalization to elliptic equations, we only have to show that given enough symmetry there cannot exist critical points away from an $n-2$-plane.  See Section \ref{ss:symmetric_criticalpoints} for the corresponding statements for harmonic functions.

Since the harmonic approximation given in Theorem \ref{th_w} is a $C^{1,\alpha}$ approximation for the function $T$, it is evident that Propositions \ref{prop_crit2} and \ref{prop_n-2} remain valid for elliptic equations under minor modifications. 

As for the effective critical points, we need to show that if $N^h(x,r)\geq 3/2$, then also $N^T(x,r)\geq 3/2$ (or something similar). One may think that as $r$ gets smaller, $h$ must be closer and closer to $T$ in order for this statement to be true. However, we will see that since we are concerned only with points of frequency $3/2$, and not points with generic frequency, the size of $r$ does not matter. Indeed, in some sense the condition $N^h(x,r)\geq 3/2$ is a $C^1$ condition on the function $h$, it states that the gradient of $h$ does not vanish in a quantitative way.

\begin{proposition}
Under the hypothesis and notation of Proposition \ref{prop_ell_y_pinch}, suppose that $x\in B_1(0)$ is such that $\abs{\nabla h(x)}^2\geq \alpha^2 \fint_{\partial B_1(0)} \abs h^2>0$. 
  If $r_1\leq C(n,\lambda)^{-\Lambda} \alpha$, then $r_c^T\geq c(n,\lambda) r_c^h $.
\end{proposition}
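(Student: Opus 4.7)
The strategy is to leverage the $C^1$-closeness of $T$ to its harmonic approximation $h = T-w$, together with the hypothesis that $\abs{\nabla h(x)}$ is of definite size, to transfer the critical radius lower bound from $h$ to $T$. First, I would invoke Theorem \ref{th_w} (applied with $N$ the integer near the pinched frequency, so $N \leq \Lambda + O(1)$) to obtain the $C^1$ estimate $\abs{\nabla w(y)} \leq C(n,\lambda)\,16^\Lambda\, r_1\,\abs{y}^{N-1+1/3}$ on the region of interest. Combined with the normalization $\fint_{\partial B(0,1)} T^2 = 1$ and the hypothesis $r_1 \leq C(n,\lambda)^{-\Lambda}\alpha$, this forces $\norm{\nabla w}_{C^0(B(0,1))} \leq \epsilon\,\alpha$ for any prescribed $\epsilon > 0$, provided the constant $C(n,\lambda)$ is chosen large enough to absorb $16^\Lambda$; in particular $\nabla w$ becomes negligible compared to $\abs{\nabla h(x)} \geq \alpha$.

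Next, I would use the assumption on $\abs{\nabla h(x)}$ to control $h$ on balls around $x$. Since $h$ is harmonic with $N^h(x,r_c^h) = 3/2$, Corollary \ref{cor_Npinch} applied to $h$ (with $\operatorname{dist}(3/2,\N) = 1/2$) produces the definite frequency gap $N^h(x, r_c^h/e) \leq 5/4$. Moreover, the harmonic version of Lemma \ref{l:frequency_comparison} guarantees that on $B_s(x)$ with $s \leq r_c^h/C$ the gradient $\abs{\nabla h}$ is bounded below by a multiple of $s^{-1}\bigl(\fint_{\partial B_{2s}(x)}(h-h(x))^2\bigr)^{1/2}$, and hence by a multiple of $\abs{\nabla h(x)}$; complementing this, the Taylor expansion of $h$ at $x$ gives $\fint_{\partial B_s(x)}(h-h(x))^2\geq c\,s^2\abs{\nabla h(x)}^2$ for $s$ of that order, so the two quantities are comparable.

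Finally, I would run the frequency comparison between $T$ and $h$ at scale $s\sim r_c^h/e$, along the same lines as the proof of Proposition \ref{prop_ell_y_pinch}. Decomposing $T-T(x) = (h-h(x)) + (w-w(x))$ and bounding $\abs{w(y)-w(x)} \leq \abs{y-x}\,\norm{\nabla w}_{C^0}\leq \epsilon\,\alpha\,s$ against the lower bound $\bigl(\fint_{\partial B_s(x)}(h-h(x))^2\bigr)^{1/2}\geq c\,\alpha\,s$, one sees that both the boundary $L^2$ norm and the Dirichlet energy of $T$ on $B_s(x)$ differ from those of $h$ by a multiplicative factor arbitrarily close to $1$ (the cross terms being controlled by Cauchy--Schwarz). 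This yields $\abs{N^T(x,s) - N^h(x,s)} \leq \epsilon$ at that scale, so $N^T(x, c\,r_c^h) \leq 5/4 + \epsilon < 3/2$, and hence $r_c^T \geq c(n,\lambda)\,r_c^h$.

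The main technical obstacle is that Theorem \ref{th_w} controls $\nabla w$ pointwise in terms of distance to the blow-up origin rather than to $x$, so the effective constant acquires a factor like $16^\Lambda$; this is precisely why the hypothesis on $r_1$ is exponentially small in $\Lambda$. A secondary point is verifying that Lemma \ref{l:frequency_comparison} and Corollary \ref{cor_Npinch} apply equally well to the purely harmonic $h$, which is routine since their proofs rely only on frequency monotonicity and standard elliptic estimates.
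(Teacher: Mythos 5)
Your argument is correct and follows essentially the same route as the paper: use the $C^1$-smallness of $w=T-h$ (guaranteed by Theorem~\ref{th_w} and the exponentially small $r_1$), combine with the lower bounds $\fint_{\partial B_r(x)}(h-h(x))^2\geq c\,r^2|\nabla h(x)|^2$ and $|\nabla h(x)|^2\leq\fint_{B_r(x)}|\nabla h|^2$ coming from $|\nabla h(x)|\geq\alpha$, to force $N^T(x,r)$ close to $N^h(x,r)$, and then conclude via a definite frequency drop. The only (inconsequential) bookkeeping difference is that you first drop $N^h$ from $3/2$ to $5/4$ with Corollary~\ref{cor_Npinch} and then transfer additively, whereas the paper transfers first (multiplicative error $\le 1/7$) to get $N^T(x,r_c^h)\le 12/7<2$ and then invokes Lemma~\ref{lemma_ellNpinch} on $T$; both land the same estimate $r_c^T\geq c(n,\lambda)\,r_c^h$.
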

\begin{proof}
 By the arguments in the proof of Proposition \ref{prop_ell_y_pinch}, for $r_1\leq r_0(n,\lambda)^\Lambda$, $x\in B_1(0)$ and $r\leq 1$ we have
 \begin{gather}
  \abs{\frac{N^T(x,r)}{N^T(x,r)}-1}\leq \frac 1 {10}\, .
 \end{gather}
Since $h$ is harmonic, by the properties of its $L^2$ expansion for any $x$ and $r$
\begin{gather}
 \fint_{\partial B_r(x)} \abs{h-h(x)}^2 \geq r^2 \abs{\nabla h(x)}^2\, , \quad \quad \abs{\nabla h(x)}^2 \leq \fint_{B_r(x)} \abs{\nabla h}^2\, .
\end{gather}
This implies that
\begin{gather}
 \frac{\fint_{\partial B_r(x)} \abs{w-w(x)}^2 }{\fint_{\partial B_r(x)} \abs{h-h(x)}^2 }\leq \frac{\norm{\nabla w}_{\infty, B_r(x)}^2 }{\abs{\nabla h(x)}^2}\leq C(n,\lambda)^{\Lambda} r_1^2 \alpha^{-2}\, ,\\
 \frac{\fint_{B_r(x)} \abs{\nabla w}^2}{\fint_{B_r(x)} \abs{\nabla h}^2}\leq \frac{\norm{\nabla w}_{\infty, B_r(x)} ^2 }{\alpha^2\fint_{\partial B_1(0)} h^2} \leq C(n,\lambda)^\Lambda r_1^2 \alpha^{-2}\, .
\end{gather}
 With these estimates we can conclude that, for $r_0(n,\lambda)$ is sufficiently small,
 \begin{gather}
  \abs{\frac{N^T(x,r)}{N^h(x,r)} -1} \leq 1/7\, .
 \end{gather}
This in particular implies $N^T(x,r_c(x))\leq 1+5/7<2$. The thesis follows from Lemma \ref{lemma_ellNpinch}.
\end{proof}

As a Corollary, we obtain immediately the following generalizations of Propositions \ref{prop_h2effcrit} and \ref{prop_heffcrit}.
\begin{corollary}\label{cor_ell2effcrit}
 If $u:B_1(0)\subset \R^2\to \R$ is a  solution to \eqref{eq_Lu} such that for all $x\in B_{1/2}(0)$ $N(x,r_0)\leq \Lambda$, there exists $\epsilon_0(n,\lambda)$ such that $N(x,r_1)-N(x,r_1/10)\leq \epsilon_0$ implies $r_c(y)\geq c(n,\lambda)/\Lambda r_1$ for all $y\in \partial B_{r_1/5}(x)$.
\end{corollary}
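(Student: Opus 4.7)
The plan is to lift the harmonic case (Proposition \ref{prop_h2effcrit}) to the elliptic setting via the $C^{1,\alpha}$ harmonic approximation constructed in Theorem \ref{th_w} and Corollary \ref{cor_ell1}, and then transfer effective critical radius lower bounds using the gradient-comparison proposition that immediately precedes this corollary. The statement in its natural reading is: for $r_1 \leq r_0(n,\lambda)$ and any $y \in \partial B(x, r_1/5)$, one has $r_c(y) \geq c(n,\lambda)\Lambda^{-1} r_1$, provided $N(x,r_1)-N(x,r_1/10)\leq \epsilon \leq \epsilon_0(n,\lambda)$.

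First, normalize by passing to $T = T^u_{x, r_1}$, which satisfies an elliptic equation of the form \eqref{eq_LT} with coefficient bounds \eqref{eq_aT} scaling like $\lambda r_1$. The frequency pinching hypothesis $N(x,r_1) - N(x,r_1/10) \leq \epsilon$ transfers to $T$ as pinching on $[1/10, 1]$ around the origin. Since $r_1/10 < r_1/4$, the pinching window is admissible for Corollary \ref{cor_ell1}, so we obtain a harmonic function $h:B(0,1/2)\to\R$ with
\begin{gather}
\fint_{\partial B(0,\rho)} |h - T|^2 \leq \delta^2 \fint_{\partial B(0,\rho)} T^2 \quad \text{for } \rho \in [1/10, 1/4],\\
|N^h(0,\rho) - N^T(0,\rho)| \leq \delta,
\end{gather}
and, by the gradient estimate in the proof of Theorem \ref{th_w}, also a pointwise $C^1$ bound $|\nabla(T-h)| \leq C(n,\lambda)^\Lambda r_1$ on an annulus around $\partial B(0,1/5)$, with $\delta$ arbitrarily small once $\epsilon$ and $r_1$ are taken small enough.

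Second, apply Proposition \ref{prop_h2effcrit} to $h$. The frequency pinching of $h$ on $[1/10, 1/4]$ (inherited from $T$ up to $\delta$), together with the uniform bound $N^h(0, \cdot) \leq C\Lambda$ from Theorem \ref{th_ellcN} applied to $T$, puts us in the regime of the harmonic proposition with integer $d \leq C\Lambda$. This yields, for every $y \in \partial B(0, 1/5)$, the lower bound $r_c^h(y) \geq c(n)/d \geq c(n)/\Lambda$. Combined with the two-dimensional identity $|\nabla P_d|(y) = 2d|y|^{d-1}$ for the leading homogeneous harmonic polynomial (cf.\ \eqref{eq_nabla2vars}) and the $L^2$ dominance of $P_d$ from Theorem \ref{th_hNpinch}, this lower bound on $r_c^h(y)$ translates via the proof of Proposition \ref{prop_h2effcrit} into a quantitative gradient lower bound $|\nabla h(y)|^2 \geq \alpha^2 \fint_{\partial B(0,1)} h^2$ with $\alpha \geq c(n)/\Lambda$ (on a slightly enlarged annulus containing $\partial B(0,1/5)$).

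Third, feed this into the proposition preceding the corollary: since $|\nabla h(y)|$ is controlled below by $\alpha = c(n)/\Lambda$ times the $L^2$-norm, and since $r_1 \leq C(n,\lambda)^{-\Lambda}\alpha$ by the hypothesis on $r_1$ (absorbing the polynomial factor $\Lambda$ into the exponential $C^{-\Lambda}$), we conclude $r_c^T(y) \geq c(n,\lambda) r_c^h(y) \geq c(n,\lambda)/\Lambda$ in the rescaled variables. Unwinding the scaling $y \leftrightarrow x + r_1 Q_x(y)$, which distorts distances by a factor in $[(1+\lambda r_1)^{-1/2},(1+\lambda r_1)^{1/2}]$, gives $r_c(y') \geq c(n,\lambda) r_1/\Lambda$ for every $y' \in \partial B(x, r_1/5)$, as desired.

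The main obstacle is the third step: one must verify that the pointwise $C^1$-closeness $|\nabla(T-h)| \leq C(n,\lambda)^\Lambda r_1$ is strong enough, relative to the gradient lower bound $\alpha \sim 1/\Lambda$ on $h$, to guarantee that critical-radius comparisons survive. This is exactly why the hypothesis $r_1 \leq r_0 C^{-\Lambda}$ is needed: the exponentially small admissible $r_1$ defeats the polynomial $\Lambda^{-1}$ loss in the gradient lower bound. Once this is set up, everything else is a straightforward application of the harmonic case and the previously-established transfer lemma.
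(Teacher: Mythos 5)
Your proposal is correct and takes essentially the same route the paper intends: pass to $T^u_{x,r_1}$, use the harmonic approximation from Theorem \ref{th_w}/Corollary \ref{cor_ell1}, apply the two-dimensional harmonic effective critical-radius result of Proposition \ref{prop_h2effcrit} to the approximating $h$, and push the lower bound back to $T$ via the proposition immediately preceding the corollary --- the paper itself presents the corollary as an ``immediate'' consequence of exactly these three ingredients, so there is no divergence in approach. One small inaccuracy worth flagging: in the normalization of that transfer proposition (against $\fint_{\partial B(0,1)}|h|^2$), the gradient lower bound at $|y|=1/5$ is not $\alpha\geq c(n)/\Lambda$ but rather exponentially small, of order $d\,5^{-d}$, since a degree-$d$ homogeneous profile decays like $|y|^{d}$ from $\partial B(0,1)$ to $\partial B(0,1/5)$; this does not affect the conclusion because the requirement $r_1\leq C(n,\lambda)^{-\Lambda}\alpha$ still collapses to $r_1\leq\widetilde C(n,\lambda)^{-\Lambda}$ for a larger constant, which is within the standing smallness hypothesis on $r_1$.
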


\begin{corollary}\label{cor_elleffcrit}
  Let $u:B_1(0)\subset \R^n\to \R$ be a solution to \eqref{eq_Lu} such that $N(x,r_0)\leq \Lambda$ for all $x\in B_{1/2}(0)$ and $\tau<1$. Suppose also that $\epsilon\leq \epsilon_0(n,\lambda)\tau^\Lambda$ and $r_1\leq c(n,\lambda)^\Lambda \epsilon$.  Let $h$ be the approximating harmonic function for $T_{x,r}$, and suppose that
 \begin{gather}
  h=Q_d + \sum_k a_k P_k\, ,
 \end{gather}
where $P_k$ are normalized hhP's of degree $k$, and $Q_d$ is a normalized hhP of degree $d$ invariant wrt the $n-2$ dimensional plane $V$. If 
\begin{gather}
 \sum_k \abs{a_k}^2 e^{2\abs {k-d}}\leq \epsilon\, ,
\end{gather}
then for all $x\in B_1(0)\setminus B_\tau(V)$, $r_c^T(x)\geq c(n) \tau^{d}$. 
\end{corollary}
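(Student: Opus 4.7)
The plan is to reduce this statement to its harmonic analogue (Proposition \ref{prop_heffcrit}) via the approximating harmonic function $h$, and then transfer the resulting critical-radius lower bound from $h$ back to $T$ using the critical-radius comparison Proposition established immediately before this Corollary. In other words, $h$ has been set up so that its hypotheses already satisfy those of Proposition \ref{prop_heffcrit}; the work to be done is in (a) checking that the parameters match, (b) producing a quantitative lower bound on $|\nabla h(x)|$ to supply the $\alpha$ required for the comparison step, and (c) ensuring that the smallness of $r_1$ is enough to invoke that comparison.

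First I would verify that $h$ satisfies the hypothesis of Proposition \ref{prop_heffcrit}. The condition there is $\epsilon\leq (c(n)\tau)^{2d-2}$, and since the uniform frequency bound $N(x,r_0)\leq\Lambda$ combined with Theorem \ref{th_ellcN} forces $d\leq C(n,\lambda)\Lambda$, this is guaranteed once $\epsilon_0(n,\lambda)$ in the hypothesis $\epsilon\leq\epsilon_0(n,\lambda)\tau^{\Lambda}$ is fixed small enough. Proposition \ref{prop_heffcrit} then yields $r_c^h(x)\geq c(n)\tau^{d}$ for every $x\in B(0,1)\setminus B_\tau(V)$. Simultaneously, repeating the gradient calculation inside the proof of Proposition \ref{prop_n-2} gives
\begin{gather}
|\nabla h(x)|\geq |\nabla Q_d(x)|-\Bigl|\nabla\sum_k a_k P_k(x)\Bigr|\geq 2d|x-V|^{d-1}-c(n)\sqrt\epsilon\, d^{n/2}\geq d\tau^{d-1},
\end{gather}
provided $\epsilon_0$ is small enough, while $\fint_{\partial B(0,1)}h^2\leq 1+\epsilon\leq 2$. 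Together these give $|\nabla h(x)|^2\geq \alpha^2 \fint_{\partial B(0,1)}h^2$ with $\alpha\asymp d\tau^{d-1}$.

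Finally I would invoke the critical-radius comparison Proposition (the unnumbered Proposition preceding this Corollary) with this $\alpha$. Its hypothesis $r_1\leq C(n,\lambda)^{-\Lambda}\alpha$ is implied by the Corollary's assumption $r_1\leq c(n,\lambda)^{\Lambda}\epsilon\leq c(n,\lambda)^{\Lambda}\tau^{\Lambda}$ (since $\tau<1$ and $\alpha$ is bigger than a power of $\tau$ allowed by the implicit constant), yielding $r_c^T(x)\geq c(n,\lambda)r_c^h(x)\geq c(n,\lambda)\tau^{d}$, which is the claim (possibly after adjusting the constant). The main technical obstacle is bookkeeping: one must make sure that the factor $d^{n/2}$ arising in the perturbation estimate for $|\nabla h|$ does not overwhelm the lower bound $d\tau^{d-1}$ for $|\nabla Q_d|$, and that the $\Lambda$-exponential constants on the $r_1$-smallness are consistent across Proposition \ref{prop_ell_y_pinch}, Corollary \ref{cor_ell1}, and the comparison Proposition. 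This is precisely why the hypothesis is stated as $\epsilon\leq \epsilon_0(n,\lambda)\tau^{\Lambda}$ and $r_1\leq c(n,\lambda)^{\Lambda}\epsilon$ rather than in sharper form.
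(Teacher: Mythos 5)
Your proposal is correct and follows exactly the route the paper has in mind: apply Proposition \ref{prop_heffcrit} to the approximating harmonic function $h$ to get $r_c^h(x)\geq c(n)\tau^d$, extract the quantitative gradient lower bound $|\nabla h(x)|\gtrsim d\tau^{d-1}$ from the structure $h=Q_d+\sum_k a_k P_k$ as in the proof of Proposition \ref{prop_n-2}, and then transfer to $T$ via the critical-radius comparison Proposition immediately preceding the Corollary. The bookkeeping you flag (the $d^{n/2}$ perturbation term, the $\tau^\Lambda$ vs.\ $\tau^{2d-2}$ and the $r_1$-smallness exponentials) is precisely what the hypotheses $\epsilon\leq\epsilon_0(n,\lambda)\tau^\Lambda$ and $r_1\leq c(n,\lambda)^\Lambda\epsilon$ are designed to absorb, so nothing is missing.
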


\subsection{Volume estimates for the effective critical set for elliptic equations}
The approximation theorem and the generalization proved in the previous sections allow us to extend the proof of the volume estimates also for elliptic equations. The proofs are the essentially same as in the case of harmonic functions, it is sufficient to replace the propositions and lemmas for harmonic function with their generalizations for generic elliptic equations proved in this section.

In particular, we need to
\begin{enumerate}
 \item replace the uniform control given by Theorem \ref{th_cN} with Theorem \ref{th_ellcN},
 \item replace the statement about frequency drops in Lemma \ref{lemma_Npinch} with Lemma \ref{lemma_ellNpinch},
 \item replace the tangent cone uniqueness of Theorem \ref{t:eff_tan_con_uniq_harm} with the one in Theorem \ref{t:eff_tan_con_uniq},
 \item replace the almost cone splitting in \ref{cor_alcone} with \ref{cor_alcone_ell},
 \item replace the $\epsilon$-regularity theorems in Propositions \ref{prop_h2effcrit} and \ref{prop_heffcrit} with Corollaries \ref{cor_ell2effcrit} and \ref{cor_elleffcrit} respectively.
\end{enumerate}

With these modifications, it is easy to prove that
\begin{theorem}
Let $R\leq r_0(n,\lambda)$ and consider $u:B_R(0)\subset \R^n\to \R$ be a solution to \eqref{eq_Lu} with \eqref{e:coefficient_estimates} such that $N(0,R)\leq \Lambda$. There exists a constant $C(n,\lambda)$ such that
 \begin{gather}
  \Vol\ton{B_r (\cS_r(u))\cap B_{R/2}(0)} \leq C(n,\lambda)^{\Lambda ^2} \ton{r/R}^2\, .
 \end{gather}
\end{theorem}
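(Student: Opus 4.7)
The plan is to mirror the proof of Theorem \ref{th_vol_harm} line for line, substituting each harmonic tool with its elliptic counterpart as the authors summarize in the five-item list preceding the statement. First I would normalize $R=1$ by rescaling; the equation transforms into another equation of the same type \eqref{eq_Lu} with $\operatorname{Lip}(\tilde a^{ij}),|\tilde b^i|,|\tilde c|\leq \lambda R$, so \eqref{e:coefficient_estimates} still holds and the factor $(r/R)^2$ on the right-hand side appears automatically from this scaling. By Theorem \ref{th_ellcN}, the bound $N(0,1)\leq\Lambda$ then yields $N(y,1/3)\leq C(n,\lambda)\Lambda$ for every $y\in B(0,1/2)$, so Lemma \ref{lemma_ellNpinch} produces an initial radius $\rho\geq C(n,\lambda)^{-\Lambda}$ on which every ball of radius $\rho$ is a good-scale ball relative to the integer $d^\star\leq C(n,\lambda)\Lambda$. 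Covering $B(0,1/2)$ by at most $C(n,\lambda)^\Lambda$ such balls reduces everything to the good-scale case.

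The heart of the proof is the elliptic analog of the key covering proposition: if $B(x,t)$ is a good-scale ball relative to $d$, then $\cS_{c\tau^d r}(u)\cap B(x,t)$ can be covered by balls $B(x_i,s_i)$ with $\sum s_i^{n-2}\leq C(n,\lambda)^\Lambda t^{n-2}$, each being either of radius $\leq r$ or a good-scale ball relative to $d-1$. I would construct this covering by separating points of $\cS\cap B(x,t)$ into a \emph{good} subset (those with locally minimizing $(d-1)$-critical radius $r_x$) and its complement, take a Vitali subcovering of the balls $B(x_i,r_i)$ centered at good points, and then use the effective tangent-map uniqueness Theorem \ref{t:eff_tan_con_uniq} together with the elliptic cone-splitting Corollary \ref{cor_alcone_ell} to force any two good centers $x_i,x_j$ to lie $\tau$-close to a common $(n-2)$-dimensional subspace $V$. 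This pins the good centers of each subfamily to the graph of a Lipschitz function over $V$, giving the $n-2$-Minkowski bound. The bad points are then handled by Vitali-type dyadic annuli anchored on the good centers, exactly as in the harmonic case. Finally, when $V$ has full dimension $n-2$, the effective $\epsilon$-regularity Corollary \ref{cor_elleffcrit} guarantees $r_c^u(z)\geq c(n,\lambda)\tau^d r_i$ for $z$ outside the tubular neighborhood $B_\tau(x_i+V)$, so such $z$ lie outside $\cS_{c(n,\lambda)\tau^d r}(u)$; the remaining points are again trapped in a thin neighborhood of an $(n-2)$-plane and covered directly. Iterating this construction $d^\star$ times and accumulating the per-step factor $C(n,\lambda)^\Lambda$ yields the promised total $C(n,\lambda)^{\Lambda^2}$.

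The main obstacle, which distinguishes the elliptic case from the harmonic one, is the careful bookkeeping of scales and constants. The harmonic approximation of Corollary \ref{cor_ell1} only works once one descends below a threshold of size $\delta C(n,\lambda)^{-\Lambda}16^{-d}$, the effective $\epsilon$-regularity Corollary \ref{cor_elleffcrit} needs $\epsilon\leq\epsilon_0(n,\lambda)\tau^{2d-2}$, and the cone-splitting Corollary \ref{cor_alcone_ell} requires its own smallness threshold on $r_1$. I need to verify that these constraints can be satisfied simultaneously along the $d^\star$-step induction, where at each step the working scale shrinks by a factor depending on $(n,\lambda,d)$ but the frequency bound drops by exactly one integer. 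The point is that every individual step introduces only a factor $C(n,\lambda)^\Lambda$, so after $d^\star\leq C\Lambda$ steps the total error is $C(n,\lambda)^{\Lambda^2}$, and the compatibility of the scale requirements reduces, after expanding the geometric series, to the single hypothesis $R\leq r_0(n,\lambda)$. Once this accounting is carried out the proof closes word for word as in Theorem \ref{th_vol_harm}.
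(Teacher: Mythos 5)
Your proposal is correct and follows essentially the same route as the paper: the paper's proof is exactly a reduction to the harmonic covering argument of Theorem \ref{th_vol_harm} with the elliptic substitutions from the five-item list, together with the single extra observation (which you identify) that the elliptic lemmas require initial scales $\leq C(n,\lambda)^{-\Lambda}\epsilon$, so one first covers $B(0,r_0)$ by a controlled number of balls of that size. Your scale-bookkeeping discussion and the accounting of per-step factor $C(n,\lambda)^\Lambda$ over $d^\star\leq C\Lambda$ steps matches the paper's reasoning.
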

\begin{proof}
 The proof is almost identical to the one carried out for harmonic functions in \ref{th_vol_harm}.
 
 However, there is one point which needs a little attention. In many of the statements for elliptic equations, we require that $r_1\leq C(n,\lambda)^{-\Lambda} \epsilon$. In order to satisfy this request, we cover the ball $B_{r_0}(0)$, with a minimal covering of balls of radius $r_1=C(n,\lambda)^{-\Lambda} \epsilon$. It is evident that the number of these balls can be bounded above by $C(n,\lambda)^\Lambda \epsilon^{-1}\leq C(n,\Lambda)^{\Lambda^2}$.
 
 On every ball of this covering, we can apply all the statement proved for elliptic equations, and, by the same proof as in the harmonic case, obtain the desired estimate. Given the bound on the number of such balls, the estimate remains essentially unchanged also on $B_R(0)$.

\end{proof}

As it is easily seen, also the improved estimate in dimension $2$ can be generalized in a similar way.
\begin{theorem}
 Let $R\leq r_0(n,\lambda)$ and consider $u:B_R(0)\subset \R^2\to \R$ be a solution to \eqref{eq_Lu} with \eqref{e:coefficient_estimates} such that $N(0,R)\leq \Lambda$. There exists a constant $C(n,\lambda)$ such that
 \begin{gather}
  \Vol\ton{B_r (\cS_r(u))\cap B_{R/2}(0)} \leq C(\lambda)^\Lambda (r/R)^2\, .
 \end{gather}
As a corollary, the number of critical points of such a function is bounded above by $C(\lambda)^\Lambda$.
\end{theorem}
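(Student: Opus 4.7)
The plan is to follow the same structure as the proof of the 2D harmonic theorem (the one stated after Proposition \ref{prop_h2effcrit}), systematically replacing each harmonic tool by its elliptic analogue listed in items (1)--(5) above. The reason we recover the exponent $C(\lambda)^\Lambda$ rather than $C(n,\lambda)^{\Lambda^2}$ is entirely two-dimensional: Corollary \ref{cor_ell2effcrit} provides $\epsilon$-regularity with $\epsilon_0$ \emph{independent} of the degree $d$, and, consequently, at each good scale the effective critical set can be covered by a \emph{single} ball (rather than by a family whose cardinality grows with $d$ as in the $n$-dimensional case).

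First I would reduce to a scale-invariant setting. The elliptic statements (Corollaries \ref{cor_ell1} and \ref{cor_ell2effcrit}, Lemma \ref{lemma_ellNpinch}) all require the radius to satisfy $r_1\leq C(n,\lambda)^{-\Lambda}\epsilon$. So I cover $B(0,R/2)$ by a minimal family of balls of radius $\rho \sim R\, C(n,\lambda)^{-\Lambda}$; in dimension $2$ this uses at most $C(\lambda)^\Lambda$ balls. It then suffices to prove the Minkowski estimate $\Vol(B_r(\cS_r(u))\cap B(x_0,\rho))\leq C(\lambda)^\Lambda (r/\rho)^2$ on each such small ball and sum; the multiplicative factor of $C(\lambda)^\Lambda$ from the covering is absorbed into the final constant $C(\lambda)^\Lambda$ (this is exactly the estimate remark made at the end of the previous theorem's proof).

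The central step is the elliptic 2D good-scale covering lemma: if $B(0,\rho)$ is a good scale ball relative to degree $d$ (in the sense used in the harmonic case), then there exists a \emph{single} point $x_\star\in \cS_r(u)\cap B(0,\rho)$, chosen to minimize $r_{x_\star}$, such that
\begin{gather}
\cS_{\tilde r}(u)\cap B(0,\rho) \subset B(x_\star, r_{x_\star})\, ,
\end{gather}
and either $r_{x_\star}=r$ or $N(y,\epsilon r_{x_\star})\leq d-1+\epsilon$ for every $y\in B(x_\star,r_{x_\star})$, where $\tilde r = r\, r_0/d$. The proof is the same as the harmonic one given before the 2D harmonic theorem, with Proposition \ref{prop_h2effcrit} replaced by Corollary \ref{cor_ell2effcrit}: by minimality of $r_{x_\star}$ together with the good-scale hypothesis, the frequency around $x_\star$ is $\epsilon$-pinched on the relevant annulus, so Corollary \ref{cor_ell2effcrit} forces $B(0,\rho)\setminus B(x_\star,r_{x_\star})$ to be free of effective critical points. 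The frequency drop statement is immediate from Lemma \ref{lemma_ellNpinch}.

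Finally I would iterate. By Theorem \ref{th_ellcN}, for every $x\in B(0,\rho/2)$ the frequency satisfies $N(x,\rho/3)\leq C\Lambda$, so there is an integer $d^\star\leq C\Lambda$ making $B(0,\rho/2)$ a good scale ball for $d^\star$. Applying the single-ball covering lemma above produces one ball on which the effective degree has dropped by one; rescaling that ball, we repeat the procedure for $d^\star-1$, and so on. Each iteration introduces only a \emph{constant} geometric factor (from converting a ball at a lower effective degree back to a unit ball through minimal recoverings) rather than a factor of $d^n$. After at most $d^\star\leq C\Lambda$ iterations we obtain a covering of $\cS_{\tilde r}\cap B(0,\rho/2)$ by balls of radius $r$ whose cardinality is bounded by $C(\lambda)^\Lambda (\rho/r)^2$, which gives the desired Minkowski estimate; the corollary on the number of critical points follows by letting $r\to 0$ since $\Cru\subset \cS_r(u)$ for every $r$. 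The main obstacle is verifying the single-ball covering lemma in the elliptic setting: one must ensure that the frequency pinching at the minimizing point $x_\star$ is sufficient to invoke Corollary \ref{cor_ell2effcrit}, which in turn relies on the harmonic approximation of Corollary \ref{cor_ell1} being $C^{1,\alpha}$-close to $T^u_{x_\star,r}$ uniformly across the annulus $[r_{x_\star},\rho]$.
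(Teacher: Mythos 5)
Your proposal is correct and follows essentially the same route the paper takes in its main text, where the authors simply state that the $\R^2$ improvement ``can be generalized in a similar way'' from the $n$-dimensional elliptic proof. You have correctly identified the two features that separate the $2$-dimensional bound $C(\lambda)^\Lambda$ from the general $C(n,\lambda)^{\Lambda^2}$: (a) the $\epsilon$-regularity threshold $\epsilon_0$ in Proposition \ref{prop_h2effcrit}/Corollary \ref{cor_ell2effcrit} is independent of $d$, so the initial covering of $B(0,R/2)$ by balls of radius $\rho\sim C(n,\lambda)^{-\Lambda}R$ needed to validate the elliptic lemmas introduces only $C(\lambda)^\Lambda$ balls rather than $C^{\Lambda^2}$; and (b) the single-ball good-scale covering lemma for $n=2$ (the paper's final Proposition in Section \ref{ss:volume_estimates_harmonic}, transplanted to the elliptic setting via \ref{cor_ell2effcrit}) reduces each iteration step to a bounded number of balls rather than $\sim d^n$ of them. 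The iteration over $d^\star\leq C\Lambda$ degrees then compounds to $C(\lambda)^\Lambda$. One point you glossed over but which deserves a sentence: the reduction to scale $\rho$ is performed once; after rescaling $T^u_{x_0,\rho}$ to unit scale the effective Lipschitz constant has shrunk by a factor $\rho$, so all subsequent iterations are already in the regime where the elliptic lemmas apply without re-imposing the $C^{-\Lambda}$ restriction, keeping the per-iteration factor constant. Also worth noting: the paper's Appendix B gives an alternate, more elementary counting proof of this same $n=2$ estimate that avoids the multi-scale covering induction entirely, by directly bounding the number of ``critical scales'' a point can see via Corollary \ref{cor_Npinch} / Proposition \ref{prop_h2effcrit}; your argument is logically independent of that appendix and mirrors the main-text scheme instead.
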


\clearpage

\begin{appendix}

\section{Nodal sets}\label{sec_nodal}
As anticipated in the introduction, with similar arguments one can obtain Minkowski estimates on the effective nodal set of a solution to an elliptic PDE. The results do not depend on whether (\ref{eq_Lu}) is critical or not, which is to say, whether $c\equiv 0$ or not.  One needs only change the symmetry results of Sections \ref{ss:symmetric_criticalpoints} and \ref{ss:critical_symmetry_elliptic} to reflect the nodal set as opposed to the critical set, and then, with exactly the same technique, it is possible to prove effective $n-1$ volume estimates on the tubular neighborhood of the set $u^{-1}(0)$. In particular, consider the two following simple propositions.
\begin{proposition}\label{prop_epsnod}
 If $u$ is a nonconstant harmonic function and $N(0,1)\leq 1/2$, then
 \begin{gather}
  u(0)^2\geq \frac 1 2 \fint_{\partial B_1(0)} u(x)^2 dS >0\, .
 \end{gather}
If $u$ solves \eqref{eq_Lu}, then there exist constants $c(n,\lambda),r_0(n,\lambda)$ such that if $N(0,r_1)\leq 1/2$ with $r_1\leq r_0$, then
\end{proposition}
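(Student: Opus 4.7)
The strategy is to reduce both claims to an elementary computation on the spherical harmonic expansion of $u$ around $0$.

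For the harmonic statement, I would expand $u(y) = \sum_{d\geq 0} a_d P_d(y)$ with $P_d\in\cP_d$ orthonormal in $L^2(\partial B(0,1))$, so that $u(0) = a_0$ (since $P_0\equiv 1$ and $P_d(0)=0$ for $d\geq 1$) and $\fint_{\partial B(0,1)} u^2 = \sum_{d\geq 0} a_d^2$. Since the nodal set is controlled by the singular frequency $N=N^u_\cS$, the formula of Section~\ref{ss:freq_poly_exp} gives
\begin{gather*}
N(0,1) = \frac{\sum_{d\geq 1} d\, a_d^2}{\sum_{d\geq 0} a_d^2}.
\end{gather*}
The hypothesis $N(0,1)\leq 1/2$ rearranges to $\sum_{d\geq 1}(2d-1) a_d^2 \leq a_0^2$, and since $2d-1\geq 1$ for $d\geq 1$ this yields $\sum_{d\geq 1} a_d^2 \leq a_0^2$. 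Hence
\begin{gather*}
\fint_{\partial B(0,1)} u^2 = a_0^2 + \sum_{d\geq 1} a_d^2 \leq 2 a_0^2 = 2\, u(0)^2,
\end{gather*}
which is the claimed inequality. If $u$ is nonconstant, the same chain forces $a_0\neq 0$ (otherwise every coefficient vanishes and $u\equiv 0$), giving strict positivity.

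For the elliptic statement, whose natural conclusion is the analog $u(0)^2 \geq c(n,\lambda) \fint_{\partial B(0,r_1)} u^2 > 0$, I would reduce to the harmonic case by the approximation machinery of Section~\ref{sec_lipgrowth}. Setting $T = T^u_{0,r_1}$, Theorem~\ref{th_w} produces a harmonic $h = T - w$ on $B(0,1/2)$ with $w(0) = 0$ and $\abs{w(x)} \leq C(n,\lambda) r_1 \abs{x}^{1+1/3}$; in particular $h(0) = T(0)$ and the spherical $L^2$ norms of $h$ and $T$ agree up to a factor $1 + O(r_1)$. Using the frequency comparison in the proof of Corollary~\ref{cor_ell1} together with the almost monotonicity of Proposition~\ref{prop_genfreq}, the singular frequency $N^h_\cS(0,1)$ differs from $N^u_\cS(0,r_1)$ by at most $O(r_1)$, and is therefore $\leq 3/4$ once $r_1\leq r_0(n,\lambda)$ is small. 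Applying the harmonic case to $h$ (the two-line algebra above works verbatim with $1/2$ replaced by any $\alpha<1$, giving $h(0)^2 \geq c_\alpha \fint_{\partial B(0,1)} h^2$) and transferring back via the $C^0$ bound on $w$ and unscaling yields the desired estimate for $u$ on $\partial B(0,r_1)$.

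The main obstacle will be setting up the analog of Proposition~\ref{prop_genfreq} and Corollary~\ref{cor_ell1} for the singular frequency $N_\cS$ instead of the normalized $N_\cC$ used throughout Section~\ref{s:general_elliptic}: almost monotonicity, $L^2$ growth control, and the harmonic approximation must all be re-derived in the unnormalized setting by replacing $(u-u(\bar x))^2$ with $u^2$ in the denominator. The derivations are completely parallel, but this bookkeeping must be carried out as a preliminary before the reduction above can be executed cleanly; once in place, the argument collapses to the elementary algebra of the harmonic case.
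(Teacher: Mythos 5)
Your harmonic-case computation is correct and matches the paper's approach: expand $u$ in spherical harmonics, write the singular frequency as $N_\cS(0,1) = (\sum_{d\geq 1} d\,a_d^2)/(\sum_{d\geq 0} a_d^2)$, and the bound $N_\cS(0,1)\leq 1/2$ rearranges directly to $\sum_{d\geq 1} a_d^2 \leq a_0^2 = u(0)^2$, giving $\fint_{\partial B} u^2 \leq 2u(0)^2$.

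For the elliptic case the overall strategy (harmonic approximation, then transfer) is right and is exactly what the paper indicates, but your setup contains a genuine flaw that propagates through the whole reduction. You set $T = T^u_{0,r_1}$, the tangent map from Definition~\ref{d:tangent}, and then assert $h(0) = T(0)$ and $N^h_\cS(0,1)\leq 3/4$. But the paper's $T^u_{x,r}$ subtracts $u(x)$ in the numerator, so $T(0)=0$ identically; hence $h(0)=T(0)-w(0)=0$, and applying the harmonic case to $h$ gives the vacuous $0\geq c\fint h^2$. Worse, because $h(0)=0$ the singular and normalized frequencies of $h$ coincide, and Corollary~\ref{cor_ell1} only relates that quantity to $N^u_\cC(0,r_1)\geq 1$, not to $N^u_\cS(0,r_1)\leq 1/2$ — so the ``therefore $\leq 3/4$'' step also fails. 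Your closing remark about replacing $(u-u(\bar x))^2$ with $u^2$ in the denominators only addresses part of the bookkeeping: the fix must also change the numerator of the blowup, working instead with the unnormalized map $\tilde T(y)=u(r_1 Q_0(y))\big/\ton{\fint_{\partial B_1}u(r_1 Q_0(y))^2}^{1/2}$, for which $\tilde T(0)\neq 0$ and whose frequency genuinely tracks $N_\cS$. Once that redefinition is made, the growth and $\Delta\tilde T$ estimates (Lemmas~\ref{lemma_lipl2est}--\ref{lemma_liplpest} adapted with vanishing order $0$ rather than $\geq 1$) and the construction of $w$ in Theorem~\ref{th_w} do carry over, and the transfer to $u(0)$ works as you intend; but as written the argument is applied to the wrong object.
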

\begin{gather}
 u(0)^2 \geq \frac 1 2 \fint_{\partial B_{cr_1}(0)} u(x)^2 dS >0\, .
\end{gather}
\begin{proof}
 The proof of the statement for harmonic function is analogous to the proof of Lemma \ref{l:frequency_comparison}. As for more general solutions, one can exploit the approximating harmonic function for $u$ to generalize the previous statement.
\end{proof}

In a completely similar way, one can prove the following proposition.
\begin{proposition}\label{prop_nod}
Let $u$ be a harmonic function. For every $\tau>0$, there exists $\epsilon(n,\tau)>0$ such that if $1-\epsilon \leq N(0,e^{-2})\leq N(0,e^2)\leq 1+\epsilon$, then $u$ does not have zeros in $B_1(0)\setminus B_\tau(V)$, where $V$ is some $n-1$ dimensional linear subspace of $\R^n$. Moreover, there exists $r_0=r_0(n)$ such for all $x\in B_1(0)\setminus B_r(V)$, $N(x,r_0)\leq \epsilon_0$.

In a similar way, if $u$ solves \eqref{eq_Lu}, there exists $r_0,\epsilon_0$ and $c$ depending only on $n,\lambda$ and $\tau$ such that if $1-\epsilon\leq N(0,r/c)\leq N(0,c r)\leq 1+\epsilon$, with $r\leq r_0$, then there exists an $n-1$ dimensional plane $V$ such that for all $x\in B_r(0)\setminus B_{\tau r} (V)$, $N(x, c r) \leq 1/2$.
\end{proposition}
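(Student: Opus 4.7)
The plan is to mirror the strategy employed in Sections \ref{ss:symmetric_criticalpoints} and \ref{ss:critical_symmetry_elliptic}, with the role previously played by $n-2$ symmetric polynomials now taken over by linear harmonic polynomials. The key input is that pinching of the (unnormalized) frequency around $1$ forces $u$ to be close to a nonzero homogeneous harmonic polynomial of degree one, whose zero set is precisely an $(n-1)$-plane through the origin.

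First I would apply the effective tangent cone uniqueness statement of Theorem \ref{th_hNpinch} (in its unnormalized version noted at the end of Section \ref{ss:hpinch}) to the pinching interval $[e^{-2},e^2]$ around the integer $d=1$. This produces a unique normalized linear hhP $P_1(y)=a\cdot y$ with $\norm{P_1}=1$ such that, for every $t\in (e^{-1},e)$, the blow-up $T^u_{0,t}$ satisfies $\norm{T^u_{0,t}-P_1}_{L^2(\partial B)}\leq C\sqrt{\epsilon}$ together with the corresponding $W^{1,2}$ bound. Since both $T^u_{0,t}$ and $P_1$ are harmonic, standard interior estimates upgrade these $L^2$ bounds to a $C^1$ estimate $\norm{T^u_{0,t}-P_1}_{C^1(B(0,3/4))}\leq C(n)\sqrt{\epsilon}$. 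Now set $V\equiv\{y\in\R^n : a\cdot y=0\}$, an $(n-1)$-dimensional linear subspace. The $L^2$-normalization of $P_1$ pins $|a|$ to a fixed dimensional constant, so for every $y\in B(0,3/4)\setminus B_\tau(V)$ we have $|P_1(y)|=|a|\operatorname{dist}(y,V)\geq c(n)\tau$. Combining with the $C^0$ bound and choosing $\epsilon\leq\epsilon(n,\tau)$ gives $|T^u_{0,t}(y)|\geq c(n)\tau/2>0$ on $B(0,3/4)\setminus B_\tau(V)$, which delivers the non-vanishing claim for $u$ after undoing the blow-up; the passage from radius $3/4$ to $1$ is handled by a trivial covering.

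For the frequency bound I would compute $N^{P_1}_{\cS}(x,r_0)$ directly. Expanding $(a\cdot y)^2$ about $x$ on $\partial B(x,r_0)$ and using that the cross term vanishes by spherical symmetry yields
\begin{gather*}
\int_{\partial B(x,r_0)} P_1^2 \geq c(n)|P_1(x)|^2 r_0^{n-1}\geq c(n)\tau^2 r_0^{n-1},
\end{gather*}
while $r_0\int_{B(x,r_0)}|\nabla P_1|^2\leq C(n)r_0^{n+1}$. Hence $N^{P_1}_{\cS}(x,r_0)\leq C(n)r_0^2/\tau^2\leq 1/4$ provided $r_0=r_0(n,\tau)$ is chosen small enough. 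The $C^1$-closeness from the first step keeps this bound stable under the $\sqrt{\epsilon}$-perturbation, yielding the desired $N^u_{\cS}(x,r_0)\leq 1/2$.

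The elliptic case follows by the same template: invoke Corollary \ref{cor_ell1} to produce a harmonic function $h$ that $C^1$-approximates $T^u_{0,cr}$ on an interior ball, use Proposition \ref{prop_ell_y_pinch} to propagate the pinching hypothesis from $u$ to $h$, apply the harmonic argument above to $h$, and finally transfer the pointwise nonvanishing and the frequency bound back to $u$ via the $C^1$-closeness and the frequency-comparison built into Proposition \ref{prop_ell_y_pinch}. The main technical care is ensuring that all constants $r_0,\epsilon_0,c$ depend only on $n,\lambda,\tau$ and not on the absolute scale $r$ at which pinching is measured; this is precisely the effective dependence furnished by Corollary \ref{cor_ell1} and Proposition \ref{prop_ell_y_pinch}, so no additional work is needed beyond bookkeeping. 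The expected sticking point is the first step, namely making sure the unnormalized version of Theorem \ref{th_hNpinch} gives the right dimensional control on $|a|$ so that the lower bound $|P_1|\gtrsim\tau$ on $B\setminus B_\tau(V)$ is quantitative; this is automatic from $\norm{P_1}_{L^2(\partial B)}=1$, which fixes $|a|$ to a dimensional constant.
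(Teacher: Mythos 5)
Your proposal is correct and follows essentially the same route as the paper: the paper expands $u$ directly in spherical harmonics and uses pinching to bound $|u(0)|$ and the $k\geq 2$ coefficients, which is exactly the content of Theorem \ref{th_hNpinch} (unnormalized version) that you invoke, so the two arguments are equivalent. Your explicit computation of $N^{P_1}_\cS(x,r_0)\leq C(n)r_0^2/\tau^2$ fills in the step the paper dismisses as ``we easily obtain the thesis'', and your elliptic reduction via Corollary \ref{cor_ell1} and Proposition \ref{prop_ell_y_pinch} matches the paper's one-line appeal to Theorem \ref{th_w}.
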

\begin{proof}
If $u$ is harmonic, let $u= u(0) + a_1 P_1 +\sum_{k\geq 2} a_k P_k$. The pinching on $N$ implies that $u(0)^2\leq c \epsilon a_1^2$ and $\sum_{k\geq 2} a_k^2 e^{2k}\leq \epsilon$ as well. This in particular implies that
\begin{gather}
 \abs{u(x)-a_1P_1(x)}\leq \abs{u(0)} + \sum_{k\geq 2} \abs{a_k} \abs{P_k(x)}\leq \epsilon c(n)\ton{1+ \sum_{k\geq 2} e^{-2k} k^{n/2}}\leq c(n)\epsilon\, .
\end{gather}
Since $P_1(x)=\ps{L}{x}$, where $L$ is a vector of length $\sqrt n$, then we easily obtain the thesis. 

Again, for more general solutions, the proposition follows from an easy application of the approximation Theorem \ref{th_w}.
\end{proof}

With these ingredients, it is easy to generalize the estimate proved for the effective critical set and obtain the following
\begin{theorem}
 There exists $r_0=r_0(n,\lambda)$ such that if $u$ solves \eqref{eq_Lu} with \eqref{e:coefficient_estimates} on $B_{r_0}(0)\subset \R^n$ and if $N(0,r_0)\leq \Lambda$, then
 \begin{gather}
  \operatorname{Vol}\ton{B_r\ton{u^{-1}(0)} \cap B_{r_0/2}(0)} \leq \operatorname{Vol}\qua{B_r\ton{N(x,r)\leq\epsilon_0} \cap B_{r_0/2}(0)} \leq \ton{C(n,\lambda)\Lambda}^\Lambda r/r_0\, .
 \end{gather}

\end{theorem}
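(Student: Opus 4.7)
The proof follows exactly the iterative covering strategy of Theorem \ref{th_vol_harm} and its elliptic extension, with the symmetry statements of Sections \ref{ss:symmetric_criticalpoints} and \ref{ss:critical_symmetry_elliptic} replaced by Propositions \ref{prop_epsnod} and \ref{prop_nod}. The structural change driving the result is that the almost-invariant subspace arising at each good scale is now $(n-1)$-dimensional rather than at most $(n-2)$-dimensional, so the Vitali covering will produce $\sum r_i^{n-1}$ in place of $\sum r_i^{n-2}$, accounting for the linear factor $r/r_0$ instead of the quadratic $(r/r_0)^2$ in the final volume estimate.

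First, by Theorem \ref{th_ellcN} I may assume $N(x,\rho)\leq C(n,\lambda)\Lambda$ for every $x\in B(0,r_0/2)$ and every $\rho\leq r_0/3$, so only integer degrees $d\in\{1,\ldots,d^*\}$ with $d^*\leq C\Lambda$ are relevant. I would define the effective nodal set as $\{x : N(x,r)\geq 1/2\}$, noting by Proposition \ref{prop_epsnod} that points outside this set have $u$ quantitatively non-vanishing on a definite-size ball and hence do not lie in $B_r(u^{-1}(0))$. I would then set up the good-scale/bad-scale decomposition of this set exactly as in Section \ref{ss:volume_estimates_harmonic}, invoking Theorem \ref{t:eff_tan_con_uniq} and the elliptic cone splitting Corollary \ref{cor_alcone_ell} to produce, on each good-scale ball relative to degree $d$, a common approximating polynomial $P_d$ and a common almost-invariant subspace $V$. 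The Lipschitz-graph-over-$V$ argument then covers the good part of the effective nodal set by balls whose radii satisfy $\sum r_i^{n-1}\leq C(n,\lambda)\Lambda$, and the bad part is absorbed into a refinement at a smaller scale via exactly the same Vitali bookkeeping as in the critical case.

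The base case $d=1$ is where Proposition \ref{prop_nod} enters decisively: frequency pinching near $1$ forces the approximating harmonic function to be essentially affine, so its nodal set is contained in a $\tau$-neighborhood of a single hyperplane, and Proposition \ref{prop_epsnod} excludes the complement from the effective nodal set. For $d\geq 2$ one inductively reduces to degree $d-1$ via the frequency-drop Lemma \ref{lemma_ellNpinch}, exactly as in the proof of Theorem \ref{th_vol_harm}. Iterating through $d^*\leq C\Lambda$ degree levels, the per-level multiplicative cost is only polynomial in $\Lambda$—crucially because the smallness threshold $\epsilon=\epsilon(n,\tau)$ appearing in Proposition \ref{prop_nod} is independent of the degree, in sharp contrast to the $d$-dependent threshold $(c(n)\tau)^{2d-2}$ of Proposition \ref{prop_heffcrit} that produces the $C^{\Lambda^2}$ constant in the critical case. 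Accumulating $\Lambda$ factors of $C(n,\lambda)\Lambda$ therefore yields the stated bound $(C(n,\lambda)\Lambda)^\Lambda r/r_0$.

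The main obstacle is bookkeeping the dimensional reduction at intermediate degrees $d\geq 2$: here the nodal set near a point with $N\approx d$ is still only forced to lie near a subspace of dimension $\leq n-2$ coming from the invariance of the approximating $P_d$, and only the $d=1$ scale actually delivers codimension-one structure. One must therefore show that the cumulative $(n-2)$-dimensional Vitali graphs used at the intermediate scales combine cleanly with the final $(n-1)$-dimensional hyperplane at the base scale to give an $(n-1)$-Minkowski content bound, rather than an $(n-2)$-Minkowski bound multiplied by something artificially small. This is handled precisely by Corollary \ref{cor_epsd}, which propagates the almost-invariant hyperplane coherently across nearby centers, so that the Lipschitz extension step in the Vitali argument produces a single graph over a genuine $(n-1)$-plane at the $d=1$ level.
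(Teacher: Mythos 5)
Your proposal gets the headline features right---the base case at degree one is handled by Proposition~\ref{prop_nod}, the degree-independence of the threshold $\epsilon(n,\tau)$ there (as opposed to the $(c(n)\tau)^{2d-2}$ threshold in Proposition~\ref{prop_heffcrit}) is what replaces $C^{\Lambda^2}$ with $(C\Lambda)^\Lambda$, and the outer skeleton is the iterated good-ball covering from Theorem~\ref{th_vol_harm}---but your treatment of the intermediate degrees $d\geq 2$ has a genuine gap.

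Your opening claim that ``the almost-invariant subspace arising at each good scale is now $(n-1)$-dimensional'' is false: by Proposition~\ref{prop_pn-2} and Corollaries~\ref{cor_alcone}/\ref{cor_alcone_ell}, a good ball of degree $d\geq 2$ still produces an invariant subspace $V$ of dimension at most $n-2$, and only $d=1$ delivers a hyperplane. You acknowledge this later, but the repair you propose does not work. In the critical-set argument the Lipschitz graph over $V$ suffices precisely because the $\epsilon$-regularity Proposition~\ref{prop_heffcrit} shows that points of the good ball at definite distance from $V$ are \emph{not} in $\cS_r(u)$, so the effective critical set concentrates on the graph. For nodal sets this fails at $d\geq2$: the zero set of a degree-$d$ polynomial invariant under an $(n-2)$-plane $V$ is a union of $d$ hyperplanes through $V$, a genuine codimension-one variety extending to the boundary of the ball, far from $V$. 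The Vitali graph over $V$ therefore does not cover $Z_r(u)$ inside a good ball of degree $\geq2$, and Corollary~\ref{cor_epsd}---which merely supplies a common almost-invariant subspace for two nearby pinched centers---does nothing to bridge this.

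The paper's actual proof makes the dichotomy explicit and resolves $d\geq 2$ differently: for good balls with at least $n-1$ symmetries, cone splitting forces the dominant degree to be $0$ or $1$ and Propositions~\ref{prop_epsnod} and~\ref{prop_nod} give the hyperplane $\epsilon$-regularity; for good balls with at most $n-2$ symmetries one does not attempt a codimension-one covering at that level at all, but simply covers the \emph{entire} good ball by a comparable number of smaller balls and recurses on the degree. Because the pinching threshold in the nodal setting is degree-independent, the per-level scale ratio---and hence the per-level multiplicative cost---is only polynomial in $\Lambda$, and iterating through at most $C\Lambda$ degree levels yields $(C\Lambda)^\Lambda$. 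To close your argument you would need to replace the graph-over-$V$ step at degrees $d\geq 2$ with this blanket covering.
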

\begin{proof}
 The proof follows closely the proof of Theorem \ref{t:main_critical}. In this case however we are interested in $n-1$ Minkowski estimates, thus in the covering arguments for the good balls we make a distinction only between functions with $n-1$ or $n$ symmetries, and functions with at most $n-2$ symmetries. In this latter case, a simple covering argument of the whole good ball will do, while if a good ball is close to having at least $n-1$ symmetries, then by the cone splitting proved in \ref{prop_alcone} the dominant degree of this ball is either $0$ or $1$. Thus the $\epsilon$-regularity theorems just proved allow us to conclude the estimate.
\end{proof}

\section[Volume estimates on the critical and effective critical set for n=2]{Volume estimates on the critical and effective critical set for $n=2$}

In this appendix we give an alternate, simplified, proof of the main results for $n=2$ which allows for an easy improvement of the constants.  Namely, we prove that 
\begin{theorem}
Let $u:B_1(0)\subset \R^2 \to\dR$ solve (\ref{eq_Lu}) and satisfy (\ref{e:coefficient_estimates}). There exists $r_0=r_0(n,\lambda)>0$ with $r_0(n,0)=\infty$ and $C=C(n,\lambda)$ such that if $\Lambda\equiv N^u(0,s)$ for some $s\leq r_0$, and if \eqref{eq_Lu} is critical, then 
 \begin{gather}
  \#\cur{x \in B_{s/2}(0): |\nabla u|(x)=0} \leq e^{C \Lambda}\, .
 \end{gather}
 If (\ref{eq_Lu}) is not critical, then we have the estimate 
  \begin{gather}
  \#\cur{x \in B_{s/2}(0):|\nabla u|(x)=u(x)=0} \leq e^{C \Lambda}\, .
 \end{gather}
\end{theorem}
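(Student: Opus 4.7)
The plan is to obtain this counting statement as a direct corollary of the improved $n=2$ Minkowski estimate established at the end of Section \ref{s:general_elliptic}, which in two dimensions reads
\[
\Vol\ton{B_r(\cCS_r(u)) \cap B(0,s/2)} \leq C(\lambda)^\Lambda\,r^2.
\]
The crucial feature is the linear exponent $\Lambda$ (rather than $\Lambda^2$), and this is exactly what permits the $e^{C\Lambda}$ bound on the count.

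First I would record the elementary inclusion $\cCS(u) \subseteq \cCS_r(u)$ valid for every $r>0$: at an honest critical (respectively singular) point the infimum on the left-hand side of \eqref{e:effective_critical} vanishes, while unique continuation for the nonconstant solution $u$ forces the averaged right-hand side to be strictly positive. Hence the Minkowski bound above holds verbatim with $\cCS_r(u)$ replaced by $\cCS(u)$.

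The counting conclusion then follows from a routine packing argument. Given any $N$ distinct points $x_1,\dots,x_N \in \cCS(u)\cap B(0,s/2)$, set $\delta = \tfrac{1}{3}\min_{i\neq j}|x_i-x_j|>0$. For $r \leq \min(\delta,s/4)$ the balls $B_r(x_i)$ are pairwise disjoint and all contained in $B(0,3s/4)$, so
\[
N\,\pi\,r^2 \leq \Vol\ton{B_r(\cCS(u))\cap B(0,3s/4)} \leq C(\lambda)^\Lambda\,r^2,
\]
where the volume estimate is applied on the slightly enlarged ball (which only affects constants). Cancelling $r^2$ yields $N\leq C(\lambda)^\Lambda/\pi \leq e^{C\Lambda}$. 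Since this bound is uniform over any finite subcollection, the full cardinality of $\cCS(u)\cap B(0,s/2)$ is at most $e^{C\Lambda}$. The same argument applies in the critical and non-critical cases, depending on whether $\cCS$ denotes $\cC$ or $\cS$.

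I do not expect any substantive obstacle: all the analytic heavy lifting is inside the 2D Minkowski estimate, whose proof crucially uses that in two dimensions $\epsilon$-regularity holds at an absolute constant $\epsilon_0(n)$ and that the 2D covering proposition produces a single ball with a one-step degree drop rather than $d$-dependent branching. The present statement is just a packing consequence of that volume bound, and the only minor bookkeeping point is enlarging the ambient ball from $B(0,s/2)$ to $B(0,3s/4)$ so as to absorb critical points lying near its boundary.
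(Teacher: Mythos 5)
Your argument is correct in outline, but it takes a genuinely different route from the paper's proof, and in a sense misses the point of the appendix. The appendix is explicitly advertised as "an alternate, simplified proof" that is self-contained: it uses only the two-dimensional $\epsilon$-regularity statement (Proposition \ref{prop_h2effcrit}) to deduce that each critical point can have at most $K\lesssim \Lambda/\epsilon_0$ pinched scales, and then runs a direct combinatorial/iterative covering argument (tracking the maximal number of critical points $A_i$ in a sub-ball at scale $e^{-i}$, and noting that $A_i$ can only drop a bounded number of times) to bound the cardinality by $c^{K}\leq e^{C\Lambda}$ — with no reference to the heavy covering machinery of Sections \ref{s:harmonic}--\ref{s:general_elliptic}. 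You instead import the two-dimensional Minkowski estimate from the end of Section \ref{s:general_elliptic} and do a packing argument; this is essentially what the paper already gestures at with the "As a corollary, the number of critical points\ldots is bounded above by $C(\lambda)^\Lambda$" line appended to that theorem. Your derivation is logically valid, but it inherits everything that Minkowski estimate depends on, whereas the appendix proof is independent of it and is where one would want to point a reader who wants the $n=2$ count with minimal overhead.

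One bookkeeping gap worth fixing: enlarging the ambient ball to $B(0,3s/4)$ forces you to cite the Minkowski estimate with $R=3s/2$, but then the relevant frequency bound is $\Lambda'=N^u(0,3s/2)\geq N^u(0,s)=\Lambda$ by monotonicity, so $\Lambda'$ is not controlled by $\Lambda$ from above and the constant does not merely "change by a constant." The cleaner route is to keep $R=s$ (so the Minkowski bound lives on $B(0,s/2)$ with the given $\Lambda$), and observe that each $B_r(x_i)$ with $x_i\in B(0,s/2)$ contributes at least $\pi r^2/2$ of area inside the convex set $B(0,s/2)$; then $N\,\pi r^2/2\leq C^\Lambda r^2$ and you conclude as before. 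You should also note that the set $\cS_r(u)$ appearing in that Section \ref{s:general_elliptic} theorem is the frequency-defined set $\{N(x,r)\geq 3/2\}$ of Section \ref{ss:volume_estimates_harmonic}, not literally the set from \eqref{e:effective_critical}; the inclusion $\Cru\subseteq\{N(\cdot,r)\geq 3/2\}$ nevertheless holds because $\nabla u(x)=0$ forces vanishing order $N(x,0)\geq 2>3/2$ and $N$ is monotone.
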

\begin{proof}
As before we will focus on the critical case, and we will assume $u$ is harmonic.  The technique is such that, verbatim as in Section \ref{s:general_elliptic} of the paper, with the appropriate approximation arguments the results all pass over to the general case.

By theorem \ref{th_cN}, $N(x,1/3)\leq C(n)\Lambda$ for all $x\in B_{1/2}(0)$. According to Proposition \ref{prop_h2effcrit}, there exists an $\epsilon_0$ independent of $\Lambda$ such that 
\begin{gather}
 N(x,re^2)- N(x,re^{-3}) \leq \epsilon_0 \ \ \ \ \text{for some} \ \ \ \ 0<r<r_0 \quad \Longrightarrow \quad \Cr(u) \cap B_r(x) \setminus B_{r/e}(x) = \emptyset\, .
\end{gather}

This, and monotonicity, means that every point can have at most
\begin{gather}
 K \leq \frac{4 c \Lambda}{\epsilon_0}
\end{gather}
critical scales. That is, for each critical point there are at most $K$ numbers $i$ such that $B_{e^{-i}}(x)\setminus B_{e^{-i-1}}(x)$ contains a critical point.

Now we proceed by induction on $i$. Let $A_0<\infty$ be the cardinality of $\Cr(u)\cap B_{1/2}(0)$. Define $T_i$ to be an infinite vector of zeros and ones, and let $\abs T = \sum_{i=1}^\infty T(i)$.

For $i=1$, consider all the balls of radius $e^{-i}$ centered at $x\in \Cru\cap B_{1/2}(0)$, and refine this covering of $\Cru$ by considering only a maximal subcovering such that $B_{e^{-i-1}}(x_j)$ are disjoint. This is obviously possible, and by simple volume estimates the number of balls in this covering are at most $c=e^4/4$.

Then refine further the covering by extracting a minimal subcovering with the property that each ball covers at least a point which is not covered by any other ball.

Now consider the ball in this covering that contains the largest number of critical points, say $B_{e^{-i}}(y_1)$, containing $A_i$ critical points. If $A_i = A_{i-1}$, then set $T_i=0$, otherwise evidently we have
\begin{gather}
A_{i-1}> A_i \geq c A_{i-1}\, .
\end{gather}
Moreover in this case (i.e., if $T(i)=1$) there also exists a critical point $x_i$ such that $e^{-i+1}\geq d(x_i,y_i)\geq e^{-i}$. Indeed, we assumed that the covering was minimal in this sense.

Now we repeat this process by induction and stop when $A_i =1$. Since the number or critical points is finite, the number of induction steps is finite. Set $\bar i$ to be the index relative to the last step. Evidently we have the estimate:
\begin{gather}
 A_0 \leq c^{\abs T} 
\end{gather}

In order to get a bound on $\abs T$, consider what happens if $T(i)=1$. As seen before, in this case there exists two critical points $x_i,y_i$ such that
\begin{gather}
 e^{-i+1}\geq d(x_i,y_i)\geq e^{-i}\, .
\end{gather}
Thus either $x_i$ or $y_i$ have the following property (call $z_i$ the one with the property): ALL the points in $B_{e^{-i-1}}(y_{i+1})$ have distance $\in[e^{-i-1},e^{i+1}]$ from $z_i$.

Now consider the critical point $y_{\bar i}$. Since it belongs to all the balls $B_{e^{-i}}(y_i)$, We know that this point has at least $\abs T$ critical scales, and now we can conclude $\abs T \leq K\leq 4c\Lambda/\epsilon_0$.
\end{proof}

With a similar argument, we can prove an effective version of this theorem.
\begin{theorem}
Let $u:B_1(0)\to\dR$ solve (\ref{eq_Lu}) and satisfy (\ref{e:coefficient_estimates}). There exists $r_0=r_0(n,\lambda)>0$ with $r_0(n,0)=\infty$ and $C=C(n,\lambda)$ such that if $\Lambda\equiv N^u(0,s)$ for some $s\leq r_0$, then 
 \begin{gather}
  \Vol\ton{B_r(\cS_r(u))\cap B_{s/2}(0)} \leq e^{C \Lambda}(r/s)^2\, .
 \end{gather}
\end{theorem}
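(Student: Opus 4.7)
After translating and rescaling the domain we may assume $s = 1$, so that $N^u(0,1)\leq \Lambda$ and the goal becomes
\begin{equation*}
\Vol\bigl(B_r(\cS_r(u))\cap B(0,1/2)\bigr)\leq e^{C(n,\lambda)\Lambda}\,r^2.
\end{equation*}
Let $\{x_j\}\subset \cS_r(u)\cap B(0,1/2)$ be a maximal $r$-separated subset; since $\bigcup_j B(x_j,2r)$ covers $B_r(\cS_r(u))\cap B(0,1/2)$, it is enough to prove $\#\{x_j\}\leq e^{C\Lambda}$. The plan is to rerun the nested covering argument of the previous (cardinality) theorem on the packing $\{x_j\}$ in place of the genuine critical set, with the iteration truncated at a dyadic scale of order $\Lambda r$ instead of $0$.

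Two inputs drive the argument. First, Theorem~\ref{th_ellcN} provides the uniform bound $N^u(y,1/3)\leq C(n,\lambda)\Lambda$ for every $y\in B(0,1/2)$, so the ``relevant'' integer $d$ at every scale is bounded by $C\Lambda$. Second, Proposition~\ref{prop_h2effcrit} (and Corollary~\ref{cor_ell2effcrit} in the elliptic case) supplies an absolute $\epsilon_0 = \epsilon_0(n,\lambda)>0$, \emph{independent of} $\Lambda$, with the following effective form: if the frequency of $u$ at a point $y$ is $\epsilon_0$-pinched on $[\rho e^{-2},\rho e^{2}]$ and $|N^u(y,\rho)-d|<1/2$, then every $x\in \partial B(y,\rho)$ satisfies $r_c(x)\geq c(n,\lambda)\rho/\Lambda$.

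Define $A_i$ to be the largest number of points of $\{x_j\}$ contained in any Euclidean ball of radius $e^{-i}$, so $A_0 = \#\{x_j\}$. Running the Vitali refinement of the cardinality proof on these balls gives a final tracked center $y^\star\in \{x_j\}$ sitting in the surviving ball at every step, and each ``drop'' $T(i)=1$ produces a companion $z_i\in \{x_j\}$ with $d(y^\star,z_i)\in [e^{-i-1},e^{-i+1}]$. The central claim is that whenever $i$ is a drop index with $e^{-i}\geq C_1\Lambda r$, the point $y^\star$ must incur a frequency drop of size $\geq \epsilon_0$ on the scale band $[e^{-i-2},e^{-i+2}]$: otherwise the $\epsilon$-regularity above applied at $y^\star$ with $\rho_i:=d(y^\star,z_i)$ would force $r_c(z_i)\geq c\rho_i/\Lambda\geq c\,e^{-i-1}/\Lambda$, and for $e^{-i-1}\geq C_1\Lambda r/c$ this contradicts $z_i\in \cS_r(u)$ via Lemma~\ref{l:frequency_comparison}. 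Let $I$ be the largest index with $e^{-I}\geq C_1\Lambda r$. Since the frequency drops at $y^\star$ are disjoint in scale and fit inside $[0,N^u(y^\star,1/3)]\subset [0,C\Lambda]$, the total drop count satisfies $|T|_{\leq I}\leq C\Lambda/\epsilon_0$. At the stopping scale $e^{-I}\sim \Lambda r$, a volume-packing estimate in $\R^2$ gives $A_I\leq c(n)\Lambda^{2}$, because a ball of that radius contains at most so many $r$-separated points. Each drop decreases $A_i$ by at most a dimensional factor $c(n)$ (the maximum number of children of a ball under Vitali refinement), hence
\begin{equation*}
\#\{x_j\} = A_0\leq c(n)^{|T|_{\leq I}}\,A_I\leq c(n)^{C\Lambda/\epsilon_0}\cdot c(n)\Lambda^{2}\leq e^{C'(n,\lambda)\Lambda},
\end{equation*}
which is the required bound.

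The main obstacle is the ``drop implies pinching failure'' step in this effective setting: in the cardinality argument the contradiction came from the mere \emph{existence} of a nearby honest critical point, while here one must exploit the quantitative lower bound $r_c\geq c\rho/\Lambda$ and its incompatibility with $\cS_r$-membership. This is precisely what forces both the truncation at the threshold scale $\sim \Lambda r$ and the polynomial prefactor $\Lambda^{2}$ at the bottom of the induction; the harmonic-case geometric information from Propositions \ref{prop_h2effcrit} and \ref{prop_crit2} must be combined with the elliptic-case harmonic approximation of Corollary~\ref{cor_ell1} (already developed in Section~\ref{s:general_elliptic}) to run the argument for general coefficients, but no new idea is needed there.
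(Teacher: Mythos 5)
Your proof is correct and takes essentially the same approach as the paper's own (very terse) proof, which also reduces to a Vitali covering at a scale comparable to $\Lambda r$ and then invokes the cardinality argument of the preceding theorem to bound the number of covering balls by $e^{C\Lambda}$. You have filled in exactly the details the paper leaves implicit: the truncated nested Vitali iteration, the role of the $\Lambda$-independent $\epsilon_0$ from Proposition~\ref{prop_h2effcrit}/Corollary~\ref{cor_ell2effcrit}, the contradiction with $\cS_r$-membership via Lemma~\ref{l:frequency_comparison} that forces the stop at scale $\sim\Lambda r$, and the $\Lambda^2$ packing prefactor at the bottom (which the paper absorbs in the same way via the $R^2 \sim \Lambda^2 r^2$ conversion). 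The only cosmetic difference is that the paper covers at scale $R\sim\Lambda r$ directly and counts covering balls, while you pack at scale $r$ and stop the induction at $e^{-I}\sim\Lambda r$; these are equivalent. Two minor bookkeeping points you gloss over but which are easy to repair: $A_i$ should be taken as the cardinality inside the nested surviving ball (as in the paper), not a global max over all balls of radius $e^{-i}$, or else the "drop'' dichotomy does not track a single $y^\star$; and the scale bands $[e^{-i-2},e^{-i+2}]$ for consecutive drop indices overlap, so "disjoint in scale'' should be read as "disjointable after passing to a subcollection with indices spaced by a fixed constant,'' costing only a factor in $C$.
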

\begin{proof}
Consider the set $\cS_r(u)$ and cover it with a Vitali covering of balls of radius $R=\bar \Lambda r/(5r_0)$. In detail
 \begin{gather}
  \cS_r(u)\subset \bigcup_{i=1}^M B_R(x_i) \, , \quad \quad x_i\in \cS_r(u)\, , \quad \quad B_{R_i/5}(x_i)\cap B_{R_j/5}(x_j)\neq \emptyset\, .
 \end{gather}
 Let $y\not \in B_{R_i/5}(x_i)$, and suppose that $\abs{y-x_i}$ is a good scale for $x_i$, meaning that
 \begin{gather}
N\ton{x_i,e^2 \abs{y-x_i}} -N\ton{x_i,e^{-2}\abs{y-x_i} }\leq \epsilon\, .  
 \end{gather}
Since we know that $N(x,1/3)\leq \bar \Lambda$ for all $x\in B_{1/2}(0)$, then $r_c(y)\geq \frac{Rr_0}{\bar \Lambda }\geq r$. 
 
By using the argument of the previous theorem, one proves that the number $M$ of centers of the covering has a uniform upper bound. Thus we obtain that
 \begin{gather}
  \Vol\ton{B_{R}(\cS_r(u))\cap B_{1/2}(0)} \leq e^{c \Lambda}R^2\, \quad \Longrightarrow \quad \, \Vol\ton{B_{r}(\cS_r(u))\cap B_{1/2}(0)} \leq C \Lambda^2 e^{c \Lambda}r^2\leq e^{c \Lambda}r^2\, .
 \end{gather}
\end{proof}

\end{appendix}

\bibliographystyle{aomalpha}
\bibliography{nava_bib}

\def\cprime{$'$} \def\cprime{$'$} \def\cprime{$'$}
  \def\cftil#1{\ifmmode\setbox7\hbox{$\accent"5E#1$}\else
  \setbox7\hbox{\accent"5E#1}\penalty 10000\relax\fi\raise 1\ht7
  \hbox{\lower1.15ex\hbox to 1\wd7{\hss\accent"7E\hss}}\penalty 10000
  \hskip-1\wd7\penalty 10000\box7} \def\cprime{$'$}
\providecommand{\bysame}{\leavevmode\hbox to3em{\hrulefill}\thinspace}
\providecommand{\noopsort}[1]{}
\providecommand{\mr}[1]{\href{http://www.ams.org/mathscinet-getitem?mr=#1}{MR~#1}}
\providecommand{\zbl}[1]{\href{http://www.zentralblatt-math.org/zmath/en/search/?q=an:#1}{Zbl~#1}}
\providecommand{\jfm}[1]{\href{http://www.emis.de/cgi-bin/JFM-item?#1}{JFM~#1}}
\providecommand{\arxiv}[1]{\href{http://www.arxiv.org/abs/#1}{arXiv~#1}}
\providecommand{\doi}[1]{\url{http://dx.doi.org/#1}}
\providecommand{\MR}{\relax\ifhmode\unskip\space\fi MR }
\providecommand{\MRhref}[2]{%
  \href{http://www.ams.org/mathscinet-getitem?mr=#1}{#2}
}
\providecommand{\href}[2]{#2}
\begin{thebibliography}{HHHN99}

\bibitem[AKS62]{toc}
\bgroup\scshape{}N.~Aronszajn\egroup{}, \bgroup\scshape{}A.~Krzywicki\egroup{},
  and \bgroup\scshape{}J.~Szarski\egroup{}, A unique continuation theorem for
  exterior differential forms on {R}iemannian manifolds,  \emph{Ark. Mat.}
  \textbf{4} (1962), 417--453 (1962). \mr{0140031}.  \zbl{0107.07803}.
  \doi{10.1007/BF02591624}.

\bibitem[ABR01]{HFT}
\bgroup\scshape{}S.~Axler\egroup{}, \bgroup\scshape{}P.~Bourdon\egroup{}, and
  \bgroup\scshape{}W.~Ramey\egroup{}, \emph{Harmonic function theory}, second
  ed., \emph{Graduate Texts in Mathematics} \textbf{137}, Springer-Verlag, New
  York, 2001. \mr{1805196}.  \zbl{0959.31001}.  Available at
  \url{http://www.axler.net/HFT.pdf}.

\bibitem[CNV15]{ChNaVa}
\bgroup\scshape{}J.~Cheeger\egroup{}, \bgroup\scshape{}A.~Naber\egroup{}, and
  \bgroup\scshape{}D.~Valtorta\egroup{}, Critical sets of elliptic equations,
  \emph{Comm. Pure Appl. Math.} \textbf{68} (2015), 173--209. \mr{3298662}.
  \zbl{06399723}.  \doi{10.1002/cpa.21518}.  Available at
  \url{http://arxiv.org/abs/1207.4236}.

\bibitem[DF88]{DonFef}
\bgroup\scshape{}H.~Donnelly\egroup{} and
  \bgroup\scshape{}C.~Fefferman\egroup{}, Nodal sets of eigenfunctions on
  {R}iemannian manifolds,  \emph{Invent. Math.} \textbf{93} (1988), 161--183.
  \mr{943927}.  \zbl{0659.58047}.  \doi{10.1007/BF01393691}.

\bibitem[GL86]{galin1}
\bgroup\scshape{}N.~Garofalo\egroup{} and \bgroup\scshape{}F.-H. Lin\egroup{},
  Monotonicity properties of variational integrals, {$A_p$} weights and unique
  continuation,  \emph{Indiana Univ. Math. J.} \textbf{35} (1986), 245--268.
  \mr{833393}.  \zbl{0678.35015}.  \doi{10.1512/iumj.1986.35.35015}.

\bibitem[GL87]{galin2}
\bysame, Unique continuation for elliptic operators: a geometric-variational
  approach,  \emph{Comm. Pure Appl. Math.} \textbf{40} (1987), 347--366.
  \mr{882069}.  \zbl{0674.35007}.  \doi{10.1002/cpa.3160400305}.

\bibitem[GT01]{GT}
\bgroup\scshape{}D.~Gilbarg\egroup{} and \bgroup\scshape{}N.~S.
  Trudinger\egroup{}, \emph{Elliptic partial differential equations of second
  order}, \emph{Classics in Mathematics}, Springer-Verlag, Berlin, 2001,
  Reprint of the 1998 edition. \mr{1814364}.  \zbl{1042.35002}.

\bibitem[Han94]{han_sing}
\bgroup\scshape{}Q.~Han\egroup{}, Singular sets of solutions to elliptic
  equations,  \emph{Indiana Univ. Math. J.} \textbf{43} (1994), 983--1002.
  \mr{1305956}.  \zbl{0817.35020}.  \doi{10.1512/iumj.1994.43.43043}.

\bibitem[Han07]{han}
\bysame, Nodal sets of harmonic functions,  \emph{Pure Appl. Math. Q.}
  \textbf{3} (2007), 647--688. \mr{2351641}.  \zbl{1141.31002}.  Available at
  \url{http://www.intlpress.com/JPAMQ/p/2007/647-688.pdf}.

\bibitem[HHL98]{hanhardtlin}
\bgroup\scshape{}Q.~Han\egroup{}, \bgroup\scshape{}R.~Hardt\egroup{}, and
  \bgroup\scshape{}F.-H. Lin\egroup{}, Geometric measure of singular sets of
  elliptic equations,  \emph{Comm. Pure Appl. Math.} \textbf{51} (1998),
  1425--1443. \mr{1639155}.  \zbl{0940.35065}.
  \doi{10.1002/(SICI)1097-0312(199811/12)51:11/12<1425::AID-CPA8>3.0.CO;2-3}.

\bibitem[HL]{hanlin}
\bgroup\scshape{}Q.~Han\egroup{} and \bgroup\scshape{}F.-H. Lin\egroup{}, Nodal
  sets of solutions of elliptic differential equations. Available at
  \url{http://nd.edu/~qhan/nodal.pdf}.

\bibitem[HL00]{HLrank}
\bgroup\scshape{}Q.~Han\egroup{} and \bgroup\scshape{}F.~Lin\egroup{}, Rank
  zero and rank one sets of harmonic maps,  \emph{Methods Appl. Anal.}
  \textbf{7} (2000), 417--442, Cathleen Morawetz: a great mathematician.
  \mr{1869293}.  \zbl{1006.58011}.

\bibitem[HHHN99]{hoste}
\bgroup\scshape{}R.~Hardt\egroup{},
  \bgroup\scshape{}M.~Hoffmann-Ostenhof\egroup{},
  \bgroup\scshape{}T.~Hoffmann-Ostenhof\egroup{}, and
  \bgroup\scshape{}N.~Nadirashvili\egroup{}, Critical sets of solutions to
  elliptic equations,  \emph{J. Differential Geom.} \textbf{51} (1999),
  359--373. \mr{1728303}.  \zbl{1144.35370}.  Available at
  \url{http://projecteuclid.org/getRecord?id=euclid.jdg/1214425070}.

\bibitem[HS89]{HardtSimon}
\bgroup\scshape{}R.~Hardt\egroup{} and \bgroup\scshape{}L.~Simon\egroup{},
  Nodal sets for solutions of elliptic equations,  \emph{J. Differential Geom.}
  \textbf{30} (1989), 505--522. \mr{1010169}.  \zbl{0692.35005}.  Available at
  \url{http://projecteuclid.org/getRecord?id=euclid.jdg/1214443599}.

\bibitem[HHN99]{HON}
\bgroup\scshape{}M.~Hoffmann-Ostenhof\egroup{},
  \bgroup\scshape{}T.~Hoffmann-Ostenhof\egroup{}, and
  \bgroup\scshape{}N.~Nadirashvili\egroup{}, Critical sets of smooth solutions
  to elliptic equations in dimension {$3$},  \emph{Indiana Univ. Math. J.}
  \textbf{45} (1996), 15--37. \mr{1406682}.  \zbl{0857.35008}.
  \doi{10.1512/iumj.1996.45.1957}.  Available at
  \url{http://dx.doi.org/10.1512/iumj.1996.45.1957}.

\bibitem[Lin91]{lin}
\bgroup\scshape{}F.-H. Lin\egroup{}, Nodal sets of solutions of elliptic and
  parabolic equations,  \emph{Comm. Pure Appl. Math.} \textbf{44} (1991),
  287--308. \mr{1090434}.  \zbl{0734.58045}.  \doi{10.1002/cpa.3160440303}.

\bibitem[Pli63]{plis}
\bgroup\scshape{}A.~Pli{\'s}\egroup{}, On non-uniqueness in {C}auchy problem
  for an elliptic second order differential equation,  \emph{Bull. Acad. Polon.
  Sci. S\'er. Sci. Math. Astronom. Phys.} \textbf{11} (1963), 95--100.
  \mr{0153959}.  \zbl{0107.07901}.

\end{thebibliography}

\end{document}